\documentclass[fontsize=10pt,a4paper]{scrartcl}
\usepackage[a4paper, top=4cm, bottom=4cm]{geometry}
\usepackage[utf8x]{inputenc}
\usepackage[T1]{fontenc}
\usepackage{authblk}
\usepackage{tikz}
\usetikzlibrary{matrix,arrows}
\usepackage{enumitem}
\usepackage{hyperref}
\usepackage{amsmath}
\usepackage{amsthm}
\usepackage{amssymb}
\usepackage{amsfonts}
\usepackage{mathrsfs}
\usepackage{euscript}
\usepackage{bbm}
\usepackage{abstract}
\usepackage{graphicx}
\usepackage{placeins}
\usepackage{subfigure}
\usepackage{nicefrac}
\usepackage{booktabs}
\usepackage{bookmark}
\usepackage{multirow}
\usepackage{dsfont}
\usepackage{longtable}
\usepackage{braket}
\usepackage{tikz}
\usepackage{scalerel}
\usepackage{listings}
\usepackage{color}
\usepackage[all]{xy}
\usepackage{tensor}
\usepackage{faktor}
\usepackage{lipsum}
\usepackage{comment}

\theoremstyle{definition}
\newtheorem{definition}{Definition}[section] 
 
\theoremstyle{definition}
\newtheorem{remark}[definition]{Remark}

\theoremstyle{definition}
\newtheorem{example}[definition]{Example}

\theoremstyle{theorem}
\newtheorem{theorem}[definition]{Theorem}
\theoremstyle{prop}
\newtheorem{prop}[definition]{Proposition}
\theoremstyle{cor}
\newtheorem{cor}[definition]{Corollary}
\theoremstyle{lemma}
\newtheorem{lemma}[definition]{Lemma}

\newcommand{\Exterior}{\mathchoice{{\textstyle\bigwedge}}%
    {{\bigwedge}}%
    {{\textstyle\wedge}}%
    {{\scriptstyle\wedge}}}

\begin{document}
\numberwithin{equation}{section}
\title{\LARGE Super fiber bundles, connection forms and parallel transport}
\author{
  \large Konstantin Eder\thanks{konstantin.eder@gravity.fau.de} 
}
\affil{Friedrich-Alexander-Universit\"at Erlangen-N\"urnberg (FAU),\\
Institute for Quantum Gravity (IQG),\\Staudtstr. 7,D-91058 Erlangen, Germany}
\maketitle

\begin{abstract}
The  present work provides a mathematically rigorous account on super fiber bundle theory, connection forms and their parallel transport, that ties together various approaches. We begin with a detailed introduction to super fiber bundles. We then introduce the concept of so-called relative supermanifolds as well as bundles and connections defined in these categories. Studying these objects turns out to be of utmost importance in order to, among other things, model anticommuting classical fermionic fields in mathematical physics. We then construct the parallel transport map corresponding to such connections and compare the results with those found by other means in the mathematical literature. Finally, applications of these methods to supergravity will be discussed such as the Cartan geometric formulation of Poincaré supergravity as well as the description of Killing vector fields and Killing spinors of super Riemannian manifolds arising from metric reductive super Cartan geometries.
\end{abstract}

\newpage

\section{Introduction}
Over the last forty years, many different approaches have been developed in order to formulate the notion of a supermanifold. The first and probably most popular one is the so-called \emph{algebro-geometric approach} introduced by Berezin, Kostant and Leites \cite{Berezin:1976,Kostant:1975qe} which borrows techniques from algebraic geometry. It is based on the interesting observation that ordinary smooth manifolds can equivalently be described in terms of the structure sheaf of smooth functions defined on the underlying topological space. This approach is very elegant and, in particular, avoids the introduction of superfluous (unphysical) degrees of freedom. Nevertheless, its definition turns out to be very abstract since, roughly speaking, points in this framework are implicitly encoded in the underlying structure sheaf of supersmooth functions. This makes this approach less accessible for physicists for concrete applications.\\
Hence, another approach to supermanifolds, the so-called \emph{concrete approach}, was initiated by DeWitt \cite{DeWitt:1984} and Rogers \cite{Rogers:1980}, and studied even more systematically by Tuynman in \cite{Tuynman:2004}, defining them similar to ordinary smooth manifolds in terms of a topological space of points, i.e., a topological manifold that locally looks a flat superspace. However, as it turns out, this definition has various ambiguities in formulating the notion of a point which, in contrast to the algebro-geometric approach, leads to too many unphysical degrees of freedom.\\
It was then found by Molotkov \cite{Mol:10} and further developed by Sachse \cite{Sac:08} that both approaches can be regarded as two sides of the same coin. In this framework, at least in the finite dimensional setting, it follows that Rogers-DeWitt supermanfolds can be interpreted in terms of a particular kind of a functor constructed out of a algebro-geometric supermanifold. This functorial intepretation then resolved the ambiguities arising in the Rogers-DeWitt approach and also opened the way towards a generalization of the theory to infinite dimensional supermanifolds.\\
Another caveat, both in the algebraic and Rogers-DeWitt approach, is the appropriate description of anticommuting (fermionic) degrees freedom. In fact, it turns out that the pullback of superfields to the underlying ordindary smooth manifold are purely commutative (bosonic). This seems, however, incompatible in various constructions in physics. For instance, in the Castellani-D'Auria-Fré approach \cite{DAuria:1982uck,Castellani:1991et}, a geometric approach to supergravity, by the so-called \emph{rheonomy principle}, physical degrees of freedom are completey determined by their pullback.\\
Furthermore, as we will also see in section \ref{section:parallel}, from a mathematical point of view, this issue also appears in the context of the parallel transport map corresponding to super connections. In fact, it turns out that in ordinary category of supermanifolds, both in the algebraic and concrete approach, the parallel transport cannot be used in order to compare different fibers of the bundle, in contrast to the classical theory.\\
A resolution has been proposed by Schmitt \cite{Schmitt:1996hp}. There, motivated by the Molotkov-Sachse approach to supermanifold theory \cite{Mol:10,Sac:08}, superfields on parametrized supermanifolds are considered. Since, a priori, this additional parametrizing supermanifold is chosen arbitrarily, one then has to ensure that these superfields transform covariantly under change of parametrization. This idea has then been implemented rigorously for instance by Hack et. al. in \cite{Hack:2015vna} studying \emph{relative supermanifolds} which are well-known in the algebraic approach \cite{Deligne:1999qp}. As it turns out, superfields on these supermanifolds indeed have the required properties, i.e., in the sense of Molotkov-Sachse, they behave functorially under change of parametrization. Moreover, in this framework, it turns out that fermionic fields have the interpretation in terms of functionals on supermanifolds which is in strong similarity to other approaches such as in the context of algebraic quantum field theory (pAQFT) \cite{Rejzner:2011au,Rejzner:2016hdj}.\\
\\
In this paper, we want to provide the mathematical rigorous foundations for the study of gauge theories on (relative) supermanifolds. In particular, we will study the parallel transport map corresponding to super connection forms defined on (relative) principal super fiber bundles. The parallel transport map, or the associated holonomies, have been considered in the context of covariant derivatives on super vector bundles in the algebro-geometric approach by Dumitrescu \cite{Florin:2008} and Groeger \cite{Groeger:2013aja}.\\ 
The theory of super fiber bundles and connection forms in the concrete approach has been developed in \cite{Tuynman:2004}. In the algebraic category, a precise definition of principal bundles and connection forms has been given in \cite{Stavracou:1996qb}. In this article, we will generalize the considerations of \cite{Tuynman:2004} to the relative category and, in particular, define super connection forms on relative principal super fiber bundles. We will then use this formalism in order to construct the corresponding parallel transport map and study some of its important properties. Moreover, we will analyze the precise relation between the algebraic and concrete approach and show explicitly that both approaches are in fact equivalent. We will therefore employ the functor of points technique as discussed in detail in \cite{Eder:2020erq}. Moreover, studying the induced parallel transport map on associated super vector bundles, this enables us to compare the results with those obtained in \cite{Florin:2008,Groeger:2013aja} in the algebraic setting.\\
\\
The structure of this paper is as follows: At the beginning, we will give a detailed introduction to super fiber bundle theory in the concrete approach to supermanifold theory. In contrast to \cite{Tuynman:2004}, we will therefore use the concept of \emph{formal bundle atlases} which is very well-known in the classical theory (see e.g. \cite{Baum,Hamilton:2017}  and references therein) and, in fact, turns out to be even applicable in the context of supermanifolds.\\
In section \ref{section:relative}, we will then introduce the concept of relative supermanifolds and define principal connections and super connection one-forms. In this context, in section \ref{section:Cartan}, we will also discuss super Cartan geometries and super Cartan connections as defined and studied in detail in \cite{Eder:2020erq} and study their relations to principal connections in the sense of Ehresmann. In section \ref{section:graded}, we will compare the theory of principal bundles and connection forms both in the algebraic and concrete approach and show that both approaches are in fact equivalent.\\
These results will then be used in section \ref{section:parallel} and \ref{section:parallelNEU} in order to construct the parallel transport map. Moreover, for a particular subclass of super Lie groups, a concrete formula for this map will be derived making it easier accessible for physical applications.\\
In the last section \ref{section:Application}, we will give some concrete examples for applications of supermanifold techniques in mathematical physics. First, we will describe $\mathcal{N}=1$, $D=4$ Poincaré supergravity in terms of a metric reductive super Cartan geometry. In this context, we will also sketch a possible embedding of the Castellani-D'Auria-Fré approach into the present formalism. Furthermore, we will discuss a link between superfields on parametrized supermanifolds and the description of anticommutative fermionic fields in pAQFT. Finally, Killing vector fields on super Riemannian manifolds will be discussed arsing from metric reductive super Cartan geometries as well as their relation to Killing spinors as they typically appear in physics. In this context, let us note that this paper can also be read as a companion paper of \cite{Eder:2020erq} studying a mathematical rigorous approach towards \emph{geometric supergravity}.


\section{Super fiber bundles}
In this section, we want to give a detailed account on super fiber bundles in the category of $H^{\infty}$-supermanifolds as this will provide us with the necessary mathematical tools needed in the subsequent sections (see appendix \ref{appendix:Super} for a review on $H^{\infty}$-supermanifold theory and our choice of conventions; for a relation to \emph{algebro-geometric supermanifolds} see section \ref{section:graded}). If not stated explicitly otherwise, in what follows, we will always work in the category of $H^{\infty}$-supermanifolds so that smoothness and related notions are always referred to this particular category. Let us start with the basic definition of a super fiber bundle.    
\begin{definition}[Super fiber bundle]\label{prop:2.1}
A \emph{super fiber bundle} ($\mathcal{E},\pi,\mathcal{M},\mathcal{F}$), also simply denoted by $\mathcal{F}\rightarrow\mathcal{E}\stackrel{\pi}{\rightarrow}\mathcal{M}$, consists of supermanifolds $\mathcal{E}$, $\mathcal{M}$ and $\mathcal{F}$ called \emph{total space}, \emph{base} and \emph{typical fiber}, respectively, as well as a smooth surjective map $\pi:\,\mathcal{E}\rightarrow\mathcal{M}$, called \emph{projection}, satisfying the local triviality property:\\
For any $p\in\mathcal{E}$ there exists an open subset $U\subset\mathcal{M}$ which is an open neighborhood of $\pi(p)$ and a homeomorphism $\phi:\,\pi^{-1}(U)\rightarrow U\times\mathcal{F}$ called \emph{local trivialization} such that the following diagram commutes
	\begin{displaymath}
	 \phi:\xymatrix{
         \pi^{-1}(U)\ar[r] \ar[d]_{\pi}  &    U\times\mathcal{F} \ar[ld]^{\mathrm{pr}_1}\\
            U   &         
     }
		\label{eq:2.1}
 \end{displaymath}
i.e. $\mathrm{pr}_1\circ\phi=\pi$ where $\mathrm{pr}_1$ denotes the projection onto the first factor. 
\end{definition}
\begin{prop}\label{prop:2.2}
Let $\mathcal{F}\rightarrow\mathcal{E}\stackrel{\pi}{\rightarrow}\mathcal{M}$ be a super fiber bundle. Then $\mathbf{B}(\mathcal{F})\rightarrow \mathbf{B}(\mathcal{E})\stackrel{\bar{\pi}}{\rightarrow}\mathbf{B}(\mathcal{M})$, with $\bar{\pi}:=\mathbf{B}(\pi)$ and $\mathbf{B}:\,\mathbf{SMan}_{H^{\infty}}\rightarrow\mathbf{Man}$ the body functor, defines a smooth fiber bundle in the category $\mathbf{Man}$ of ordinary $C^{\infty}$-smooth manifolds.
\end{prop}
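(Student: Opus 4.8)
The plan is to transport the local-triviality data of $\pi$ through the body functor, relying on three structural properties of $\mathbf{B}$ that I expect to be established in the appendix: functoriality (so that $\mathbf{B}$ sends $H^{\infty}$-smooth maps to $C^{\infty}$-maps and isomorphisms to diffeomorphisms), compatibility with the natural body projection $\epsilon_{\mathcal{X}}\colon\mathcal{X}\to\mathbf{B}(\mathcal{X})$, and preservation of finite products together with open subsupermanifolds. The first thing I would record is that $\bar\pi=\mathbf{B}(\pi)$ is a $C^{\infty}$-map of ordinary manifolds, which is immediate from functoriality. For surjectivity I would invoke naturality of the body projection, $\bar\pi\circ\epsilon_{\mathcal{E}}=\epsilon_{\mathcal{M}}\circ\pi$; since $\epsilon_{\mathcal{M}}$ and $\pi$ are both surjective, the right-hand side is surjective, which forces $\bar\pi$ to be surjective as well.

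The heart of the argument is local triviality. Fix $\bar p\in\mathbf{B}(\mathcal{E})$ and choose a preimage $p\in\mathcal{E}$ under $\epsilon_{\mathcal{E}}$, possible by surjectivity of $\epsilon_{\mathcal{E}}$. Local triviality of $\pi$ supplies an open $U\subset\mathcal{M}$ containing $\pi(p)$ and a local trivialization $\phi\colon\pi^{-1}(U)\to U\times\mathcal{F}$ with $\mathrm{pr}_1\circ\phi=\pi$. Applying $\mathbf{B}$: since $\mathbf{B}$ preserves open subsupermanifolds, $\mathbf{B}(U)$ is an open subset of $\mathbf{B}(\mathcal{M})$, and one checks (again via naturality of $\epsilon$) that $\mathbf{B}(\pi^{-1}(U))=\bar\pi^{-1}(\mathbf{B}(U))$, so that $\bar p$ indeed lies over $\mathbf{B}(U)$. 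Because $\mathbf{B}$ sends the isomorphism $\phi$ to a diffeomorphism and preserves the product $U\times\mathcal{F}$, I obtain a diffeomorphism
\[
\bar\phi:=\mathbf{B}(\phi)\colon\ \bar\pi^{-1}(\mathbf{B}(U))\ \xrightarrow{\ \sim\ }\ \mathbf{B}(U)\times\mathbf{B}(\mathcal{F}).
\]
Functoriality together with $\mathbf{B}(\mathrm{pr}_1)=\mathrm{pr}_1$ under the product identification then yields $\mathrm{pr}_1\circ\bar\phi=\mathbf{B}(\mathrm{pr}_1\circ\phi)=\mathbf{B}(\pi)=\bar\pi$, i.e. the trivialization triangle descends to the body. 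As $\bar p$ was arbitrary and $\mathbf{B}(\mathcal{F})$ is a manifold, this exhibits $\bar\pi$ as a locally trivial $C^{\infty}$-fiber bundle with typical fiber $\mathbf{B}(\mathcal{F})$.

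The genuinely routine parts are functoriality and the commuting triangle; the step I expect to carry the real weight is the pair of compatibility lemmas for $\mathbf{B}$, namely that $\mathbf{B}$ preserves finite products, $\mathbf{B}(U\times\mathcal{F})\cong\mathbf{B}(U)\times\mathbf{B}(\mathcal{F})$ naturally, and that it restricts correctly to open subsupermanifolds with $\mathbf{B}(\pi^{-1}(U))=\bar\pi^{-1}(\mathbf{B}(U))$. Both of these hinge on the explicit construction of the body in the $H^{\infty}$-setting, obtained by discarding the nilpotent (soul) part of the coordinates. Accordingly I would prove them first at the level of local super-charts, where the statements are transparent, and then glue, verifying naturality so that the local identifications patch to a single global diffeomorphism $\mathbf{B}(U\times\mathcal{F})\cong\mathbf{B}(U)\times\mathbf{B}(\mathcal{F})$ rather than merely chart-wise ones. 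Once these compatibility statements are in hand, the fiber-bundle axioms for $\bar\pi$ follow formally from the construction above.
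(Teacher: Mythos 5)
Your proof is correct and follows essentially the same route as the paper's: the paper reduces the statement to functoriality of $\mathbf{B}$ together with the surjectivity criterion (Prop.~\ref{Prop:A.10}), and its (fuller) argument likewise establishes $\bar{\pi}^{-1}(\mathbf{B}(U))=\mathbf{B}(\pi^{-1}(U))$ and pushes each local trivialization through $\mathbf{B}$ exactly as you do. The compatibility lemmas you isolate (preservation of products and of open subsupermanifolds, naturality of the body projection) are precisely the ingredients the paper treats as known from its appendix on the body functor.
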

\begin{proof}
This is an immediate consequence of Prop. \ref{Prop:A.10} as well as the fact that $\mathbf{B}:\,\mathbf{SMan}_{H^{\infty}}\rightarrow\mathbf{Man}$ is a functor. 
\end{proof}

\begin{definition}\label{prop:2.3}
Let $\mathcal{M}$ and $\mathcal{F}$ be supermanifolds, $\mathcal{E}$ an abstract set and $\pi:\,\mathcal{E}\rightarrow\mathcal{M}$ a surjective map.
\begin{enumerate}[label=(\roman*)]
	\item Let $U\subset\mathcal{M}$ be open and $\phi:\,\pi^{-1}(U)\rightarrow U\times\mathcal{F}$ a bijective map such that $\mathrm{pr}_1\circ\phi=\pi\big{|}_{\pi^{-1}(U)}$, then $(U,\phi_{U})$ is called a \emph{formal bundle chart}.
	\item A family $\{(U_{\alpha},\phi_{\alpha})\}_{\alpha\in\Upsilon}$ of formal bundle charts is called a \emph{(smooth) formal bundle atlas} of $\mathcal{E}$ (w.r.t. $\pi$) iff $\{U_{\alpha}\}_{\alpha\in\Upsilon}$ is an open covering of $\mathcal{M}$ and for any $\alpha,\beta\in\Upsilon$ with $U_{\alpha}\cap U_{\beta}\neq\emptyset$, the transition functions
	\begin{equation}
	\phi_{\beta}\circ\phi^{-1}_{\alpha}:\,(U_{\alpha}\cap U_{\beta})\times\mathcal{F}\rightarrow(U_{\alpha}\cap U_{\beta})\times\mathcal{F}
	\label{eq:2.4}
	\end{equation} 
are smooth.
\end{enumerate} 
\end{definition}

\begin{theorem}\label{prop:2.4}
Let $\mathcal{M}$ and $\mathcal{F}$ be supermanifolds of dimensions $\mathrm{dim}\,\mathcal{M}=(m,n)$ and $\mathrm{dim}\,\mathcal{F}=(p,q)$, respectively, $\mathcal{E}$ an abstract set and $\pi:\,\mathcal{E}\rightarrow\mathcal{M}$ a surjective map. Let furthermore $\{(U_{\alpha},\phi_{\alpha})\}_{\alpha\in\Upsilon}$ be a smooth formal bundle atlas of $\mathcal{E}$ (w.r.t. $\pi$). Then, there exists a unique topology and smooth structure on $\mathcal{E}$ such that $\mathcal{E}$ becomes a supermanifold of dimension $\mathrm{dim}\,\mathcal{E}=(m+p,n+q)$ and $(\mathcal{E},\pi,\mathcal{M},\mathcal{F})$ a super fiber bundle which is locally trivial w.r.t. the bundle atlas $\{(U_{\alpha},\phi_{\alpha})\}_{\alpha\in\Upsilon}$.
\end{theorem}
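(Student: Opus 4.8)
The plan is to imitate the classical fiber bundle construction theorem (as in \cite{Baum,Hamilton:2017}), transporting both the topology and the $H^{\infty}$-smooth structure from the model pieces $U_{\alpha}\times\mathcal{F}$ to $\mathcal{E}$ along the bijections $\phi_{\alpha}$, and then verifying consistency, the bundle axioms, and uniqueness.

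First I would fix the topology. Since each $\phi_{\alpha}$ is a bijection onto the product supermanifold $U_{\alpha}\times\mathcal{F}$, I declare $V\subseteq\mathcal{E}$ to be open iff $\phi_{\alpha}(V\cap\pi^{-1}(U_{\alpha}))$ is open in $U_{\alpha}\times\mathcal{F}$ for every $\alpha\in\Upsilon$. The crucial point making this a well-defined topology is that the transition functions $\phi_{\beta}\circ\phi^{-1}_{\alpha}$ are smooth with smooth inverses $\phi_{\alpha}\circ\phi^{-1}_{\beta}$ (again transition functions), hence homeomorphisms; therefore the open sets prescribed through $\phi_{\alpha}$ and through $\phi_{\beta}$ agree on the overlaps $\pi^{-1}(U_{\alpha}\cap U_{\beta})$. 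Using $\mathrm{pr}_1\circ\phi_{\gamma}=\pi$ one checks $\phi_{\gamma}(\pi^{-1}(U_{\alpha}\cap U_{\gamma}))=(U_{\alpha}\cap U_{\gamma})\times\mathcal{F}$, so each $\pi^{-1}(U_{\alpha})$ is open and $\phi_{\alpha}$ is a homeomorphism onto $U_{\alpha}\times\mathcal{F}$. Hausdorffness and second countability of $\mathcal{E}$ then follow from the corresponding properties of $\mathcal{M}$ and $\mathcal{F}$: points with distinct images under $\pi$ are separated by preimages of disjoint neighborhoods in $\mathcal{M}$, points lying over the same base point (hence in a common $\pi^{-1}(U_{\alpha})$) by the homeomorphism $\phi_{\alpha}$, and a countable subatlas is extracted using second countability of $\mathcal{M}$.

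Next I would install the $H^{\infty}$-smooth structure. Choosing charts $(V,\psi)$ of $\mathcal{M}$ with $V\subseteq U_{\alpha}$ and charts $(W,\chi)$ of $\mathcal{F}$, the maps $(\psi\times\chi)\circ\phi_{\alpha}$ defined on $\phi^{-1}_{\alpha}(V\times W)$ furnish charts on $\mathcal{E}$ valued in flat superspace of dimension $(m+p,n+q)$, since $U_{\alpha}\times\mathcal{F}$ is the product supermanifold. These cover $\mathcal{E}$, and the change of charts between one coming from $\phi_{\alpha}$ and one coming from $\phi_{\beta}$ factors through the smooth transition function $\phi_{\beta}\circ\phi^{-1}_{\alpha}$ pre- and post-composed with product charts of $\mathcal{M}$ and $\mathcal{F}$; hence it is $H^{\infty}$-smooth, and the collection forms an $H^{\infty}$-atlas of dimension $(m+p,n+q)$. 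In these charts the projection reads as the first-factor projection, so $\pi$ is smooth, and each $\phi_{\alpha}$ is by construction a diffeomorphism onto $U_{\alpha}\times\mathcal{F}$ with $\mathrm{pr}_1\circ\phi_{\alpha}=\pi$; thus $(\mathcal{E},\pi,\mathcal{M},\mathcal{F})$ is a super fiber bundle in the sense of Definition \ref{prop:2.1}, locally trivial w.r.t. $\{(U_{\alpha},\phi_{\alpha})\}_{\alpha\in\Upsilon}$. Uniqueness is then immediate: any topology and smooth structure for which the $\phi_{\alpha}$ are local trivializations must restrict on each $\pi^{-1}(U_{\alpha})$ to the pullback along $\phi_{\alpha}$ of the product structure, and since these sets cover $\mathcal{E}$ the two structures coincide.

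The step requiring the most care is the verification that the candidate charts are $H^{\infty}$-compatible, i.e. that smoothness of the formal transition functions in the sense of Definition \ref{prop:2.3} genuinely upgrades to $H^{\infty}$-smoothness of the induced coordinate changes on $\mathcal{E}$; this is where the product supermanifold structure on $U_{\alpha}\times\mathcal{F}$ and the stability of $H^{\infty}$-smoothness under composition with charts of $\mathcal{M}$ and $\mathcal{F}$ must be invoked. I would also attend to the point-set issues, ensuring the transported topology is genuinely Hausdorff and that a global smooth structure (rather than merely a family of local ones) is obtained, but these follow the classical template once compatibility of the charts on overlaps is established.
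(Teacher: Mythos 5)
Your proposal is correct and follows essentially the same route as the paper: the same declaration of the topology through the formal bundle charts, the same product charts $(\psi\times\chi)\circ\phi_{\alpha}$ with smoothness of the coordinate changes reduced to smoothness of the formal transition functions, and the same separation argument for the two cases of equal or distinct base points. The one point to adjust is that in the $H^{\infty}$/DeWitt setting it is the body $\mathbf{B}(\mathcal{E})$, not $\mathcal{E}$ itself, that must be shown Hausdorff and second countable (the total space is never Hausdorff in the DeWitt topology); your separation argument is precisely the one the paper applies to the body, so only the phrasing needs correcting.
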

\begin{proof}
We define a topology on $\mathcal{E}$ by declaring a subset $O\subseteq\mathcal{E}$ to be open if and only if for any $\alpha\in\Upsilon$ the image
\begin{equation}
\phi_{\alpha}(O\cap\pi^{-1}(U_{\alpha}))\subseteq U_{\alpha}\times\mathcal{F}
\end{equation}
is an open subset in $\mathcal{M}\times\mathcal{F}$ (note that this condition is mandatory in order for $\phi_{\alpha}$ to define a homeomorphism). Since the formal bundle charts are bijective, it follows immediately that arbitrary unions and finite intersections of open sets are open. Moreover, the empty set and $\mathcal{E}$ are open as well, so that this indeed defines a topology on $\mathcal{E}$.\\
By intersection, one may assume that the $\{U_{\alpha}\}_{\alpha}$ are coordinate neighborhoods of $\mathcal{M}$. Let then $\{(U_{\alpha},\varphi_{\alpha})\}_{\alpha\in\Upsilon}$ and $\{(V_{\beta},\psi_{\beta})\}_{\beta\in\Sigma}$ be smooth atlases of $\mathcal{M}$ and $\mathcal{F}$, respectively. For $(\alpha,\beta)\in\Upsilon\times\Sigma$ we define open sets $W_{\alpha\beta}:=\phi_{\alpha}^{-1}(U_{\alpha}\times V_{\beta})\subseteq\mathcal{E}$ as well as bijective maps
\begin{equation}
\theta_{\alpha\beta}:=(\varphi_{\alpha}\times\psi_{\beta})\circ\phi_{\alpha}:\,W_{\alpha\beta}\rightarrow\varphi_{\alpha}(U_{\alpha})\times\psi_{\alpha}(V_{\beta})\subseteq\Lambda^{m+p,n+q}
\label{eq:2.6}
\end{equation}
For $(\alpha',\beta')\in\Upsilon\times\Sigma$ such that $W_{\alpha\beta}\cap W_{\alpha'\beta'}\neq\emptyset$, it then follows 
\begin{equation}
\theta_{\alpha'\beta'}\circ\theta_{\alpha\beta}^{-1}=(\varphi_{\alpha'}\times\psi_{\beta'})\circ\phi_{\alpha'}\circ\phi_{\alpha}^{-1}\circ(\varphi^{-1}_{\alpha}\times\psi^{-1}_{\beta})
\label{eq:2.7}
\end{equation}
on $\theta_{\alpha\beta}(W_{\alpha\beta}\cap W_{\alpha'\beta'})$, which is smooth by definition of a formal bundle atlas. Hence, $\{(W_{\alpha\beta},\theta_{\alpha\beta})\}$ defines a smooth atlas of $\mathcal{E}$ turning it into a proto $H^{\infty}$-supermanifold of dimension $\mathrm{dim}\,\mathcal{E}=(m+p,n+q)$.\\
Let $U\subseteq\mathcal{M}$ be open. Then, for any $\alpha\in\Upsilon$, $\phi_{\alpha}(\pi^{-1}(U)\cap\pi^{-1}(U_{\alpha}))=\phi_{\alpha}(\pi^{-1}(U\cap U_{\alpha}))=(U\cap U_{\alpha})\times\mathcal{F}$ is open in $U_{\alpha}\times\mathcal{F}$ and thus $\pi^{-1}(U)\subseteq\mathcal{E}$ is open proving that $\pi:\,\mathcal{E}\longrightarrow\mathcal{M}$ is continuous. To see that is also smooth, let $(\alpha,\beta)\in\Upsilon\times\Sigma$ and $\alpha'\in\Upsilon$ such that $U_{\alpha'}\cap U_{\alpha}\neq\emptyset$. It follows
\begin{equation}
\varphi_{\alpha'}\circ\pi\circ\theta_{\alpha\beta}^{-1}=\varphi_{\alpha'}\circ\mathrm{pr}_1\circ(\varphi^{-1}_{\alpha}\times\psi^{-1}_{\beta}):\,\varphi_{\alpha}(U_{\alpha'}\cap U_{\alpha})\times\psi_{\beta}(V_{\beta})\rightarrow\varphi_{\alpha}(U_{\alpha'}\cap U_{\alpha})
\label{eq:2.8}
\end{equation}
which is obviously smooth. It remains to show that $\mathcal{E}$ is in fact a $H^{\infty}$-supermanifold, i.e., $\mathbf{B}(\mathcal{E})$ is a second countable Hausdorff topological space.\\
To prove that it is Hausdorff, consider first $p,q\in\mathbf{B}(\mathcal{E})$ with $\mathbf{B}(\pi)(p)\neq\mathbf{B}(\pi)(q)$. Since $\mathbf{B}(\mathcal{M})\times\mathbf{B}(\mathcal{F})$ is Hausdorff by assumption, there are open subsets $\mathbf{B}(\pi)(p)\in U_p$ and $\mathbf{B}(\pi)(q)\in U_q$ in $\mathbf{B}(\mathcal{M})\times\mathbf{B}(\mathcal{F})$ with $U_p\cap U_q\neq\emptyset$. Since, $\mathbf{B}(\pi):\,\mathbf{B}(\mathcal{E})\rightarrow\mathbf{B}(\mathcal{M})$ is smooth, these yield disjoint open subsets $p\in\mathbf{B}(\pi)^{-1}(U_p)$ and $q\in\mathbf{B}(\pi)^{-1}(U_q)$ in $\mathbf{B}(\mathcal{E})$ separating $p$ and $q$. If $\mathbf{B}(\pi)(p)=\mathbf{B}(\pi)(q)$, consider $\alpha\in\Upsilon$ with $p,q\in\mathbf{B}(\pi)^{-1}(\mathbf{B}(U_{\alpha}))$, where $\mathbf{B}(\phi):\,\mathbf{B}(\pi)^{-1}(\mathbf{B}(U_{\alpha}))\rightarrow\mathbf{B}(U_{\alpha})\times\mathbf{B}(\mathcal{F})$ is the corresponding bundle chart on the body which, in particular, defines a homeomorphism. Let then $V_p,V_q\subset\mathbf{B}(\mathcal{F})$ be disjoint open subsets with $\mathbf{B}(\phi)(p)\in V_p$ and $\mathbf{B}(\phi)(q)\in V_q$. This then finally yields disjoint open subsets $p\in\mathbf{B}(\phi)^{-1}(\mathbf{B}(U_{\alpha})\times V_p)$ and $q\in\mathbf{B}(\phi)^{-1}(\mathbf{B}(U_{\alpha})\times V_q)$ in $\mathbf{B}(\mathcal{E})$ seperating $p$ and $q$. This shows that $\mathbf{B}(\mathcal{E})$ is indeed Hausdorff. That $\mathbf{B}(\mathcal{E})$ is also second countable follows similarly using that $\mathbf{B}(\mathcal{M})\times\mathbf{B}(\mathcal{F})$ is second countable.\\
Hence, this proves that $\mathcal{E}$ indeed defines a $H^{\infty}$-supermanifold and that $(\mathcal{E},\pi,\mathcal{M},\mathcal{F})$ is a super fiber bundle. 
\end{proof}

\begin{example}[Pullback bundle]\label{prop:2.5}
Given a smooth map $f:\,\mathcal{N}\rightarrow\mathcal{M}$ between supermanifolds and a super fiber bundle $(\mathcal{E},\pi,\mathcal{M},\mathcal{F})$, one can construct a new bundle by setting
\begin{equation}
f^*\mathcal{E}:=\{(x,p)\in\mathcal{N}\times\mathcal{E}|\,f(x)=\pi(p)\}\subseteq\mathcal{N}\times\mathcal{E}
\label{eq:2.9}
\end{equation}
as total space together with the projection
\begin{equation}
\pi_f:\,f^*\mathcal{E}\rightarrow\mathcal{N},\,(x,p)\mapsto x
\label{eq:2.10}
\end{equation}
that is, fibers over $\mathcal{M}$ are pulled back w.r.t. $f$ to fibers over $\mathcal{N}$. Let $\{(U_{\alpha},\phi_{\alpha})\}_{\alpha\in\Upsilon}$ be a smooth bundle atlas on $\mathcal{E}$. For $\alpha\in\Upsilon$, define the map $\psi_{\alpha}:\,f^*\mathcal{E}\supseteq\pi^{-1}_f(U_{\alpha})\rightarrow U_{\alpha}\times\mathcal{F}$ via
\begin{equation}
\psi_{\alpha}(x,p):=(x,\mathrm{pr}_2\circ\phi_{\alpha}(p))
\label{eq:2.11}
\end{equation}
It is clear by definition that $\psi_{\alpha}$ is bijective and fiber-preserving. Moreover, for $\alpha,\beta\in\Upsilon$ with $U_{\alpha}\cap U_{\beta}\neq\emptyset$, we compute
\begin{equation}
\psi_{\beta}\circ\psi_{\alpha}^{-1}(x,p)=\psi_{\beta}(x,\phi_{\alpha}^{-1}(f(x),p))=(x,\mathrm{pr}_2\circ\phi_{\beta}\circ\phi_{\alpha}^{-1}(f(x),p))
\label{eq:2.12}
\end{equation}
$\forall (x,p)\in (U_{\alpha}\cap U_{\beta})\times\mathcal{F}$ and thus is smooth. It follows that $\{(U_{\alpha},\psi_{\alpha})\}_{\alpha\in\Upsilon}$ satisfies the properties of a formal bundle atlas so that, by Theorem \ref{prop:2.4}, $(f^*\mathcal{E},\pi_f,\mathcal{N},\mathcal{F})$ has the structure of a super fiber bundle called the \emph{pullback bundle} of $\mathcal{E}$ w.r.t. $f$.\\
\\
The pullback bundle defines the \emph{pullback} in the category of super fiber bundles. More precisely, it satisfies the following universal property: Given super fiber bundles $\mathcal{E}\stackrel{\pi_{\mathcal{E}}}{\rightarrow}\mathcal{M}$ and $\mathcal{Q}\stackrel{\pi_{\mathcal{Q}}}{\rightarrow}\mathcal{N}$ as well as a smooth bundle morphism $(\phi,f):\,(\mathcal{Q},\mathcal{N})\rightarrow(\mathcal{E},\mathcal{M})$, there exists a unique amooth bundle morphism $(\hat{\phi},\mathrm{id}_{\mathcal{N}}):\,(\mathcal{Q},\mathcal{N})\rightarrow(f^*\mathcal{E},\mathcal{N})$ such that the following diagram commutes  
\begin{equation}
\begin{tikzpicture}
  \node (fP) {$f^{*}\mathcal{E}$};
  \node (N) [node distance=1.7cm, right of=fP] {$\mathcal{E}$};
  \node (P) [node distance=1.7cm, below of=fP] {$\mathcal{N}$};
  \node (M) [node distance=1.7cm, below of=N] {$\mathcal{M}$};
  \node (Q) [node distance=1.7cm, left of=fP, above of=fP] {$\mathcal{Q}$};
  \draw[->] (fP) to node [above] {$\mathrm{pr}_2$} (N);
  \draw[->] (fP) to node [left] {$\pi_f$} (P);
  \draw[->] (P) to node [above] {$f$} (M);
  \draw[->] (N) to node [right]{$\pi_{\mathcal{E}}$} (M);
  \draw[->, bend left] (Q) to node [above] {$\phi$} (N);
  \draw[->, bend right] (Q) to node [below] {$\pi_{\mathcal{Q}}\,\,\,\,\,$} (P);
  \draw[->, dashed] (Q) to node  {$\hat{\phi}$} (fP);
\end{tikzpicture}
\label{eq:2.13}
\end{equation}
In fact, $\hat{\phi}$ has to be of the form $\hat{\phi}:\,Q\ni q\mapsto(\pi_{\mathcal{Q}}(q),\phi(q))$ and therefore is smooth.
\end{example}
We next consider a particular subclass of super fiber bundles whose typical fiber carries the structure of a super $\Lambda$-vector space. Therefore, recall that a (real) super $\Lambda$-vector space $\mathcal{V}$ is a free super $\Lambda$-module together with a distinguished equivalence class $[(e_i)_i]$ of homogeneous bases, where two homogeneous bases $(e_i)_i$ and $(f_j)_j$ of $\mathcal{V}$ are called equivalent, if they are related by scalar coefficients. A representative $(e_i)_i\in[(e_i)_i]$ is then called a real homogeneous basis. Correspondingly, the equivalence class generates a real super vector space $V$ such that $\mathcal{V}=\Lambda\otimes V$. 
\begin{definition}[Super vector bundle]\label{prop:2.6}
A \emph{super vector bundle of rank $m|n$} is a super fiber bundle $\mathcal{V}\rightarrow\mathcal{E}\stackrel{\pi}{\rightarrow}\mathcal{M}$ such that
\begin{enumerate}[label=(\roman*)]
	\item the typical fiber is a super $\Lambda$-vector space $\mathcal{V}$ of dimension $\mathrm{dim}\,\mathcal{V}=m|n$.
	\item for each $x\in\mathcal{M}$, the fibers $\mathcal{E}_x:=\pi^{-1}(\{x\})$ have the structure of free super $\Lambda$-modules.
	\item there exists a smooth bundle atlas $\{(U_{\alpha},\phi_{\alpha})\}_{\alpha\in\Upsilon}$ of $\mathcal{E}$ such that, for each $\alpha\in\Upsilon$, the induced map
	\begin{align}
	\phi_{\alpha,x}:\,\mathcal{E}_x&\rightarrow\mathcal{V}\label{eq:2.14}\\
	v_x&\mapsto\mathrm{pr}_2\circ\phi_{\alpha}(v_x)\nonumber
	\end{align}
$\forall x\in U_{\alpha}$ is a (right-linear) isomorphism of super $\Lambda$-modules.
\end{enumerate}
A super vector bundle of rank $1|1$ is also called a \emph{super line bundle}.
\end{definition}
\begin{lemma}[after \cite{Tuynman:2004}]\label{prop:2.7}
Let $\mathcal{V}$ and $\mathcal{W}$ be super $\Lambda$-vector spaces, $\mathcal{M}$ be a supermanifold and $\phi:\,\mathcal{M}\times\mathcal{V}\rightarrow\mathcal{W}$ be a smooth map such that $\phi(p,\,\cdot\,):\,\mathcal{V}\rightarrow\mathcal{W}\in\mathrm{End}_R(\mathcal{V},\mathcal{W})$ is a right-linear map for any $p\in\mathcal{M}$. Then, there exists a unique smooth map $\psi:\,\mathcal{M}\rightarrow\mathrm{End}_R(\mathcal{V},\mathcal{W})$ such that $\psi(p)(v)=\phi(p,v)$ for any $p\in\mathcal{M}$, $v\in\mathcal{V}$.
\end{lemma}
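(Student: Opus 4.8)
The plan is to first dispatch uniqueness and then reduce smoothness to a coordinate computation. Uniqueness is immediate: a right-linear map $\mathcal{V}\rightarrow\mathcal{W}$ is completely determined by its values, so the requirement $\psi(p)(v)=\phi(p,v)$ for all $v\in\mathcal{V}$ fixes $\psi(p)$ uniquely, and any two maps with the stated property must coincide pointwise. Hence the only real content is existence together with smoothness. Accordingly I would define $\psi$ pointwise by $\psi(p):=\phi(p,\,\cdot\,)$, which lies in $\mathrm{End}_R(\mathcal{V},\mathcal{W})$ by hypothesis, and the entire task becomes verifying that this $\psi$ is smooth.

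To even speak of smoothness of $\psi$, I would recall that $\mathrm{End}_R(\mathcal{V},\mathcal{W})$ is itself a super $\Lambda$-vector space, hence a supermanifold: writing $\mathcal{V}=\Lambda\otimes V$ and $\mathcal{W}=\Lambda\otimes W$ for the underlying real super vector spaces, one has $\mathrm{End}_R(\mathcal{V},\mathcal{W})\cong\Lambda\otimes\mathrm{Hom}_{\mathbb{R}}(V,W)$, which carries a distinguished class of real homogeneous bases. Concretely, fixing real homogeneous bases $(e_i)_i$ of $\mathcal{V}$ and $(f_j)_j$ of $\mathcal{W}$, the elementary maps $E^j{}_i$ (determined by $e_k\mapsto\delta_{ik}f_j$) form a real homogeneous basis of $\mathrm{End}_R(\mathcal{V},\mathcal{W})$, and any right-linear $A$ is encoded by its matrix $A(e_i)=\sum_j f_j A^j{}_i$ with coefficients $A^j{}_i\in\Lambda$. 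A map into a super $\Lambda$-vector space is then smooth if and only if all of its coordinate functions with respect to such a real basis are smooth (i.e.\ $H^{\infty}$), so it suffices to control the coefficient functions $p\mapsto\psi(p)^j{}_i$.

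The heart of the argument is therefore to show that each $p\mapsto\psi(p)^j{}_i$ is a smooth $\Lambda$-valued function on $\mathcal{M}$. For a fixed real basis vector $e_i$ the inclusion $\iota_{e_i}:\,\mathcal{M}\rightarrow\mathcal{M}\times\mathcal{V}$, $p\mapsto(p,e_i)$, is smooth, being the pairing of $\mathrm{id}_{\mathcal{M}}$ with the constant map at $e_i$; hence the evaluation $\phi(\,\cdot\,,e_i)=\phi\circ\iota_{e_i}:\,\mathcal{M}\rightarrow\mathcal{W}$ is smooth as a composition of smooth maps. Since $(f_j)_j$ is a real basis of $\mathcal{W}$, the coordinate projections $\mathrm{pr}_j:\,\mathcal{W}\rightarrow\Lambda$ onto the $f_j$-component are smooth, and therefore $\psi^j{}_i=\mathrm{pr}_j\circ\phi(\,\cdot\,,e_i):\,\mathcal{M}\rightarrow\Lambda$ is smooth. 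As the $\psi^j{}_i$ are precisely the coordinates of $\psi$ with respect to the real basis $(E^j{}_i)$ of $\mathrm{End}_R(\mathcal{V},\mathcal{W})$, smoothness of $\psi$ follows at once from the coordinate characterization above, while the defining identity $\psi(p)(v)=\phi(p,v)$ holds by construction (it suffices to check it on the $e_i$, and both sides are right-linear in $v$).

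I expect the main difficulty to be of a bookkeeping rather than a conceptual nature. The two points requiring care are: first, that it is legitimate to test only on the \emph{real} basis vectors $e_i$ rather than on arbitrary $\Lambda$-valued $v\in\mathcal{V}$, which is justified because a right-$\Lambda$-linear map is determined by its values on a real basis; and second, that the super $\Lambda$-vector space (hence supermanifold) structure imposed on $\mathrm{End}_R(\mathcal{V},\mathcal{W})$ is the correct one and that extracting the matrix coefficients $\psi^j{}_i$ via $\mathrm{pr}_j\circ\phi(\,\cdot\,,e_i)$ is compatible with the right-$\Lambda$-module structure and the prescribed parities. Once the identification $\mathrm{End}_R(\mathcal{V},\mathcal{W})\cong\Lambda\otimes\mathrm{Hom}_{\mathbb{R}}(V,W)$ and the characterization of smoothness through coordinate functions are in place, the remaining verifications are routine.
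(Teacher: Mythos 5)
Your proposal is correct and follows essentially the same route as the paper: fix real homogeneous bases, note that the matrix coefficients $p\mapsto\psi^j{}_i(p)$ are obtained by composing $\phi(\,\cdot\,,e_i)$ (smooth because the $e_i$ are body points) with the right-dual coordinate projections on $\mathcal{W}$, and conclude smoothness of $\psi$ from smoothness of its coordinates, with uniqueness immediate. The only cosmetic difference is that the paper writes $\psi(p)=f_j\otimes\psi^j_i(p)e^i$ explicitly in terms of the dual basis rather than defining $\psi(p):=\phi(p,\,\cdot\,)$ and then checking coordinates, but the content is identical.
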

\begin{proof}
Let $(e_i)_{i=1,\ldots,n}$ and $(f_j)_{j=1,\ldots,m}$ be real homogeneous bases of $\mathcal{V}$ and $\mathcal{W}$, respectively, and $(f^j)_{j=1,\ldots,m}$ the corresponding right-dual basis of $\mathcal{W}^*:=\underline{\mathrm{Hom}}_R(\mathcal{W},\Lambda)$. Since $e_i$ are body points and $f^j:\,\mathcal{W}\rightarrow\Lambda$ are smooth, the maps $\phi^j_i:=f^j\circ\phi(\,\cdot\,,e_i):\,\mathcal{M}\rightarrow\Lambda$ are of class $H^{\infty}$ for any $i=1,\ldots,n$ and $j=1,\ldots,m$. Hence, we can define a smooth map $\psi:\,\mathcal{M}\rightarrow\mathrm{End}_R(\mathcal{V},\mathcal{W})$ via $\psi(p):=f_j\otimes\psi^j_i(p)e^i$ $\forall p\in\mathcal{M}$ which, by construction, satisfies $\psi(p)(v)=\phi(p,v)$ for any $v\in\mathcal{V}$. That $\psi$ is unique is immediate.
\end{proof}
\begin{remark}\label{prop:2.8}
Given a super vector bundle $\mathcal{V}\rightarrow\mathcal{E}\rightarrow\mathcal{M}$ and local trivializations $(U_{\alpha},\phi_{\alpha})$ and $(U_{\beta},\phi_{\beta})$, this yields the map
\begin{equation}
\mathrm{pr}_2\circ(\phi_{\alpha}\circ\phi_{\beta}^{-1}):\,(U_{\alpha}\cap U_{\beta})\times\mathcal{V}\rightarrow\mathcal{V}
\label{eq:2.15}
\end{equation}
which, in particular, is linear in the second argument. Thus, using Lemma \ref{prop:2.7}, this yields a smooth map
\begin{equation}
g_{\alpha\beta}:\,U_{\alpha}\cap U_{\beta}\rightarrow\mathrm{GL}(\mathcal{V})
\label{eq:2.16}
\end{equation}
satisfying $g_{\alpha\beta}(x)v=\mathrm{pr}_2\circ(\phi_{\alpha}\circ\phi_{\beta}^{-1})(x,v)$, that is, $g_{\alpha\beta}(x)=\phi_{\alpha,x}\circ\phi_{\beta,x}^{-1}$, which we will call \emph{transition maps}. 
\end{remark}
\begin{prop}\label{prop:2.10}
There is a one-to-one correspondence between local trivializations of a super vector bundle $\mathcal{V}\rightarrow\mathcal{E}\stackrel{\pi}{\rightarrow}\mathcal{M}$  and families $(X_i)_{i}$ of smooth local sections $X_i:\,U\rightarrow\mathcal{E}$ of $\mathcal{E}$, $U\subset\mathcal{M}$ open, such that $(X_{ix})_i$ defines a homogeneous basis of the super $\Lambda$-module $\mathcal{E}_x$ $\forall x\in U$.
\end{prop}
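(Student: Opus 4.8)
The plan is to write down explicit constructions in both directions and check that they are mutually inverse, isolating the only genuinely analytic point, namely the smoothness of the trivialization built out of a frame. Throughout I fix, once and for all, a real homogeneous basis $(e_i)_i$ of the typical fiber $\mathcal{V}$, ordered so that $e_1,\ldots,e_m$ are even and $e_{m+1},\ldots,e_{m+n}$ are odd. First I would treat the easy direction. Given a local trivialization $(U,\phi)$, define local sections $X_i:=\phi^{-1}(\,\cdot\,,e_i):\,U\rightarrow\mathcal{E}$. Each $X_i$ is smooth since $\phi^{-1}$ is smooth and $x\mapsto(x,e_i)$ is smooth ($e_i$ being a body point), it is a section because $\mathrm{pr}_1\circ\phi=\pi$, and since by Definition \ref{prop:2.6}(iii) the induced map $\phi_x=\phi_{U,x}$ of \eqref{eq:2.14} is a (parity-preserving) right-linear isomorphism $\mathcal{E}_x\rightarrow\mathcal{V}$, the family $(X_i(x))_i=(\phi_x^{-1}(e_i))_i$ is a homogeneous basis of $\mathcal{E}_x$ for every $x\in U$, with $|X_i|=|e_i|$. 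This yields the assignment from trivializations to frames.

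Conversely, given a family $(X_i)_i$ of homogeneous smooth local sections over $U$ whose values form a basis of each $\mathcal{E}_x$, I would reindex so that the parities of the $X_i$ agree with those of the $e_i$ (possible since every fiber is free of the same rank $m|n$ as $\mathcal{V}$) and define $\phi^{-1}:\,U\times\mathcal{V}\rightarrow\pi^{-1}(U)$ by $\phi^{-1}(x,v):=\sum_i X_i(x)\,v^i$, where $v=\sum_i e_i v^i$ is the expansion in right-coordinates. For fixed $x$ this is precisely the right-linear isomorphism $\mathcal{V}\rightarrow\mathcal{E}_x$ carrying the basis $(e_i)_i$ to the basis $(X_i(x))_i$, so $\phi^{-1}$ is a fiberwise bijection and the resulting $\phi$ satisfies $\mathrm{pr}_1\circ\phi=\pi$, i.e. it is a formal bundle chart.

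The main obstacle is to upgrade $\phi$ from a mere bijection to a smooth local trivialization, i.e. a diffeomorphism. For this I would compare $\phi$ with a trivialization $(U_{\alpha},\phi_{\alpha})$ from the defining atlas of Definition \ref{prop:2.6}(iii) on the overlap $U\cap U_{\alpha}$. Using right-linearity of $\phi_{\alpha,x}$, the transition $\phi_{\alpha}\circ\phi^{-1}$ sends $(x,v)\mapsto(x,\sum_i a_i(x)v^i)$ with $a_i(x):=\mathrm{pr}_2\circ\phi_{\alpha}(X_i(x))$, and $a_i$ is smooth because $\phi_{\alpha}\circ X_i$ is a composition of smooth maps; hence $\phi_{\alpha}\circ\phi^{-1}$ is smooth, and so is $\phi^{-1}$. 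Moreover $(x,v)\mapsto\sum_i a_i(x)v^i$ is right-linear in $v$, so by Lemma \ref{prop:2.7} it induces a smooth map $g:\,U\cap U_{\alpha}\rightarrow\mathrm{GL}(\mathcal{V})$, $g(x)=\phi_{\alpha,x}\circ\phi_x^{-1}$, which is just the transition map of Remark \ref{prop:2.8}. Since inversion in the super Lie group $\mathrm{GL}(\mathcal{V})$ is smooth, $x\mapsto g(x)^{-1}$ is smooth, whence $\phi\circ\phi_{\alpha}^{-1}$ is smooth as well; as the $U_{\alpha}$ cover $U$, this shows $\phi$ is a diffeomorphism and therefore a genuine local trivialization.

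Finally I would verify that the two assignments are mutually inverse, which is purely formal. Starting from a trivialization $\phi$, right-linearity of $\phi_x^{-1}$ gives $\sum_i X_i(x)v^i=\phi_x^{-1}\big(\sum_i e_i v^i\big)=\phi^{-1}(x,v)$, so the frame $X_i=\phi^{-1}(\,\cdot\,,e_i)$ reproduces $\phi$; starting from a frame, the associated sections are $\phi^{-1}(\,\cdot\,,e_i)=\sum_j X_j(\,\cdot\,)\delta^j_i=X_i$, reproducing the frame. This establishes the claimed one-to-one correspondence.
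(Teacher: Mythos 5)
Your proof is correct and follows essentially the same route as the paper: the easy direction evaluates $\phi^{-1}$ at the body points $(\,\cdot\,,e_i)$, and the converse builds the fiberwise isomorphism $\phi^{-1}(x,v)=X_i(x)v^i$ and obtains smoothness of its inverse by composing with an existing trivialization and inverting the resulting smooth invertible ($\mathrm{GL}(\mathcal{V})$-valued) transition, exactly as in the paper's use of the matrix $\tensor{\widetilde{\phi}}{_i^j}$. Your added check that the two assignments are mutually inverse is a small completeness improvement over the paper's proof but not a different argument.
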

\begin{proof}
Let $(U,\phi_U)$ be a local trivialization of $\mathcal{E}$. If $(e_i)_i$ is a real homogeneous of the super $\Lambda$-vector space $\mathcal{V}$, define $X_i(p):=\phi^{-1}_U(p,e_i)$ $\forall p\in U$. Since $e_i\in\mathcal{V}$ are body points, it follows that $X_i$ define smooth sections of $\mathcal{E}$ and $(X_i(p))_i$ is a homogeneous basis of $\mathcal{E}_p$ $\forall p\in U$.\\
Conversely, if $(X_i)_{i}$ is a family of smooth local sections $X_i:\,U\rightarrow\mathcal{E}$ of $\mathcal{E}$ such that, for any $p\in\mathcal{M}$, $(X_i(p))_i$ defines a homogeneous basis of $\mathcal{E}_p$ $\forall p\in U$, consider the map 
\begin{equation}
\psi:\,U\times\mathcal{V}\rightarrow\pi^{-1}(U)\subseteq\mathcal{E},\,(x,v^ie_i)\mapsto v^iX_i
\label{eq:2.17}
\end{equation}
By definition, $\psi$ is bijective, smooth and an isomorphism of super $\Lambda$-modules on each fiber. Hence, it remains to show that has a smooth inverse. Therefore, following \cite{Tuynman:2004}, let $(W,\phi_W)$ be a local trivialization of $\mathcal{E}$ with $U\cap W\neq\emptyset$. Then, $\phi_W\circ\psi:\,(U\cap W)\times\mathcal{V}\rightarrow(U\cap W)\times\mathcal{V}$ is of the form 
\begin{equation}
\phi_W\circ\psi(p,v)=(p,v^i\tensor{\widetilde{\phi}}{_i^j}(p)e_j)
\label{eq:2.18}
\end{equation}
with $\tensor{\widetilde{\phi}}{_i^j}:=\braket{\mathrm{pr}_2\circ\phi_W(\,\cdot\,,e_i)|\mathrel{^i\! e}}$ which is of class $H^{\infty}$ and invertible. Hence, it has a smooth inverse yielding a smooth inverse of $\phi_W\circ\psi$. But, as $\phi_W$ is a diffeomorphism, this implies that $\psi$ is a local diffeomorphism and hence admits a smooth inverse $\psi^{-1}:\,\pi^{-1}(U)\rightarrow U\times\mathcal{V}$ providing a local trivialization of $\mathcal{E}$.
\end{proof}
\begin{cor}\label{prop:2.11}
A super vector bundle $\mathcal{V}\rightarrow\mathcal{E}\stackrel{\pi}{\rightarrow}\mathcal{M}$ is trivial if and only if there exists a family $(X_i)_{i}$ of smooth global sections $X_i\in\Gamma(\mathcal{E})$ of $\mathcal{E}$ such that $(X_{ix})_i$ defines a homogeneous basis of the super $\Lambda$-module $\mathcal{E}_x$ $\forall x\in\mathcal{M}$.\qed
\end{cor}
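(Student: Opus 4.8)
The plan is to obtain this as an immediate specialization of the one-to-one correspondence established in Proposition \ref{prop:2.10} to the case where the open set $U$ is the entire base $\mathcal{M}$. Recall that, by definition, the super vector bundle $\mathcal{V}\rightarrow\mathcal{E}\stackrel{\pi}{\rightarrow}\mathcal{M}$ is \emph{trivial} precisely when it admits a global trivialization, that is, a local trivialization $(U,\phi_U)$ with $U=\mathcal{M}$. Thus the corollary is nothing more than Proposition \ref{prop:2.10} read with this maximal choice of $U$, and I would prove the two implications separately.

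For the forward direction, I would assume $\mathcal{E}$ is trivial and fix a global trivialization $(\mathcal{M},\phi)$. Applying the first half of Proposition \ref{prop:2.10} with $U=\mathcal{M}$, and choosing a real homogeneous basis $(e_i)_i$ of $\mathcal{V}$, the sections $X_i(p):=\phi^{-1}(p,e_i)$ are smooth and globally defined, and $(X_i(p))_i$ forms a homogeneous basis of $\mathcal{E}_p$ for every $p\in\mathcal{M}$. This is exactly the desired family $(X_i)_i\in\Gamma(\mathcal{E})$.

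Conversely, given a family $(X_i)_i$ of smooth global sections $X_i\in\Gamma(\mathcal{E})$ such that $(X_{ix})_i$ is a homogeneous basis of $\mathcal{E}_x$ for all $x\in\mathcal{M}$, I would invoke the second half of Proposition \ref{prop:2.10} with $U=\mathcal{M}$. This yields the fiber-preserving map $\psi:\,\mathcal{M}\times\mathcal{V}\rightarrow\mathcal{E}$, $(x,v^ie_i)\mapsto v^iX_i$, which is shown there to be a diffeomorphism, and whose inverse $\psi^{-1}$ is then a global trivialization of $\mathcal{E}$. Hence $\mathcal{E}$ is trivial.

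There is essentially no serious obstacle here; the only point worth verifying is that the smoothness argument for $\psi^{-1}$ carried out in the proof of Proposition \ref{prop:2.10}, which proceeds locally over overlaps $U\cap W$ with trivializing neighborhoods $(W,\phi_W)$, remains valid when $U=\mathcal{M}$. This is automatic, since $\mathcal{E}$, being a super vector bundle, is locally trivial by hypothesis, so the collection of such $W$ covers $\mathcal{M}$ and the smoothness of $\psi^{-1}$ may be checked chart-by-chart. The statement therefore follows at once, and no further computation is needed.
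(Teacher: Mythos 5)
Your proposal is correct and matches the paper's intent exactly: the corollary is stated with an immediate \qed precisely because it is Proposition \ref{prop:2.10} specialized to $U=\mathcal{M}$, which is the argument you give. Your closing remark that the smoothness check for $\psi^{-1}$ still goes through chart-by-chart when $U=\mathcal{M}$ is a sensible verification, and nothing further is needed.
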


Since, it will appear quite frequently in what follows, let us introduce a new symbol for the composition of two left linear morphisms. More precisely, we define the following.
\begin{definition}\label{definitionNEU:2.11}
Let $\phi:\,\underline{\mathrm{Hom}}_L(\mathcal{V},\mathcal{W})$ and $\psi:\,\underline{\mathrm{Hom}}_L(\mathcal{W},\mathcal{W}')$ be two left linear morphisms between super $\Lambda$-modules. Then, the composition $\phi\diamond\psi\in\underline{\mathrm{Hom}}_L(\mathcal{V},\mathcal{W}')$ is defined as the left linear morphism given by
\begin{equation}
\braket{v|\phi\diamond\psi}:=\braket{\braket{v|\phi}|\psi}
\end{equation}
for all $v\in\mathcal{V}$.
\end{definition}

\begin{example}[The dual vector bundle]\label{prop:2.12}
Recall that, to a super $\Lambda$-module $\mathcal{V}$, one can associate its corresponding left-dual $\tensor[^*]{\mathcal{V}}{}$ defined as the super $\Lambda$-module  $\tensor[^*]{\mathcal{V}}{}:=\underline{\mathrm{Hom}}_L(\mathcal{V},\Lambda)$. Analogously, one defines the right-dual $\mathcal{V}^*:=\underline{\mathrm{Hom}}_R(\mathcal{V},\Lambda)$. Thus, given a super vector bundle $\mathcal{V}\rightarrow\mathcal{E}\stackrel{\pi}{\rightarrow}\mathcal{M}$, we can construct the corresponding \emph{left-dual bundle} as follows. As total space, we set
\begin{equation}
\tensor[^*]{\mathcal{E}}{}:=\coprod_{x\in\mathcal{M}}{\tensor[^*]{\mathcal{E}}{_x}}
\label{eq:2.19}
\end{equation}
together with the surjective map 
\begin{align}
\pi_{\tensor[^*]{\mathcal{E}}{}}:\tensor[^*]{\mathcal{E}}{}\rightarrow\mathcal{M},\tensor[^*]{\mathcal{E}}{_x}\ni T_x\mapsto x
\label{eq:2.20}
\end{align}
Let $\{(U_{\alpha},\phi_{\alpha})\}_{\alpha\in\Upsilon}$ be a smooth bundle atlas on $\mathcal{E}$. Then, for any $\alpha\in\Upsilon$, define the map
\begin{align}
\phi^*_{\alpha}:\tensor[^*]{\mathcal{E}}{}\supseteq\!\pi^{-1}_{\tensor[^*]{\mathcal{E}}{}}(U_{\alpha})&\rightarrow U_{\alpha}\times\!\tensor[^*]{\mathcal{V}}{}\\
T_x&\mapsto (x,\phi_{\alpha}^*(T_x))\nonumber
\label{eq:2.21}
\end{align}
where $\phi_{\alpha}^*(T_x)\in\!\tensor[^*]{\mathcal{V}}{}$ is defined as $\braket{v|\phi_{\alpha}^*(T_x)}:=\braket{\braket{v|\phi_{\alpha,x}^{-1}}|T_x}$ $\forall v\in\mathcal{V}$. It follows that $\phi_{\alpha}^*$ is bijective, fiber-preserving and an isomorphism of super $\Lambda$-modules on each fiber. The inverse is given by
\begin{equation}
\phi^{*-1}_{\alpha}:\,U_{\alpha}\times\!\tensor[^*]{\mathcal{V}}{}\rightarrow\pi^{-1}_{\tensor[^*]{\mathcal{E}}{}}(U_{\alpha}),\,(x,\ell)\mapsto(\phi_{\alpha}^{-1})^*\ell_x\in\!\tensor[^*]{\mathcal{E}}{_x}
\label{eq:2.21.1}
\end{equation}
where $\braket{v_x|(\phi_{\alpha}^{-1})^*\ell_x}:=\braket{\braket{v_x|\phi_{\alpha,x}}|\ell}$ $\forall v_x\in\mathcal{E}_x$, $x\in\mathcal{M}$. For $\alpha,\beta\in\Upsilon$ with $U_{\alpha}\cap U_{\beta}\neq\emptyset$, the transition function $\phi^*_{\beta}\circ\phi^{*-1}_{\alpha}:\,(U_{\alpha}\cap U_{\beta})\times\!\tensor[^*]{\mathcal{V}}{}\rightarrow(U_{\alpha}\cap U_{\beta})\times\!\tensor[^*]{\mathcal{V}}{}$ takes the form
\begin{equation}
\phi^*_{\beta}\circ\phi^{*-1}_{\alpha}(x,\ell)=\phi^*_{\beta}((\phi_{\alpha}^{-1})^*\ell_x)=(x,\phi_{\beta}^*((\phi_{\alpha}^{-1})^*\ell_x))
\label{eq:2.22}
\end{equation}
Choosing a real homogeneous basis $(e_i)_i$ of $\mathcal{V}$, the r.h.s. of \eqref{eq:2.22} becomes
\begin{align}
\braket{e_i|\phi_{\beta}^*((\phi_{\alpha}^{-1})^*\ell_x)}&=\braket{\braket{e_i|\phi_{\beta,x}^{-1}}|(\phi_{\alpha}^{-1})^*\ell_x)}=\braket{\braket{e_i|\phi_{\beta,x}^{-1}\diamond\phi_{\alpha,x}}|\ell}\\
&=\tensor{(g_{\alpha\beta})}{^{sT}_{\! i}^j}(x)\ell_j\nonumber
\label{eq:2.23}
\end{align}
with '$sT$' denoting \emph{super transposition}, for any $x\in U_{\alpha}$ and thus is smooth. Hence, it follows that $\{(U_{\alpha},\phi^*_{\alpha})\}_{\alpha\in\Upsilon}$ defines a formal bundle atlas of $\tensor[^*]{\mathcal{E}}{}$ so that, by Theorem \ref{prop:2.3}, $\tensor[^*]{\mathcal{V}}{}\rightarrow\!\tensor[^*]{\mathcal{E}}{}\rightarrow\mathcal{M}$ has the structure of a super vector bundle called the \emph{left-dual super vector bundle} of $\mathcal{E}$. Analogously, one defines the \emph{right-dual super vector bundle} $\mathcal{V}^*\rightarrow\mathcal{E}^*\rightarrow\mathcal{M}$ of $\mathcal{E}$. 
\end{example}
Suppose $\mathcal{V}\rightarrow\mathcal{E}\stackrel{\pi}{\rightarrow}\mathcal{M}$ is a super vector bundle and $(X_i)_i$ a family of smooth local sections $X_i:\,U\rightarrow\mathcal{E}$ of $\mathcal{E}$ over $U\subseteq\mathcal{M}$ open such that, for any $x\in U$, $(X_{ix})_i$ defines a homogeneous basis of the super $\Lambda$-module $\mathcal{E}_x$. According to Prop. \ref{prop:2.10}, this yields a local trivialization $(U,\phi_U)$ of $\mathcal{E}$ with the inverse given by
\begin{equation}
\phi_U^{-1}:\,U\times\mathcal{V}\rightarrow\pi^{-1}(U)\subseteq\mathcal{E},\,(x,v^ie_i)\mapsto v^iX_{ix}
\label{eq:2.24}
\end{equation}
with $(e_i)_i$ a real homogeneous basis of $\mathcal{V}$. By \eqref{eq:2.21.1}, this in turn induces a local local trivialization $(U,\phi^*_U)$ of the left-dual bundle $\tensor[^*]{\mathcal{E}}{}$ with inverse
\begin{equation}
\phi^{*-1}_{U}:\,U\times\!\tensor[^*]{\mathcal{V}}{}\rightarrow\pi^{-1}_{\tensor[^*]{\mathcal{E}}{}}(U),\,(x,\ell)\mapsto(\phi^{-1})^*\ell_x\in\!\tensor[^*]{\mathcal{E}}{_x}
\label{eq:2.25}
\end{equation}
Hence, if $(\tensor[^i]{e}{})_i$ denotes the corresponding left-dual basis of $\tensor[^*]{\mathcal{V}}{}$, again by Prop. \ref{prop:2.10}, this yields a family $(\tensor[^i]{\omega}{})_i$ of smooth local sections $\tensor[^i]{\omega}{}\in\Gamma_U(\tensor[^*]{\mathcal{E}}{})$ of $\tensor[^*]{\mathcal{E}}{}$ given by
\begin{equation}
\tensor[^i]{\omega}{}:=\phi^{*-1}_{U}(\,\cdot\,,\tensor[^i]{e}{})
\label{eq:2.26}
\end{equation}
such that, for any $x\in U$, $(\tensor[^i]{\omega}{_x})_i$ defines a homogeneous basis of the corresponding left-dual super $\Lambda$-module $\tensor[^*]{\mathcal{E}}{_x}$. Evaluation on the $X_i$ then yields
\begin{align}
\braket{X_{ix}|\tensor[^j]{\omega}{_x}}&=\braket{X_{ix}|(\phi^{-1}_U)^*\tensor[^j]{e}{_x}}=\braket{\braket{X_{ix}|\phi_{U,x}}|\tensor[^j]{e}{}}\nonumber\\
&=\braket{\braket{\braket{e_i|\phi_{U,x}^{-1}}|\phi_{U,x}}|\tensor[^j]{e}{}}\nonumber\\
&=\braket{\braket{e_i|\phi_{U,x}^{-1}\diamond\phi_{U,x}}|\tensor[^j]{e}{}}\nonumber\\
&=\braket{e_i|\tensor[^j]{e}{}}=\delta{_i^j}
\label{eq:2.27}
\end{align}
$\forall x\in U$, that is, $\braket{X_{i}|\tensor[^j]{\omega}{}}=\delta{_i^j}$ $\forall i,j$. Hence, we have established the following.
\begin{prop}\label{prop:2.13}
Let $\mathcal{V}\rightarrow\mathcal{E}\stackrel{\pi}{\rightarrow}\mathcal{M}$ be a super vector bundle, $(X_i)_{i=1,\ldots,n}$ a family of local sections $X_i:\,U\rightarrow\mathcal{E}$ of $\mathcal{E}$ over $U\subseteq\mathcal{M}$ open such that, for any $x\in U$, $(X_{ix})_i$ defines a homogeneous basis of the super $\Lambda$-module $\mathcal{E}_x$. Then, there exists a family $(\tensor[^i]{\omega}{})_{i,\ldots,n}$ of smooth local sections $\tensor[^i]{\omega}{}\in\Gamma_U(\tensor[^*]{\mathcal{E}}{})$ of the corresponding left-dual super vector bundle $\tensor[^*]{\mathcal{V}}{}\rightarrow\!\tensor[^*]{\mathcal{E}}{}\rightarrow\mathcal{M}$ such that $(\tensor[^i]{\omega}{_x})_{i}$ defines a homogeneous basis of $\tensor[^*]{\mathcal{E}}{_x}$ $\forall x\in U$ and are dual in the sense that
\begin{equation}
\braket{X_{i}|\tensor[^j]{\omega}{}}=\delta{_i^j}
\label{eq:2.28}
\end{equation}
$\forall i,j=1,\ldots,n$\qed
\end{prop}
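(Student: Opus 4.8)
The plan is to obtain the dual family $(\tensor[^i]{\omega}{})_i$ by transporting the given frame $(X_i)_i$ through two applications of the frame–trivialization correspondence of Proposition \ref{prop:2.10}, one for $\mathcal{E}$ and one for its left-dual bundle $\tensor[^*]{\mathcal{E}}{}$, whose super vector bundle structure has already been set up in Example \ref{prop:2.12}. Since the statement is purely an existence claim, it suffices to exhibit such an $(\tensor[^i]{\omega}{})_i$ explicitly and then verify the duality relation \eqref{eq:2.28}.

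First I would use Proposition \ref{prop:2.10} to convert the frame $(X_i)_i$ into a local trivialization $(U,\phi_U)$ of $\mathcal{E}$, whose inverse is determined by $\phi_U^{-1}(x,v^ie_i)=v^iX_{ix}$ for a fixed real homogeneous basis $(e_i)_i$ of $\mathcal{V}$. By the construction of the left-dual bundle in Example \ref{prop:2.12}, and in particular the formula \eqref{eq:2.21.1} for $\phi_U^{*-1}$, this induces a local trivialization $(U,\phi_U^*)$ of $\tensor[^*]{\mathcal{E}}{}$. Applying Proposition \ref{prop:2.10} a second time — now to the super vector bundle $\tensor[^*]{\mathcal{V}}{}\rightarrow\tensor[^*]{\mathcal{E}}{}\rightarrow\mathcal{M}$ — and feeding in the left-dual basis $(\tensor[^i]{e}{})_i$ of $\tensor[^*]{\mathcal{V}}{}$, I would define $\tensor[^i]{\omega}{}:=\phi_U^{*-1}(\,\cdot\,,\tensor[^i]{e}{})$. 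Smoothness of these sections and the fact that $(\tensor[^i]{\omega}{_x})_i$ is a homogeneous basis of $\tensor[^*]{\mathcal{E}}{_x}$ for each $x\in U$ then come for free from that same proposition, so no separate argument for these properties is required.

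The only genuinely computational step is the duality relation \eqref{eq:2.28}, and this is the part I would carry out by hand. Unwinding the definitions of $(\phi^{-1})^*$ from Example \ref{prop:2.12} and of the $\diamond$-composition from Definition \ref{definitionNEU:2.11}, the pairing reduces to $\braket{X_{ix}|\tensor[^j]{\omega}{_x}}=\braket{\braket{e_i|\phi_{U,x}^{-1}\diamond\phi_{U,x}}|\tensor[^j]{e}{}}$. The crux is the identity $\phi_{U,x}^{-1}\diamond\phi_{U,x}=\mathrm{id}_{\mathcal{V}}$, which collapses the expression to $\braket{e_i|\tensor[^j]{e}{}}=\delta_i^j$. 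I do not expect any deep obstacle here; the only real subtlety is bookkeeping the left- versus right-linear pairing conventions carefully, so that the $\diamond$-composition is inserted in the correct order — this is exactly the chain of equalities already recorded in \eqref{eq:2.27}, which completes the proof.
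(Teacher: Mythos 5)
Your proposal follows exactly the paper's own argument: Proposition \ref{prop:2.10} converts the frame $(X_i)_i$ into a trivialization $(U,\phi_U)$, the construction \eqref{eq:2.21.1} of Example \ref{prop:2.12} induces the dual trivialization $(U,\phi_U^*)$, a second application of Proposition \ref{prop:2.10} with the left-dual basis $(\tensor[^i]{e}{})_i$ yields $\tensor[^i]{\omega}{}=\phi_U^{*-1}(\,\cdot\,,\tensor[^i]{e}{})$, and the duality relation is verified by the chain of equalities \eqref{eq:2.27}. This is correct and is essentially the same proof as in the paper.
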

\begin{example}[Maurer-Cartan form]\label{example:MC}
Given a super Lie group $\mathcal{G}$, one can choose a real homogeneous basis $(X_{ie})_i$ of the super Lie module $\mathrm{Lie}(\mathcal{G})\equiv T_e\mathcal{G}=\Lambda\otimes\mathfrak{g}$. It then follows that the corresponding left-invariant vector fields $(X_{i})_i$ induce a homogeneous basis of the tangent module $T_g\mathcal{G}$ at any $g\in\mathcal{G}$. Thus, via Prop. \ref{prop:2.13}, this in turn induces a basis $(\tensor[^i]{\omega}{})_{i}$ of smooth 1-forms $\tensor[^i]{\omega}{}\in\Omega^1(\mathcal{M}):=\Gamma(\tensor[^*]{T\mathcal{M}}{})$, that is, smooth sections of the \emph{left-dual cotangent bundle} $\tensor[^*]{T\mathcal{M}}{}$. It follows immediately from the left invariance of the $X_i$ that the 1-forms $\tensor[^i]{\omega}{}$ are also left invariant.

The \emph{Maurer-Cartan form} on $\mathcal{G}$ is defined as the $\mathrm{Lie}(\mathcal{G})$-valued 1-form $\theta_{\mathrm{MC}}\in\Omega^1(\mathcal{G},\mathfrak{g}):=\Omega^1(\mathcal{G})\otimes\mathfrak{g}$ given by
\begin{equation}
    \theta_{\mathrm{MC}}:=\tensor[^i]{\omega}{}\otimes X_i
\end{equation}
By definition, the Maurer-Cartan form is left invariant. Moreover, using the generalized tangent map (see Definition \ref{definition:3.9} in section \ref{section:relative}), it follows that one can also write $(\theta_{\mathrm{MC}})_g=L_{g^{-1}*}$ $\forall g\in\mathcal{G}$ with $L_g:=\mu_{\mathcal{G}}(g,\cdot)$ the \emph{left translation} on $\mathcal{G}$.
\end{example}

\begin{example}[Tensor product of super vector bundles]\label{prop:2.14}
Let $\mathcal{V}\rightarrow\mathcal{E}\rightarrow\mathcal{M}$ and $\widetilde{\mathcal{V}}\rightarrow\widetilde{\mathcal{E}}\rightarrow\mathcal{M}$ be super vector bundles. Set
\begin{equation}
\mathcal{E}\otimes\widetilde{\mathcal{E}}:=\coprod_{x\in\mathcal{M}}{\mathcal{E}_x\otimes_{\Lambda}\widetilde{\mathcal{E}_x}}
\end{equation}
together with the surjective map
\begin{equation}
\pi_{\otimes}:\,\mathcal{E}\otimes\widetilde{\mathcal{E}}\rightarrow\mathcal{M},\,\mathcal{E}_x\otimes\widetilde{\mathcal{E}}_x\ni v_x\mapsto x
\label{eq:2.30}
\end{equation}
Let $\{(U_{\alpha},\phi_{\alpha})\}_{\alpha\in\Upsilon}$ and $\{(V_{\alpha'},\widetilde{\phi}_{\alpha'})\}_{\alpha'\in\Sigma}$ be smooth bundle atlases of $\mathcal{E}$ and $\widetilde{\mathcal{E}}$, respectively. For $(\alpha,\alpha')\in\Upsilon\times\Sigma$ with $U_{\alpha,\alpha'}:=U_{\alpha}\cap V_{\alpha'}\neq\emptyset$, define
\begin{align}
\psi_{\alpha,\alpha'}:\,\mathcal{E}\otimes\widetilde{\mathcal{E}}\supseteq\pi_{\otimes}^{-1}(U_{\alpha,\alpha'})&\rightarrow U_{\alpha,\alpha'}\times(\mathcal{V}\otimes_{\Lambda}\widetilde{\mathcal{V}})\label{eq:2.31}\\
v_x\otimes w_x&\mapsto(x,(\phi_{\alpha}\otimes\widetilde{\phi}_{\alpha'})(v_x\otimes w_x))\nonumber
\end{align}
Then, $\{(U_{\alpha,\alpha'},\psi_{\alpha,\alpha'})\}_{(\alpha,\alpha')\in\Upsilon\times\Sigma}$ defines a formal bundle atlas of $\mathcal{E}\otimes\widetilde{\mathcal{E}}$ turning $\mathcal{V}\otimes\widetilde{\mathcal{V}}\rightarrow\mathcal{E}\otimes\widetilde{\mathcal{E}}\stackrel{\pi_{\otimes}}{\rightarrow}\mathcal{M}$ into a super vector bundle called the \emph{tensor product super vector bundle}.
\end{example}

\begin{definition}[Principal super fiber bundle]\label{prop:2.15}
A \emph{principal super fiber bundle} is a super fiber bundle $\mathcal{G}\rightarrow\mathcal{P}\stackrel{\pi}{\rightarrow}\mathcal{M}$ such that
\begin{enumerate}[label=(\roman*)]
	\item the typical fiber is a super Lie group $\mathcal{G}$.
	\item the total space $\mathcal{P}$ is equipped with a smooth $\mathcal{G}$-right action $\Phi:\,\mathcal{P}\times\mathcal{G}\rightarrow\mathcal{P}$ preserving the fibers, that is, $\pi\circ\Phi=\pi\circ\mathrm{pr}_1$, or explicitly, $p\cdot g:=\Phi(p,g)\in\mathcal{P}_x$ $\forall g\in\mathcal{G}$ and $p\in\mathcal{P}$ with $\pi(p)=x\in\mathcal{M}$.
	\item there exists a smooth bundle atlas $\{(U_{\alpha},\phi_{\alpha})\}_{\alpha\in\Upsilon}$ of $\mathcal{P}$ such that, for each $\alpha\in\Upsilon$, the bundle chart $\phi_{\alpha}:\,\pi^{-1}(U_{\alpha})\rightarrow U_{\alpha}\times\mathcal{G}$ is $\mathcal{G}$-equivariant, i.e.
	\begin{equation}
\phi_{\alpha}\circ\Phi=(\mathrm{id}\times\mu_{\mathcal{G}})\circ(\phi_{\alpha}\times\mathrm{id}_{\mathcal{G}})
	\label{eq:2.32}
	\end{equation}
where $U_{\alpha}\times\mathcal{G}$ is equipped with the standard $\mathcal{G}$-right action $\mathrm{id}\times\mu_{\mathcal{G}}:\,(U_{\alpha}\times\mathcal{G})\times\mathcal{G}\rightarrow U_{\alpha}\times\mathcal{G},\,((x,g),g')\mapsto(x,gg')$.
\end{enumerate}
\end{definition}
\begin{prop}\label{prop:2.16}
Let $\mathcal{G}\rightarrow\mathcal{P}\stackrel{\pi_{\mathcal{P}}}{\rightarrow}\mathcal{M}$ be a principal super fiber bundle, then the orbit space $\mathcal{P}/\mathcal{G}$ equipped with the quotient topology can be given the structure a supermanifold such that $\mathcal{P}/\mathcal{G}$ is canonically isomorphic to $\mathcal{M}$ and the body $\mathbf{B}(\mathcal{P}/\mathcal{G})$ is isomorphic to $\mathbf{B}(\mathcal{P})/\mathbf{B}(\mathcal{G})$ in the sense of ordinary smooth manifolds.\\
Moreover, the sheaf $H^{\infty}_{\mathcal{P}/\mathcal{G}}$ of smooth functions on $\mathcal{P}/\mathcal{G}$ is canonically isomorphic to the quotient sheaf
\begin{equation}
H^{\infty}_{\mathcal{P}}/H^{\infty}_{\mathcal{G}}:\,\mathcal{P}/\mathcal{G}\supset U\rightarrow (H^{\infty}_{\mathcal{P}}/H^{\infty}_{\mathcal{G}})(U):=\{f\in H^{\infty}(\pi^{-1}(U))|\,\Phi^*(f)=f\otimes 1_{\mathcal{G}}\}
\label{eq:2.34}
\end{equation} 
where $\pi:\,\mathcal{P}\rightarrow\mathcal{P}/\mathcal{G}$ is the canonical projection.
\end{prop}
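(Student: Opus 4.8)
The plan is to exploit the defining equivariance of the bundle charts in order to identify the $\mathcal{G}$-orbits in $\mathcal{P}$ with the fibers of $\pi_{\mathcal{P}}$, and then to transport the supermanifold structure of $\mathcal{M}$ across the resulting bijection. First I would show that the orbits of the right action $\Phi$ coincide with the fibers $\mathcal{P}_x=\pi_{\mathcal{P}}^{-1}(x)$. Since $\pi_{\mathcal{P}}\circ\Phi=\pi_{\mathcal{P}}\circ\mathrm{pr}_1$ by Definition \ref{prop:2.15}(ii), every orbit is contained in a single fiber. Conversely, working in an equivariant local trivialization $\phi_{\alpha}:\,\pi_{\mathcal{P}}^{-1}(U_{\alpha})\rightarrow U_{\alpha}\times\mathcal{G}$, relation \eqref{eq:2.32} turns the action into right multiplication $(x,g)\cdot g'=(x,gg')$ on the group factor; as right multiplication of $\mathcal{G}$ on itself is transitive, each fiber $\mathcal{P}_x$ is a single orbit. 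Hence $\pi_{\mathcal{P}}$, being constant on orbits, descends to a bijection $\bar{\pi}:\,\mathcal{P}/\mathcal{G}\rightarrow\mathcal{M}$ satisfying $\pi_{\mathcal{P}}=\bar{\pi}\circ\pi$, where $\pi:\,\mathcal{P}\rightarrow\mathcal{P}/\mathcal{G}$ denotes the quotient projection.

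Next I would equip $\mathcal{P}/\mathcal{G}$ with the quotient topology and verify that $\bar{\pi}$ is a homeomorphism. Continuity of $\bar{\pi}$ follows from the universal property of the quotient topology together with continuity of $\pi_{\mathcal{P}}$. For the inverse, I would observe that in each trivialization the quotient map $\pi$ corresponds, under $\phi_{\alpha}$ and $\bar{\pi}$, to $\mathrm{pr}_1:\,U_{\alpha}\times\mathcal{G}\rightarrow U_{\alpha}$; since $\mathrm{pr}_1$ is open, $\pi$ is open, whence $\bar{\pi}$ is open and thus a homeomorphism. I would then declare the supermanifold structure on $\mathcal{P}/\mathcal{G}$ to be the one transported from $\mathcal{M}$ along $\bar{\pi}$, so that $\bar{\pi}$ becomes an isomorphism of supermanifolds by construction; uniqueness is forced by the fact that the trivializations already exhibit $\pi$ as a submersion, leaving no other smooth structure compatible with the quotient.

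For the statement on bodies, I would apply Prop. \ref{prop:2.2} to regard $\mathbf{B}(\mathcal{G})\rightarrow\mathbf{B}(\mathcal{P})\rightarrow\mathbf{B}(\mathcal{M})$ as a smooth fiber bundle. Because $\mathbf{B}$ is a functor, it carries the right action $\Phi$ to a smooth right action of $\mathbf{B}(\mathcal{G})$, turning this into an ordinary principal $\mathbf{B}(\mathcal{G})$-bundle, so the classical theory gives $\mathbf{B}(\mathcal{P})/\mathbf{B}(\mathcal{G})\cong\mathbf{B}(\mathcal{M})$. Combining this with $\mathbf{B}(\mathcal{P}/\mathcal{G})\cong\mathbf{B}(\mathcal{M})$, which follows from the isomorphism $\mathcal{P}/\mathcal{G}\cong\mathcal{M}$ established above and functoriality of $\mathbf{B}$, yields the identification $\mathbf{B}(\mathcal{P}/\mathcal{G})\cong\mathbf{B}(\mathcal{P})/\mathbf{B}(\mathcal{G})$.

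Finally, for the sheaf isomorphism I would show that the pullback $\pi^*$ identifies $H^{\infty}_{\mathcal{P}/\mathcal{G}}(U)$ with the $\mathcal{G}$-invariant functions, namely those $f$ with $\Phi^*(f)=f\otimes 1_{\mathcal{G}}$. Injectivity of $\pi^*$ is clear since $\pi$ is surjective, and any descended function is manifestly invariant. The essential point is the converse: every invariant $f\in H^{\infty}(\pi^{-1}(U))$ descends to a smooth function on $\bar{\pi}(U)$, where I use $\pi^{-1}(U)=\pi_{\mathcal{P}}^{-1}(\bar{\pi}(U))$. This I would check locally: in a trivialization, the invariance condition $\Phi^*(f)=f\otimes 1_{\mathcal{G}}$ forces the pulled-back function on $U_{\alpha}\times\mathcal{G}$ to be independent of the $\mathcal{G}$-factor, hence of the form $g\circ\mathrm{pr}_1$ with $g\in H^{\infty}(U_{\alpha})$, and smoothness of $f$ translates precisely into smoothness of $g$; patching these descents over the cover $\{U_{\alpha}\}$ and transporting along $\bar{\pi}$ gives the required isomorphism of sheaves. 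The main obstacle will be exactly this local analysis in the $H^{\infty}$-category: one must verify carefully that a smooth $\mathcal{G}$-invariant function on $U_{\alpha}\times\mathcal{G}$ is genuinely a pullback from $U_{\alpha}$, which is where the precise meaning of the invariance condition $\Phi^*(f)=f\otimes 1_{\mathcal{G}}$ and the structure of the super Lie group $\mathcal{G}$ enter.
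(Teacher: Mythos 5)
Your proposal is correct and follows essentially the same route as the paper: descend $\pi_{\mathcal{P}}$ to a bijection $\bar{\pi}$ via the universal property, use openness of the bundle projection to make $\bar{\pi}$ a homeomorphism, transport the supermanifold structure from $\mathcal{M}$, obtain the body statement from Prop.~\ref{prop:2.2} and the classical theory, and identify the sheaf of descended functions with the $\mathcal{G}$-invariant ones. The only difference is cosmetic — you spell out the orbit-equals-fiber identification and the local descent of invariant functions in trivializations, details the paper compresses into ``it follows immediately'' and ``by definition of the differential structure.''
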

\begin{proof}
Since $\pi_{\mathcal{P}}\circ\Phi=\pi_{\mathcal{P}}\circ\mathrm{pr}_1$, $\pi_{\mathcal{P}}$ is constant on $\mathcal{G}$-orbits. Hence, by universal property of the quotient, there exists a unique continuous map $\hat{\pi}:\,\mathcal{P}/\mathcal{G}\rightarrow\mathcal{M}$ such that the following diagram commutes
\begin{displaymath}
	 \xymatrix{
         \mathcal{P}\ar[rr]^{\pi_{\mathcal{P}}} \ar[d]_{\pi}  & &   \mathcal{M} \\
            \mathcal{P}/\mathcal{G} \ar[rru]_{\hat{\pi}} &          
     }
		\label{eq:2.35}
 \end{displaymath}
It follows immediately that $\hat{\pi}$ is bijective. Moreover, since $\pi_{\mathcal{P}}$ is open as a bundle map, it follows, by definition of the quotient topology, that $\hat{\pi}$ is a homoemorphism. Choosing an atlas $\{(U_{\alpha},\phi_{\alpha})\}_{\alpha\in\Upsilon}$ on $\mathcal{M}$, this yields a corresponding atlas $\{(V_{\alpha},\psi_{\alpha})\}_{\alpha\in\Upsilon}$ of $\mathcal{P}/\mathcal{G}$ by setting $V_{\alpha}:=\hat{\pi}^{-1}(U_{\alpha})$ and $\psi_{\alpha}:=\phi_{\alpha}\circ\hat{\pi}:\,V_{\alpha}\rightarrow\phi_{\alpha}(U_{\alpha})\subset\Lambda^{m,n}$ $\forall\alpha\in\Upsilon$, where $\mathrm{dim}\,\mathcal{M}=(m,n)$. In this way, $\mathcal{P}/\mathcal{G}$ becomes a supermanifold diffeomorphic to $\mathcal{M}$ via $\hat{\pi}$. By Prop. \ref{prop:2.2}, $\mathbf{B}(\mathcal{G})\rightarrow\mathbf{B}(\mathcal{P})\rightarrow\mathbf{B}(\mathcal{M})$ defines a ordinary smooth principal fiber bundle over $\mathbf{B}(\mathcal{M})$ with structure group $\mathbf{B}(\mathcal{G})$. Hence, arguing as above, one concludes that $\mathbf{B}(\mathcal{P})/\mathbf{B}(\mathcal{G})$ is canonically diffeomorphic to $\mathbf{B}(\mathcal{M})$. To summarize, we have the isomorphism
\begin{equation}
\mathbf{B}(\mathcal{P}/\mathcal{G})\stackrel{\mathbf{B}(\hat{\pi})}{\longrightarrow}\mathbf{B}(\mathcal{M})\stackrel{\cong}{\longrightarrow}\mathbf{B}(\mathcal{P})/\mathbf{B}(\mathcal{G})
\label{eq:2.36}
\end{equation}
Finally, let $U\subseteq\mathcal{P}/\mathcal{G}$ be an open subset and $f\in H^{\infty}(\pi^{-1}(U))$ a smooth map with $\Phi^*(f)=f\otimes 1_{\mathcal{G}}$. Then, $f$ is constant on $\mathcal{G}$-orbits such that, by the universal property of the quotient, there exists a unique continuous map $\tilde{f}:\,U\rightarrow\Lambda$ with $\tilde{f}\circ\pi=f$. By definition of the differential structure on $\mathcal{P}/\mathcal{G}$, it follows immediately that $\tilde{f}$ is smooth, i.e., $\tilde{f}\in H^{\infty}(U)$. Conversely, if $g\in H^{\infty}(U)$, then $g':=\pi^*g=g\circ\pi|_{\pi^{-1}(U)}\in H^{\infty}(\pi^{-1}(U))$ satisfying $\Phi^*(g')=\Phi^*\circ\pi^*(g)=(\pi\circ\Phi)^*(g)=(\pi\circ\mathrm{pr}_1)^*(g)=\mathrm{pr}_1^*\circ\pi^*(g)=g'\otimes 1_{\mathcal{G}}$, i.e., $g'\in(H^{\infty}_{\mathcal{P}}/H^{\infty}_{\mathcal{G}})(U)$. This closes the prove of the above proposition.
\end{proof}

\begin{prop}\label{prop:2.17}
There is a one-to-one correspondence between local trivializations of a principal super fiber bundle $\mathcal{G}\rightarrow\mathcal{P}\stackrel{\pi}{\rightarrow}\mathcal{M}$ and smooth local sections $s\in\Gamma_{U}(\mathcal{P})$ of $\mathcal{P}$ for any $U\subseteq\mathcal{M}$ open.
\end{prop}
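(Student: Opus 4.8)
The plan is to exhibit the two maps of the correspondence explicitly and then verify that they are mutually inverse, with the only genuinely analytic point being the smoothness of the reconstructed trivialization. For the first direction, given a $\mathcal{G}$-equivariant local trivialization $(U,\phi)$ as in Definition \ref{prop:2.15}(iii), I would define $s:\,U\rightarrow\mathcal{P}$ by $s(x):=\phi^{-1}(x,e)$, where $e\in\mathcal{G}$ denotes the identity. Since $e$ is a body point and $\phi^{-1}$ is smooth, $s$ is smooth, and the defining relation $\mathrm{pr}_1\circ\phi=\pi$ immediately gives $\pi\circ s=\mathrm{id}_U$, so that $s\in\Gamma_U(\mathcal{P})$.

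For the converse direction, given $s\in\Gamma_U(\mathcal{P})$, I would define $\Psi:\,U\times\mathcal{G}\rightarrow\pi^{-1}(U)$ by $\Psi(x,g):=\Phi(s(x),g)=s(x)\cdot g$. This is smooth as a composition of the smooth maps $s$, $\mathrm{id}_{\mathcal{G}}$ and $\Phi$, and since $\Phi$ preserves the fibers one has $\pi\circ\Psi=\mathrm{pr}_1$, i.e., $\Psi$ is fiber-preserving. The associativity $s(x)\cdot(gg')=(s(x)\cdot g)\cdot g'$ of the right action is precisely the $\mathcal{G}$-equivariance condition \eqref{eq:2.32} for the prospective chart $\phi:=\Psi^{-1}$, while bijectivity of $\Psi$ on each fiber follows since $\mathcal{G}$ acts simply transitively on the fibers of a principal bundle. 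Hence, once $\Psi$ is shown to be a diffeomorphism, $\phi:=\Psi^{-1}$ is the desired $\mathcal{G}$-equivariant local trivialization.

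The main obstacle is the smoothness of $\Psi^{-1}$, and here I would follow the same strategy as in the proof of Prop. \ref{prop:2.10}: invoke an auxiliary trivialization $(W,\phi_W)$ from the given smooth bundle atlas with $U\cap W\neq\emptyset$ and write $\phi_W\circ s(x)=(x,\gamma(x))$, where $\gamma:=\mathrm{pr}_2\circ\phi_W\circ s:\,U\cap W\rightarrow\mathcal{G}$ is smooth. The $\mathcal{G}$-equivariance of $\phi_W$ then yields $\phi_W\circ\Psi(x,g)=(x,\gamma(x)g)$ on $(U\cap W)\times\mathcal{G}$, which is a diffeomorphism with inverse $(x,h)\mapsto(x,\gamma(x)^{-1}h)$, smooth by smoothness of multiplication and inversion in the super Lie group $\mathcal{G}$. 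Since $\phi_W$ is itself a diffeomorphism, $\Psi$ restricts to a diffeomorphism onto $\pi^{-1}(U\cap W)$; as such neighborhoods $W$ cover $U$ and the local pieces are restrictions of the single globally defined smooth map $\Psi$, it follows that $\Psi$ is a diffeomorphism onto $\pi^{-1}(U)$.

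Finally, I would verify that the two assignments are mutually inverse. Starting from $(U,\phi)$, the equivariance of $\phi$ gives $\phi(\phi^{-1}(x,e)\cdot g)=(x,g)$, so the trivialization reconstructed from $s(x)=\phi^{-1}(x,e)$ is exactly $\Psi^{-1}=\phi$; conversely, starting from $s$, the section reconstructed from $\phi=\Psi^{-1}$ is $x\mapsto\Psi(x,e)=s(x)\cdot e=s(x)$. This establishes the claimed one-to-one correspondence.
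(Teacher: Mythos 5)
Your proof is correct and follows essentially the same route as the paper: the section is $s(x)=\phi^{-1}(x,e)$ (smooth because $e$ is a body point), the reconstructed chart is the inverse of $\Psi(x,g)=s(x)\cdot g$, and smoothness of $\Psi^{-1}$ is obtained exactly as in Prop.~\ref{prop:2.10} by composing with an auxiliary trivialization $\phi_W$ to reduce to $(x,h)\mapsto(x,\gamma(x)^{-1}h)$. Your explicit check that the two assignments are mutually inverse is a small addition the paper leaves implicit.
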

\begin{proof}
If $(U,\phi_U)$ is a local trivialization of $\mathcal{P}$, the map $s:\,\pi^{-1}(U)\ni p\mapsto s(x):=\phi_{U}^{-1}(x,e)\in\mathcal{P}_x$ defines, as $e\in\mathbf{B}(\mathcal{G})$, a smooth local section of $\mathcal{P}$ over $U$.\\
Conversely, if $s\in\Gamma_{U}(\mathcal{P})$ is a smooth local section of $\mathcal{P}$ over $U\subseteq\mathcal{M}$, consider the map
\begin{equation}
\psi:\,U\times\mathcal{G}\rightarrow\pi^{-1}(U),\,(x,g)\mapsto\Phi(s(x),g)
\label{eq:2.37}
\end{equation}
Then, $\psi$ defines a smooth fiber-preserving and $\mathcal{G}$-equivariant map which, in particular, is bijective, as the $\mathcal{G}$-right action $\Phi:\,\mathcal{P}\times\mathcal{G}\rightarrow\mathcal{P}$ on $\mathcal{P}$ is free and transitive. 
That the inverse is also smooth follows as in proof of Prop. \ref{prop:2.10}. 
\end{proof}


\begin{example}[The frame bundle $\mathscr{F}(\mathcal{E})$]\label{prop:2.19}
Given a super vector bundle $\mathcal{V}\rightarrow\mathcal{E}\stackrel{\pi}{\rightarrow}\mathcal{M}$, one can construct a new bundle as follows. For any $x\in\mathcal{M}$ we define a \emph{frame at} $x$ as an isomorphism of super $\Lambda$-modules $p_x:\,\mathcal{V}\rightarrow\mathcal{E}_x$. Let $\mathscr{F}(\mathcal{E})_x$ denote the set of all frames at $x\in\mathcal{M}$. We set
\begin{equation}
\mathscr{F}(\mathcal{E}):=\coprod_{x\in\mathcal{M}}{\mathscr{F}(\mathcal{E})_x}
\label{eq:2.38}
\end{equation}
together with the surjective map 
\begin{equation}
\pi_{\mathscr{F}}:\,\mathscr{F}(\mathcal{E})\rightarrow\mathcal{M},\,\mathscr{F}(\mathcal{E})_x\ni p_x\mapsto x\in\mathcal{M}
\label{eq:2.39}
\end{equation}
Furthermore, we introduce a $\mathrm{GL}(\mathcal{V})$-right action on $\mathscr{F}(\mathcal{E})$ via
\begin{align}
\Phi:\,\mathscr{F}(\mathcal{E})\times\mathrm{GL}(\mathcal{V})&\mapsto\mathscr{F}(\mathcal{E})\\
(p_x,g)&\mapsto(p_x\circ g)_{x}\nonumber
\label{eq:2.40}
\end{align}
It follows that $\Phi$ is fiber-preserving, i.e., $\Phi(p_x,g)\in\mathscr{F}(\mathcal{E})_x$ $\forall p_x\in\mathscr{F}(\mathcal{E})_x, g\in\mathrm{GL}(\mathcal{V})$ and $x\in\mathcal{M}$, and therefore is also free and transitive on each fiber. Let $\{(U_{\alpha},\phi_{\alpha})\}_{\alpha\in\Upsilon}$ be a smooth bundle atlas on $\mathcal{E}$. Then, for any $\alpha\in\Upsilon$ and $x\in U_{\alpha}$, the induced map $\phi_{\alpha,x}^{-1}:\,\mathcal{V}\rightarrow\mathcal{E}_x$ canonically yields a frame at $x$. Hence, let us define the new map 
\begin{align}
\psi_{\alpha}:\,U_{\alpha}\times\mathrm{GL}(\mathcal{V})&\rightarrow\pi_{\mathscr{F}}^{-1}(U_{\alpha})\subseteq\mathscr{F}(\mathcal{E})\\
(x,g)&\mapsto\Phi(\phi_{\alpha,x}^{-1},g)\nonumber
\label{eq:2.41}
\end{align}
This map is bijective with inverse $\psi^{-1}_{\alpha}(p_x)=(x,\phi_{\alpha,x}\circ p_x)$ $\forall p_x\in\mathscr{F}(\mathcal{E})_x$, $x\in U_{\alpha}$ (note that indeed $\phi_{\alpha,x}\circ p_x\in\mathrm{GL}(\mathcal{V})$). Let $\alpha,\beta\in\Upsilon$ with $U_{\alpha}\cap U_{\beta}\neq\emptyset$, the transition function $\psi^{-1}_{\beta}\circ\psi_{\alpha}:\,(U_{\alpha}\cap U_{\beta})\times\mathrm{GL}(\mathcal{V})\rightarrow(U_{\alpha}\cap U_{\beta})\times\mathrm{GL}(\mathcal{V})$ then takes the form
\begin{align}
\psi^{-1}_{\beta}\circ\psi_{\alpha}(x,g)&=\psi^{-1}_{\beta}(\Phi(\phi_{\alpha,x}^{-1},g))=(x,\phi_{\beta,x}\circ\phi_{\alpha,x}^{-1}\circ g)\nonumber\\
&=(x,g_{\beta\alpha}(x)\circ g)
\end{align}
$\forall (x,g)\in(U_{\alpha}\cap U_{\beta})\times\mathrm{GL}(\mathcal{V})$ and thus is smooth.\\
Hence, we have constructed an appropriate formal bundle of $\mathscr{F}(\mathcal{E})$ such that $\mathrm{GL}(\mathcal{V})\rightarrow\mathcal{\mathscr{F}(\mathcal{E})}\stackrel{\pi}{\rightarrow}\mathcal{M}$ turns into a principal super fiber bundle with structure group $\mathrm{GL}(\mathcal{V})$ and $\mathrm{GL}(\mathcal{V})$-right action $\Phi:\,\mathscr{F}(\mathcal{E})\times\mathrm{GL}(\mathcal{V})\mapsto\mathscr{F}(\mathcal{E})$, called the \emph{frame bundle} of $\mathcal{E}$. 
\end{example}
\begin{definition}\label{prop:2.20}
Given a supermanifold, the \emph{frame bundle} $\mathscr{F}(\mathcal{M})$ of $\mathcal{M}$ is defined as the frame bundle $\mathscr{F}(\mathcal{M})\equiv\mathscr{F}(T\mathcal{M})$ of the associated tangent bundle $T\mathcal{M}$.
\end{definition}

\begin{prop}[Associated fiber bundle]\label{prop:2.21}
Let $\mathcal{G}\rightarrow\mathcal{P}\stackrel{\pi_{\mathcal{P}}}{\rightarrow}\mathcal{M}$ be a principal super fiber bundle with $\mathcal{G}$-right action $\Phi:\,\mathcal{P}\times\mathcal{G}\rightarrow\mathcal{P}$. Let furthermore $\rho:\,\mathcal{G}\times\mathcal{F}\rightarrow\mathcal{F}$ be a smooth left action of $\mathcal{G}$ on a supermanifold $\mathcal{F}$. On $\mathcal{P}\times\mathcal{F}$ consider the map
\begin{equation}
\Phi^{\times}:\,(\mathcal{P}\times\mathcal{F})\times\mathcal{G}\rightarrow\mathcal{P}\times\mathcal{F},\,((p,v),g)\mapsto(\Phi(p,g),\rho(g^{-1},v))
\label{eq:2.42}
\end{equation} 
Then, $\Phi^{\times}$ defines an effective smooth $\mathcal{G}$-right action on $\mathcal{P}\times\mathcal{F}$. Let $\mathcal{E}:=\mathcal{P}\times_{\rho}\mathcal{F}:=(\mathcal{P}\times\mathcal{F})/\mathcal{G}$ be the corresponding coset space and $\pi_{\mathcal{E}}:\,\mathcal{E}\rightarrow\mathcal{M}$ be defined as
\begin{align}
\pi_{\mathcal{E}}:\,\mathcal{E}&\rightarrow\mathcal{M}\\
[p,v]&\mapsto\pi_{\mathcal{P}}(p)\nonumber
\label{eq:2.43}
\end{align}
Then, $\mathcal{E}$ can be equipped with the structure of a supermanifold such that $\pi_{\mathcal{E}}$ is a smooth surjective map and $(\mathcal{E},\pi_{\mathcal{E}},\mathcal{M},\mathcal{F})$ turns into a super fiber bundle, called the \emph{super fiber bundle associated to} $\mathcal{P}$ w.r.t. the $\mathcal{G}$-left action $\rho$ on $\mathcal{F}$. 
\end{prop}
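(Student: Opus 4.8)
The plan is to organize the argument into two independent parts. First I would verify the elementary claim that $\Phi^{\times}$ is a smooth effective $\mathcal{G}$-right action, and then I would produce an explicit formal bundle atlas on $\mathcal{E}$ so that Theorem \ref{prop:2.4} upgrades $\mathcal{E}$ to a supermanifold and $(\mathcal{E},\pi_{\mathcal{E}},\mathcal{M},\mathcal{F})$ to a super fiber bundle in one stroke, rather than building the smooth structure by hand.

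For the first part, the right-action axioms follow by direct computation from the facts that $\Phi$ is a right action, $\rho$ a left action, and inversion an anti-homomorphism on $\mathcal{G}$: one checks $\Phi^{\times}((p,v),e)=(p,v)$ and $\Phi^{\times}(\Phi^{\times}((p,v),g),g')=(\Phi(p,gg'),\rho((gg')^{-1},v))=\Phi^{\times}((p,v),gg')$, while smoothness is inherited from that of $\Phi$, $\rho$ and the inversion map of $\mathcal{G}$. Effectiveness is in fact freeness: since the principal action $\Phi$ is free, $\Phi^{\times}((p,v),g)=(p,v)$ already forces $\Phi(p,g)=p$ and hence $g=e$.

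For the second part I would use Prop. \ref{prop:2.17} to trade a bundle atlas $\{(U_{\alpha},\phi_{\alpha})\}_{\alpha}$ of $\mathcal{P}$ for a family of local sections $s_{\alpha}\in\Gamma_{U_{\alpha}}(\mathcal{P})$. Because $\Phi$ is free and transitive on fibers, every $p\in\pi_{\mathcal{P}}^{-1}(U_{\alpha})$ decomposes uniquely as $p=\Phi(s_{\alpha}(x),\gamma_{\alpha}(p))$ with $x=\pi_{\mathcal{P}}(p)$, and $\mathcal{G}$-equivariance of $\phi_{\alpha}$ identifies $\gamma_{\alpha}=\mathrm{pr}_2\circ\phi_{\alpha}$, so that $\gamma_{\alpha}$ is smooth. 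I would then define the candidate trivializations
\begin{equation}
\psi_{\alpha}:\,\pi_{\mathcal{E}}^{-1}(U_{\alpha})\rightarrow U_{\alpha}\times\mathcal{F},\quad [p,v]\mapsto(\pi_{\mathcal{P}}(p),\rho(\gamma_{\alpha}(p),v)),
\end{equation}
with proposed inverse $(x,v)\mapsto[s_{\alpha}(x),v]$. The crux here is to check that $\psi_{\alpha}$ is well defined on $\mathcal{G}$-orbits: replacing $(p,v)$ by $(\Phi(p,g),\rho(g^{-1},v))$ sends $\gamma_{\alpha}(p)$ to $\gamma_{\alpha}(p)g$, and the two copies of $g$ cancel inside $\rho$, leaving the value unchanged; bijectivity and the relation $\mathrm{pr}_1\circ\psi_{\alpha}=\pi_{\mathcal{E}}$ are then straightforward.

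Finally I would compute the transition functions. Writing $g_{\beta\alpha}:U_{\alpha}\cap U_{\beta}\rightarrow\mathcal{G}$ for the smooth principal transition maps characterized by $s_{\alpha}(x)=\Phi(s_{\beta}(x),g_{\beta\alpha}(x))$, one obtains $\psi_{\beta}\circ\psi_{\alpha}^{-1}(x,v)=(x,\rho(g_{\beta\alpha}(x),v))$, which is smooth since $\rho$ and $g_{\beta\alpha}$ are. Hence $\{(U_{\alpha},\psi_{\alpha})\}_{\alpha}$ is a formal bundle atlas and Theorem \ref{prop:2.4} furnishes the unique topology and smooth structure making $\mathcal{E}$ a supermanifold and $(\mathcal{E},\pi_{\mathcal{E}},\mathcal{M},\mathcal{F})$ a super fiber bundle with smooth surjective projection $\pi_{\mathcal{E}}$; a short additional check that the canonical projection $\mathcal{P}\times\mathcal{F}\rightarrow\mathcal{E}$ is smooth identifies this topology with the quotient topology. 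I expect the only real obstacle to be the orbit-space bookkeeping, namely the smoothness and well-definedness of $\gamma_{\alpha}$ and the descent of $\psi_{\alpha}$ to the quotient; once the identification $\gamma_{\alpha}=\mathrm{pr}_2\circ\phi_{\alpha}$ is recognized, everything reduces to the already-known principal transition maps and no genuine analysis remains.
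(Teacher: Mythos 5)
Your proposal is correct and follows essentially the same route as the paper: the paper defines the trivializations $\phi_{\alpha}^{\times}([p,v]):=(\pi_{\mathcal{P}}(p),\rho(\mathrm{pr}_2\circ\phi_{\alpha}(p),v))$ with inverse $(x,v)\mapsto[\phi_{\alpha}^{-1}(x,e),v]$, which coincide with your $\psi_{\alpha}$ and $(x,v)\mapsto[s_{\alpha}(x),v]$ once you identify $\gamma_{\alpha}=\mathrm{pr}_2\circ\phi_{\alpha}$ and $s_{\alpha}=\phi_{\alpha}^{-1}(\cdot,e)$, and the transition functions and the appeal to Theorem \ref{prop:2.4} are the same. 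The only cosmetic differences are that you phrase things through local sections via Prop.\ \ref{prop:2.17} and note freeness rather than mere effectiveness, and that the identification of the atlas topology with the quotient topology is deferred by the paper to the subsequent Prop.\ \ref{prop:2.22}.
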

\begin{proof}
It is immediate that $\Phi^{\times}:\,(\mathcal{P}\times\mathcal{F})\times\mathcal{G}\rightarrow\mathcal{P}\times\mathcal{F}$ defines a smooth $\mathcal{G}$-right action. That it also is effective follows from the fact that $\Phi:\,\mathcal{P}\times\mathcal{G}\rightarrow\mathcal{P}$ is effective.\\
Let $\{(U_{\alpha},\phi_{\alpha})\}_{\alpha\in\Upsilon}$ be a smooth bundle atlas of $\mathcal{G}\rightarrow\mathcal{P}\stackrel{\pi_{\mathcal{P}}}{\rightarrow}\mathcal{M}$. For $\alpha\in\Upsilon$ define 
\begin{align}
\phi^{\times}_{\alpha}:\,\pi_{\mathcal{E}}^{-1}(U_{\alpha})&\rightarrow U_{\alpha}\times\mathcal{F}\\
[p,v]&\rightarrow(\pi_{\mathcal{P}}(p),\rho(\mathrm{pr}_2\circ\phi_{\alpha}(p),v))\nonumber
\label{eq:2.44}
\end{align}
which is well-defined as $\rho$ is a $\mathcal{G}$-left action on $\mathcal{F}$. Moreover, $\phi^{\times}_{\alpha}$ is a bijection with inverse $(\phi^{\times}_{\alpha})^{-1}:\,U_{\alpha}\times\mathcal{F}\rightarrow\pi_{\mathcal{E}}^{-1}(U_{\alpha}),\,(x,v)\mapsto[\phi_{\alpha}^{-1}(x,e),v]$. For $\alpha,\alpha'\in\Upsilon$ with $U_{\alpha}\cap U_{\alpha'}\neq\emptyset$, it follows
\begin{equation}
\phi^{\times}_{\beta}\circ(\phi^{\times}_{\alpha})^{-1}(x,v)=(x,\rho(\mathrm{pr}_2\circ\phi_{\beta}\circ\phi_{\alpha}^{-1}(x,e),v))
\label{eq:2.45}
\end{equation}
$\forall (x,v)\in (U_{\alpha}\cap U_{\beta})\times\mathcal{F}$, and thus $\phi^{\times}_{\beta}\circ(\phi^{\times}_{\alpha})^{-1}$ is smooth as $\mathrm{pr}_2\circ\phi_{\beta}\circ\phi_{\alpha}^{-1}$ is smooth and $e\in\mathbf{B}(\mathcal{G})$ is a body point. Thus, the family $\{(U_{\alpha},\phi_{\alpha}^{\times})\}_{\alpha\in\Upsilon}$ defines a formal bundle atlas of $\mathcal{E}$ (w.r.t. $\pi_{\mathcal{E}}$) and hence induces a topology and supermanifold structure on $\mathcal{E}$ such that $(\mathcal{E},\pi_{\mathcal{E}},\mathcal{M},\mathcal{F})$ becomes a super fiber bundle.
\end{proof}
\begin{prop}\label{prop:2.22}
Under the conditions of Prop. \ref{prop:2.21}, the canonical projection
\begin{equation}
\pi:\,\mathcal{P}\times\mathcal{F}\rightarrow\mathcal{E}=\mathcal{P}\times_{\rho}\mathcal{F}
\label{eq:2.46}
\end{equation}
is a smooth bundle map, i.e., $\mathcal{P}\times\mathcal{F}\stackrel{\pi}{\rightarrow}\mathcal{P}\times_{\rho}\mathcal{F}$ carries the structure of a super fiber bundle with typical fiber $\mathcal{G}$. As a consequence, $\pi$ is an open submersion and the topology on $\mathcal{E}$ defined via the construction in the proof of Theorem \ref{prop:2.4} coincides with the quotient topology.
\end{prop}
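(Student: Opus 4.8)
The plan is to produce an explicit fibre-preserving trivialization of $\pi$ over the open cover $\{\pi_{\mathcal{E}}^{-1}(U_{\alpha})\}_{\alpha\in\Upsilon}$ of $\mathcal{E}$, where $\{(U_{\alpha},\phi_{\alpha})\}_{\alpha\in\Upsilon}$ is the $\mathcal{G}$-equivariant bundle atlas of $\mathcal{P}$ from Definition \ref{prop:2.15} and $\phi^{\times}_{\alpha}$ are the charts of $\mathcal{E}$ built in Prop. \ref{prop:2.21}. First I would record the set-theoretic identity $\pi^{-1}(\pi_{\mathcal{E}}^{-1}(U_{\alpha}))=\pi_{\mathcal{P}}^{-1}(U_{\alpha})\times\mathcal{F}$, which follows at once from $\pi_{\mathcal{E}}\circ\pi=\pi_{\mathcal{P}}\circ\mathrm{pr}_1$ and which shows that this domain is open in $\mathcal{P}\times\mathcal{F}$ and that the $\pi_{\mathcal{E}}^{-1}(U_{\alpha})$ cover $\mathcal{E}$. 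Writing $\sigma_{\alpha}:=\mathrm{pr}_2\circ\phi_{\alpha}:\,\pi_{\mathcal{P}}^{-1}(U_{\alpha})\rightarrow\mathcal{G}$, I define
\begin{equation}
\Psi_{\alpha}:\,\pi_{\mathcal{P}}^{-1}(U_{\alpha})\times\mathcal{F}\rightarrow\pi_{\mathcal{E}}^{-1}(U_{\alpha})\times\mathcal{G},\qquad (p,v)\mapsto([p,v],\sigma_{\alpha}(p)),
\end{equation}
so that $\mathrm{pr}_1\circ\Psi_{\alpha}=\pi$ holds by construction.

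Second, I would establish that $\Psi_{\alpha}$ is a bijection by exhibiting a two-sided inverse explicitly in terms of $\phi^{\times}_{\alpha}$. Using $\phi^{\times}_{\alpha}([p,v])=(\pi_{\mathcal{P}}(p),\rho(\sigma_{\alpha}(p),v))$ together with the equivariance $\phi_{\alpha}\circ\Phi=(\mathrm{id}\times\mu_{\mathcal{G}})\circ(\phi_{\alpha}\times\mathrm{id}_{\mathcal{G}})$, one checks that
\begin{equation}
\Psi_{\alpha}^{-1}(q,g)=\bigl(\phi_{\alpha}^{-1}(\pi_{\mathcal{E}}(q),g),\,\rho(g^{-1},\mathrm{pr}_2\circ\phi^{\times}_{\alpha}(q))\bigr)
\end{equation}
does the job: the identity $\Psi_{\alpha}\circ\Psi_{\alpha}^{-1}=\mathrm{id}$ uses $\sigma_{\alpha}(\phi_{\alpha}^{-1}(x,g))=g$ and $\rho(g,\rho(g^{-1},\cdot))=\mathrm{id}$, while $\Psi_{\alpha}^{-1}\circ\Psi_{\alpha}=\mathrm{id}$ uses $\phi_{\alpha}(p)=(\pi_{\mathcal{P}}(p),\sigma_{\alpha}(p))$. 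Both components of the inverse are manifestly representative-independent because $\phi^{\times}_{\alpha}$ is a well-defined map on $\mathcal{E}$.

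Third, for smoothness I would first verify that $\pi$ is smooth by composing with the charts of Prop. \ref{prop:2.21}: on $\pi_{\mathcal{P}}^{-1}(U_{\alpha})\times\mathcal{F}$ one has $\phi^{\times}_{\alpha}\circ\pi(p,v)=(\pi_{\mathcal{P}}(p),\rho(\sigma_{\alpha}(p),v))$, which is smooth since $\pi_{\mathcal{P}},\sigma_{\alpha},\rho$ are. Consequently both components of $\Psi_{\alpha}$ are smooth, and the displayed formula for $\Psi_{\alpha}^{-1}$ is visibly smooth as it only involves $\phi_{\alpha}^{-1}$, $\pi_{\mathcal{E}}$, $\phi^{\times}_{\alpha}$, inversion in $\mathcal{G}$ and $\rho$. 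Hence each $\Psi_{\alpha}$ is a fibre-preserving diffeomorphism, and since $\pi$ is surjective as a quotient projection, $(\mathcal{P}\times\mathcal{F},\pi,\mathcal{E},\mathcal{G})$ is a super fibre bundle in the sense of Definition \ref{prop:2.1}. For completeness I would record the transition functions $\Psi_{\beta}\circ\Psi_{\alpha}^{-1}(q,g)=(q,\gamma_{\beta\alpha}(\pi_{\mathcal{E}}(q))\,g)$, where $\gamma_{\beta\alpha}$ is the smooth $\mathcal{G}$-valued cocycle of $\mathcal{P}$ defined by $\phi_{\beta}\circ\phi_{\alpha}^{-1}(x,g)=(x,\gamma_{\beta\alpha}(x)g)$.

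Finally, for the two consequences: in the trivializations $\Psi_{\alpha}$ the map $\pi$ is just $\mathrm{pr}_1$, so being locally a projection conjugated by a diffeomorphism it is an open submersion. Being continuous, open and surjective it is a topological quotient map; since the topology on $\mathcal{E}$ produced in the proof of Theorem \ref{prop:2.4} (via Prop. \ref{prop:2.21}) already makes $\pi$ continuous and open, the universal (finest-topology) characterization of the quotient forces it to coincide with the quotient topology on $(\mathcal{P}\times\mathcal{F})/\mathcal{G}$. I expect the only genuinely delicate point to be the bookkeeping in the second step: verifying that the $\alpha$-dependent formula for $\Psi_{\alpha}^{-1}$ truly lands in the correct $\mathcal{G}$-orbit and is insensitive to the chosen representative of $q=[p,v]$, which is precisely where the equivariance of $\phi_{\alpha}$ and the definition of $\phi^{\times}_{\alpha}$ enter.
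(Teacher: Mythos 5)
Your proposal is correct and follows essentially the same route as the paper: your trivialization $\Psi_{\alpha}(p,v)=([p,v],\sigma_{\alpha}(p))$ is literally the paper's $\widetilde{\phi}_{\alpha}(p,v)=([p,v],\mathrm{pr}_2\circ\phi_{\alpha}(p))$, which the paper obtains as the composite $((\phi^{\times}_{\alpha})^{-1}\times\mathrm{id}_{\mathcal{G}})\circ(\mathrm{id}_{U_{\alpha}}\times\Theta)\circ(\phi_{\alpha}\times\mathrm{id}_{\mathcal{F}})$ with $\Theta(g,v)=(\rho(g,v),g)$, and the concluding open-submersion and quotient-topology arguments are identical. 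The only cosmetic difference is that you verify the inverse by hand where the paper inherits invertibility and smoothness from the three constituent diffeomorphisms in its commutative diagram.
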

\begin{proof}
First, let us show that $\pi$ is continuous. Therefore, if $U\subseteq\mathcal{E}$ is open, by definition of the topology on $\mathcal{E}$, for any $\alpha\in\Upsilon$, $U\cap\pi_{\mathcal{E}}^{-1}(U_{\alpha})$ is of the form $U\cap\pi_{\mathcal{E}}^{-1}(U_{\alpha})=(\phi_{\alpha}^{\times})^{-1}(\widetilde{U}_{\alpha})$ with $\widetilde{U}_{\alpha}\subseteq U_{\alpha}\times\mathcal{F}$ open. Then,
\begin{align}
\pi^{-1}(U)&=\pi^{-1}\left(\bigcup_{\alpha\in\Upsilon}{U\cap\pi_{\mathcal{E}}^{-1}(U_{\alpha})}\right)=\bigcup_{\alpha\in\Upsilon}{\pi^{-1}((\phi_{\alpha}^{\times})^{-1}(\widetilde{U}_{\alpha}))}\nonumber\\
&=\bigcup_{\alpha\in\Upsilon}{(\phi_{\alpha}^{\times}\circ\pi)^{-1}(\widetilde{U}_{\alpha})}
\label{eq:2.47}
\end{align}
and hence $\pi^{-1}(U)\subseteq\mathcal{P}\times\mathcal{F}$ is open proving that $\pi$ is continuous. That it is smooth can be checked by direct computation.\\
Next, to see that is in fact a bundle map, i.e., $\mathcal{P}\times\mathcal{F}\stackrel{\pi}{\rightarrow}\mathcal{P}\times_{\rho}\mathcal{F}$ carries the structure of a super fiber bundle, let us define an atlas $\{(\pi_{\mathcal{E}}^{-1}(U_{\alpha}),\widetilde{\phi}_{\alpha})\}_{\alpha\in\Upsilon}$ of locally trivializing bundle charts as follows. For $\alpha\in\Upsilon$, the map $\widetilde{\phi}_{\alpha}:\,\pi^{-1}(\pi_{\mathcal{E}}^{-1}(U_{\alpha}))=\pi_{\mathcal{P}}^{-1}(U_{\alpha})\times\mathcal{F}\rightarrow\pi_{\mathcal{E}}^{-1}(U_{\alpha})\times\mathcal{G}$ is obtained via the following commutative diagram
	\begin{displaymath}
	 \xymatrix{
         \pi_{\mathcal{P}}^{-1}(U_{\alpha})\times\mathcal{F}\ar[rr]^{\phi_{\alpha}\times\mathrm{id}_{\mathcal{F}}}_{\cong} \ar[d]_{\widetilde{\phi}_{\alpha}}  &  &  U_{\alpha}\times\mathcal{G}\times\mathcal{F} \ar[d]^{\mathrm{id}_{U_{\alpha}}\times\Theta}_{\cong}\\
            \pi_{\mathcal{E}}^{-1}(U_{\alpha})\times\mathcal{G}  & &   U_{\alpha}\times\mathcal{F}\times\mathcal{G} \ar[ll]^{(\phi^{\times})^{-1}\times\mathrm{id}_{\mathcal{G}}}_{\cong}     
     }
		\label{eq:2.48}
	\end{displaymath}
where $\Theta:\,\mathcal{G}\times\mathcal{F}\rightarrow\mathcal{F}\times\mathcal{G}$ is the diffeomorphism given by $\Theta(g,v):=(\rho(g,v),g)$ $\forall(g,v)\in\mathcal{G}\times\mathcal{F}$ and thus $\widetilde{\phi}_{\alpha}(p,v)=([p,v],\mathrm{pr}_2\circ\phi_{\alpha}(p))$ $\forall(p,v)\in\pi^{-1}(\pi_{\mathcal{E}}^{-1}(U_{\alpha}))$. It follows that $\widetilde{\phi}_{\alpha}$ is a diffeomorphism preserving the fibers, i.e., $\mathrm{pr}_{1}\circ\widetilde{\phi}_{\alpha}=\pi$ and thus indeed defines a local trivialization.\\
That $\pi:\,\mathcal{P}\times\mathcal{F}\rightarrow\mathcal{E}=\mathcal{P}\times_{\rho}\mathcal{F}$ is an open map follows by a standard argument using the fact that it is a bundle map and thus locally coincides with the projection $\mathrm{pr}_1$ which is open. It remains to show that the topology on $\mathcal{E}$ coincides with the quotient topology. Obviously, any open subset $U\subseteq\mathcal{E}$ is also open w.r.t. the quotient topology as $\pi$ is continuous. Conversely, if $U\subseteq\mathcal{E}$ is a subset of $\mathcal{E}$ such that $\pi^{-1}(U)\subseteq\mathcal{P}\times\mathcal{F}$ is open, it follows from $\pi(\pi^{-1}(U))=U$ that $U$ is open, as well. Hence, this proves the proposition.
\end{proof}

\begin{cor}\label{prop:2.23}
Let $\mathcal{G}\rightarrow\mathcal{P}\stackrel{\pi_{\mathcal{P}}}{\rightarrow}\mathcal{M}$ be a principal super fiber bundle with structure group $\mathcal{G}$ and $\rho:\,\mathcal{G}\rightarrow\mathrm{GL}(\mathcal{V})$ be a representation of $\mathcal{G}$ on a finite dimensional super $\Lambda$-vector space $\mathcal{V}$ which induces the smooth $\mathcal{G}$-left action $\rho:\,\mathcal{G}\times\mathcal{V}\rightarrow\mathcal{V},\,(g,v)\mapsto\rho(g,v)\equiv\rho(g)v$ on $\mathcal{V}$. Then, the associated fiber bundle $\mathcal{E}:=\mathcal{P}\times_{\rho}\mathcal{V}$ can be given the structure of a super vector bundle where, for each $x\in\mathcal{M}$, the fiber $\mathcal{E}_x=\mathcal{P}_x\times_{\rho}\mathcal{V}$ carries the structure of a free super $\Lambda$-module via
\begin{align}
a[p,v]+b[p,w]:=[p,av+bw]
\label{eq:2.49}
\end{align}
$\forall [p,v],[p,w]\in\mathcal{E}_x$, $a,b\in\Lambda$ and $\mathbb{Z}_2$-grading
\begin{equation}
(\mathcal{E}_x)_{0}:=\{[p,v]\in\mathcal{E}_x|\,v\in\mathcal{V}_{0}\},\quad(\mathcal{E}_x)_{1}:=\{[p,v]\in\mathcal{E}_x|\,v\in\mathcal{V}_1\}
\label{eq:2.50}
\end{equation} 
The bundle $\mathcal{V}\rightarrow\mathcal{E}\rightarrow\mathcal{M}$ is called the \emph{super vector bundle} associated to $\mathcal{P}$ w.r.t. the representation $\rho$.\qed
\end{cor}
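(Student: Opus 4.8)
The plan is to leverage Proposition \ref{prop:2.21}, which already guarantees that $\mathcal{E}=\mathcal{P}\times_{\rho}\mathcal{V}$ is a super fiber bundle with typical fiber $\mathcal{V}$ and with bundle charts $\phi^{\times}_{\alpha}$ of the stated form. Since $\mathcal{V}$ is a super $\Lambda$-vector space of dimension $m|n$ by assumption, condition (i) of Definition \ref{prop:2.6} holds automatically, and it remains to establish conditions (ii) and (iii), i.e.\ that each fiber $\mathcal{E}_x$ carries the structure of a free super $\Lambda$-module under \eqref{eq:2.49}--\eqref{eq:2.50} and that the $\phi^{\times}_{\alpha}$ restrict to fiberwise right-linear isomorphisms.

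First I would verify that the operations \eqref{eq:2.49} and the grading \eqref{eq:2.50} are well defined, i.e.\ independent of the chosen representative. Because $\mathcal{P}$ is principal, the $\mathcal{G}$-right action $\Phi$ is free and transitive on each fiber $\mathcal{P}_x$; hence any two classes in $\mathcal{E}_x$ admit representatives $[p,v],[p,w]$ sharing the same $p$, so that \eqref{eq:2.49} indeed applies. If $p$ is replaced by $p\cdot g$, the corresponding representatives become $[p\cdot g,\rho(g^{-1})v]$ and $[p\cdot g,\rho(g^{-1})w]$, and then
\[
a[p\cdot g,\rho(g^{-1})v]+b[p\cdot g,\rho(g^{-1})w]=[p\cdot g,\rho(g^{-1})(av+bw)]=[p,av+bw],
\]
where the decisive step uses that $\rho(g^{-1})\in\mathrm{GL}(\mathcal{V})$ is an even, right-linear automorphism of $\mathcal{V}$ and thus commutes with the module operations. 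The same evenness of $\rho(g^{-1})$ preserves the parity of $v$, making the decomposition \eqref{eq:2.50} representative-independent as well. Fixing any single $p\in\mathcal{P}_x$, the map $\mathcal{V}\rightarrow\mathcal{E}_x$, $v\mapsto[p,v]$, is then a bijection compatible with these operations, so $\mathcal{E}_x$ inherits from $\mathcal{V}$ the structure of a free super $\Lambda$-module of rank $m|n$, establishing (ii).

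It then remains to check (iii). On the fiber over $x\in U_{\alpha}$ the chart induces $\phi^{\times}_{\alpha,x}:\,\mathcal{E}_x\rightarrow\mathcal{V}$, $[p,v]\mapsto\rho(\mathrm{pr}_2\circ\phi_{\alpha}(p))v$, whose well-definedness is precisely the $\mathcal{G}$-equivariance \eqref{eq:2.32} of $\phi_{\alpha}$ already exploited in the proof of Proposition \ref{prop:2.21}. Choosing a common representative $p$ as above and using that $\rho(\mathrm{pr}_2\circ\phi_{\alpha}(p))\in\mathrm{GL}(\mathcal{V})$ is a module automorphism, one reads off directly that $\phi^{\times}_{\alpha,x}$ intertwines the operations \eqref{eq:2.49} and preserves the grading, hence is a right-linear isomorphism of super $\Lambda$-modules, with inverse $v\mapsto[\phi_{\alpha}^{-1}(x,e),v]$. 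I expect the only genuinely delicate point to be the well-definedness in the second step --- the interplay between the choice of a common representative and the equivariance of $\rho$ --- whereas the remaining verifications are purely formal consequences of $\rho$ taking values in $\mathrm{GL}(\mathcal{V})$ once that is settled.
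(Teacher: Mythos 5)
Your proposal is correct and follows exactly the route the paper intends: the corollary is stated with a \qed precisely because it is the routine verification, via Prop.~\ref{prop:2.21}, that the fiberwise operations \eqref{eq:2.49}--\eqref{eq:2.50} are representative-independent (using freeness and transitivity of the $\mathcal{G}$-action together with evenness and right-linearity of $\rho(g)$) and that the induced charts $\phi^{\times}_{\alpha,x}$ are right-linear isomorphisms. Your write-up supplies these details faithfully, including the one genuinely delicate point (well-definedness under change of representative via the equivariance of $\phi_{\alpha}$), so nothing is missing.
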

\begin{cor}\label{prop:2.24}
Let $\mathcal{G}\rightarrow\mathcal{P}\stackrel{\pi_{\mathcal{P}}}{\rightarrow}\mathcal{M}$ be a principal super fiber bundle with $\mathcal{G}$-right action $\Phi:\,\mathcal{P}\times\mathcal{G}\rightarrow\mathcal{P}$ and let $\rho:\,\mathcal{G}\rightarrow\mathrm{GL}(\mathcal{V})$ be a representation of $\mathcal{G}$ on a finite dimensional super $\Lambda$-vector space $\mathcal{V}$. Let $H^{\infty}(\mathcal{P},\mathcal{V})^{\mathcal{G}}$ be the subspace of smooth $\mathcal{G}$-equivariant functions on $\mathcal{P}$ with values in $\mathcal{V}$ defined as
\begin{equation}
H^{\infty}(\mathcal{P},\mathcal{V})^{\mathcal{G}}:=\{f\in H^{\infty}(\mathcal{P},\mathcal{V})|\,f(p\cdot g)=\rho(g^{-1})(f(p)),\,\forall p\in\mathcal{P},g\in\mathcal{G}\}
\label{eq:2.52}
\end{equation}
Let $\mathcal{E}:=\mathcal{P}\times_{\rho}\mathcal{V}$ be the associated super vector bundle. Then, there exists an isomorphism of super vector spaces given by
\begin{equation}
\Psi:\,H^{\infty}(\mathcal{P},\mathcal{V})^{\mathcal{G}}\rightarrow\Gamma(\mathcal{E}),\,f\mapsto(\Psi(f):\,\pi_{\mathcal{P}}(p)\mapsto[p,f(p)])
\label{eq:2.53}
\end{equation}
\end{cor}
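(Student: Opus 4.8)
The plan is to construct an explicit inverse to $\Psi$ and verify that both $\Psi$ and its inverse are well-defined, linear, and respect the $\mathbb{Z}_2$-grading. The heart of the matter is the correspondence between equivariant functions and sections, which is entirely analogous to the classical (non-super) statement; the main work is checking that everything interacts correctly with the super $\Lambda$-module structure described in Corollary \ref{prop:2.23} and that the resulting section $\Psi(f)$ is genuinely smooth.

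First I would check that $\Psi$ is \emph{well-defined}, i.e.\ that $\Psi(f)$ is a bona fide section. Given $f\in H^{\infty}(\mathcal{P},\mathcal{V})^{\mathcal{G}}$ and $x\in\mathcal{M}$, I pick any $p\in\pi_{\mathcal{P}}^{-1}(x)$ and set $\Psi(f)(x):=[p,f(p)]$. Independence of the choice of $p$ follows from equivariance: any other point in the fiber is $p\cdot g=\Phi(p,g)$, and then $[p\cdot g, f(p\cdot g)]=[\Phi(p,g),\rho(g^{-1})(f(p))]=[p,f(p)]$ by the very definition of the equivalence relation on $\mathcal{P}\times_{\rho}\mathcal{V}$ coming from $\Phi^{\times}$. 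Moreover $\pi_{\mathcal{E}}(\Psi(f)(x))=\pi_{\mathcal{P}}(p)=x$, so $\Psi(f)$ is a section. Smoothness I would verify in a local trivialization: choosing a local section $s\in\Gamma_U(\mathcal{P})$ as in Prop.\ \ref{prop:2.17}, the associated bundle chart $\phi^{\times}_{\alpha}$ of Prop.\ \ref{prop:2.21} sends $\Psi(f)(x)\mapsto(x,\rho(\mathrm{pr}_2\circ\phi_{\alpha}(s(x)),f(s(x))))$, which is a composition of smooth maps; linearity of $\Psi$ and compatibility with the grading \eqref{eq:2.49}--\eqref{eq:2.50} are then immediate from the construction.

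Next I would construct the inverse map $\Phi:\,\Gamma(\mathcal{E})\rightarrow H^{\infty}(\mathcal{P},\mathcal{V})^{\mathcal{G}}$. Given a section $\sigma\in\Gamma(\mathcal{E})$ and a point $p\in\mathcal{P}$, the value $\sigma(\pi_{\mathcal{P}}(p))$ lies in $\mathcal{E}_{\pi_{\mathcal{P}}(p)}=\mathcal{P}_{\pi_{\mathcal{P}}(p)}\times_{\rho}\mathcal{V}$, and since $p$ itself is a point of the fiber $\mathcal{P}_{\pi_{\mathcal{P}}(p)}$, there is a \emph{unique} $v\in\mathcal{V}$ with $\sigma(\pi_{\mathcal{P}}(p))=[p,v]$; I define $f_{\sigma}(p):=v$. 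Uniqueness of $v$ holds because $\mathcal{G}$ acts freely on $\mathcal{P}$, so fixing the first slot of the class $[p,v]$ rigidifies the second. The equivariance of $f_{\sigma}$ follows by running the well-definedness computation above in reverse: $[p\cdot g, f_{\sigma}(p\cdot g)]=\sigma(\pi_{\mathcal{P}}(p))=[p,f_{\sigma}(p)]=[p\cdot g,\rho(g^{-1})(f_{\sigma}(p))]$, and freeness again forces $f_{\sigma}(p\cdot g)=\rho(g^{-1})(f_{\sigma}(p))$. Smoothness of $f_{\sigma}$ is checked locally using $s$ and the trivialization formula, exactly as for $\Psi$.

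The \textbf{main obstacle} I anticipate is not the algebra but the careful handling of smoothness in the $H^{\infty}$ setting: I must ensure that the pointwise constructions above patch together to genuinely $H^{\infty}$-smooth objects, rather than merely maps that are smooth fiberwise. The clean way around this is to never argue pointwise where smoothness is concerned, but instead to read off everything from the explicit local formulas provided by $\phi^{\times}_{\alpha}$ and a local section $s$; these express both $\Psi(f)$ and $f_{\sigma}$ as compositions of the smooth maps $\rho$, $\phi_{\alpha}$, $s$, and $f$ (resp.\ $\sigma$). Finally, $\Psi\circ\Phi=\mathrm{id}$ and $\Phi\circ\Psi=\mathrm{id}$ follow directly by inserting the definitions and invoking the freeness of the action once more, and linearity over $\Lambda$ together with the grading shows $\Psi$ is an isomorphism of super vector spaces, completing the proof.
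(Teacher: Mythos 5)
Your proposal is correct and follows essentially the same route as the paper: construct the explicit inverse via the fiberwise bijections $v\mapsto[p,v]$, use equivariance and freeness of the action for well-definedness, and verify $H^{\infty}$-smoothness in both directions through local sections and the associated-bundle charts. The only cosmetic difference is that the paper deduces smoothness of $\Psi(f)$ by observing that $\hat{f}(p)=[p,f(p)]$ is constant on $\mathcal{G}$-orbits and invoking the quotient-manifold result (Prop.\ \ref{prop:2.16}), whereas you read it off directly from the local formula in the chart $\phi^{\times}_{\alpha}$ — both are fine.
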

\begin{proof}
For $f\in H^{\infty}(\mathcal{P},\mathcal{V})^{\mathcal{G}}$ define the smooth map $\hat{f}:\,\mathcal{P}\rightarrow\mathcal{E}$ via $\hat{f}(p):=[p,f(p)]$, $\forall p\in\mathcal{P}$. By construction, this yields $\Phi^*(\hat{f})=\hat{f}\otimes 1_{\mathcal{G}}$, that is, $\hat{f}$ is constant on $\mathcal{G}$-orbits. Thus, according to Prop. \ref{prop:2.16}, this induces a smooth map $\Psi(f):\,\mathcal{M}\rightarrow\mathcal{E}$ satisfying $\Psi(f)\circ\pi_{\mathcal{P}}=\hat{f}$ and, in particular, is fiber-preserving. Hence, $\Psi(f)\in\Gamma(\mathcal{E})$.\\
Conversely, suppose one has given a smooth section $X\in\Gamma(\mathcal{E})$. For any $p\in\mathcal{P}$, we define the bijective map $[p]:\,\mathcal{V}\rightarrow\mathcal{E}_{x},\,v\mapsto[p,v]$ with $x:=\pi_{\mathcal{P}}(p)$. Hence, let us define a map $f:\,\mathcal{P}\rightarrow\mathcal{V}$ via $f(p)=[p]^{-1}(X(x))$, $\forall p\in\mathcal{P}_x$. By construction, it then follows $f(p\cdot g)=\rho(g^{-1})(f(p))$ for any $g\in\mathcal{G}$. To see that it is smooth, let us choose a local section $s:\,U\rightarrow\mathcal{P}$ of the principal super fiber bundle $\mathcal{P}$ with $U\subseteq\mathcal{M}$ open which, in turn, induces a local trivialization $\psi:\,\mathcal{P}\supseteq\pi_{\mathcal{P}}^{-1}(U)\rightarrow U\times\mathcal{G}$ of $\mathcal{P}$ as well as smooth sections $\hat{s}_i:\,U\rightarrow\mathcal{E}$ of the associated super vector bundle $\mathcal{E}$ via $\hat{s}(x):=[s(x),e_i]$ $\forall x\in U$ where $(e_i)_i$ is a real homogeneous basis of $\mathcal{V}$. By Prop. \ref{prop:2.10}, this induces a local trivialization $\phi:\,\mathcal{E}\supseteq\pi_{\mathcal{E}}^{-1}(U)\rightarrow U\times\mathcal{V}$ of $\mathcal{E}$ over $U$. It is then easy to see that there exists a smooth map $v:\,U\rightarrow\mathcal{\mathcal{V}}$ such that $X(x)=[s(x),v(x)]$ $\forall x\in U$. It then follows $f\circ\psi^{-1}(x,g)=\rho(g^{-1},v(x))$ $\forall(x,g)\in U\times\mathcal{G}$ which is obviously smooth proving that indeed $f\in H^{\infty}(\mathcal{P},\mathcal{V})^{\mathcal{G}}$. This shows that $\Psi$ is bijective. That $\Psi$ is linear and preserves the grading is immediate. 
\end{proof}
\begin{example}[vector bundles as associated bundles]\label{prop:2.25}
Let $\mathcal{V}\rightarrow\mathcal{E}\stackrel{\pi}{\rightarrow}\mathcal{M}$ be a super vector bundle. The general linear group $\mathrm{GL}(\mathcal{V})$ acts in a trivial way on $\mathcal{V}$ via the so-called \emph{fundamental representation}
\begin{equation}
\rho\equiv\mathrm{id}:\,\mathrm{GL}(\mathcal{V})\rightarrow\mathrm{GL}(\mathcal{V}),\,g\mapsto g
\label{eq:2.54}
\end{equation}
This yields the associated super vector bundle $\mathscr{F}(\mathcal{E})\times_{\rho}\mathcal{V}$. Consider the map
\begin{align}
\mathscr{F}(\mathcal{E})\times_{\rho}\mathcal{V}&\rightarrow\mathcal{E}\label{eq:2.55}\\
[p_x,v]&\rightarrow(v^iX_{ix})_x\nonumber
\end{align}
where we have chosen a real homogeneous basis $(e_i)_i$ of $\mathcal{V}$ and set $X_{ix}:=p_x(e_i)$ $\forall i$. It is immediate that \eqref{eq:2.55} is smooth and well-defined, fiber-preserving as well as an isomorphism of super $\Lambda$-modules on each fiber. Hence, it indeed defines an isomorphism of super vector bundles. This shows that each super vector bundle is associated to a principal super fiber bundle.\\ 
\\
This construction also yields a new characterization of the (left-)dual super vector bundle $\tensor[^*]{\mathcal{V}}{}\rightarrow\!\tensor[^*]{\mathcal{E}}{}\rightarrow\mathcal{M}$ as follows. A representation $\rho:\,\mathcal{G}\rightarrow\mathrm{GL}(\mathcal{V})$ of a super Lie group on a super $\Lambda$-vector space yields a representation $\tensor[^*]{\rho}{}:\,\mathcal{G}\rightarrow\mathrm{GL}(\tensor[^*]{\mathcal{V}}{})$ on the corresponding left-dual super $\Lambda$-vector space $\tensor[^*]{\mathcal{V}}{}$ via
\begin{equation}
\tensor[^*]{\rho}{}(g)\ell:\,v\mapsto\braket{v|\tensor[^*]{\rho}{}(g)\ell}:=\braket{\braket{v|\rho(g^{-1})}|\ell}
\label{eq:2.56}
\end{equation}
$\forall g\in\mathcal{G}$, $\ell\in\!\tensor[^*]{\mathcal{V}}{}$ and $v\in\mathcal{V}$, called the \emph{left-dual representation} of $\mathcal{G}$. \\
In our case, i.e., $\mathcal{G}=\mathrm{GL}(\mathcal{V})$ and $\rho\equiv\mathrm{id}$, this yields the associated super vector bundle $\mathscr{F}(\mathcal{E})\times_{\tensor[^*]{\rho}{}}\!\tensor[^*]{\mathcal{V}}{}$. In fact, in turns out that this bundle is isomorphic to the left-dual super vector bundle via
\begin{align}
\mathscr{F}(\mathcal{E})\times_{\tensor[^*]{\rho}{}}\!\tensor[^*]{\mathcal{V}}{}&\rightarrow\tensor[^*]{\mathcal{E}}{}\\
[p_x,\ell]&\rightarrow(\mathrel{^i\! X_x}\ell_{i})_x\nonumber
\label{eq:2.57}
\end{align}
where, for any $x\in\mathcal{M}$, $(\tensor[^i]{X}{_x})_i$ denotes the left-dual basis of $(X_{ix})_i$ in $\tensor[^*]{\mathcal{E}}{_x}$. 
\end{example}


\begin{cor}\label{prop:2.27}
Let $\mathcal{H}\rightarrow\mathcal{P}\stackrel{\pi_{\mathcal{P}}}{\rightarrow}\mathcal{M}$ be a principal super fiber bundle with structure group $\mathcal{H}$ and $\mathcal{H}$-right action $\Phi:\mathcal{P}\times\mathcal{H}\rightarrow\mathcal{H}$. Let $\lambda:\,\mathcal{H}\rightarrow\mathcal{G}$ be a morphism of super Lie groups and $\rho_{\lambda}:=\mu_{\mathcal{G}}\circ(\lambda\times\mathrm{id}_{\mathcal{G}}):\,\mathcal{H}\times\mathcal{G}\rightarrow\mathcal{G}$ the induced smooth left action of $\mathcal{H}$ on $\mathcal{G}$. Then, the associated fiber bundle $\mathcal{P}\times_{\mathcal{H}}\mathcal{G}:=\mathcal{P}\times_{\rho_{\lambda}}\mathcal{G}$ can be given the structure of a principal super fiber bundle with structure group $\mathcal{G}$ and $\mathcal{G}$-right action
\begin{align}
\widetilde{\Phi}:\,(\mathcal{P}\times_{\mathcal{H}}\mathcal{G})\times\mathcal{G}&\rightarrow\mathcal{P}\times_{\mathcal{H}}\mathcal{G}\\
([p,g],h)&\mapsto[p,\mu_{\mathcal{G}}(g,h)]\nonumber
\label{eq:2.59}
\end{align}
Furthermore, let $\iota:\,\mathcal{P}\rightarrow\mathcal{P}\times_{\mathcal{H}}\mathcal{G}$ be defined as $\iota(p):=[p,e]$ $\forall p\in\mathcal{P}$, then $\iota$ is smooth, fiber-preserving and $\mathcal{H}$-equivariant in the sense that $\iota\circ\Phi=\widetilde{\Phi}\circ(\iota\times\lambda)$. Moreover, if $\lambda:\,\mathcal{H}\hookrightarrow\mathcal{G}$ is an embedding, then $\iota$ is an embedding.
\end{cor}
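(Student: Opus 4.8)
The plan is to read off everything from the explicit bundle charts constructed in the proof of Prop.~\ref{prop:2.21}. Recall that for $\mathcal{E}:=\mathcal{P}\times_{\rho_{\lambda}}\mathcal{G}$ these charts are $\phi^{\times}_{\alpha}([p,g])=(\pi_{\mathcal{P}}(p),\rho_{\lambda}(\mathrm{pr}_2\circ\phi_{\alpha}(p),g))=(\pi_{\mathcal{P}}(p),\lambda(\mathrm{pr}_2\circ\phi_{\alpha}(p))\,g)$, so by that proposition $\mathcal{E}$ is already a super fiber bundle with typical fiber $\mathcal{G}$. The equivalence relation on $\mathcal{P}\times\mathcal{G}$ inherited from $\Phi^{\times}$ reads $[p,g]=[p\cdot k,\lambda(k^{-1})g]$ for all $k\in\mathcal{H}$. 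First I would check that $\widetilde{\Phi}$ is well defined: if $[p,g]=[p',g']$, say $p'=p\cdot k$ and $g'=\lambda(k^{-1})g$, then $[p',g'h]=[p\cdot k,\lambda(k^{-1})gh]=[p,gh]$ by the same relation, so $[p,gh]$ is independent of the representative. That $\widetilde{\Phi}$ is a right action ($[p,(gh)h']=[p,g(hh')]$ and $[p,ge]=[p,g]$) and fiber-preserving ($\pi_{\mathcal{E}}\circ\widetilde{\Phi}=\pi_{\mathcal{E}}\circ\mathrm{pr}_1$) is then immediate.

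For smoothness and for the principal structure I would compute $\widetilde{\Phi}$ in the charts $\phi^{\times}_{\alpha}$. Writing $(x,\gamma):=\phi^{\times}_{\alpha}([p,g])=(\pi_{\mathcal{P}}(p),\lambda(\mathrm{pr}_2\circ\phi_{\alpha}(p))g)$, one finds $\phi^{\times}_{\alpha}(\widetilde{\Phi}([p,g],h))=(x,\gamma h)$; that is, in every bundle chart $\widetilde{\Phi}$ becomes the standard right multiplication $(x,\gamma,h)\mapsto(x,\gamma h)$ on $U_{\alpha}\times\mathcal{G}$. This shows at once that $\widetilde{\Phi}$ is smooth and that each $\phi^{\times}_{\alpha}$ is $\mathcal{G}$-equivariant in the sense of Definition~\ref{prop:2.15}(iii). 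Hence $\{(U_{\alpha},\phi^{\times}_{\alpha})\}_{\alpha\in\Upsilon}$ is an equivariant atlas and $\mathcal{G}\rightarrow\mathcal{P}\times_{\mathcal{H}}\mathcal{G}\rightarrow\mathcal{M}$ is a principal super fiber bundle with structure group $\mathcal{G}$.

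Next I turn to $\iota(p)=[p,e]$. Factoring $\iota=\pi\circ(\mathrm{id}_{\mathcal{P}},e)$ through the smooth projection $\pi$ of Prop.~\ref{prop:2.22} gives smoothness, and $\pi_{\mathcal{E}}(\iota(p))=\pi_{\mathcal{P}}(p)$ gives the fiber-preserving property; equivalently, the same chart computation yields $\phi^{\times}_{\alpha}\circ\iota\circ\phi_{\alpha}^{-1}=\mathrm{id}_{U_{\alpha}}\times\lambda$, i.e. locally $\iota$ acts as $\lambda$ in the fibers. The equivariance $\iota\circ\Phi=\widetilde{\Phi}\circ(\iota\times\lambda)$ is a one-line check with the equivalence relation: $\widetilde{\Phi}(\iota(p),\lambda(k))=[p,\lambda(k)]=[p\cdot k,\lambda(k^{-1})\lambda(k)]=[p\cdot k,e]=\iota(p\cdot k)=\iota(\Phi(p,k))$.

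Finally, suppose $\lambda$ is an embedding. Injectivity of $\iota$ follows from that of $\lambda$: $[p,e]=[p',e]$ forces $p'=p\cdot k$ with $\lambda(k)=e$, whence $k=e$ and $p'=p$. The local form $\mathrm{id}_{U_{\alpha}}\times\lambda$ shows that $\iota$ is an immersion and, restricted to $\pi_{\mathcal{P}}^{-1}(U_{\alpha})$, an embedding onto $(\phi^{\times}_{\alpha})^{-1}(U_{\alpha}\times\lambda(\mathcal{H}))$. The point I expect to require the most care is promoting these local embeddings to a global one, since a smooth injective immersion need not be an embedding in general. Here the fiber-preserving property is what saves the argument: because $\pi_{\mathcal{E}}\circ\iota=\pi_{\mathcal{P}}$, one has $\iota(\mathcal{P})\cap\pi_{\mathcal{E}}^{-1}(U_{\alpha})=\iota(\pi_{\mathcal{P}}^{-1}(U_{\alpha}))=(\phi^{\times}_{\alpha})^{-1}(U_{\alpha}\times\lambda(\mathcal{H}))$, so the image meets each open trivializing set exactly in the embedded piece. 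Thus the subspace topology on $\iota(\mathcal{P})$ agrees locally, and hence globally, with the one transported from $\mathcal{P}$, so $\iota$ is a homeomorphism onto its image; together with the immersion property this makes $\iota$ an embedding, completing the proof.
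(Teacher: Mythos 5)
Your proof is correct and follows essentially the same route as the paper: both read off smoothness of $\widetilde{\Phi}$, the $\mathcal{G}$-equivariance of the trivializations, and the local form $\phi^{\times}_{\alpha}\circ\iota=(\mathrm{id}\times\lambda)\circ\phi_{\alpha}$ from the explicit charts of Prop.~\ref{prop:2.21}, and both obtain smoothness of $\iota$ by factoring it through the canonical projection $\pi$. The only real divergence is the final topological step: where the paper deduces that $\iota$ is a homeomorphism onto its image from $\pi$ being an open submersion, you instead identify $\iota(\mathcal{P})\cap\pi_{\mathcal{E}}^{-1}(U_{\alpha})$ with $(\phi^{\times}_{\alpha})^{-1}(U_{\alpha}\times\lambda(\mathcal{H}))$ and glue the local embeddings over the trivializing cover---a slightly more explicit argument that reaches the same conclusion.
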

\begin{proof}
First, let us show that $\widetilde{\Phi}:\,(\mathcal{P}\times_{\mathcal{H}}\mathcal{G})\times\mathcal{G}\rightarrow\mathcal{P}\times_{\mathcal{H}}\mathcal{G}$ is a smooth $\mathcal{G}$-right action with respect to which the local trivializations $\phi_{\alpha}^{\times}$ are $\mathcal{G}$-equivariant. To see that is continuous, consider the smooth map $\Phi':\,(\mathcal{P}\times\mathcal{G})\times\mathcal{G}\rightarrow\mathcal{P}\times_{\mathcal{H}}\mathcal{G},\,((p,g),h)\mapsto[p,\mu_{\mathcal{G}}(g,h)]$. Since $\Phi'$ is constant on $\mathcal{G}$-orbits it follows from Prop. \eqref{prop:2.22}, i.e., the topology on $\mathcal{E}$ coincides with the quotient topology, that this induces a continuous map $\widetilde{\Phi}:\,(\mathcal{P}\times_{\mathcal{H}}\mathcal{G})\times\mathcal{G}\rightarrow\mathcal{P}\times_{\mathcal{H}}\mathcal{G}$ such that the following diagram commutes
\begin{displaymath}
	 \xymatrix{
         (\mathcal{P}\times\mathcal{G})\times\mathcal{G}\ar[rr]^{\Phi'} \ar[d]_{\pi\times\mathrm{id}_{\mathcal{G}}}  & &   \mathcal{P}\times_{\mathcal{H}}\mathcal{G} \\
            (\mathcal{P}\times_{\mathcal{H}}\mathcal{G})\times\mathcal{G} \ar[rru]_{\widetilde{\Phi}} &          
     }
		\label{eq:2.60}
 \end{displaymath}
That $\widetilde{\Phi}$ is smooth can be checked by direct computation. Furthermore, for a locally trivializing bundle chart $\phi^{\times}_{\alpha}:\,\pi_{\mathcal{E}}^{-1}(U_{\alpha})\rightarrow U_{\alpha}\times\mathcal{G}$, we compute
\begin{align}
\phi^{\times}_{\alpha}\circ\widetilde{\Phi}([p,g],h)&=(\pi_{\mathcal{P}}(p),\rho(\mathrm{pr}_2\circ\phi_{\alpha}(p),\mu_{\mathcal{G}}(g,h)))\nonumber\\
&=(\mathrm{id}\times\mu_{\mathcal{G}})(\pi_{\mathcal{P}}(p),\rho(\mathrm{pr}_2\circ\phi_{\alpha}(p),g)),h)\nonumber\\
&=(\mathrm{id}\times\mu_{\mathcal{G}})\circ\phi^{\times}_{\alpha}([p,g],h)
\label{eq:2.61}
\end{align}
and hence $\phi^{\times}_{\alpha}\circ\widetilde{\Phi}=(\mathrm{id}\times\mu_{\mathcal{G}})\circ\phi^{\times}_{\alpha}$ as required.\\
Finally, let us consider the map $\iota:\,\mathcal{P}\rightarrow\mathcal{P}\times_{\mathcal{H}}\mathcal{G},\,p\mapsto[p,e]$. Since $\iota=\pi\circ\iota_{\mathcal{P}}$ is a composition of the canonical projection $\pi$ as well as the embedding $\iota_{\mathcal{P}}:\,\mathcal{P}\rightarrow\mathcal{P}\times\{e\}\subset\mathcal{P}\times\mathcal{G}$, it follows that $\iota$ is smooth. That $\iota$ is fiber-preserving is immediate and for $(p,h)\in\mathcal{P}\times\mathcal{H}$ it follows $\iota\circ\Phi(p,h)=[\Phi(p,h),e]=[p,\rho_{\lambda}(h,e)]=[p,\mu_{\mathcal{G}}(e,\lambda(h))]=\widetilde{\Phi}\circ(\iota\times\lambda)(p,h)$, that is, $\iota$ is $\mathcal{H}$-equivariant. To prove the last assertion, if $\lambda:\,\mathcal{H}\hookrightarrow\mathcal{G}$ is an embedding, it follows that $\iota$ is injective. Moreover, since $\pi$ is an open submersion, $\iota$ is a homeomorphism onto its image. To see that it is an immersion and thus an embedding, let $\{(U_{\alpha},\phi^{\times}_{\alpha})\}_{\alpha\in\Upsilon}$ be a smooth bundle atlas of $\mathcal{P}\times_{\rho_{\lambda}}\mathcal{G}$ induced by a smooth bundle atlas of $\mathcal{P}$. Then, for $\alpha\in\Upsilon$, this yields
\begin{equation}
\phi_{\alpha}^{\times}\circ\iota(p)=(\pi_{\mathcal{P}}(p),\lambda(\mathrm{pr}_2\circ\phi_{\alpha}(p)))=(\mathrm{id}\times\lambda)\circ\phi_{\alpha}(p)
\label{eq:2.62}
\end{equation}
$\forall p\in\mathcal{P}$. But, since $\phi_{\alpha}$ is a diffeomorphism and $\mathrm{id}\times\lambda:\,\mathcal{P}\times\mathcal{H}\rightarrow\mathcal{P}\times\mathcal{G}$ an embedding the claim follows.
\end{proof}
\begin{definition}\label{prop:2.28}
Under the assumptions of Corollary \ref{prop:2.27}, the associated principal super fiber bundle $\mathcal{P}\times_{\mathcal{H}}\mathcal{G}:=\mathcal{P}\times_{\rho_{\lambda}}\mathcal{G}$ is called the \emph{$\lambda$-extension} of $\mathcal{P}$. If $\lambda:\,\mathcal{H}\hookrightarrow\mathcal{G}$ is an embedding, $\mathcal{P}\times_{\mathcal{H}}\mathcal{G}$ will also simply be called the \emph{$\mathcal{G}$-extension} of $\mathcal{P}$.
\end{definition}
\begin{definition}\label{prop:2.29}
Let $\mathcal{G}\rightarrow\mathcal{P}\stackrel{\pi_{\mathcal{P}}}{\rightarrow}\mathcal{M}$ be a principal super fiber bundle with structure group $\mathcal{G}$ and $\lambda:\,\mathcal{H}\longrightarrow\mathcal{G}$ a morphism of super Lie groups. A $\mathcal{H}$-bundle $\mathcal{H}\rightarrow\mathcal{Q}\stackrel{\pi_{\mathcal{Q}}}{\rightarrow}\mathcal{M}$ over $\mathcal{M}$ is called a \emph{$\lambda$-reduction} of $\mathcal{P}$, if there exists a smooth map $\Lambda:\,\mathcal{Q}\longrightarrow\mathcal{P}$ such that the following diagram commutes
\begin{equation} 
\begin{xy}
 \xymatrix{
         \mathcal{Q}\times\mathcal{H} \ar[rr] \ar[dd]_{\Lambda\times\lambda}   & &   \mathcal{Q}\ar[dd]^{\Lambda} \ar[rd]^{\pi_{\mathcal{Q}}} & \\
																&			&			&	\mathcal{M} \\
         \mathcal{P}\times\mathcal{G} \ar[rr]       &       &  \mathcal{P}\ar[ru]_{\pi_{\mathcal{P}}}  & 
     }
 \end{xy}
\end{equation}
i.e. $\Lambda$ is fiber-preserving and $\mathcal{H}$-equivariant in the sense that $\Lambda\circ\Phi_{\mathcal{Q}}=\Phi_{\mathcal{P}}\circ(\Lambda\times\lambda)$ with $\Phi_{\mathcal{Q}}:\,\mathcal{Q}\times\mathcal{H}\rightarrow\mathcal{Q}$ and $\Phi_{\mathcal{P}}:\,\mathcal{P}\times\mathcal{G}\rightarrow\mathcal{P}$ the super Lie group actions on $\mathcal{Q}$ and $\mathcal{P}$, respectively.
\end{definition}
\begin{cor}\label{prop:2.30}
Under the assumptions of Corollary \ref{prop:2.27}, the $\mathcal{H}$-bundle $\mathcal{P}$ is a $\lambda$-reduction of the $\mathcal{G}$-bundle $\mathcal{P}\times_{\mathcal{H}}\mathcal{G}$.\qed
\end{cor}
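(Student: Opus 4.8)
The plan is to produce the reduction morphism $\Lambda$ by recycling the map $\iota$ already constructed in Corollary~\ref{prop:2.27}, so that essentially no new work is required. Concretely, I would take $\mathcal{Q}:=\mathcal{P}$ as the $\mathcal{H}$-bundle, regard $\mathcal{P}\times_{\mathcal{H}}\mathcal{G}$ as the $\mathcal{G}$-bundle to be reduced, and set $\Lambda:=\iota:\,\mathcal{P}\rightarrow\mathcal{P}\times_{\mathcal{H}}\mathcal{G}$, $p\mapsto[p,e]$. It then suffices to check that $\Lambda$ meets the three requirements in Definition~\ref{prop:2.29}: smoothness, the fiber-preserving condition, and $\mathcal{H}$-equivariance.

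First I would invoke Corollary~\ref{prop:2.27} directly for smoothness, since $\iota$ was shown there to be a smooth map. Second, the fiber-preserving property $\pi_{\mathcal{P}\times_{\mathcal{H}}\mathcal{G}}\circ\iota=\pi_{\mathcal{P}}$ follows at once from the definition of the projection on the associated bundle in Proposition~\ref{prop:2.21}, because $\pi_{\mathcal{E}}([p,e])=\pi_{\mathcal{P}}(p)$ for every $p\in\mathcal{P}$; this is precisely the statement that $\iota$ is fiber-preserving recorded in Corollary~\ref{prop:2.27}. Third, matching $\Phi_{\mathcal{Q}}$ with the $\mathcal{H}$-right action $\Phi$ on $\mathcal{P}$ and $\Phi_{\mathcal{P}}$ with the $\mathcal{G}$-right action $\widetilde{\Phi}$ on $\mathcal{P}\times_{\mathcal{H}}\mathcal{G}$, the required identity $\Lambda\circ\Phi_{\mathcal{Q}}=\Phi_{\mathcal{P}}\circ(\Lambda\times\lambda)$ becomes exactly the equivariance relation $\iota\circ\Phi=\widetilde{\Phi}\circ(\iota\times\lambda)$ already established in the proof of Corollary~\ref{prop:2.27}.

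If I wanted to spell out the last point rather than merely cite it, I would evaluate both sides on $(p,h)\in\mathcal{P}\times\mathcal{H}$: on the one hand $\iota(\Phi(p,h))=[p\cdot h,e]$, and on the other hand $\widetilde{\Phi}(\iota(p),\lambda(h))=[p,\mu_{\mathcal{G}}(e,\lambda(h))]=[p,\lambda(h)]$, and the two coincide because $(p,\lambda(h))$ and $(p\cdot h,e)$ lie in the same $\mathcal{H}$-orbit under $\Phi^{\times}$. Once these three conditions are in place, the commuting diagram of Definition~\ref{prop:2.29} holds with $\Lambda=\iota$, so that $\mathcal{P}$ is a $\lambda$-reduction of $\mathcal{P}\times_{\mathcal{H}}\mathcal{G}$. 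I expect no real obstacle here: the statement is a pure corollary of Corollary~\ref{prop:2.27}, and the only care needed is the bookkeeping of which action and which projection play the roles of $\Phi_{\mathcal{Q}}$, $\Phi_{\mathcal{P}}$ and the two bundle projections in Definition~\ref{prop:2.29}.
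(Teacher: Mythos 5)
Your proposal is correct and matches the paper's intent exactly: the paper states this corollary with \qed and no proof, precisely because, as you observe, the map $\iota(p)=[p,e]$ from Corollary \ref{prop:2.27} already carries all three properties (smoothness, fiber preservation, $\mathcal{H}$-equivariance $\iota\circ\Phi=\widetilde{\Phi}\circ(\iota\times\lambda)$) demanded of the reduction morphism $\Lambda$ in Definition \ref{prop:2.29}. Your explicit orbit check $[p,\lambda(h)]=[p\cdot h,e]$ is also correct.
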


\section{$\mathcal{S}$-relative super connection forms}\label{section:relative}
One of the main issues when working in the standard category of supermanifolds, both in the $H^{\infty}$ or the algebraic category, is that superfields on the body of a supermanifold only contain commuting (bosonic) degrees of freedom, that is, there are no anticommuting (fermionic) field configurations on the body.\\
This, however, turns out to be incompatible with various constructions in physics, such as in the geometric approach to supergravity. In the Castellani-D'Auria-Fré approach to supergraviy \cite{DAuria:1982uck,Castellani:1991et}, for instance, one has the so-called \emph{rheonomy principle} stating that physical fields are completely fixed by their pullback to the body manifold.\\
This can be cured by factorising a given supermanifold $\mathcal{M}$ by an additional parametrizing supermanifold $\mathcal{S}$ and studying superfields on $\mathcal{S}\times\mathcal{M}$. One is then interested in a certain subclass of such superfields which depend as little as possible on this additional supermanifold making them covariant in a specific sense under a change of parametrization. This idea is based on a proposal formulated already by Schmitt in \cite{Schmitt:1996hp} which is motivated by the functorial approach to supermanifold theory according to Molotkov \cite{Mol:10} and Sachse \cite{Sac:09}.
This approach also recently found application in context of superconformal field theories on super Riemannian surfaces \cite{Jost:2014wfa,Kessler:2019bwp} as well as in context of the local approach to super quantum field theories (QFT) \cite{Hack:2015vna}. Moreover, as will be explained in more detail in section \ref{sec:Param}, the description of fermionic fields turns out to be quite similar to considerations in perturbative algebraic QFT \cite{Rejzner:2011au,Rejzner:2016hdj}. \\
In the following, we will adopt the terminology of \cite{Hack:2015vna} introducing the notion of a relative supermanifold. However, unlike as in \cite{Hack:2015vna}, in order to study fermionic fields, we will not restrict on superpoints as parametrizing supermanifolds. We will then define principal connections and connection 1-forms on parametrizing supermanifolds. These results will then be applied in section \ref{section:parallelNEU} for the construction of the parallel tansport map.

\begin{definition}\label{def:9.1}
Let $\mathcal{S}$ and $\mathcal{M}$ be supermanifolds. The pair $(\mathcal{S}\times\mathcal{M},\mathrm{pr}_{\mathcal{S}})$ with $\mathrm{pr}_{\mathcal{S}}:\,\mathcal{S}\times\mathcal{M}\rightarrow\mathcal{S}$ the projection onto the first factor is called a \emph{$\mathcal{S}$-relative supermanifold} also denoted by $\mathcal{M}_{/\mathcal{S}}$.\\
A morphism $\phi:\,\mathcal{M}_{/\mathcal{S}}\rightarrow\mathcal{N}_{/\mathcal{S}}$ between $\mathcal{S}$-relative supermanifolds is a morphism $\phi:\,\mathcal{S}\times\mathcal{M}\rightarrow\mathcal{S}\times\mathcal{N}$ of supermanifolds preserving the projections, i.e. the following diagram is commutative
\begin{displaymath}
	 \xymatrix{
         \mathcal{S}\times\mathcal{M}\ar[rr]^{\phi} \ar[dr]_{\mathrm{pr}_{\mathcal{S}}}  & &    \mathcal{S}\times\mathcal{N} \ar[dl]^{\mathrm{pr}_{\mathcal{S}}}\\
            &    \mathcal{S}   &   
     }
		\label{eq:9.1}
 \end{displaymath}
Hence, $\phi(s,p)=(s,\tilde{\phi}(s,p))$ $\forall (s,p)\in\mathcal{S}\times\mathcal{M}$ with $\tilde{\phi}:=\mathrm{pr}_{\mathcal{N}}\circ\phi:\,\mathcal{S}\times\mathcal{M}\rightarrow\mathcal{N}$. This yields a category $\mathbf{SMan}_{/\mathcal{S}}$ called the \emph{category of $\mathcal{S}$-relative supermanifolds}.
\end{definition}
The following proposition gives a different characterization of morphism between $\mathcal{S}$-relative supermanifolds.
\begin{prop}[after \cite{Hack:2015vna}]\label{prop:9.2}
Let $\mathcal{M}_{/\mathcal{S}},\mathcal{N}_{/\mathcal{S}}\in\mathbf{Ob}(\mathbf{SMan}_{/\mathcal{S}})$ be $\mathcal{S}$-relative supermanifolds. Then, the map
\begin{align}
\alpha_{\mathcal{S}}:\,\mathrm{Hom}_{\mathbf{SMan}_{/\mathcal{S}}}(\mathcal{M}_{/\mathcal{S}},\mathcal{N}_{/\mathcal{S}})&\rightarrow\mathrm{Hom}_{\mathbf{SMan}_{H^{\infty}}}(\mathcal{S}\times\mathcal{M},\mathcal{S}\times\mathcal{N})\label{eq:9.2}\\
(\phi:\,\mathcal{S}\times\mathcal{M}\rightarrow\mathcal{S}\times\mathcal{N})&\mapsto(\mathrm{pr}_{\mathcal{N}}\circ\phi:\,\mathcal{S}\times\mathcal{M}\rightarrow\mathcal{N})\nonumber
\end{align}
is a bijection with the inverse given by
\begin{align}
\alpha_{\mathcal{S}}^{-1}:\,\mathrm{Hom}_{\mathbf{SMan}_{H^{\infty}}}(\mathcal{S}\times\mathcal{M},\mathcal{N})&\rightarrow\mathrm{Hom}_{\mathbf{SMan}_{/\mathcal{S}}}(\mathcal{M}_{/\mathcal{S}},\mathcal{N}_{/\mathcal{S}})\label{eq:9.3}\\
(\psi:\,\mathcal{S}\times\mathcal{M}\rightarrow\mathcal{N})&\mapsto((\mathrm{id}_{\mathcal{S}}\times\psi)\circ(d_{\mathcal{S}}\times\mathrm{id}_{\mathcal{M}}):\,\mathcal{S}\times\mathcal{M}\rightarrow\mathcal{S}\times\mathcal{N})\nonumber
\end{align}
with $d_{\mathcal{S}}:\,\mathcal{S}\rightarrow\mathcal{S}\times\mathcal{S}$ the diagonal map.\qed
\end{prop}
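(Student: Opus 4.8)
The plan is to verify directly that $\alpha_{\mathcal{S}}$ and the proposed $\alpha_{\mathcal{S}}^{-1}$ are well-defined maps landing in the asserted hom-sets, and that they are mutually inverse. No deep input is required beyond the fact that compositions and products of $H^{\infty}$-smooth maps are again smooth and that the diagonal $d_{\mathcal{S}}$ is smooth. First I would establish well-definedness of $\alpha_{\mathcal{S}}$: if $\phi\in\mathrm{Hom}_{\mathbf{SMan}_{/\mathcal{S}}}(\mathcal{M}_{/\mathcal{S}},\mathcal{N}_{/\mathcal{S}})$, then $\mathrm{pr}_{\mathcal{N}}\circ\phi$ is a composition of smooth maps, hence an element of $\mathrm{Hom}_{\mathbf{SMan}_{H^{\infty}}}(\mathcal{S}\times\mathcal{M},\mathcal{N})$, so $\alpha_{\mathcal{S}}$ indeed takes values in its target.

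Next I would show that $\alpha_{\mathcal{S}}^{-1}$ is well-defined. For $\psi\in\mathrm{Hom}_{\mathbf{SMan}_{H^{\infty}}}(\mathcal{S}\times\mathcal{M},\mathcal{N})$ the map $(\mathrm{id}_{\mathcal{S}}\times\psi)\circ(d_{\mathcal{S}}\times\mathrm{id}_{\mathcal{M}})$ is smooth, being built from the smooth diagonal $d_{\mathcal{S}}$, identities, and products of smooth maps. Evaluating it yields
\begin{equation}
(\mathrm{id}_{\mathcal{S}}\times\psi)\circ(d_{\mathcal{S}}\times\mathrm{id}_{\mathcal{M}})(s,p)=(\mathrm{id}_{\mathcal{S}}\times\psi)(s,s,p)=(s,\psi(s,p))
\end{equation}
so its first component is $s$, i.e. $\mathrm{pr}_{\mathcal{S}}\circ\alpha_{\mathcal{S}}^{-1}(\psi)=\mathrm{pr}_{\mathcal{S}}$. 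Hence $\alpha_{\mathcal{S}}^{-1}(\psi)$ preserves the projection onto $\mathcal{S}$ and defines a morphism $\mathcal{M}_{/\mathcal{S}}\rightarrow\mathcal{N}_{/\mathcal{S}}$, so $\alpha_{\mathcal{S}}^{-1}$ lands in $\mathrm{Hom}_{\mathbf{SMan}_{/\mathcal{S}}}(\mathcal{M}_{/\mathcal{S}},\mathcal{N}_{/\mathcal{S}})$.

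It then remains to check that the two composites are identities, using the normal form $\phi(s,p)=(s,\tilde{\phi}(s,p))$ with $\tilde{\phi}=\mathrm{pr}_{\mathcal{N}}\circ\phi$ recorded in Definition~\ref{def:9.1}. On the one hand, since $\alpha_{\mathcal{S}}^{-1}(\psi)$ is the morphism $(s,p)\mapsto(s,\psi(s,p))$, applying $\alpha_{\mathcal{S}}=\mathrm{pr}_{\mathcal{N}}\circ(\,\cdot\,)$ gives $\alpha_{\mathcal{S}}(\alpha_{\mathcal{S}}^{-1}(\psi))\colon(s,p)\mapsto\psi(s,p)$, i.e. $\psi$ itself. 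On the other hand, for projection-preserving $\phi$ one computes $\alpha_{\mathcal{S}}^{-1}(\alpha_{\mathcal{S}}(\phi))\colon(s,p)\mapsto(s,\tilde{\phi}(s,p))=\phi(s,p)$, where the last equality is precisely the normal form of $\phi$. Both composites are therefore the identity, establishing the bijection.

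I do not expect a genuine obstacle here: the statement is the familiar currying/adjunction isomorphism expressing that a morphism over $\mathcal{S}$ carries exactly the same datum as a smooth map into the second factor $\mathcal{N}$, now transported into $\mathbf{SMan}_{H^{\infty}}$. The only point demanding a little care is to run the computations at the level of morphisms rather than merely on the underlying topological points; this is handled by recalling that all maps involved are $H^{\infty}$-smooth and that two such morphisms agreeing on all points of $\mathcal{S}\times\mathcal{M}$ coincide, so the pointwise identities above already characterize the morphisms uniquely.
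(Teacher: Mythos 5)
Your proof is correct. The paper states this proposition without proof (it is attributed to \cite{Hack:2015vna} and closed immediately), and your direct verification---checking well-definedness of both maps and computing the two composites via the normal form $\phi(s,p)=(s,\tilde{\phi}(s,p))$---is exactly the routine currying argument being invoked; since $H^{\infty}$-morphisms are genuine point maps, the pointwise computations do determine the morphisms, as you note.
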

Let $\lambda:\,\mathcal{S}\rightarrow\mathcal{S}'$ be a morphism between parametrizing supermanifolds, we will also call such a morphism a \emph{change of parametrization}. Then, any smooth map $\phi:\,\mathcal{S'}\times\mathcal{M}\rightarrow\mathcal{N}$ can be pulled back via $\lambda$ to a morphism $\lambda^*\phi:=\phi\circ(\lambda\times\mathrm{id}_{\mathcal{M}}):\,\mathcal{S}\times\mathcal{M}\rightarrow\mathcal{N}$. Using \ref{eq:9.3}, this yields the map \cite{Hack:2015vna}
\begin{align}
\lambda^{*}:\,\mathrm{Hom}_{\mathbf{SMan}_{/\mathcal{S}'}}(\mathcal{M}_{/\mathcal{S}'},\mathcal{N}_{/\mathcal{S'}})&\rightarrow\mathrm{Hom}_{\mathbf{SMan}_{/\mathcal{S}}}(\mathcal{M}_{/\mathcal{S}},\mathcal{N}_{/\mathcal{S}})\label{eq:9.4}\\
\phi&\mapsto\alpha_{\mathcal{S}}^{-1}(\alpha_{\mathcal{S}'}(\phi)\circ(\lambda\times\mathrm{id}_{\mathcal{M}}))\nonumber
\end{align}
Hence, explicitly, for $\phi:\,\mathcal{M}_{\mathcal{S}'}\rightarrow\mathcal{N}_{\mathcal{S'}}$, $\lambda^*(\phi)$ reads
\begin{align}
\alpha_{\mathcal{S}}^{-1}(\phi)(s,p)=(s,\mathrm{pr}_{\mathcal{N}}\circ\phi(\lambda(s),p))
\label{eq:9.5}
\end{align}
$\forall(s,p)\in\mathcal{S}\times\mathcal{M}$. The following proposition demonstrates that the set of morphisms between relative supermanifolds is functorial in the parametrizing supermanifold and thus indeed has the required properties under change of parametrization.
\begin{prop}[after \cite{Hack:2015vna}]
The assignment
\begin{align}
\mathbf{SMan}\rightarrow\mathbf{Set}:\,\mathbf{Ob}(\mathbf{SMan})\ni\mathcal{S}&\mapsto\mathrm{Hom}_{\mathbf{SMan}_{/\mathcal{S}}}(\mathcal{M}_{/\mathcal{S}},\mathcal{N}_{/\mathcal{S}})\in\mathbf{Ob}(\mathbf{Set})\nonumber\\
(\lambda:\,\mathcal{S}\rightarrow\mathcal{S}')&\mapsto\lambda^{*}
\label{eq:9.6}
\end{align}
defines a contravariant functor on the category $\mathbf{SMan}_{H^{\infty}}$ of $H^{\infty}$-supermanifolds. Moreover, the map $\lambda^{*}$ associated to the morphism $\lambda:\,\mathcal{S}\rightarrow\mathcal{S}'$ preserves compositions, i.e., $\lambda^*(\phi\circ\psi)=\lambda^*(\phi)\circ\lambda^{*}(\psi)$ for any $\psi:\,\mathcal{M}\rightarrow\mathcal{N}$ and $\phi:\,\mathcal{N}\rightarrow\mathcal{L}$
\end{prop}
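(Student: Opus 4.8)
The whole statement reduces to elementary bookkeeping with the explicit description around \eqref{eq:9.5}, so the plan is to set up that formula once and then read off all three assertions from it. For $\lambda:\,\mathcal{S}\rightarrow\mathcal{S}'$ and a relative morphism $\phi$ over $\mathcal{S}'$, the definition of $\lambda^{*}$ through $\alpha_{\mathcal{S}}^{-1}$ unwinds to
\begin{equation}
\lambda^{*}(\phi)(s,p)=(s,\mathrm{pr}_{\mathcal{N}}\circ\phi(\lambda(s),p)),\qquad (s,p)\in\mathcal{S}\times\mathcal{M}.
\end{equation}
Since we work in the concrete $H^{\infty}$-category, morphisms are determined by their values on points, so it suffices to compare the two sides of each identity after evaluating on an arbitrary $(s,p)$. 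Well-definedness of $\lambda^{*}$ is built in: the right-hand side is manifestly projection-preserving and smooth, hence lies in $\mathrm{Hom}_{\mathbf{SMan}_{/\mathcal{S}}}(\mathcal{M}_{/\mathcal{S}},\mathcal{N}_{/\mathcal{S}})$ by Prop.~\ref{prop:9.2}, and $\lambda^{*}$ reverses the direction of $\lambda$, as a contravariant functor requires.

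First I would dispose of the unit law $\mathrm{id}_{\mathcal{S}}^{*}=\mathrm{id}$: for $\lambda=\mathrm{id}_{\mathcal{S}}$ the formula returns $(s,\mathrm{pr}_{\mathcal{N}}\circ\phi(s,p))$, which equals $\phi(s,p)$ precisely because $\phi$ preserves the projection. For functoriality I would take a second change of parametrization $\mu:\,\mathcal{S}'\rightarrow\mathcal{S}''$ and a relative morphism $\phi$ over $\mathcal{S}''$, and evaluate both $(\mu\circ\lambda)^{*}(\phi)$ and $\lambda^{*}(\mu^{*}(\phi))$ at $(s,p)$. The former gives $(s,\mathrm{pr}_{\mathcal{N}}\circ\phi(\mu(\lambda(s)),p))$; for the latter one first reads $\mu^{*}(\phi)(\lambda(s),p)=(\lambda(s),\mathrm{pr}_{\mathcal{N}}\circ\phi(\mu(\lambda(s)),p))$ and then applies $\mathrm{pr}_{\mathcal{N}}$, obtaining the same value. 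This yields $(\mu\circ\lambda)^{*}=\lambda^{*}\circ\mu^{*}$ and, with the unit law, completes the contravariant functoriality on $\mathbf{SMan}_{H^{\infty}}$.

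The compatibility with composition is handled in the same spirit. With $\psi:\,\mathcal{M}_{/\mathcal{S}'}\rightarrow\mathcal{N}_{/\mathcal{S}'}$ and $\phi:\,\mathcal{N}_{/\mathcal{S}'}\rightarrow\mathcal{L}_{/\mathcal{S}'}$, I would abbreviate $q:=\mathrm{pr}_{\mathcal{N}}\circ\psi(\lambda(s),p)$. The projection-preservation of $\psi$ gives $\psi(\lambda(s),p)=(\lambda(s),q)$, whence $\mathrm{pr}_{\mathcal{L}}\circ(\phi\circ\psi)(\lambda(s),p)=\mathrm{pr}_{\mathcal{L}}\circ\phi(\lambda(s),q)$, i.e. $\lambda^{*}(\phi\circ\psi)(s,p)=(s,\mathrm{pr}_{\mathcal{L}}\circ\phi(\lambda(s),q))$. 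On the other side $\lambda^{*}(\psi)(s,p)=(s,q)$ and then $\lambda^{*}(\phi)(s,q)=(s,\mathrm{pr}_{\mathcal{L}}\circ\phi(\lambda(s),q))$, so the two expressions coincide. An analogous, even shorter, computation shows $\lambda^{*}$ fixes identity morphisms, so $\lambda^{*}$ is in fact a functor $\mathbf{SMan}_{/\mathcal{S}'}\rightarrow\mathbf{SMan}_{/\mathcal{S}}$.

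None of these steps is deep; the only thing requiring attention — and the point where the relative structure is genuinely used — is the systematic invocation of the projection-preservation property, which is exactly what licenses feeding $\lambda(s)$ into the first slot consistently at each stage and keeps the first component equal to the bookkeeping parameter $s$. Should one wish to avoid evaluating on points altogether, I would instead establish the point-free identity $\alpha_{\mathcal{S}}(\phi\circ\psi)=\alpha_{\mathcal{S}}(\phi)\circ(\mathrm{id}_{\mathcal{S}}\times\alpha_{\mathcal{S}}(\psi))\circ(d_{\mathcal{S}}\times\mathrm{id}_{\mathcal{M}})$ and combine it with the definition of $\lambda^{*}$ and the bijectivity of $\alpha_{\mathcal{S}}$ from Prop.~\ref{prop:9.2}; this serves as a clean categorical cross-check of the pointwise argument.
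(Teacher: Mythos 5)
Your proof is correct and follows exactly the route the paper itself flags, namely the direct pointwise verification using the explicit formula \eqref{eq:9.5} valid in the $H^{\infty}$-category (the paper's own proof is a one-line appeal to the identities \eqref{eq:9.2}--\eqref{eq:9.4} with this explicit computation named as the alternative). Your bookkeeping of the unit law, the contravariance $(\mu\circ\lambda)^{*}=\lambda^{*}\circ\mu^{*}$, and the compatibility with composition via the projection-preservation property is all accurate.
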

\begin{proof}
This is an immediate consequence of the identities \eqref{eq:9.2}, \eqref{eq:9.3} and \eqref{eq:9.4}. Alternatively, one may directly prove this proposition using the explicit formula \eqref{eq:9.5} valid in the $H^{\infty}$-category.
\end{proof}
\begin{definition}
\begin{enumerate}[label=(\roman*)]
	\item Let $\mathcal{M}_{/\mathcal{S}}\in\mathbf{Ob}(\mathbf{SMan}_{/\mathcal{S}})$ be a $\mathcal{S}$-relative supermanifold and $p\in\mathcal{M}_{/\mathcal{S}}$ a point, that is, a tuple $p\equiv(s,x)$ in $\mathcal{S}\times\mathcal{M}$. A \emph{tangent vector} $X_p$ at $p$ is defined as a tangent vector $X_p\in T_p(\mathcal{S}\times\mathcal{M})$ satisfying
\begin{equation}
X_p(f\otimes 1)=0,\,\forall f\in H^{\infty}(\mathcal{S})
\label{eq:9.7}
\end{equation}
The collection of tangent vectors at $p$ defines a super $\Lambda$-sub module of $T_p(\mathcal{S}\times\mathcal{M})$ denoted by $T_p(\mathcal{M}_{/\mathcal{S}})$ which we call the \emph{tangent module} of $\mathcal{M}_{/\mathcal{S}}$ at $p$.\\
A \emph{smooth vector field} $X\in\mathfrak{X}(\mathcal{M}_{/\mathcal{S}})$ on $\mathcal{M}_{/\mathcal{S}}$ is a smooth section $X\in\mathfrak{X}(\mathcal{S}\times\mathcal{M})$ of the tangent bundle of $\mathcal{S}\times\mathcal{M}$ such that $X_p\in T_p(\mathcal{M}_{/\mathcal{S}})$ for any $p\in\mathcal{S}\times\mathcal{M}$. The vector fields on $\mathcal{M}_{/\mathcal{S}}$ form a super $H^{\infty}(\mathcal{S}\times\mathcal{M})$-sub module $\mathfrak{X}(\mathcal{M}_{/\mathcal{S}})$ of $\mathfrak{X}(\mathcal{S}\times\mathcal{M})$ isomorphic to $H^{\infty}(\mathcal{S})\otimes\mathfrak{X}(\mathcal{M})$.
	\item A \emph{co-tangent vector} $\omega_p$ at $p\in\mathcal{M}_{/\mathcal{S}}$ is defined a left-linear morphism $\omega_p:\,T_p(\mathcal{M}_{/\mathcal{S}})\rightarrow\Lambda$. A \emph{$1$-form} $\omega$ on $\mathcal{M}_{/\mathcal{S}}$ is a left-linear morphism of super $H^{\infty}(\mathcal{S}\times\mathcal{M})$-modules $\omega\in\underline{\mathrm{Hom}}_L(\mathfrak{X}(\mathcal{M}_{/\mathcal{S}}),H^{\infty}(\mathcal{S}\times\mathcal{M}))$. The set $\Omega^1(\mathcal{M}_{/\mathcal{S}})$ of $\mathcal{S}$-relative $1$-forms on $\mathcal{M}_{/\mathcal{S}}$ defines a super $H^{\infty}(\mathcal{S}\times\mathcal{M})$-sub module of $\Omega^1(\mathcal{S}\times\mathcal{M})$ isomorphic to $\Omega^1(\mathcal{M})\otimes H^{\infty}(\mathcal{S})$.
	\item Analogously, one defines $k$-forms $\omega\in\Omega^k(\mathcal{M}_{/\mathcal{S}})$ on $\mathcal{M}_{/\mathcal{S}}$, $k\in\mathbb{N}$, as skew-symmetric $k$-left linear morphisms $\omega\in\underline{\mathrm{Hom}}_{a,L}(\mathfrak{X}(\mathcal{M}_{/\mathcal{S}})^k,H^{\infty}(\mathcal{S}\times\mathcal{M}))$ of super $H^{\infty}(\mathcal{S}\times\mathcal{M})$-modules. For $k=0$, we set $\Omega^0(\mathcal{M}_{/\mathcal{S}})\equiv H^{\infty}(\mathcal{S}\times\mathcal{M})$.\\
	Operations on forms, such as the \emph{exterior} and \emph{interior derivative} as well as \emph{Lie derivative} can be defined as in the non-relative setting (see e.g. \cite{Deligne:1999qp,Bartocci,Tuynman:2004,Stavracou:1996qb}). For instance, given a 1-form $\omega\in\Omega^1(\mathcal{M}_{/\mathcal{S}})$ and homogeneous $X\in\mathfrak{X}(\mathcal{M}_{/\mathcal{S}})$, the interior derivative is given by $\iota_X(\omega):=\braket{X|\omega}$ and similarly for arbitrary $k$-forms. The Lie derivative is then defined via the \emph{(graded) Cartan formula} $L_X:=\mathrm{d}\circ\iota_X+\iota_X\circ\mathrm{d}$. Moreover, one has the important identity
	\begin{equation}
	[L_X,\iota_Y]\equiv L_X\circ\iota_Y-(-1)^{|X||Y|}\iota_Y\circ L_X=\iota_{[X,Y]}
	\label{eq:identity}
	\end{equation}
for any homogeneous $X,Y\in\mathfrak{X}(\mathcal{M}_{/\mathcal{S}})$.
\end{enumerate}
\end{definition}

\begin{definition}\label{definition-bundle}
\begin{enumerate}[label=(\roman*)]
	\item Consider a principal super fiber bundle $\mathcal{G}\rightarrow\mathcal{P}\stackrel{\pi}{\rightarrow}\mathcal{M}$ with $\mathcal{G}$-right action $\Phi:\,\mathcal{P}\times\mathcal{G}\rightarrow\mathcal{P}$ as well as a parametrizing supermanifold $\mathcal{S}$. Taking products, this yields a fiber bundle 
\begin{displaymath}
	 \xymatrix{
            \mathcal{S}\times\mathcal{P} \ar[d]^{\pi_{\mathcal{S}}} & \mathcal{G}\ar[l]\\
              \mathcal{S}\times\mathcal{M} & 
     }
		\label{eq:9.8}
 \end{displaymath}
with projection $\pi_{\mathcal{S}}:=\mathrm{id}_{\mathcal{S}}\times\pi$ and $\mathcal{G}$-right action $\Phi_{\mathcal{S}}:=\mathrm{id}_{\mathcal{S}}\times\Phi:\,(\mathcal{S}\times\mathcal{P})\times\mathcal{G}\rightarrow\mathcal{S}\times\mathcal{P}$. By construction, $\pi_{\mathcal{S}}$ defines a morphism of $\mathcal{S}$-relative supermanifolds. Moreover, $\Phi_{\mathcal{S}}$ satisfies $\pi_{\mathcal{S}}\circ\Phi_{\mathcal{S}}=\pi_{\mathcal{S}}\circ\mathrm{pr}_{\mathcal{S}\times\mathcal{P}}$ as well as 
\begin{equation}
\Phi_{\mathcal{S}}\circ(\Phi_{\mathcal{S}}\times\mathrm{id})=\Phi_{\mathcal{S}}\circ(\mathrm{id}\times\mu)
\label{eq:0.1}
\end{equation}
We will call such a group action a $\mathcal{G}$-right action of $\mathcal{S}$-relative supermanifolds.\\
Hence, this yields a fiber bundle $\mathcal{G}\rightarrow\mathcal{P}_{/\mathcal{S}}\stackrel{\pi_{\mathcal{S}}}{\rightarrow}\mathcal{M}_{/\mathcal{S}}$ in the category $\mathbf{SMan}_{/\mathcal{S}}$ of $\mathcal{S}$-relative supermanifolds which will be called a \emph{$\mathcal{S}$-relative principal super fiber bundle}.
\item Let $\mathcal{V}\rightarrow\mathcal{E}\stackrel{\pi}{\rightarrow}\mathcal{M}$ be a super vector bundle. Similarly as above, taking products, this yields a fiber bundle $\mathcal{V}\rightarrow\mathcal{E}_{/\mathcal{S}}\stackrel{\pi_{\mathcal{S}}}{\rightarrow}\mathcal{M}_{/\mathcal{S}}$ in the category $\mathbf{SMan}_{/\mathcal{S}}$ with typical fiber given by a super $\Lambda$-vector space $\mathcal{V}$ which will be called a \emph{$\mathcal{S}$-relative super vector bundle}.\\
A smooth section $X\in\Gamma(\mathcal{E}_{/\mathcal{S}})$ of the $\mathcal{S}$-relative super vector bundle $\mathcal{E}_{/\mathcal{S}}$ is given by morphisms $X:\,\mathcal{M}_{/\mathcal{S}}\rightarrow\mathcal{E}_{/\mathcal{S}}$ of $\mathcal{S}$-relative supermanifolds satisfying $\pi_{\mathcal{S}}\circ X=\mathrm{id}$.
\end{enumerate}
\end{definition}

\begin{remark}\label{prop:3.6}
Let $\mathcal{M}_{/\mathcal{S}}\in\mathbf{Ob}(\mathbf{SMan}_{/\mathcal{S}})$ be a $\mathcal{S}$-relative supermanifold. Choosing a local coordinate neighborhood of $\mathcal{M}$, it is immediate to see that for any $(s,p)\in\mathcal{M}_{/\mathcal{S}}$, the tangent module $T_{(s,p)}(\mathcal{M}_{/\mathcal{S}})$ is isomorphic to $T_{p}\mathcal{M}$ via $T_p\mathcal{M}\ni X_p\mapsto\mathds{1}\otimes X_p\in T_{(s,p)}(\mathcal{M}_{/\mathcal{S}})$.\\
On $\mathcal{S}\times\mathcal{M}$ consider the assignment $T(\mathcal{M}_{/\mathcal{S}}):\,\mathcal{S}\times\mathcal{M}\ni(s,p)\mapsto T_{(s,p)}(\mathcal{M}_{/\mathcal{S}})$ which defines a subbundle of $T(\mathcal{S}\times\mathcal{M})$ and which we call the \emph{tangent bundle} of $\mathcal{M}_{/\mathcal{S}}$. On the other hand, according to Definition \ref{definition-bundle} (ii), one can also consider the $\mathcal{S}$-relative super vector bundle $(T\mathcal{M})_{/\mathcal{S}}$. Together with the previous observation, we obtain an isomorphism
\begin{align}
(T\mathcal{M})_{/\mathcal{S}}&\stackrel{\sim}{\rightarrow}T(\mathcal{M}_{/\mathcal{S}}),\,(s,X_p)\mapsto(\mathds{1}\otimes X)_{(s,p)}\in T_{(s,p)}(\mathcal{M}_{/\mathcal{S}})
\end{align}
Moreover, via this identification, it follows that smooth vector fields on $\mathcal{M}_{/\mathcal{S}}$ can be identified with smooth sections $X\in\Gamma((T\mathcal{M})_{/\mathcal{S}})$ of the $\mathcal{S}$-relative super vector bundle $(T\mathcal{M})_{/\mathcal{S}}$.\\
In a similar way, it follows that $\mathcal{S}$-relative 1-forms on $\mathcal{M}_{/{\mathcal{S}}}$ can be identified with smooth sections $\omega\in\Gamma((\tensor[^*]{T\mathcal{M}}{})_{/\mathcal{S}})$ of the $\mathcal{S}$-relative super vector bundle $(\tensor[^*]{T\mathcal{M}}{})_{/\mathcal{S}}$.
\end{remark}
As in the ordinary theory of principal fiber bundles and gauge theory in physics, connections and connection 1-forms play a very central role. To implement these notions in the category of relative supermanifolds, we first have to introduce the notion of a geometric distribution.
\begin{definition}
A \emph{smooth geometric distribution $\mathcal{E}$ of rank $k|l$} on a $\mathcal{S}$-relative supermanifold $\mathcal{M}_{/\mathcal{S}}$ is an assignment
\begin{equation}
\mathcal{E}:\,\mathcal{M}_{/\mathcal{S}}\ni p\mapsto\mathcal{E}_p\subseteq T_p(\mathcal{M}_{/\mathcal{S}})
\end{equation} 
mapping each point $p\in\mathcal{M}_{/\mathcal{S}}$ to a super $\Lambda$-sub module $\mathcal{E}_p$ of $T_p(\mathcal{M}_{/\mathcal{S}})$ of dimension $k|l$ such that, for any $p\in\mathcal{M}_{/\mathcal{S}}$, there exists $p\in U\subseteq\mathcal{S}\times\mathcal{M}$ open as well as a family $(X_i)_{i=1,\ldots,k+l}$ of smooth vector fields $X\in\mathfrak{X}(U)$ on $\mathcal{S}\times\mathcal{M}$ such that $X_i(q)\in T_q(\mathcal{M}_{/\mathcal{S}})$ $\forall q\in U$ and $(X_i(q))_i$ defines a homogeneous basis of $\mathcal{E}_q$.
\end{definition}
\begin{lemma}[a reformulation of \cite{Tuynman:2004}]\label{prop:3.8}
Let $\mathcal{M}_{/\mathcal{S}}$ be a $\mathcal{S}$-relative supermanifold of dimension $(m,n)$ and $\mathcal{W}\rightarrow\mathcal{F}_{/\mathcal{S}}\rightarrow\mathcal{N}_{/\mathcal{S}}$ be a $\mathcal{S}$-relative super vector bundle of rank $k|l$. Let $(\phi,g):\,T\mathcal{M}_{/\mathcal{S}}\rightarrow\mathcal{F}_{/\mathcal{S}}$ a (right linear) morphism of $\mathcal{S}$-relative super vector bundles such that $\phi_p:\,T_p\mathcal{M}_{/\mathcal{S}}\rightarrow(\mathcal{F}_{/\mathcal{S}})_{g(p)}$ is surjective $\forall p\in\mathcal{M}_{/\mathcal{S}}$. Then
\begin{equation}
\mathrm{ker}(\phi):=\coprod_{p\in\mathcal{M}_{/\mathcal{S}}}{\mathrm{ker}(\phi_{p}:\,T_p\mathcal{M}_{/\mathcal{S}}\rightarrow(\mathcal{F}_{/\mathcal{S}})_{g(p)})}
\end{equation}
defines a smooth geometric distribution of rank $(m-k)|(n-l)$ on $\mathcal{M}_{/\mathcal{S}}$.
\end{lemma}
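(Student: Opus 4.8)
The plan is to verify the two defining requirements of a smooth geometric distribution: that each fiber $\ker(\phi_p)$ is a super $\Lambda$-submodule of $T_p(\mathcal{M}_{/\mathcal{S}})$ of rank $(m-k)|(n-l)$, and that the assignment $p\mapsto\ker(\phi_p)$ is smooth in the sense that it admits, around every point, a local homogeneous frame of smooth vector fields. The first requirement is pure super-linear algebra: since $\phi_p$ is an even right-linear surjection from the free super $\Lambda$-module $T_p(\mathcal{M}_{/\mathcal{S}})$ of rank $m|n$ onto the free module $(\mathcal{F}_{/\mathcal{S}})_{g(p)}$ of rank $k|l$, it splits, so $\ker(\phi_p)$ is a direct summand and hence free of complementary rank $(m-k)|(n-l)$. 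The substance of the lemma is therefore the smoothness claim, which is the super- and relative analogue of the classical fact that the kernel of a constant-rank vector bundle morphism is a subbundle.

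To establish smoothness I would fix an arbitrary point $p_0=(s_0,x_0)\in\mathcal{S}\times\mathcal{M}$ and pass to local models. Using Remark \ref{prop:3.6} I identify the relative vector fields near $p_0$ with smooth sections of $(T\mathcal{M})_{/\mathcal{S}}$, and trivialize $T\mathcal{M}$ over a coordinate neighborhood of $x_0$ by the coordinate derivatives, obtaining a typical fiber $\mathcal{V}\cong\Lambda^{m,n}$ with homogeneous basis $(\partial_i)_i$. Choosing a local trivialization of $\mathcal{F}_{/\mathcal{S}}$ around $g(p_0)$ with typical fiber $\mathcal{W}\cong\Lambda^{k,l}$ and pulling it back along $g$, Lemma \ref{prop:2.7} converts the fiberwise right-linear map $\phi$ into a single smooth map $\widetilde{\phi}:\,U\rightarrow\underline{\mathrm{Hom}}_R(\mathcal{V},\mathcal{W})$ on some open $U\ni p_0$, with $\widetilde{\phi}(q)=\phi_q$ under these identifications.

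The crucial step is then a constant-rank splitting. In the chosen homogeneous bases $\widetilde{\phi}(q)$ is represented by a block matrix whose diagonal blocks carry the even entries and whose off-diagonal blocks carry the odd entries, and surjectivity of $\phi_{p_0}$ is equivalent to the two diagonal blocks having full rank $k$ and $l$ on the body. I then select $k$ even and $l$ odd coordinate directions spanning a rank-$k|l$ summand $\mathcal{V}'\subseteq\mathcal{V}$ for which $\phi'(p_0):=\widetilde{\phi}(p_0)|_{\mathcal{V}'}:\,\mathcal{V}'\rightarrow\mathcal{W}$ is an isomorphism, and let $\mathcal{V}''$ be the complementary summand spanned by the remaining $(m-k)$ even and $(n-l)$ odd directions. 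Invertibility is detected on the body and is therefore an open condition, so after shrinking $U$ the restriction $\phi'(q)$ is invertible for all $q\in U$ with $H^{\infty}$ inverse $(\phi'(q))^{-1}$ (the inverse of an $H^{\infty}$ matrix with invertible body is again $H^{\infty}$). Writing $\phi''(q):=\widetilde{\phi}(q)|_{\mathcal{V}''}$ and $(e''_a)_a$ for the homogeneous basis of $\mathcal{V}''$, the kernel is the graph over $\mathcal{V}''$ spanned by
\begin{equation*}
X_a(q):=e''_a-(\phi'(q))^{-1}\circ\phi''(q)(e''_a),
\end{equation*}
which form a homogeneous basis of $\ker\widetilde{\phi}(q)$ for every $q\in U$ and depend smoothly on $q$. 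Transporting them back through the trivialization yields smooth vector fields $X_a\in\mathfrak{X}(U)$ with $X_a(q)\in T_q(\mathcal{M}_{/\mathcal{S}})$ and $(X_a(q))_a$ a homogeneous basis of $\ker(\phi_q)$, exactly as the definition demands.

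The main obstacle is precisely the super-linear algebra of this last step: correctly translating \emph{surjective even morphism of free super $\Lambda$-modules} into the full-rank condition on the two \emph{body} blocks, verifying that this condition is genuinely open so that $\phi'$ remains invertible on a neighborhood, and checking that inversion preserves $H^{\infty}$-smoothness. One must keep the $\mathbb{Z}_2$-grading throughout so that the even and odd ranks are tracked separately and the complement $\mathcal{V}''$ comes out with the advertised rank $(m-k)|(n-l)$. The relative structure, by contrast, contributes nothing essential once Remark \ref{prop:3.6} is invoked, since all constructions are carried out fiberwise over $\mathcal{S}\times\mathcal{M}$ and the resulting fields automatically lie in the relative tangent module $T(\mathcal{M}_{/\mathcal{S}})$.
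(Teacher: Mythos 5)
Your proof is correct and follows essentially the same route as the paper: local trivialization of both bundles, conversion of the fiberwise right-linear map into a smooth $\mathrm{End}_R$-valued map via Lemma \ref{prop:2.7}, selection of an invertible $k|l$ sub-block (open condition detected on the body, with $H^{\infty}$ inverse), and the graph formula $e''_a-(\phi')^{-1}\circ\phi''(e''_a)$ for a smooth homogeneous frame of the kernel, which is precisely the paper's $s_{i'}(p)=\psi_U^{-1}\bigl(p,e_{i'}-e_i\tensor{h}{^i_r}(p)\tensor{\widetilde{\phi}}{_{VU}^r_{i'}}(p)\bigr)$. The only cosmetic difference is that you front-load the fiberwise rank count via a splitting argument where the paper simply reads it off from the invertible sub-block.
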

\begin{proof}
The following proof is a generalization of the proof in the ordinary $H^{\infty}$-category as given in \cite{Tuynman:2004}. For an arbitrary but fixed $p_0\in\mathcal{M}_{/\mathcal{S}}$, let $(U_{/\mathcal{S}},\psi_U)$ and $(V_{/\mathcal{S}},\psi'_V)$ be local trivializations of $T\mathcal{M}_{/\mathcal{S}}$ and $\mathcal{F}_{/\mathcal{S}}$, respectively, such that $p_0\in U_{/\mathcal{S}}$ and $\mathcal{S}\times U\subset g^{-1}(\mathcal{S}\times V)$. Consider the map $\phi_{VU}:=\psi'_V\circ\phi\circ\psi^{-1}_U:\,U_{/\mathcal{S}}\times\Lambda^{m|n}\rightarrow V_{/\mathcal{S}}\times\mathcal{W}$. Since $\phi$ is a right linear super vector bundle morphism, $\mathrm{pr}_2\circ\phi_{VU}(p,\,\cdot\,):\,\Lambda^{m|n}\rightarrow\mathcal{W}\in\mathrm{End}_R(\Lambda^{m|n},\mathcal{W})$ is a right linear map $\forall p\in U_{/\mathcal{S}}$. Hence, there exists a (unique) smooth map $\widetilde{\phi}_{VU}:\,\mathcal{S}\times U\rightarrow\mathrm{End}_R(\Lambda^{m|n},\mathcal{W})$ such that $\widetilde{\phi}_{VU}(p)v=\mathrm{pr}_2\circ\phi_{VU}(p,v)$ $\forall (p,v)\in(\mathcal{S}\times U)\times\Lambda^{m|n}$. Thus, w.r.t. a real homogeneous basis $(e_i)_{i=1,\ldots,m+n}$ and $(f_j)_{j=1,\ldots,k+l}$ of $\Lambda^{m|n}$ and $\mathcal{W}$, respectively, this yields
\begin{equation}
\phi_{VU}(p,v)=(g(p),f_j\tensor{\widetilde{\phi}}{_{VU}^j_i}(p)v^i)
\end{equation}
Since $\phi_p:\,T_p\mathcal{M}_{/\mathcal{S}}\rightarrow(\mathcal{F}_{/\mathcal{S}})_{g(p)}$ is surjective, the rank of the super matrix $\tensor{\widetilde{\phi}}{_{VU}^j_i}(p)$ has to be $k|l$ $\forall p\in\mathcal{S}\times U$. Hence, after reordering the real homogeneous basis $(e_i)_i$, we may assume that the sub super matrix $(\tensor{\widetilde{\phi}}{_{VU}^r_i}(p_0))_{i,r=1,\ldots,k+l}$ is invertible. Hence, there exists $U'\subset\mathcal{S}\times U$ open such that $(\tensor{\widetilde{\phi}}{_{VU}^r_i}(p))_{i,r=1,\ldots,k+l}$ is invertible $\forall p\in U'$. As taking the inverse of super matrices is smooth, the inverse matrix $(\tensor{h}{^i_r}(p))_{i,r=1,\ldots,k+l}$ of $(\tensor{\widetilde{\phi}}{_{VU}^r_i}(p))$ $\forall p\in U'$ defines a smooth right-linear map from $\mathcal{W}$ to $\Lambda^{m|n}$.\\
For $i'=k+l,\ldots,m+n$, set
\begin{equation}
s_{i'}(p):=\psi_U^{-1}\left(p,e_{i'}-e_i\tensor{h}{^i_r}(p)\tensor{\widetilde{\phi}}{_{VU}^r_{i'}}(p)\right)
\end{equation}
$\forall p\in U'$. Then, since $\phi$ is homogeneous, it follows that $s_{i'}\in\mathfrak{X}(U)$, $i'=k+l,\ldots,m+n$ define smooth vector fields on $U'$ such that $s_{i'}(p)\in T_p\mathcal{M}_{/\mathcal{S}}$ $\forall p\in U'$ and $(s_{i'}(p))_{i'}$ is a homogeneous basis of $\mathrm{ker}(\phi)_p$. Hence, this proves that $\mathrm{ker}(\phi)$ defines a smooth geometric distribution of rank $(m-k)|(n-l)$ on $\mathcal{M}_{/\mathcal{S}}$.  
\end{proof}
Before we proceed with the definition of vertical and horizontal distributions as well as connection forms on principal super fiber bundles, let us first note a very important fact concerning partial evaluation of smooth maps defined on supermanifolds. More precisely, note that, in the $H^{\infty}$-category, the space of smooth functions defines a $\mathbb{R}$-vector space. Thus, it follows that, given a smooth map $\phi:\,\mathcal{M}\times\mathcal{N}\rightarrow\mathcal{L}$ between $H^{\infty}$-supermanifolds as well as a point $q\in\mathcal{N}$, the map
\begin{equation}
\phi_q:=\phi(\cdot,q):\,\mathcal{M}\rightarrow\mathcal{L}
\end{equation}
in general, will not be of class $H^{\infty}$, unless $g\in\mathbf{B}(\mathcal{N})$. However, following \cite{Tuynman:2004}, one can still associate a tangent map to $\phi_q$ even if $q$ is not an element of the body. More precisely, one makes the following definition.
\begin{definition}\label{definition:3.9}
Let $\phi:\,\mathcal{M}\times\mathcal{N}\rightarrow\mathcal{L}$ be a smooth map between supermanifolds and $q\in\mathcal{N}$ be an arbitrary but fixed point. Using the identification $T(\mathcal{M}\times\mathcal{N})\cong T(\mathcal{M})\times T\mathcal{N}$, the \emph{generalized tangent map} $\phi_{q*}:\,T_p\mathcal{M}\rightarrow T_{\phi(p,q)}\mathcal{L}$ of $\phi_q\equiv\phi(\cdot,q)$ at $p\in\mathcal{M}$ is defined as
\begin{equation}
\phi_{q*}(X_{p})\equiv D_p\phi_q(X_p):=D_{(p,g)}\phi(X_{p},0_q)
\end{equation}
for any $X_{p}\in T_{p}\mathcal{M}$. Similarly, one defines $\phi_{p*}$ for $p\in\mathcal{M}$.
\end{definition}

\begin{definition}\label{definition:3.10}
Let $\psi:\,\mathcal{M}\times\mathcal{N}\rightarrow\mathcal{L}$ and $\phi:\,\mathcal{K}\times\mathcal{L}\rightarrow\mathcal{P}$ be smooth maps between supermanifolds and $p\in\mathcal{K}$ and $q\in\mathcal{M}$ arbitrary fixed points. Then, the generalized tangent map $(\phi_p\circ\psi_q)_{*}$ of the continuous map $\phi_p\circ\psi_q=\phi(p,\psi(q,\,\cdot\,)):\,\mathcal{N}\rightarrow\mathcal{P}$ is defined, according to \ref{definition:3.9}, as the generalized tangent map $(\phi\times(\mathrm{id}\times\psi))_{(p,q)*}$ associated to $(\phi\times(\mathrm{id}\times\psi))_{(p,q)}:\,\mathcal{N}\rightarrow\mathcal{P}$.
\end{definition}

\begin{prop}\label{definition:3.11}
Let $\psi:\,\mathcal{M}\times\mathcal{N}\rightarrow\mathcal{L}$ and $\phi:\,\mathcal{K}\times\mathcal{L}\rightarrow\mathcal{P}$ be smooth maps between supermanifolds and $p\in\mathcal{K}$ and $q\in\mathcal{M}$ arbitrary fixed points. Then,
\begin{equation}
(\phi_p\circ\psi_q)_{*}=\phi_{p*}\circ\psi_{q*}
\end{equation}
\end{prop}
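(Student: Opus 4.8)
The plan is to reduce the asserted generalized chain rule to the ordinary chain rule for honest smooth maps, which is legitimate precisely because the whole point of Definition \ref{definition:3.10} is that the generalized tangent map of the partially evaluated composite $\phi_p\circ\psi_q$ is \emph{defined} as a genuine tangent map of the bona fide smooth map
\[
F:=\phi\circ(\mathrm{id}_{\mathcal{K}}\times\psi):\,\mathcal{K}\times\mathcal{M}\times\mathcal{N}\rightarrow\mathcal{P},\qquad F(k,m,n)=\phi(k,\psi(m,n)),
\]
partially evaluated at the fixed point $(p,q)\in\mathcal{K}\times\mathcal{M}$. First I would observe that $\phi_p\circ\psi_q=F_{(p,q)}$, so that Definition \ref{definition:3.9} (in the form that fixes the first factor) gives, for any $X_n\in T_n\mathcal{N}$,
\[
(\phi_p\circ\psi_q)_*(X_n)=D_{((p,q),n)}F\,(0_{(p,q)},X_n).
\]

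Next I would apply the ordinary chain rule to $F=\phi\circ(\mathrm{id}_{\mathcal{K}}\times\psi)$, which is available since $F$ is genuinely smooth and $((p,q),n)$ is an honest point, to factor $D_{((p,q),n)}F=D_{(p,\psi(q,n))}\phi\circ D_{((p,q),n)}(\mathrm{id}_{\mathcal{K}}\times\psi)$. The key bookkeeping step is the decomposition of the tangent spaces of products: under $T(\mathcal{K}\times\mathcal{M})\cong T\mathcal{K}\times T\mathcal{M}$ one has $0_{(p,q)}=(0_p,0_q)$, and the differential of a product map is the product of the differentials, so that
\[
D_{((p,q),n)}(\mathrm{id}_{\mathcal{K}}\times\psi)\,(0_p,0_q,X_n)=(0_p,\,D_{(q,n)}\psi(0_q,X_n))=(0_p,\psi_{q*}(X_n)),
\]
where the last equality is exactly the definition of $\psi_{q*}$ from Definition \ref{definition:3.9}.

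Finally I would apply $D_{(p,\psi(q,n))}\phi$ to the vector $(0_p,\psi_{q*}(X_n))$; by Definition \ref{definition:3.9} (again fixing the first argument, now $p\in\mathcal{K}$, with $\psi_{q*}(X_n)\in T_{\psi(q,n)}\mathcal{L}$ as the input tangent vector) this equals $\phi_{p*}(\psi_{q*}(X_n))$. Chaining the three displays yields $(\phi_p\circ\psi_q)_*(X_n)=\phi_{p*}(\psi_{q*}(X_n))$ for every $X_n$, which is the asserted identity. I expect the main obstacle to be notational rather than analytic: one must confirm that the auxiliary map underlying Definition \ref{definition:3.10} is indeed the $F$ above (so that the two associativity conventions for $\mathcal{K}\times\mathcal{M}\times\mathcal{N}$ are matched), and keep the product–tangent-space identifications consistent so that the vanishing components genuinely sit in the fixed directions. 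Once this matching is made, the ordinary chain rule carries all of the weight and no further estimates are needed.
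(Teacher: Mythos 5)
Your proposal is correct and follows essentially the same route as the paper: both unwind Definition \ref{definition:3.10} to express $(\phi_p\circ\psi_q)_*$ as the honest differential of $\phi\circ(\mathrm{id}\times\psi)$ at $((p,q),\cdot)$ applied to $(0_{(p,q)},X)$, apply the ordinary chain rule, and identify the two resulting factors as $\phi_{p*}$ and $\psi_{q*}$ via Definition \ref{definition:3.9}. The bookkeeping you flag (matching the auxiliary map and the product tangent-space splitting) is exactly what the paper's displayed computation carries out.
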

\begin{proof}
For $X_g\in T_g{\mathcal{N}}$, $g\in\mathcal{N}$, we compute
\begin{align}
(\phi_p\circ\psi_q)_{*}(X_g)&:=(\phi\times(\mathrm{id}\times\psi))_{(p,q)*}(X_g)=D(\phi\times(\mathrm{id}\times\psi))(0_{(p,q)},X_g)\nonumber\\
&=D\phi(D(\mathrm{id}\times\psi)(0_{(p,q)},X_g))\nonumber\\
&=D\phi(0_p,D\psi(0_{q},X_g))\nonumber\\
&=\phi_{p*}\circ\psi_{q*}(X_g)
\end{align}
\end{proof}

\begin{definition}
Let $\Phi_{\mathcal{S}}:\,\mathcal{M}_{/\mathcal{S}}\times\mathcal{G}\rightarrow\mathcal{M}_{/\mathcal{S}}$ be a smooth right action of a super Lie group $\mathcal{G}$ on a $\mathcal{S}$-relative supermanifold $\mathcal{M}_{/\mathcal{S}}$. A \emph{fundamental tangent vector} $\widetilde{X}_p\in T_p\mathcal{M}_{/\mathcal{S}}$ at $p\in\mathcal{M}_{/\mathcal{S}}$ associated to $X\in\mathrm{Lie}(\mathcal{G})=T_e\mathcal{G}$ is defined as
\begin{equation}
\widetilde{X}_p:=(\Phi_{\mathcal{S}})_{p*}(X)\in T_{p}\mathcal{M}_{/\mathcal{S}}
\end{equation}
where we made use of the generalized tangent map. In case $X\in\mathfrak{g}$, this yields a smooth vector field $\widetilde{X}\in\mathfrak{X}(\mathcal{M}_{/\mathcal{S}})$ which can equivalently be written as
\begin{equation}
\widetilde{X}=(\mathds{1}\otimes X_e)\circ\Phi_{\mathcal{S}}^{*}
\end{equation}
and which is called the \emph{fundamental vector field} generated by $X$.
\end{definition}

\begin{definition}
Let $\mathcal{G}\rightarrow\mathcal{P}_{/\mathcal{S}}\stackrel{\pi_{\mathcal{S}}}{\rightarrow}\mathcal{M}_{/\mathcal{S}}$ be a $\mathcal{S}$-relative principal super fiber bundle. The \emph{vertical tangent module} $\mathscr{V}_p$ of $\mathcal{P}_{/\mathcal{S}}$ at a point $p\in\mathcal{P}_{/\mathcal{S}}$ is a super $\Lambda$-sub module of the tangent module $T_p\mathcal{P}_{/\mathcal{S}}$ defined as
\begin{equation}
\mathscr{V}_p:=\mathrm{ker}(D_p\pi_{\mathcal{S}})
\end{equation}
\end{definition}
\makeatletter
\newcommand{\raisemath}[1]{\mathpalette{\raisem@th{#1}}}
\newcommand{\raisem@th}[3]{\raisebox{#1}{$#2#3$}}
\makeatother

\begin{lemma}\label{prop:3.14}
Let $\Phi_{\mathcal{S}}:\,\mathcal{M}_{/\mathcal{S}}\times\mathcal{G}\rightarrow\mathcal{M}_{/\mathcal{S}}$ be a smooth right action of a super Lie group $\mathcal{G}$ on a $\mathcal{S}$-relative supermanifold $\mathcal{M}_{/\mathcal{S}}$. Then, for $X\in\mathrm{Lie}(\mathcal{G})=T_e\mathcal{G}$, one has
\begin{equation}
(\Phi_{\mathcal{S}})_{g*}\widetilde{X}_p=\widetilde{\mathrm{Ad}_{g^{-1}}X}_{\!\raisemath{2pt}{p\cdot g}},\quad\forall p\in\mathcal{P}_{/\mathcal{S}},\,g\in\mathcal{G}
\end{equation}
with $\widetilde{\mathrm{Ad}_{g^{-1}}X}_{\!\raisemath{2pt}{p\cdot g}}$ the fundamental tangent vector associated to $\mathrm{Ad}_{g^{-1}}X\in\mathrm{Lie}(\mathcal{G})$ at $p\cdot g\in\mathcal{M}_{/\mathcal{S}}$.
\end{lemma}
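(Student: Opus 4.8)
The plan is to show that both sides of the claimed identity arise as the \emph{generalized tangent map} at $e\in\mathcal{G}$, evaluated on $X$, of one and the same smooth map $\mathcal{G}\to\mathcal{M}_{/\mathcal{S}}$, namely $h\mapsto\Phi_{\mathcal{S}}(p,hg)$. Since $p$ and $g$ are in general not body points, none of the partial maps $\Phi_{\mathcal{S}}(p,\cdot)$, $\Phi_{\mathcal{S}}(\cdot,g)$, or the inner automorphism $h\mapsto g^{-1}hg$ is of class $H^{\infty}$; hence every tangent map below must be read as a generalized tangent map in the sense of Definition \ref{definition:3.9}, and every composition must be justified through the composition rule of Prop.\ \ref{definition:3.11} (realizing each factor as a partial evaluation of a genuinely smooth two-variable map) rather than the naive chain rule. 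This bookkeeping is the main point to get right; once it is in place the argument reduces to the associativity of the right action (cf.\ \eqref{eq:0.1}).

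For the left-hand side, I would first write $\widetilde{X}_p=(\Phi_{\mathcal{S}})_{p*}(X)$ and note that $(\Phi_{\mathcal{S}})_{g*}$ is the generalized tangent map of the right translation $R_g:=\Phi_{\mathcal{S}}(\cdot,g)$, obtained by fixing the \emph{second} argument of $\Phi_{\mathcal{S}}$. Writing $R_g=\widetilde{\Phi}_g$ for the smooth two-variable map $\widetilde{\Phi}(g,y):=\Phi_{\mathcal{S}}(y,g)$, the composition rule (Prop.\ \ref{definition:3.11}, applied with $\widetilde{\Phi}$ as the outer map) yields $(\Phi_{\mathcal{S}})_{g*}\circ(\Phi_{\mathcal{S}})_{p*}=(R_g\circ\Phi_{\mathcal{S}}(p,\cdot))_{*}$. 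By associativity one has $R_g\circ\Phi_{\mathcal{S}}(p,\cdot):h\mapsto\Phi_{\mathcal{S}}(\Phi_{\mathcal{S}}(p,h),g)=\Phi_{\mathcal{S}}(p,hg)$, so that $(\Phi_{\mathcal{S}})_{g*}\widetilde{X}_p$ is the generalized tangent map of $h\mapsto\Phi_{\mathcal{S}}(p,hg)$ evaluated on $X$.

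For the right-hand side, I would use that $\mathrm{Ad}_{g^{-1}}X$ is, by definition, the generalized tangent map at $e$ of the inner automorphism $\mathrm{inn}_{g^{-1}}:h\mapsto g^{-1}hg$ applied to $X$; realizing $\mathrm{inn}_{g^{-1}}$ as the partial evaluation of the smooth conjugation map $\mathcal{G}\times\mathcal{G}\to\mathcal{G}$, $(k,h)\mapsto k^{-1}hk$, Prop.\ \ref{definition:3.11} gives $\widetilde{\mathrm{Ad}_{g^{-1}}X}_{p\cdot g}=(\Phi_{\mathcal{S}})_{p\cdot g*}(\mathrm{Ad}_{g^{-1}}X)=(\Phi_{\mathcal{S}}(p\cdot g,\cdot)\circ\mathrm{inn}_{g^{-1}})_{*}(X)$. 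Invoking associativity once more, $\Phi_{\mathcal{S}}(p\cdot g,\cdot)\circ\mathrm{inn}_{g^{-1}}:h\mapsto\Phi_{\mathcal{S}}(p\cdot g,g^{-1}hg)=\Phi_{\mathcal{S}}(p,g(g^{-1}hg))=\Phi_{\mathcal{S}}(p,hg)$, which is exactly the same smooth map $\mathcal{G}\to\mathcal{M}_{/\mathcal{S}}$ as on the left. Since the two sides are the generalized tangent map of one and the same map, evaluated on the same $X\in T_e\mathcal{G}$, they coincide, establishing the identity for all $p\in\mathcal{M}_{/\mathcal{S}}$ and $g\in\mathcal{G}$.

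Finally, I would remark that the only genuinely technical step is verifying that Prop.\ \ref{definition:3.11} indeed applies to the two ``mixed'' compositions above: the left one fixes the second argument of $\Phi_{\mathcal{S}}$ in the outer factor, which is handled by passing to the swapped map $\widetilde{\Phi}$ and observing that the generalized tangent map is unchanged under the swap; the right one requires that $\mathrm{Ad}_{g^{-1}}$ be consistently interpreted through the generalized tangent map of conjugation. Everything else is a direct consequence of the associativity axiom \eqref{eq:0.1}.
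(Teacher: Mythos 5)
Your proof is correct and follows essentially the same route as the paper's: both rest on the composition rule for generalized tangent maps (Prop.\ \ref{definition:3.11}), the associativity of the action \eqref{eq:0.1}, and the identification of $\mathrm{Ad}_{g^{-1}}$ with the (generalized) tangent map of conjugation, i.e.\ $\mathrm{Ad}_{g^{-1}}=L_{g^{-1}*}\circ R_{g*}$. The only difference is organizational — you show both sides equal the generalized tangent map of the common map $h\mapsto\Phi_{\mathcal{S}}(p,hg)$, whereas the paper transforms the left side into the right by inserting $L_{g*}\circ L_{g^{-1}*}=\mathrm{id}$ — which is not a genuinely distinct argument.
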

\begin{proof}
By Prop. \ref{definition:3.11}, we find
\begin{align}
(\Phi_{\mathcal{S}})_{g*}\widetilde{X}_p&=(\Phi_{\mathcal{S}})_{g*}\circ(\Phi_{\mathcal{S}})_{p*}(X)=((\Phi_{\mathcal{S}})_{g}\circ(\Phi_{\mathcal{S}})_{p})_{*}(X)\nonumber\\
&=(\Phi_{\mathcal{S}}\circ(\Phi_{\mathcal{S}}\times\mathrm{id}))_{(p,\,\cdot\,,g)*}(X)\nonumber\\
&=(\Phi_{\mathcal{S}}\circ(\mathrm{id}\times\mu))_{(p,\,\cdot\,,g)*}(X)\nonumber\\
&=(\Phi_{\mathcal{S}})_{p*}\circ R_{g*}(X)
\end{align}
Since $R_{g*}=L_{g^*}\circ L_{g^{-1}*}\circ R_{g*}=L_{g^*}\circ\mathrm{Ad}_{g^{-1}}$, it thus follows 
\begin{align}
(\Phi_{\mathcal{S}})_{g*}\widetilde{X}_p&=(\Phi_{\mathcal{S}})_{p*}\circ L_{g^*}\circ L_{g^{-1}*}\circ R_{g*}(X)=(\Phi_{\mathcal{S}})_{p\cdot g*}\circ\mathrm{Ad}_{g^{-1}}(X)=\widetilde{\mathrm{Ad}_{g^{-1}}X}_{\!\raisemath{2pt}{p\cdot g}}
\end{align}
as claimed.
\end{proof}

\begin{prop}
Let $\mathcal{G}\rightarrow\mathcal{P}_{/\mathcal{S}}\rightarrow\mathcal{M}_{/\mathcal{S}}$ be a $\mathcal{S}$-relative principal super fiber bundle with right-action $\Phi_{\mathcal{S}}:\,\mathcal{P}_{/\mathcal{S}}\times\mathcal{G}\rightarrow\mathcal{P}_{/\mathcal{S}}$. For any $p\in\mathcal{P}_{/\mathcal{S}}$, one has 
\begin{equation}
\mathscr{V}_p=\{\widetilde{X}_p|\,X\in\mathrm{Lie}(\mathcal{G})\}
\label{eq:3.0.1}
\end{equation}
i.e., the vertical tangent module $\mathscr{V}_p$ is generated by the fundamental tangent vectors at $p$. In particular, the assignment $\mathcal{V}:\,\mathcal{M}_{/\mathcal{S}}\ni p\mapsto\mathscr{V}_p$ defines a smooth geometric distribution of rank $\mathrm{dim}\,\mathfrak{g}$ called the \emph{vertical tangent bundle} which is right-invariant in the sense that $(\Phi_{\mathcal{S}})_{g*}\mathscr{V}_p=\mathscr{V}_{p\cdot g}$ $\forall g\in\mathcal{G}$.
\end{prop}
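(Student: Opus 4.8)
The plan is to prove the three claims—the generation statement \eqref{eq:3.0.1}, smoothness of rank $\dim\mathfrak{g}$, and right-invariance—in turn, using the $\mathcal{G}$-equivariant local trivializations as the computational device and invoking Lemma \ref{prop:3.14} only at the very end. The conceptual content is entirely local, so once the correct identifications are set up each claim reduces to a short calculation.

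First I would establish the inclusion $\{\widetilde{X}_p\mid X\in\mathrm{Lie}(\mathcal{G})\}\subseteq\mathscr{V}_p$. Because the action preserves fibers, $\pi_{\mathcal{S}}\circ\Phi_{\mathcal{S}}=\pi_{\mathcal{S}}\circ\mathrm{pr}_{\mathcal{S}\times\mathcal{P}}$, the partial map $\pi_{\mathcal{S}}\circ(\Phi_{\mathcal{S}})_p:\,\mathcal{G}\to\mathcal{M}_{/\mathcal{S}}$ is the constant map $g\mapsto\pi_{\mathcal{S}}(p)$. Hence, directly from the definition of the generalized tangent map, $D_p\pi_{\mathcal{S}}(\widetilde{X}_p)=D_p\pi_{\mathcal{S}}((\Phi_{\mathcal{S}})_{p*}X)$ is the generalized tangent map of this constant map and therefore vanishes, so $\widetilde{X}_p\in\ker(D_p\pi_{\mathcal{S}})=\mathscr{V}_p$ for every $X\in\mathrm{Lie}(\mathcal{G})$.

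For the reverse inclusion together with the rank, I would pass to an equivariant chart $\phi_{\alpha}$ of $\mathcal{P}$, which in the relative setting identifies $\pi_{\mathcal{S}}^{-1}(\mathcal{S}\times U_{\alpha})$ with $(\mathcal{S}\times U_{\alpha})\times\mathcal{G}$ so that $\pi_{\mathcal{S}}$ becomes the projection and, by $\mathcal{G}$-equivariance, $\Phi_{\mathcal{S}}$ becomes standard right multiplication $((s,x),g)\cdot g'=((s,x),gg')$. Writing $p\leftrightarrow((s,x),g)$, the relative tangent module splits as $T_p(\mathcal{P}_{/\mathcal{S}})\cong T_x\mathcal{M}\oplus T_g\mathcal{G}$, under which $D_p\pi_{\mathcal{S}}$ is the projection onto the first summand; thus $\mathscr{V}_p\cong T_g\mathcal{G}$ has dimension $\dim\mathfrak{g}$. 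A short computation with the generalized tangent map (Prop. \ref{definition:3.11}) gives $\widetilde{X}_p=(0,L_{g*}X)$, and since $L_{g*}:\,T_e\mathcal{G}\to T_g\mathcal{G}$ is an isomorphism the assignment $X\mapsto\widetilde{X}_p$ is an injective $\Lambda$-linear map onto all of $T_g\mathcal{G}\cong\mathscr{V}_p$. This proves \eqref{eq:3.0.1} and simultaneously that $\mathscr{V}_p$ is free of rank $\dim\mathfrak{g}$.

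To promote this to a \emph{smooth} geometric distribution I would fix a real homogeneous basis $(X_{ie})_i$ of $\mathrm{Lie}(\mathcal{G})$ with $X_{ie}\in\mathfrak{g}$, as in Example \ref{example:MC}. The fundamental vector fields $\widetilde{X}_i$ are then smooth by the definition of the fundamental vector field, take values in $\mathscr{V}$, and—via the isomorphism $T_e\mathcal{G}\cong\mathscr{V}_p$ just obtained—the tuple $(\widetilde{X}_i(p))_i$ is a homogeneous basis of $\mathscr{V}_p$ at every $p$, which is exactly the defining condition of a smooth geometric distribution of rank $\dim\mathfrak{g}$. Finally, right-invariance is immediate from Lemma \ref{prop:3.14}: since $(\Phi_{\mathcal{S}})_{g*}\widetilde{X}_p=\widetilde{\mathrm{Ad}_{g^{-1}}X}_{p\cdot g}$ and $X\mapsto\mathrm{Ad}_{g^{-1}}X$ is a bijection of $\mathrm{Lie}(\mathcal{G})$, the map $(\Phi_{\mathcal{S}})_{g*}$ carries $\mathscr{V}_p=\{\widetilde{X}_p\}$ onto $\mathscr{V}_{p\cdot g}=\{\widetilde{Y}_{p\cdot g}\}$, giving $(\Phi_{\mathcal{S}})_{g*}\mathscr{V}_p=\mathscr{V}_{p\cdot g}$. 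I expect the main obstacle to be the careful bookkeeping of the generalized tangent maps in the chart—in particular verifying that the equivariant trivialization genuinely turns $\Phi_{\mathcal{S}}$ into standard right multiplication and that partial evaluation at the non-body point $g$ still yields the stated splitting—rather than any conceptual difficulty.
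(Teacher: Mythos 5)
Your proof is correct, and the overall skeleton (fundamental vectors lie in the kernel; smoothness via a real homogeneous basis of fundamental vector fields; right-invariance from Lemma \ref{prop:3.14} plus invertibility of $(\Phi_{\mathcal{S}})_{g*}$) matches the paper's. Where you genuinely diverge is in establishing the reverse inclusion and the rank. The paper argues intrinsically: it takes a local section $s$ with $\pi_{\mathcal{S}}\circ s=\mathrm{id}$, deduces that $D_p\pi_{\mathcal{S}}$ is surjective, counts dimensions to get $\mathrm{dim}\,\mathscr{V}_p=\mathrm{dim}\,\mathfrak{g}$, and then invokes the fact that $\Phi_{p*}:\,T_e\mathcal{G}\rightarrow T_p\mathcal{P}_{/\mathcal{S}}$ is an even injective morphism of free super $\Lambda$-modules, so that equality of dimensions forces it to be an isomorphism onto $\mathscr{V}_p$. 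You instead pass to a $\mathcal{G}$-equivariant trivialization, identify $T_p(\mathcal{P}_{/\mathcal{S}})\cong T_x\mathcal{M}\oplus T_g\mathcal{G}$ with $D_p\pi_{\mathcal{S}}$ the first projection, and compute $\widetilde{X}_p=(0,L_{g*}X)$ directly, so that surjectivity onto $\mathscr{V}_p\cong T_g\mathcal{G}$ follows from $L_{g*}$ being an isomorphism. Your route is more explicit and avoids having to separately justify injectivity of $\Phi_{p*}$ (which the paper asserts rather than proves), at the cost of the chart bookkeeping you flag yourself — in particular checking that the equivariant trivialization converts $\Phi_{\mathcal{S}}$ into $((s,x),g)\cdot g'=((s,x),gg')$ and that the generalized tangent map of $\mu_{\mathcal{G}}(g,\cdot)$ at the non-body point $g$ is indeed $L_{g*}$; both checks go through using Definition \ref{definition:3.9} and Prop. \ref{definition:3.11}, so there is no gap. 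The paper's argument is shorter and coordinate-free; yours gives the explicit formula $\widetilde{X}_p=(0,L_{g*}X)$, which is independently useful (it is essentially what is used later in the proof of Theorem \ref{thm:8.0}).
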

\begin{proof}
For any $p\in\mathcal{P}_{/\mathcal{S}}$, let $s:\,U_{/\mathcal{S}}\rightarrow\mathcal{P}_{/\mathcal{S}}$ be a smooth local section of $\mathcal{P}_{/\mathcal{S}}$ with $\pi_{\mathcal{S}}(p)\in\mathcal{S}\times U\subseteq\mathcal{S}\times\mathcal{M}$ open. Then, $\pi_{\mathcal{S}}\circ s=\mathrm{id}$ implies $\pi_{\mathcal{S}*}\circ s_{*}=\mathrm{id}$ and thus $D_p\pi_{\mathcal{S}}:\,T_p\mathcal{M}_{/\mathcal{S}}\rightarrow T_{\pi_{\mathcal{S}}(p)}\mathcal{P}_{/\mathcal{S}}$ is surjective. Since, $D_p\pi_{\mathcal{S}}$ is homogeneous, it follows that $\mathscr{V}_p=\mathrm{ker}\,D_p\pi_{\mathcal{S}}$ is a super $\Lambda$-sub module of $T_p(\mathcal{P}_{/\mathcal{S}})$ of dimension $\mathrm{dim}\,\mathscr{V}_p=\mathrm{dim}\,T_p\mathcal{P}_{/\mathcal{S}}-\mathrm{dim}\,T_{\pi_{\mathcal{S}}(p)}\mathcal{M}_{/\mathcal{S}}=\mathrm{dim}\,\mathfrak{g}$ $\forall p\in\mathcal{P}_{/\mathcal{S}}$.\\
For $X\in T_e\mathcal{G}=\mathrm{Lie}(\mathcal{G})$, the associated fundamental tangent vector $\widetilde{X}_p$ at $p\in\mathcal{P}_{/\mathcal{S}}$ is given by $\widetilde{X}_p=(\Phi_{\mathcal{S}})_{p*}X$. By definition of the generalized tangent map, this yields 
\begin{align}
D_p\pi_{\mathcal{S}}(\widetilde{X}_p)&=D_p\pi_{\mathcal{S}}\circ D_{(p,e)}\Phi_{\mathcal{S}}(0_p,X)\nonumber\\
&=D_{(p,e)}(\pi_{\mathcal{S}}\circ\Phi_{\mathcal{S}})(0_p,X)\nonumber\\
&=D_{(p,e)}(\pi_{\mathcal{S}}\circ\mathrm{pr}_1)(0_p,X)=0
\end{align}
i.e., $\widetilde{X}_p\in\mathscr{V}_p$. As $\Phi_{p*}:\,T_e\mathcal{G}\mapsto T_p\mathcal{P}_{/\mathcal{S}}$ is an even and injective map of super $\Lambda$-modules, it follows from the observation above that it is an isomorphism onto $\mathscr{V}_p$ proving \eqref{eq:3.0.1}.\\
To prove the last assertion, let $(X_i)_i$ be a real homogeneous basis of $T_e\mathcal{G}$ and $\widetilde{X}=\mathds{1}\otimes X\circ\Phi_{\mathcal{S}}^*$ the associated smooth fundamental vector fields on $\mathcal{P}_{/\mathcal{S}}$. Then, $(\widetilde{X}_i(p))_i$ is a homogeneous basis of $\mathscr{V}_p$ $\forall p\in\mathcal{P}_{/\mathcal{S}}$ and thus $\mathscr{V}:\,\mathcal{M}_{/\mathcal{S}}\ni p\mapsto\mathscr{V}_p$ defines a smooth geometric distribution of rank $\mathrm{dim}\,\mathfrak{g}$. It remains to show that $\mathscr{V}$ is indeed right-invariant, that is, $(\Phi_{\mathcal{S}})_{g*}\mathscr{V}_p=\mathscr{V}_{p\cdot g}$ $\forall g\in\mathcal{G}$. Therefore, if $\widetilde{X}_p\in\mathscr{V}_p$ with $X\in\mathrm{Lie}(\mathcal{G})$, it follows from Lemma \ref{prop:3.14}
\begin{equation}
(\Phi_{\mathcal{S}})_{g*}\widetilde{X}_p=\widetilde{\mathrm{Ad}_{g^{-1}}X}_{\!\raisemath{2pt}{p\cdot g}}\in\mathscr{V}_{p\cdot g}.
\end{equation} 
Since $(\Phi_{\mathcal{S}})_{g*}\circ(\Phi_{\mathcal{S}})_{g^{-1}*}=\mathrm{id}$, the claim follows.
\end{proof}
\begin{definition}
Let $\mathcal{G}\rightarrow\mathcal{P}_{/\mathcal{S}}\stackrel{\pi}{\rightarrow}\mathcal{M}_{/\mathcal{S}}$ be a $\mathcal{S}$-relative principal super fiber bundle with right-action $\Phi_{\mathcal{S}}:\,\mathcal{P}_{/\mathcal{S}}\times\mathcal{G}\rightarrow\mathcal{P}_{/\mathcal{S}}$. A \emph{principal connection} (à la Ehresmann) $\mathscr{H}$ on $\mathcal{P}_{/\mathcal{S}}$ is a smooth geometric distribution $\mathscr{H}:\,\mathcal{P}_{/\mathcal{S}}\ni p\mapsto\mathscr{H}_p\subset T_p(\mathcal{P}_{/\mathcal{S}})$ of horizontal tangent modules on $\mathcal{P}_{/\mathcal{S}}$ of rank $\mathrm{dim}\,\mathcal{M}$ such that $\mathscr{H}_p\oplus\mathscr{V}_p=T_p(\mathcal{P}_{/\mathcal{S}})$ and $\mathscr{H}$ is right-invariant in the sense that
\begin{equation}
(\Phi_{\mathcal{S}})_{g*}\mathscr{H}_p=\mathscr{H}_{p\cdot g}
\end{equation}
$\forall p\in\mathcal{P}_{/\mathcal{S}}$, $g\in\mathcal{G}$.
\end{definition}
\begin{definition}
Let $\mathcal{G}\rightarrow\mathcal{P}_{/\mathcal{S}}\rightarrow\mathcal{M}_{/\mathcal{S}}$ be a $\mathcal{S}$-relative principal super fiber bundle and $\mathscr{H}:\,\mathcal{P}_{/\mathcal{S}}\rightarrow T(\mathcal{P}_{/\mathcal{S}})$ a principal connection on $\mathcal{P}_{/\mathcal{S}}$. A tangent vector $X_p\in T_p(\mathcal{P}_{/\mathcal{S}})$ at $p\in\mathcal{P}_{/\mathcal{S}}$ is called \emph{horizontal} or \emph{vertical} if $X_p\in\mathscr{H}_p$ or $X_p\in\mathscr{V}_p$, respectively. Analogously, one defines horizontal and vertical vector fields.
\end{definition} 
\begin{remark}\label{prop:3.18}
Since, $\mathscr{H}_p\oplus\mathscr{V}_p=T_p(\mathcal{P}_{/\mathcal{S}})$ $\forall p\in\mathcal{P}_{/\mathcal{S}}$, this induces projections $\mathrm{pr}_h$ and $\mathrm{pr}_v$ on $T(\mathcal{P}_{/\mathcal{S}})$ onto the horizontal and vertical tangent modules. As $\mathscr{H}$ and $\mathscr{V}$ define smooth horizontal distributions, it clear that, for any smooth vector field $X\in\Gamma(T\mathcal{P}_{/\mathcal{S}})$, the projections $\mathrm{pr}_h\circ X$ and $\mathrm{pr}_v\circ X$ define smooth horizontal and vertical vector fields, respectively.
\end{remark}
We finally come to an equivalent characterization of principal connections in terms of kernels of particular 1-forms defined on (relative) principal super fiber bundles. These so-called super connection 1-forms yield a generalization of the well-known gauge fields playing a prominent role in ordinary gauge theory in physics.
\begin{definition}\label{prop:3.19}
A \emph{super connection $1$-form} $\mathcal{A}$ on the $\mathcal{S}$-relative principal super fiber bundle $\mathcal{G}\rightarrow\mathcal{P}_{/\mathcal{S}}\stackrel{\pi_{\mathcal{S}}}{\rightarrow}\mathcal{M}_{/\mathcal{S}}$ is an even $\mathrm{Lie}(\mathcal{G})$-valued 1-form $\mathcal{A}\in\Omega^1(\mathcal{P}_{/\mathcal{S}},\mathfrak{g}):=\Omega^1(\mathcal{P}_{/\mathcal{S}})\otimes\mathfrak{g}$ such that
\begin{enumerate}[label=(\roman*)]
	\item $\braket{\widetilde{X}|\mathcal{A}}=X$ $\forall X\in\mathfrak{g}$
	\item $(\Phi_{\mathcal{S}})_g^*\mathcal{A}=\mathrm{Ad}_{g^{-1}}\circ\mathcal{A}$ $\forall g\in\mathcal{G}$.
\end{enumerate}
where, in condition (ii), the generalized tangent map was used (see Def. \ref{definition:3.9}).
\end{definition}

\begin{theorem}
For a $\mathcal{S}$-relative principal super fiber bundle $\mathcal{G}\rightarrow\mathcal{P}_{/\mathcal{S}}\rightarrow\mathcal{M}_{/\mathcal{S}}$, there is a one-to-one correspondence between principal connections and connection 1-forms on $\mathcal{P}_{/\mathcal{S}}$. More precisely,
\begin{enumerate}[label=(\roman*)]
	\item if $\mathscr{H}:\,\mathcal{P}_{/\mathcal{S}}\ni p\mapsto\mathscr{H}_p$ is a principal connection on $\mathcal{P}_{/\mathcal{S}}$, then $\mathcal{A}\in\Omega^1(\mathcal{P}_{/\mathcal{S}},\mathfrak{g})_0$ defined via 
\begin{equation}
\braket{(\widetilde{X}_p,Y_p)|\mathcal{A}_p}:=X
\label{eq:3.0.3}
\end{equation}
$\forall(\widetilde{X}_p,Y_p)\in\mathscr{H}_p\oplus\mathscr{V}_p=T_p\mathcal{P}$, $p\in\mathcal{P}_{/\mathcal{S}}$ and $X\in\mathrm{Lie}(\mathcal{G})$, defines a connection 1-form on $\mathcal{P}_{/\mathcal{S}}$.
	\item if $\mathcal{A}\in\Omega^1(\mathcal{P}_{/\mathcal{S}},\mathfrak{g})_0$ is a connection 1-form on $\mathcal{P}_{/\mathcal{S}}$, then the assignment
	\begin{equation}
	\mathscr{H}:\,\mathcal{P}_{/\mathcal{S}}\ni p\mapsto\mathrm{ker}(\mathcal{A}_p)\subset T_p(\mathcal{P}_{/\mathcal{S}})
	\label{eq:3.0.4}
	\end{equation}
	defines a principal connection on $\mathcal{P}_{/\mathcal{S}}$
\end{enumerate}
\end{theorem}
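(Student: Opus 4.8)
The plan is to establish the two constructions separately and then check that they are mutually inverse, which yields the asserted bijection. Throughout I would rely on the already proven description of the vertical distribution, namely that $\mathscr{V}_p=\{\widetilde{X}_p\mid X\in\mathrm{Lie}(\mathcal{G})\}$ and that $\mathrm{Lie}(\mathcal{G})\ni X\mapsto\widetilde{X}_p=(\Phi_{\mathcal{S}})_{p*}X\in\mathscr{V}_p$ is an even isomorphism of super $\Lambda$-modules, together with Lemma \ref{prop:3.8}, Lemma \ref{prop:3.14} and Remark \ref{prop:3.18}. For part (i), given a principal connection $\mathscr{H}$, I first note that the prescription \eqref{eq:3.0.3} is well-defined: since $\mathscr{H}_p\oplus\mathscr{V}_p=T_p\mathcal{P}_{/\mathcal{S}}$, every tangent vector decomposes uniquely as a horizontal vector $Y_p$ plus a vertical vector, and the vertical part equals $\widetilde{X}_p$ for a unique $X\in\mathrm{Lie}(\mathcal{G})$ by the isomorphism above. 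Hence $\mathcal{A}_p:=(\Phi_{\mathcal{S}})_{p*}^{-1}\circ\mathrm{pr}_v$ is a well-defined, parity-preserving (thus even) left-linear map $T_p\mathcal{P}_{/\mathcal{S}}\to\mathrm{Lie}(\mathcal{G})$. To see that $\mathcal{A}$ is a smooth $\mathfrak{g}$-valued $1$-form I would argue locally: a real homogeneous basis $(X_i)_i$ of $\mathfrak{g}$ yields smooth fundamental vector fields $\widetilde{X}_i$ forming a frame of $\mathscr{V}$, and since $\mathrm{pr}_v$ is smooth by Remark \ref{prop:3.18}, expanding $\mathrm{pr}_v$ in this frame produces smooth component $1$-forms $\tensor[^i]{\omega}{}$ with $\mathcal{A}=\tensor[^i]{\omega}{}\otimes X_i\in\Omega^1(\mathcal{P}_{/\mathcal{S}},\mathfrak{g})_0$. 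Condition (i) of Definition \ref{prop:3.19} is then immediate, since for $X\in\mathfrak{g}$ the fundamental field $\widetilde{X}$ is purely vertical, so $\mathrm{pr}_v\widetilde{X}=\widetilde{X}$ and $\braket{\widetilde{X}|\mathcal{A}}=X$.

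For the equivariance condition (ii) I would combine the right-invariance of $\mathscr{H}$ with Lemma \ref{prop:3.14}. For a vector decomposed as $Y_p+\widetilde{X}_p$ with $Y_p$ horizontal, right-invariance gives that $(\Phi_{\mathcal{S}})_{g*}Y_p$ is again horizontal, while Lemma \ref{prop:3.14} gives $(\Phi_{\mathcal{S}})_{g*}\widetilde{X}_p=\widetilde{\mathrm{Ad}_{g^{-1}}X}_{\!\raisemath{2pt}{p\cdot g}}$, which is vertical with generator $\mathrm{Ad}_{g^{-1}}X$. Evaluating $\mathcal{A}_{p\cdot g}$ on the pushforward therefore returns $\mathrm{Ad}_{g^{-1}}X=\mathrm{Ad}_{g^{-1}}\braket{(Y_p+\widetilde{X}_p)|\mathcal{A}_p}$, that is $(\Phi_{\mathcal{S}})_g^*\mathcal{A}=\mathrm{Ad}_{g^{-1}}\circ\mathcal{A}$, which is exactly (ii).

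For part (ii), given a connection $1$-form $\mathcal{A}$, condition (i) shows that $\mathcal{A}_p$ restricted to $\mathscr{V}_p$ is the inverse isomorphism $\widetilde{X}_p\mapsto X$, so in particular $\mathcal{A}_p\colon T_p\mathcal{P}_{/\mathcal{S}}\to\mathrm{Lie}(\mathcal{G})$ is surjective for every $p$. Regarding the even $1$-form $\mathcal{A}$ as a (right-linear) morphism from $T\mathcal{P}_{/\mathcal{S}}$ to the trivial $\mathcal{S}$-relative super vector bundle with fibre $\mathrm{Lie}(\mathcal{G})$, Lemma \ref{prop:3.8} then shows that $\mathscr{H}=\mathrm{ker}(\mathcal{A})$ is a smooth geometric distribution of rank $\dim\mathcal{P}-\dim\mathfrak{g}=\dim\mathcal{M}$. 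The splitting $\mathscr{H}_p\oplus\mathscr{V}_p=T_p\mathcal{P}_{/\mathcal{S}}$ follows by a dimension count once I verify $\mathscr{H}_p\cap\mathscr{V}_p=0$: any $v$ in the intersection is $v=\widetilde{X}_p$ with $\mathcal{A}_p(v)=0$, whence $X=\braket{\widetilde{X}_p|\mathcal{A}_p}=0$ and $v=0$. Right-invariance of $\mathrm{ker}(\mathcal{A})$ is then a direct consequence of (ii): for $v\in\mathrm{ker}(\mathcal{A}_p)$ one computes $\mathcal{A}_{p\cdot g}((\Phi_{\mathcal{S}})_{g*}v)=\mathrm{Ad}_{g^{-1}}(\mathcal{A}_p(v))=0$, so $(\Phi_{\mathcal{S}})_{g*}\mathscr{H}_p\subseteq\mathscr{H}_{p\cdot g}$, with equality obtained by applying the same to $g^{-1}$.

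Finally I would check that the two assignments are mutually inverse. Starting from $\mathscr{H}$, the form $\mathcal{A}$ of part (i) vanishes precisely on the horizontal part, so $\mathrm{ker}(\mathcal{A}_p)=\mathscr{H}_p$; conversely, starting from $\mathcal{A}$ and writing $v=Y_p+\widetilde{X}_p$ with $Y_p\in\mathrm{ker}(\mathcal{A}_p)=\mathscr{H}_p$, left-linearity and condition (i) give $\mathcal{A}_p(v)=\mathcal{A}_p(Y_p)+\mathcal{A}_p(\widetilde{X}_p)=X$, which is exactly the form reconstructed by (i). I expect the only genuinely delicate point to be the smoothness claim in part (i) — that the pointwise prescription assembles into an $H^\infty$ $1$-form — which is resolved by passing to the smooth fundamental frame $(\widetilde{X}_i)_i$ and invoking the smoothness of $\mathrm{pr}_v$; a minor accompanying check is the $\Lambda$-linear extension of condition (i) from $\mathfrak{g}$ to all of $\mathrm{Lie}(\mathcal{G})$, used in the intersection argument, which follows routinely from the evenness of $\mathcal{A}$ and of the generalized tangent map.
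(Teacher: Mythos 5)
Your proposal is correct and follows essentially the same route as the paper: smoothness of $\mathcal{A}$ in part (i) via the decomposition $\mathrm{pr}_v+\mathrm{pr}_h$ and the smooth fundamental frame $(\widetilde{X}_i)_i$, equivariance via right-invariance of $\mathscr{H}$ combined with Lemma \ref{prop:3.14}, and in part (ii) the identification of $\mathcal{A}$ with a surjective even morphism onto the trivial $\mathrm{Lie}(\mathcal{G})$-bundle so that Lemma \ref{prop:3.8} yields the smooth distribution, with right-invariance from condition (ii). Your explicit verification of $\mathscr{H}_p\cap\mathscr{V}_p=0$ and of the mutual-inverse property are small completions the paper leaves implicit, but they do not change the argument.
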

\begin{proof}
\begin{enumerate}[label=(\roman*)]
	\item We have to show that $\mathcal{A}$ as defined via \eqref{eq:3.0.4} satisfies the conditions (i) and (ii) in \ref{prop:3.19} of a connection 1-form on $\mathcal{P}_{/\mathcal{S}}$. First, to see that $\mathcal{A}$ indeed defines a smooth even $\mathrm{Lie}(\mathcal{G})$-valued 1-form on $\mathcal{P}_{/\mathcal{S}}$, i.e., $\mathcal{A}\in\Omega^1(\mathcal{P}_{/\mathcal{S}},\mathfrak{g})_0$, let $(X_i)_i$ be a homogeneous basis of $\mathfrak{g}\subset\mathrm{Lie}(\mathcal{G})=T_e\mathcal{G}$. Consider the components $\tensor[^i]{\mathcal{A}}{}:=\mathcal{A}\diamond\tensor[^i]{X}{}$ $\forall i$ with $(\tensor[^i]{X}{})_i$ the corresponding left-dual basis of $\tensor[^*]{\mathrm{Lie}(\mathcal{G})}{}$. According to Remark \ref{prop:3.18}, if $X\in\mathfrak{X}(\mathcal{P}_{/\mathcal{S}})$ is a smooth vector field, we can decompose $X$ into vertical and horizontal parts via $X=\mathrm{pr}_v\circ X+\mathrm{pr}_h\circ X=:X_v+X_h$. Since $X_v$ and $X_h$ are smooth, it follows that $\tensor[^i]{\mathcal{A}}{}$ is smooth iff $\braket{X_v|\tensor[^i]{\mathcal{A}}{}}$ and $\braket{X_h|\tensor[^i]{\mathcal{A}}{}}$ are smooth for any $X$ and thus iff $\tensor[^i]{\mathcal{A}}{}$ is smooth when restricted to smooth vertical and horizontal vector fields. The fundamental vector fields $\widetilde{X}_i$ generated by $X_i$ define global smooth vertical vector fields such that $(\widetilde{X}_i(p))_i$ is a homogeneous basis of $\mathscr{V}_p$ $\forall p\in\mathcal{P}_{/\mathcal{S}}$. By definition of $\mathcal{A}$, we have
\begin{equation}
\braket{\widetilde{X}_i|\tensor[^j]{\mathcal{A}}{}}=\delta_i^j\quad\forall i,j
\end{equation}
which is smooth and thus $\mathcal{A}$ is smooth on vertical vector fields. Finally, since $\mathscr{H}$ is a smooth geometric distribution, for any $p\in\mathcal{P}_{/\mathcal{S}}$, there exists a $p\in U_p\subseteq\mathcal{S}\times\mathcal{P}$ open as well as a family $(Y_i)_i$ of smooth horizontal vector fields on $U_p$ such that $(Y_i(q))_i$ is a homogeneous basis of $\mathscr{H}_q$ $\forall q\in U_p$. Since $\braket{Y_i|\mathcal{A}}=0$ and thus is smooth, it follows that $\mathcal{A}$ is also smooth on horizontal vector fields. Hence, indeed $\mathcal{A}\in\Omega^1(\mathcal{P}_{/\mathcal{S}},\mathfrak{g})$. That $\mathcal{A}$ has to be even is immediate.\\
It remains to show that $\mathcal{A}$ in fact satisfies the conditions (i) and (ii). By definition, (i) is immediate. Moreover, by right-invariance of $\mathscr{H}$, it suffices to show (ii) for vertical tangent vectors. Hence, let $\widetilde{X}_p\in\mathscr{V}_p\subset T_p(\mathcal{P}_{\mathcal{S}})$ with $X\in\mathrm{Lie}(\mathcal{G})$. Using Lemma \ref{prop:3.14}, we compute 
\begin{equation}
\braket{(\Phi_{\mathcal{S}})_{g*}\widetilde{X}_p|\mathcal{A}_{pg}}=\braket{\widetilde{\mathrm{Ad}_{g^{-1}}X}_{p\cdot g}|\mathcal{A}_{pg}}=\mathrm{Ad}_{g^{-1}}X=\mathrm{Ad}_{g^{-1}}\!\braket{\widetilde{X}_p|\mathcal{A}_p}
\end{equation}
This shows that $\mathcal{A}$ defines a principal connection 1-form on $\mathcal{P}_{/\mathcal{S}}$.
	\item Conversely, for $\mathcal{A}\in\Omega^1(\mathcal{P}_{/\mathcal{S}},\mathfrak{g})_0$ a principal connection 1-form on $\mathcal{P}_{/\mathcal{S}}$, we have to show that $\mathscr{H}:\,\mathcal{P}_{/\mathcal{S}}\ni p\mapsto\mathrm{ker}(\mathcal{A}_p)\subset T_p(\mathcal{P}_{/\mathcal{S}})$ defines a principal connection on $\mathcal{P}_{/\mathcal{S}}$. Therefore, similar as in \cite{Tuynman:2004}, consider the map
\begin{equation}
T(\mathcal{P}_{/\mathcal{S}})\rightarrow(\mathcal{P}\times\mathrm{Lie}(\mathcal{G}))_{/\mathcal{S}},\,T_p\mathcal{P}\ni X_p\mapsto(p,\braket{X_p|\mathcal{A}_p})
\label{eq:3.0.5}
\end{equation}
from $T(\mathcal{P}_{/\mathcal{S}})$ to the trivial $\mathcal{S}$-relative super vector bundle $\mathrm{Lie}(\mathcal{G})\rightarrow(\mathcal{P}\times\mathrm{Lie}(\mathcal{G}))_{/\mathcal{S}}\rightarrow\mathcal{P}_{/\mathcal{S}}$. By Remark \ref{prop:3.6}, we can identify $T(\mathcal{P}_{/\mathcal{S}})$ with the $\mathcal{S}$-relative super vector bundle $(T\mathcal{P})_{/\mathcal{S}}$. Hence, it follows that \eqref{eq:3.0.5} defines a smooth even and surjective, as $\mathcal{A}$ is even and surjective, morphism of $\mathcal{S}$-relative super vector bundles. Hence, by Lemma \ref{prop:3.8}, the kernel of \eqref{eq:3.0.5}, which coincides with $\mathscr{H}$, defines a smooth geometric distribution. To see that it is right-invariant, note that, by condition (ii), for $p\in\mathcal{P}_{/\mathcal{S}}$ and $X_p\in\mathscr{H}_p=\mathrm{ker}(\mathcal{A}_p)$, we have
\begin{equation}
\braket{(\Phi_{\mathcal{S}})_{g*}X_p|\mathcal{A}_{p\cdot g}}=\mathrm{Ad}_{g^{-1}}\!\braket{X_p|\mathcal{A}_p}=0
\end{equation} 
and thus $(\Phi_{\mathcal{S}})_{g*}X_p\in\mathscr{H}_{p\cdot g}$. 
\end{enumerate}
\end{proof}
We finally want to define the notion of a covariant derivative and curvature 2-forms corresponding to super connection forms defined on relative principal super fiber bundles. 
\begin{definition}
Let $\mathcal{G}\rightarrow\mathcal{P}_{/\mathcal{S}}\rightarrow\mathcal{M}_{/\mathcal{S}}$ be a $\mathcal{S}$-relative principal super fiber bundle and $\mathcal{A}\in\Omega^1(\mathcal{P}_{/\mathcal{S}},\mathfrak{g})_0$ a super connection 1-form on $\mathcal{P}_{/\mathcal{S}}$. The linear map $D^{(\mathcal{A})}:\,\Omega^k(\mathcal{P}_{/\mathcal{S}},\mathcal{V})\rightarrow\Omega^{k+1}(\mathcal{P}_{/\mathcal{S}},\mathcal{V})$ defined as
\begin{equation}
\braket{X_0,\ldots,X_k|D^{(\mathcal{A})}\omega}:=\braket{\mathrm{pr}_{h}\circ X_0,\ldots,\mathrm{pr}_{h}\circ X_k|\mathrm{d}\omega}
\end{equation}
for smooth vector fields $X_i\in\mathfrak{X}(\mathcal{P}_{/\mathcal{S}})$, $i=0,\ldots,k$, is called the \emph{covariant derivative induced by $\mathcal{A}$} where $\mathrm{pr}_h:\,T\mathcal{P}_{/\mathcal{S}}\rightarrow\mathscr{H}:=\mathrm{ker}\,\mathcal{A}$ denotes the projection onto the horizontal tangent modules induced by $\mathcal{A}$.
\end{definition}
An important subclass of vector-valued forms on (relative) principal super fiber bundles is provided by forms that transform covariantly in a specific sense under gauge transformaions. This is the content of the following definition. One then asks the question, whether one can define a derivative on such forms so that the transformation property is preserved. As we will see, it follows that the covariant derivative induced by super connections forms indeed has the right properties.  
\begin{definition}
Let $\mathcal{G}\rightarrow\mathcal{P}_{/\mathcal{S}}\rightarrow\mathcal{M}_{/\mathcal{S}}$ be a $\mathcal{S}$-relative principal super fiber bundle with $\mathcal{G}$-right action $\Phi:\,\mathcal{P}_{/\mathcal{S}}\times\mathcal{G}\rightarrow\mathcal{P}_{/\mathcal{S}}$ and $\rho:\,\mathcal{G}\rightarrow\mathrm{GL}(\mathcal{V})$ be a representation of $\mathcal{G}$ on a super $\Lambda$-vector space $\mathcal{V}$. A $k$-form $\omega$ on $\mathcal{P}_{/\mathcal{S}}$ with values in $\mathcal{V}$ is called \emph{horizontal of type $(\mathcal{G},\rho)$}, symbolically, $\omega\in\Omega^k(\mathcal{P}_{/\mathcal{S}},\mathcal{V})^{(\mathcal{G},\rho)}$ if $\omega$ vanishes on vertical tangent vectors and
\begin{equation}
\Phi_{g}^*\omega=\rho(g)^{-1}\circ\omega,\,\forall g\in\mathcal{G}
\end{equation}
\end{definition}
\begin{prop}\label{prop:3.23}
Let $\mathcal{G}\rightarrow\mathcal{P}_{/\mathcal{S}}\rightarrow\mathcal{M}_{/\mathcal{S}}$ be a $\mathcal{S}$-relative principal super fiber bundle and $\mathcal{A}\in\Omega^1(\mathcal{P}_{/\mathcal{S}},\mathfrak{g})_0$ a super connection 1-form on $\mathcal{P}_{/\mathcal{S}}$. Let $\omega\in\Omega^k_{hor}(\mathcal{P}_{/\mathcal{S}},\mathcal{V})^{(\mathcal{G},\rho)}$ be a horizontal $k$-form on $\mathcal{P}_{/\mathcal{S}}$ of type $(\mathcal{G},\rho)$. Then, the induced covariant derivative $D^{(\mathcal{A})}$ takes the form
\begin{equation}
D^{(\mathcal{A})}\omega=\mathrm{d}\omega+\rho_{*}(\mathcal{A})\wedge\omega
\label{eq:4.17.1}
\end{equation}
where the $(k+1)$-form $\rho_{*}(\mathcal{A})\wedge\omega$ on $\mathcal{P}_{/\mathcal{S}}$ is defined as
\begin{equation}
\braket{X_0,\ldots,X_k|\rho_{*}(\mathcal{A})\wedge\omega}:=\sum_{i=0}^k{(-1)^{k-i+\sum_{l=0}^{i-1}|X_i||X_l|}\rho_{*}(\iota_{X_i}\mathcal{A})}(\braket{X_0,\ldots,\widehat{X}_i,\ldots,X_k|\omega})
\label{eq:4.17.2}
\end{equation}
In particular, $D^{(\mathcal{A})}\omega\in\Omega^{k+1}_{hor}(\mathcal{P}_{/\mathcal{S}},\mathcal{V})^{(\mathcal{G},\rho)}$, i.e., $D^{(\mathcal{A})}$ induces a covariant derivative on the subspace of horizontal forms of type $(\mathcal{G},\rho)$. 
\end{prop}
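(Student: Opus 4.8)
The plan is to verify the identity \eqref{eq:4.17.1} by evaluating both sides on families of homogeneous vector fields. Since $\mathrm{d}\omega$, $\rho_{*}(\mathcal{A})\wedge\omega$ and $D^{(\mathcal{A})}\omega$ are all $\mathcal{V}$-valued $(k+1)$-forms, hence multilinear and alternating, and since at every point $T_p(\mathcal{P}_{/\mathcal{S}})=\mathscr{H}_p\oplus\mathscr{V}_p$, it suffices to test on tuples $(X_0,\ldots,X_k)$ in which each entry is either horizontal or vertical; moreover, because the fundamental vector fields $\widetilde{A}$, $A\in\mathfrak{g}$, span $\mathscr{V}$, I may take each vertical entry of the form $\widetilde{A}_i$. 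I would then distinguish cases according to the number of vertical entries.

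If all $X_i$ are horizontal, then $\mathrm{pr}_{h}\circ X_i=X_i$, so the left-hand side of \eqref{eq:4.17.1} reduces to $\braket{X_0,\ldots,X_k|\mathrm{d}\omega}$. On the right-hand side every summand of \eqref{eq:4.17.2} carries a factor $\rho_{*}(\iota_{X_i}\mathcal{A})=\rho_{*}(\braket{X_i|\mathcal{A}})$, which vanishes since horizontal vectors lie in $\mathrm{ker}\,\mathcal{A}=\mathscr{H}$; hence $\rho_{*}(\mathcal{A})\wedge\omega$ vanishes and both sides agree. If two or more entries are vertical, the left-hand side vanishes because $\mathrm{pr}_{h}$ annihilates a vertical slot; on the right, every surviving summand of \eqref{eq:4.17.2} still feeds $\omega$ at least one vertical argument, so it vanishes by horizontality of $\omega$, while the term $\braket{\ldots|\mathrm{d}\omega}$ vanishes upon iterating the identity $\iota_{\widetilde{A}}\mathrm{d}\omega=L_{\widetilde{A}}\omega$ (established below) together with $\iota_{\widetilde{A}}\omega=0$.

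The decisive case is a single vertical entry, say $X_0=\widetilde{A}$ with $A\in\mathfrak{g}$ and $X_1,\ldots,X_k$ horizontal. Here the left-hand side is again $0$, so I must show that the two terms on the right cancel. In \eqref{eq:4.17.2} only the summand $i=0$ survives (the others carry $\iota_{X_i}\mathcal{A}=0$), and by property (i) of Definition \ref{prop:3.19} one has $\iota_{\widetilde{A}}\mathcal{A}=\braket{\widetilde{A}|\mathcal{A}}=A$, so that term is a single multiple of $\rho_{*}(A)\braket{X_1,\ldots,X_k|\omega}$. For the $\mathrm{d}\omega$ term I would use the Cartan formula, i.e. identity \eqref{eq:identity} and $L_{\widetilde{A}}:=\mathrm{d}\circ\iota_{\widetilde{A}}+\iota_{\widetilde{A}}\circ\mathrm{d}$, together with horizontality $\iota_{\widetilde{A}}\omega=0$, to write $\iota_{\widetilde{A}}\mathrm{d}\omega=L_{\widetilde{A}}\omega$, and then the infinitesimal equivariance
\begin{equation}
L_{\widetilde{A}}\omega=-\rho_{*}(A)\,\omega,
\end{equation}
obtained by differentiating the defining relation $\Phi_g^*\omega=\rho(g)^{-1}\circ\omega$ at $g=e$ in the direction $A$. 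The Koszul signs in \eqref{eq:4.17.2} are arranged precisely so that these two contributions cancel, which establishes \eqref{eq:4.17.1}.

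Finally, the two claimed properties of $D^{(\mathcal{A})}\omega$ follow quickly. Horizontality is immediate from the defining formula of $D^{(\mathcal{A})}$, since inserting any vertical vector produces a factor $\mathrm{pr}_{h}\circ\widetilde{A}=0$. For equivariance I would use that the splitting is right-invariant, so that $(\Phi_{\mathcal{S}})_{g*}$ commutes with $\mathrm{pr}_{h}$, together with the fact that $\mathrm{d}$ commutes with pullback and $\rho(g)^{-1}$ is a constant linear map, giving $\Phi_g^*\mathrm{d}\omega=\mathrm{d}(\rho(g)^{-1}\omega)=\rho(g)^{-1}\mathrm{d}\omega$; then
\begin{align}
\braket{X_0,\ldots,X_k|\Phi_g^*D^{(\mathcal{A})}\omega}&=\braket{\mathrm{pr}_{h}(\Phi_{\mathcal{S}})_{g*}X_0,\ldots|\mathrm{d}\omega}=\braket{(\Phi_{\mathcal{S}})_{g*}\mathrm{pr}_{h}X_0,\ldots|\Phi_g^*\mathrm{d}\omega}\nonumber\\
&=\rho(g)^{-1}\braket{\mathrm{pr}_{h}X_0,\ldots|\mathrm{d}\omega}=\rho(g)^{-1}\braket{X_0,\ldots,X_k|D^{(\mathcal{A})}\omega}.
\end{align}
The main obstacle I anticipate is making the infinitesimal equivariance rigorous in the relative super category---differentiating the $\mathcal{G}$-action in an odd direction via the generalized tangent map of Definition \ref{definition:3.9}---and carefully tracking the super-signs so that the cancellation in the one-vertical case is exact.
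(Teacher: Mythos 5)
Your proposal is correct and follows essentially the same route as the paper: a case analysis on how many of the test vector fields are vertical (taken to be fundamental), the vanishing of the wedge term on horizontal entries, the infinitesimal equivariance $L_{\widetilde{A}}\omega=-\rho_{*}(A)\circ\omega$ combined with the Cartan formula and identity \eqref{eq:identity} to force the cancellation in the one-vertical case, and right-invariance of the horizontal distribution for the equivariance of $D^{(\mathcal{A})}\omega$. The obstacle you flag at the end is exactly where the paper also leans on external machinery, namely the argument of Prop.~\ref{prop:5.10} to justify differentiating the $\mathcal{G}$-equivariance relation in odd directions.
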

\begin{proof}
That $D^{(\mathcal{A})}\omega$ defines a horizontal form of type $(\mathcal{G},\rho)$ if $\omega$ does, follows immediately from the right-invariance of the horizontal distribution which implies $\mathrm{pr}_h\circ(\Phi_{\mathcal{S}})_g=(\Phi_{\mathcal{S}})_g\circ\mathrm{pr}_h$ $\forall g\in\mathcal{G}$ where $\Phi_{\mathcal{S}}:\,\mathcal{P}_{/\mathcal{S}}\times\mathcal{G}\rightarrow\mathcal{P}_{/\mathcal{S}}$ denotes the $\mathcal{G}$-right action on $\mathcal{P}_{/\mathcal{S}}$. \\
To prove \eqref{eq:4.17.1}, it is sufficient to show equality when evaluating \eqref{eq:4.17.1} on smooth horizontal and vertical vector fields $X_i$, $i=0,\ldots,k$. Moreover, if $X$ is vertical, it suffices to assume that is a fundamental vector field $X=\widetilde{Y}$ generated by some $Y\in\mathfrak{g}$. In case all $X_i$ are vertical, then it is clear that both sides in \eqref{eq:4.17.1} trivially vanish by horizontality. If all $X_i$ are horizontal, then \eqref{eq:4.17.2} simply vanishes as $\mathcal{A}$ vanishes on horizontal vector fields proving equality also in this case. Next, assume that at least two vector fields are vertical. Therefore, note that if $\widetilde{X}$ for some $X\in\mathfrak{g}$ is a fundamental vector field and $Y^{*}$ is horizontal then $[\widetilde{X},Y^{*}]$ is also horizontal. In fact, according to Prop. \ref{prop:5.10}, we have $L_{\widetilde{X}}\mathcal{A}=-\mathrm{ad}_X\circ\mathcal{A}$ such that, following \cite{Tuynman:2004}, this yields
\begin{align}
\braket{[\widetilde{X},Y^{*}]|\mathcal{A}}&=\widetilde{X}\braket{Y^{*}|\mathcal{A}}-(-1)^{|X||Y|}\braket{Y^{*}|L_{\widetilde{X}}\mathcal{A}}\nonumber\\
&=\mathrm{ad}_X\circ\braket{Y^{*}|\mathcal{A}}=0
\end{align}
Hence, again, both sides in \eqref{eq:4.17.1} vanish as $\omega$ is horizontal and $\mathcal{A}$ vanishes on horizontal vector fields. It thus remains to consider the case where at least one vector field is vertical. Thus, suppose $\widetilde{X}$ is a fundamental vector field generated by $X\in\mathfrak{g}$ and $Y_i$ are horizontal $\forall i=1,\dots,k$. By definition, the left hand side of \eqref{eq:4.17.1} simply vanishes. On the other hand, following precisely the same steps as in the proof of Prop. \ref{prop:5.10}, it follows that $L_{\widetilde{X}}\omega=-\mathrm{\rho}_{*}(X)\circ\omega$. This yields
\begin{align}
(-1)^k\braket{\widetilde{X},Y_1\ldots,Y_k|d\omega}&=(-1)^{\sum_{i=1}^k{|X||Y_i|}}\braket{Y_1,\ldots,Y_k|L_{\widetilde{X}}\omega}\nonumber\\
&=-\rho_{*}(X)(\braket{Y_1,\ldots,Y_k|\omega})
\end{align}
where identity \eqref{eq:identity} was used. Moreover, using Definition \eqref{eq:4.17.2}, we get
\begin{align}
\braket{\widetilde{X},Y_1\ldots,Y_k|\rho_{*}(\mathcal{A})\wedge\omega}=(-1)^{k}\rho_{*}(\iota_{\widetilde{X}}\mathcal{A})(\braket{Y_1,\ldots,Y_k|\omega})=\rho_{*}(X)(\braket{Y_1,\ldots,Y_k|\omega})
\end{align}
proving that the right hand side of \eqref{eq:4.17.1} vanishes as well.
\end{proof}

\begin{definition}
Let $\alpha\in\Omega^{k}(\mathcal{M}_{/\mathcal{S}},\mathfrak{g})$ and $\beta\in\Omega^{l}(\mathcal{M}_{/\mathcal{S}},\mathfrak{g})$ be a $k$- resp. $l$-form on a $\mathcal{S}$-relative supermanifold $\mathcal{M}_{/\mathcal{S}}$ with values in a super Lie module $\mathrm{Lie}(\mathcal{G})$ corresponding to a super Lie group $\mathcal{G}$. The $k+l$-form $[\alpha\wedge\beta]\in\Omega^{k+l}(\mathcal{M}_{/\mathcal{S}},\mathfrak{g})$ is then defined via
\begin{equation}
[\alpha\wedge\beta]:=\alpha^i\wedge\mathfrak{C}^{|e_i|}(\beta^j)\otimes[e_i,e_j]
\end{equation}
where we have expanded $\alpha=\alpha^i\otimes e_i$ and $\beta=\beta^i\otimes e_i$ w.r.t. a homogeneous basis $(e_i)_i$ of $\mathfrak{g}$. Here, the involution $\mathfrak{C}:\,\mathcal{V}\rightarrow\mathcal{V}$ is defined as $\mathfrak{C}(v)=(-1)^{|v|}v$ for any homogeneous $v\in\mathcal{V}$.
\end{definition}

\begin{remark}\label{prop:3.0.25}
For $\omega\in\Omega^k_{hor}(\mathcal{P}_{/\mathcal{S}},\mathfrak{g})^{(\mathcal{G},\mathrm{Ad})}$, it follows that
\begin{equation}
D^{(\mathcal{A})}\omega=\mathrm{d}\omega+[\mathcal{A}\wedge\omega]
\end{equation}
To see this, let us consider the case $k=1$. By direct computation, it then follows
\begin{align}
\braket{X,Y|[\mathcal{A}\wedge\omega]}&=\iota_X\circ\iota_Y\left(\mathcal{A}^i\wedge\mathfrak{C}^{|e_i|}(\omega^j)\right)\otimes[e_i,e_j]\nonumber\\
&=\left((-1)^{(|e_i|+|Y|)|X|}\braket{Y|\mathcal{A}^i}\braket{X|\mathfrak{C}^{|e_i|}(\omega^j)}-(-1)^{|e_i||Y|}\braket{X|\mathcal{A}^i}\braket{Y|\mathfrak{C}^{|e_i|}(\omega^j)}\right)\otimes[e_i,e_j]\nonumber\\
&=-[\braket{X|\mathcal{A}},\braket{Y|\omega}]+(-1)^{|X||Y|}[\braket{Y|\mathcal{A}},\braket{X|\omega}]=\braket{X,Y|\mathrm{ad}(\mathcal{A})\wedge\omega}
\end{align}
Hence, indeed, $[\mathcal{A}\wedge\omega]=\mathrm{ad}(\mathcal{A})\wedge\omega$.
\end{remark}

\begin{definition}
Let $\mathcal{A}\in\Omega^1(\mathcal{P}_{/\mathcal{S}},\mathfrak{g})_0$ be a super connection 1-form on a $\mathcal{S}$-relative principal super fiber bundle $\mathcal{G}\rightarrow\mathcal{P}_{/\mathcal{S}}\rightarrow\mathcal{M}_{/\mathcal{S}}$. The horizontal $2$-form 
\begin{equation}
F(\mathcal{A}):=D^{(\mathcal{A})}\mathcal{A}\in\Omega^2_{hor}(\mathcal{P}_{/\mathcal{S}},\mathfrak{g})^{(\mathcal{G},\mathrm{Ad})}
\end{equation}
is called the \emph{curvature} of $\mathcal{A}$.
\end{definition}

\begin{prop}
Let $\mathcal{A}\in\Omega^1(\mathcal{P}_{/\mathcal{S}},\mathfrak{g})_0$ be a super connection 1-form on a $\mathcal{S}$-relative principal super fiber bundle $\mathcal{G}\rightarrow\mathcal{P}_{/\mathcal{S}}\rightarrow\mathcal{M}_{/\mathcal{S}}$. Then,
\begin{enumerate}[label=(\roman*)]
	\item $F(\mathcal{A})=\mathrm{d}\mathcal{A}+[\mathcal{A}\wedge\mathcal{A}]$
	\item $D^{(\mathcal{A})}F(\mathcal{A})=0$ (Bianchi identity)
\end{enumerate}
\end{prop}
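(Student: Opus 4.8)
The plan is to establish (i) first and then deduce (ii) from it. Both sides of (i) are even $\mathfrak{g}$-valued $2$-forms on $\mathcal{P}_{/\mathcal{S}}$, hence $H^{\infty}$-bilinear and graded skew-symmetric in their two arguments, so they are determined by their values on a local frame. Using the splitting $T_p\mathcal{P}_{/\mathcal{S}}=\mathscr{H}_p\oplus\mathscr{V}_p$ together with Remark \ref{prop:3.18}, I would evaluate both sides on pairs in which each slot is either a locally defined smooth horizontal field or a fundamental field $\widetilde{A}$ generated by an element $A$ of a homogeneous basis of $\mathfrak{g}$; since the fundamental fields span $\mathscr{V}$ globally and horizontal fields span $\mathscr{H}$ locally, multilinearity reduces the claim to three cases.

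In the purely horizontal case the identity is immediate from the definition $\braket{X,Z|F(\mathcal{A})}=\braket{\mathrm{pr}_h X,\mathrm{pr}_h Z|\mathrm{d}\mathcal{A}}$, since $\mathrm{pr}_h$ acts as the identity and $[\mathcal{A}\wedge\mathcal{A}]$ vanishes on horizontal vectors because $\mathcal{A}$ annihilates $\mathscr{H}=\ker\mathcal{A}$. In the mixed case, with a fundamental field $\widetilde{A}$ and a horizontal field $W$, the left-hand side vanishes as $\mathrm{pr}_h\widetilde{A}=0$; on the right, $[\mathcal{A}\wedge\mathcal{A}]$ again vanishes (one slot is horizontal), while $\braket{\widetilde{A},W|\mathrm{d}\mathcal{A}}=0$ follows from $\braket{W|\mathcal{A}}=0$, the constancy of $\braket{\widetilde{A}|\mathcal{A}}=A$, and the fact—already established in the proof of Prop. \ref{prop:3.23}—that $[\widetilde{A},W]$ is horizontal. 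The purely vertical case is the crux: for $\widetilde{A},\widetilde{B}$ the left-hand side is again zero, so one must verify that the bracket term in $\mathrm{d}\mathcal{A}$, produced by $\braket{[\widetilde{A},\widetilde{B}]|\mathcal{A}}$ using that the assignment $A\mapsto\widetilde{A}$ respects brackets, is exactly cancelled by $\braket{\widetilde{A},\widetilde{B}|[\mathcal{A}\wedge\mathcal{A}]}$, the latter evaluated through the purely algebraic identity of Remark \ref{prop:3.0.25} (which does not require horizontality).

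I expect this last cancellation to be the main obstacle—not conceptually, but because it demands careful bookkeeping of the Koszul signs attached to the graded Lie bracket, the involution $\mathfrak{C}$, and the super exterior derivative; making these normalizations consistent is exactly what fixes the coefficient in front of $[\mathcal{A}\wedge\mathcal{A}]$ so that the vertical contributions cancel.

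For the Bianchi identity (ii), I would argue directly and avoid the graded Jacobi identity entirely. By definition of the covariant derivative, $\braket{X_0,X_1,X_2|D^{(\mathcal{A})}F(\mathcal{A})}=\braket{\mathrm{pr}_h X_0,\mathrm{pr}_h X_1,\mathrm{pr}_h X_2|\mathrm{d}F(\mathcal{A})}$, so it suffices to show that $\mathrm{d}F(\mathcal{A})$ vanishes on horizontal arguments. Substituting the structure equation from (i) and using $\mathrm{d}^2=0$ gives $\mathrm{d}F(\mathcal{A})=\mathrm{d}[\mathcal{A}\wedge\mathcal{A}]$. Expanding the exterior derivative of this $2$-form on a horizontal triple $(Z_0,Z_1,Z_2)$, every derivative term contains a factor $\braket{Z_i,Z_j|[\mathcal{A}\wedge\mathcal{A}]}$ with two horizontal entries, and every bracket term contains a factor of the form $[\mathcal{A}\wedge\mathcal{A}]([Z_i,Z_j],Z_k)$ with the horizontal entry $Z_k$ in one slot; since $[\mathcal{A}\wedge\mathcal{A}]$ requires $\mathcal{A}$ to act on both of its slots and $\mathcal{A}$ kills horizontal vectors, each term vanishes individually, independently of the Koszul signs. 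Hence $D^{(\mathcal{A})}F(\mathcal{A})=0$, which makes (ii) the cleaner of the two parts once (i) is in hand.
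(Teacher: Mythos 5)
Your proof is correct. For part (i) you follow essentially the route the paper indicates (evaluating both sides separately on horizontal fields and on fundamental fields, reusing the horizontality of $[\widetilde{X},Y^{*}]$ established in the proof of Prop.~\ref{prop:3.23}); the vertical--vertical cancellation you single out as requiring careful sign bookkeeping is indeed the only delicate point, and your caution is warranted, since the paper itself is not entirely consistent about the normalization there, writing $F(\mathcal{A})=\mathrm{d}\mathcal{A}+[\mathcal{A}\wedge\mathcal{A}]$ in the statement but $\tfrac{1}{2}[\mathcal{A}\wedge\mathcal{A}]$ in its own Bianchi computation. For part (ii) you take a genuinely different, and arguably cleaner, route: the paper expands $\mathcal{A}=\mathcal{A}^{i}\otimes e_{i}$ in a homogeneous basis, proves the algebraic identity $\mathrm{d}[\mathcal{A}\wedge\mathcal{A}]=-[\mathcal{A}\wedge\mathrm{d}\mathcal{A}]$ by tracking the Koszul signs, and only then observes that the right-hand side is annihilated by horizontal arguments because $\mathcal{A}$ is; you skip the component computation entirely, expanding $\mathrm{d}[\mathcal{A}\wedge\mathcal{A}]$ on a horizontal triple via the intrinsic formula and killing every term on the grounds that $[\mathcal{A}\wedge\mathcal{A}]$ vanishes as soon as one of its two slots is horizontal. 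Your version is sign-free and shorter; the paper's buys the reusable intermediate identity $\mathrm{d}F(\mathcal{A})=-[\mathcal{A}\wedge\mathrm{d}\mathcal{A}]$. Either argument is acceptable.
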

\begin{proof}
The first identity can be shown similarly as in the proof of Prop. \ref{prop:3.23} by applying both sides separately on horizontal and vertical vector fields.\\
To show the Bianchi identity, let us decompose $\mathcal{A}=\mathcal{A}^i\otimes e_i$ with $(e_i)_i$ a real homogeneous basis of $\mathfrak{g}$. Then, 
\begin{align}
\mathrm{d}\mathrm{F}(\mathcal{A})&=\frac{1}{2}\mathrm{d}[\mathcal{A}\wedge\mathcal{A}]=(-1)^{|e_i||e_j|}\frac{1}{2}\left(\mathrm{d}\mathcal{A}^i\wedge\mathcal{A}^j-\mathcal{A}^i\wedge \mathrm{d}\mathcal{A}^j\right)\otimes[e_i,e_j]\nonumber\\
&=\frac{1}{2}\left(\mathcal{A}^j\wedge\mathrm{d}\mathcal{A}^i-(-1)^{|e_i||e_j|}\mathcal{A}^i\wedge\mathrm{d}\mathcal{A}^j\right)\otimes[e_i,e_j]=-[\mathcal{A}\wedge \mathrm{d}\mathcal{A}]
\label{eq:3.10.1}
\end{align}
Thus, it follows that $\mathrm{d}\mathrm{F}(\mathcal{A})$ vanishes when evaluating it on horizontal vector fields. But, since $D^{(\mathcal{A})}F(\mathcal{A})=\mathrm{pr}_h\diamond \mathrm{d}\mathrm{F}(\mathcal{A})$ (cf. Definition \ref{definitionNEU:2.11}), the claim follows.
\end{proof}

\section{Super Cartan geometry}\label{section:Cartan}
Super Cartan geometries play a very important role in the geometric approach to supergravity. It is based on the observation that gravity has the equivalent description in terms of ordinary Cartan geometry. In \cite{Eder:2020erq}, we have given a definition of \emph{super Cartan geometries} in the context of enriched categories. In this section, we extend the definition to metric reductive super Cartan geometries and also give a detailed proof for its relation to Ehresmann connections (for its application to supergravity see \cite{Eder:2020erq} or section \ref{section:Application} below). Before we state the main definition, we first need to recall the notion of a \emph{super Klein geometry}.  
\begin{definition}
A \emph{super Klein geometry} is a pair $(\mathcal{G},\mathcal{H})$ consisting of a super Lie Group $\mathcal{G}$ and an embedded super Lie subgroup $\mathcal{H}\hookrightarrow\mathcal{G}$.
\end{definition}
Super Cartan geometries can be regarded as deformed Klein geometries. The following definition provides a generalization of Cartan geometries to the super category (for a physical account on ordinary Cartan geometries with application to gravity see e.g. \cite{Wise:2006sm}). A first approach in this direction in context of relative categories has been studied in \cite{Hack:2015vna} introducing the notion of \emph{Cartan structures}. A generalization to super Cartan geometries which are defined on arbitrary (not necessarily trivial) relative principal super fiber bundles and which also encode non-trivial fermionic degrees of freedom on the body of a supermanifold has been given in \cite{Eder:2020erq}.       
\begin{definition}[\cite{Eder:2020erq}]\label{Def:4.2}
A \emph{super Cartan geometry} $(\pi_{\mathcal{S}}:\,\mathcal{P}_{/\mathcal{S}}\rightarrow\mathcal{M}_{/\mathcal{S}},\mathcal{A})$ modeled on a super Klein geometry $(\mathcal{G},\mathcal{H})$ is a $\mathcal{S}$-relative principal super fiber bundle $\mathcal{H}\rightarrow\mathcal{P}_{/\mathcal{S}}\rightarrow\mathcal{M}_{/\mathcal{S}}$ with structure group $\mathcal{H}$ together with a smooth even $\mathrm{Lie}(\mathcal{G})$-valued 1-form $\mathcal{A}\in\Omega^1(\mathcal{P}_{/\mathcal{S}},\mathfrak{g})_0$ on $\mathcal{P}_{/\mathcal{S}}$ called \emph{super Cartan connection} such that
\begin{enumerate}[label=(\roman*)]
	\item $\braket{\widetilde{X}|\mathcal{A}}=X$,\quad $\forall X\in\mathfrak{h}$
	\item $(\Phi_{\mathcal{S}})^*_{h}\mathcal{A}=\mathrm{Ad}_{h^{-1}}\circ\mathcal{A}$,\quad $\forall h\in\mathcal{H}$
	\item for any $s\in\mathbf{B}(\mathcal{S})$, the pullback of $\mathcal{A}$ w.r.t. the induced embedding $\iota_{\mathcal{P}}:\,\mathcal{P}\hookrightarrow\mathcal{S}\times\mathcal{P},\,p\mapsto(s,p)$ yields an isomorphism $\iota_{\mathcal{P}}^*\mathcal{A}_p:\,T_p\mathcal{P}\rightarrow\mathrm{Lie}(\mathcal{G})$ of free super $\Lambda$-modules for any $p\in\mathcal{P}$
\end{enumerate}
where the last condition will be called the \emph{super Cartan condition}. If (iii) is not satisfied, $\mathcal{A}$ will be called a \emph{generalized super Cartan connection}.
\end{definition}
\begin{remark}
Note that, by definition, it follows that condition (iii) for a super Cartan connection $\mathcal{A}$ is preserved under change of parametrization. In fact, let $\lambda:\,\mathcal{S'}\rightarrow\mathcal{S}$ be a smooth map. Then, since $\lambda|_{\mathbf{B}(\mathcal{S}')}\subseteq\mathbf{B}(\mathcal{S})$, it follows that the pullback $\lambda^*\mathcal{A}$ also satisfies (iii).
\end{remark}

\begin{definition}[after \cite{Tuynman:2004,Tuynman:2018}]\label{bilinear map}
A (homogeneous) \emph{super bilinear form} $\mathscr{S}$ of parity $|\mathscr{S}|\in\mathbb{Z}_2$ on a free super $\Lambda$-module $\mathcal{V}$ is a right bilinear map $\mathscr{S}:\,\mathcal{V}\times\mathcal{V}\rightarrow\Lambda^{\mathbb{C}}$, $\Lambda^{\mathbb{C}}:=\Lambda\otimes\mathbb{C}$, which satisfies $\mathscr{S}(\mathcal{V}_{i},\mathcal{V}_{j})\subseteq(\Lambda^{\mathbb{C}})_{|\mathscr{S}|+i+j}$ and is graded symmetric, i.e.,
\begin{equation}
\mathscr{S}(v,w)=(-1)^{|v||w|}\mathscr{S}(w,v),\quad\forall\text{ homogeneous }v,w\in\mathcal{V}
\label{eq:0.2.3.1}
\end{equation}
Let $V:=\mathcal{V}/\mathcal{N}$ with $\mathcal{N}:=\{v\in\mathcal{V}|\,\exists a\in\Lambda:\,a\neq 0\text{ and }ax=0\}$ the subset of nilpotent vectors. A super bilinear form $\mathscr{S}$ is called \emph{smooth}, if $\mathscr{S}(V,V)\subseteq\mathbb{C}$. An even super bilinear form $\mathscr{S}:\,\mathcal{V}\times\mathcal{V}\rightarrow\Lambda^{\mathbb{C}}$ is called a \emph{super metric} if it is non-degenerate, that is, for any $v\in V$ with $v\neq 0$, there exists $w\in V$ such that $\epsilon(\mathscr{S}(v,w))\neq 0$ where $\epsilon:\,\Lambda^{\mathbb{C}}\rightarrow\mathbb{C}$ is the body map.

\end{definition}

\begin{remark}
If a super bilinear form $\mathscr{S}$ is smooth, it follows immediately that $\mathscr{S}|_{V\times V}$ defines a graded symmetric bilinear form on the super vector space $V:=\mathcal{V}/\mathcal{N}$ in the sense of \cite{Cheng:2012}, i.e., $\mathscr{S}(V_{i},V_{j})=0$ unless $i+j=|\mathscr{S}|$. Moreover, $\mathscr{S}|_{V_0\times V_0}$ is symmetric and $\mathscr{S}|_{V_1\times V_1}$ is antisymmetric. If $\mathscr{S}$ is furthermore even and non-degenerate then so is $\mathscr{S}|_{V\times V}$ which implies that $V_1$ is necessarily even dimensional.
\end{remark}

\begin{definition}
A super Cartan geometry $(\pi_{\mathcal{S}}:\,\mathcal{P}_{/\mathcal{S}}\rightarrow\mathcal{M}_{/\mathcal{S}},\mathcal{A})$ modeled on a super Klein geometry $(\mathcal{G},\mathcal{H})$ is called 
\begin{enumerate}[label=(\roman*)]
	\item \emph{reductive} if the super Lie algebra $\mathfrak{g}$ of $\mathcal{G}$ admits a decomposition of the form $\mathfrak{g}=\mathfrak{g}/\mathfrak{h}\oplus\mathfrak{h}$ with $\mathfrak{h}$ the super Lie algebra of $\mathcal{H}$ and $\mathfrak{g}/\mathfrak{h}$ a super vector space such that the corresponding super $\Lambda$-vector space $\Lambda\otimes\mathfrak{g}/\mathfrak{h}$ is invariant w.r.t. the Adjoint action of $\mathcal{H}$ on $\mathrm{Lie}(\mathcal{G})$.
	\item \emph{metric} if it is reductive and if the super $\Lambda$-vector space $\Lambda\otimes\mathfrak{g}/\mathfrak{h}$ admits a smooth super metric that is invariant w.r.t. the Adjoint action of $\mathcal{H}$. 
\end{enumerate}

\begin{definition}
For $\mathcal{M}$ a supermanifold, consider the right-dual tensor product super vector bundle $(T\mathcal{M}\otimes T\mathcal{M})^*$. A smooth section $g\in\Gamma^{\mathbb{C}}((T\mathcal{M}\otimes T\mathcal{M})^*)\cong\mathrm{Hom}_R(\Gamma(T\mathcal{M})^2,H^{\infty}(\mathcal{M})\otimes\mathbb{C})$ is called a \emph{super metric} on $\mathcal{M}$, if $g_x$ for any $x\in\mathcal{M}$ defines a super metric on the tangent module $T_x\mathcal{M}$. Thus, for any homogeneous smooth vector fields $X,Y\in\Gamma(T\mathcal{M})$, $g(X,Y)=(-1)^{|X||Y|}g(Y,X)$ and the map $\Gamma(T\mathcal{M})\ni X\mapsto g(X,\cdot)\in\Gamma(T\mathcal{M}^*)$ is an isomorphism. Thus, $g$ defines a super metric in the sense of \cite{Goertsches:2008}.
\end{definition}

\begin{remark}
Let $g$ be a super metric on a supermanifold $\mathcal{M}$. Similar as in the proof of Prop. \ref{prop:superLie}, it follows that, if $p\in\mathbf{B}(\mathcal{M})$ is a body point, the tangent module $\mathcal{V}_p:=T_p\mathcal{M}$ has the structure of a super $\Lambda$-vector space $\mathcal{V}_p=\Lambda\otimes V_p$ with the super vector space $V_p$ consisting of derivations $X_p\in T_p\mathcal{M}$ satisfying 
\begin{equation}
X_p(f)\in\mathbb{R},\,\forall f\in H^{\infty}(\mathcal{M})
\end{equation}
Hence, it follows that $V_p$ can be identified with the tangent space $T_p\mathbf{A}(\mathcal{M})$ of the corresponding algebro-geometric supermanifold $\mathbf{A}(\mathcal{M})$ (see section \ref{section:graded}). By definition, any $X_p\in V_p$ arises from the restriction of a local smooth vector field on $\mathcal{M}$ to $p$. Consequently, as $g$ is smooth and $p$ is a body point, it follows that $g_p(V_p,V_p)\subseteq\mathbb{C}$, that is, $g_p$ is smooth according to Definition \ref{bilinear map}.
\end{remark}

\begin{prop}\label{prop:4.7.0}
Let $(\pi_{\mathcal{S}}:\,\mathcal{P}_{/\mathcal{S}}\rightarrow\mathcal{M}_{/\mathcal{S}},\mathcal{A})$ be a super Cartan geometry modeled on a super Klein geometry $(\mathcal{G},\mathcal{H})$. Let $\mathrm{pr}_{\mathfrak{h}}:\,\mathrm{Lie}(\mathcal{G})\rightarrow\mathrm{Lie}(\mathcal{H})$ denote the projection of $\mathrm{Lie}(\mathcal{G})$ onto the super Lie sub module $\mathrm{Lie}(\mathcal{H})$. Then, $\mathrm{pr}_{\mathfrak{h}}\circ\mathcal{A}\in\Omega^1(\mathcal{P}_{/\mathcal{S}},\mathfrak{h})_0$ defines a super connection 1-form on $\mathcal{P}_{/\mathcal{S}}$.\\
Let the super Cartan geometry, in addition, be reductive and $\mathrm{pr}_{\mathfrak{g}/\mathfrak{h}}$ denote the projection of $\mathrm{Lie}(\mathcal{G})$ onto the super $\Lambda$-vector space $\Lambda\otimes\mathfrak{g}/\mathfrak{h}$. Then, $E:=\mathrm{pr}_{\mathfrak{g}/\mathfrak{h}}\circ\mathcal{A}$, called \emph{super soldering form} or \emph{supervielbein}, defines an even horizontal $\Lambda\otimes\mathfrak{g}/\mathfrak{h}$-valued 1-form on $\mathcal{P}_{/\mathcal{S}}$ of type $(\mathcal{H},\mathrm{Ad})$.   
\end{prop}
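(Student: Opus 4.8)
The plan is to treat both assertions uniformly: each of the two forms is the composition of the even, smooth super Cartan connection $\mathcal{A}\in\Omega^1(\mathcal{P}_{/\mathcal{S}},\mathfrak{g})_0$ with a fixed even $\Lambda$-linear projection on $\mathrm{Lie}(\mathcal{G})$ (namely $\mathrm{pr}_{\mathfrak{h}}$ resp. $\mathrm{pr}_{\mathfrak{g}/\mathfrak{h}}$). Since post-composition with a constant even linear map preserves smoothness, $\Lambda$-linearity and parity, it is immediate that $\mathrm{pr}_{\mathfrak{h}}\circ\mathcal{A}\in\Omega^1(\mathcal{P}_{/\mathcal{S}},\mathfrak{h})_0$ and that $E=\mathrm{pr}_{\mathfrak{g}/\mathfrak{h}}\circ\mathcal{A}$ is an even $\Lambda\otimes\mathfrak{g}/\mathfrak{h}$-valued $1$-form. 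All the real content therefore lies in the defining algebraic properties, which I would reduce entirely to the two Cartan conditions in Definition \ref{Def:4.2} together with the $\mathrm{Ad}(\mathcal{H})$-behaviour of the respective projection.

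For the first assertion I would verify conditions (i) and (ii) of Definition \ref{prop:3.19} for the structure group $\mathcal{H}$. Condition (i) is immediate: for $X\in\mathfrak{h}$ the super Cartan condition gives $\braket{\widetilde{X}|\mathcal{A}}=X$, and since $\mathrm{pr}_{\mathfrak{h}}$ restricts to the identity on $\mathfrak{h}$ we obtain $\braket{\widetilde{X}|\mathrm{pr}_{\mathfrak{h}}\circ\mathcal{A}}=\mathrm{pr}_{\mathfrak{h}}(X)=X$. For condition (ii) I would use that $\mathrm{pr}_{\mathfrak{h}}$ is a constant linear map, hence commutes with the pullback $(\Phi_{\mathcal{S}})_h^{*}$, so that
\[
(\Phi_{\mathcal{S}})_h^{*}(\mathrm{pr}_{\mathfrak{h}}\circ\mathcal{A})=\mathrm{pr}_{\mathfrak{h}}\circ(\Phi_{\mathcal{S}})_h^{*}\mathcal{A}=\mathrm{pr}_{\mathfrak{h}}\circ\mathrm{Ad}_{h^{-1}}\circ\mathcal{A}
\]
by Definition \ref{Def:4.2}(ii). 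Comparing with the desired right hand side $\mathrm{Ad}_{h^{-1}}\circ\mathrm{pr}_{\mathfrak{h}}\circ\mathcal{A}$, the claim becomes equivalent to the identity $\mathrm{pr}_{\mathfrak{h}}\circ\mathrm{Ad}_{h^{-1}}=\mathrm{Ad}_{h^{-1}}\circ\mathrm{pr}_{\mathfrak{h}}$ on $\mathrm{Lie}(\mathcal{G})$ for all $h\in\mathcal{H}$; note that Cartan condition (iii) makes $\mathcal{A}$ fibrewise surjective, so this must indeed hold on all of $\mathrm{Lie}(\mathcal{G})$. Because $\mathrm{Ad}(\mathcal{H})$ always preserves the subalgebra $\mathfrak{h}$, the identity holds precisely when the complement defining $\mathrm{pr}_{\mathfrak{h}}$ is $\mathrm{Ad}(\mathcal{H})$-invariant.

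For the second (reductive) assertion I would show that $E$ is horizontal of type $(\mathcal{H},\mathrm{Ad})$. Horizontality is checked on the generators of the vertical distribution: the vertical module at $p$ is spanned by the fundamental tangent vectors $\widetilde{X}_p$ with $X\in\mathrm{Lie}(\mathcal{H})$, and the super Cartan condition (i), extended $\Lambda$-linearly, gives $\braket{\widetilde{X}_p|\mathcal{A}}=X\in\mathrm{Lie}(\mathcal{H})$; applying $\mathrm{pr}_{\mathfrak{g}/\mathfrak{h}}$, which annihilates $\mathrm{Lie}(\mathcal{H})$, yields $\braket{\widetilde{X}_p|E}=0$. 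For the transformation law I would repeat the pullback computation of the first part to get $\Phi_h^{*}E=\mathrm{pr}_{\mathfrak{g}/\mathfrak{h}}\circ\mathrm{Ad}_{h^{-1}}\circ\mathcal{A}$ and then invoke reductivity: since $\Lambda\otimes\mathfrak{g}/\mathfrak{h}$ is $\mathrm{Ad}(\mathcal{H})$-invariant and $\mathrm{Lie}(\mathcal{H})$ is automatically so, $\mathrm{Ad}_{h^{-1}}$ preserves the whole decomposition $\mathrm{Lie}(\mathcal{G})=\mathrm{Lie}(\mathcal{H})\oplus\Lambda\otimes\mathfrak{g}/\mathfrak{h}$ and hence commutes with $\mathrm{pr}_{\mathfrak{g}/\mathfrak{h}}$. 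This gives $\Phi_h^{*}E=\mathrm{Ad}_{h^{-1}}\circ E$, i.e. $E\in\Omega^1_{hor}(\mathcal{P}_{/\mathcal{S}},\Lambda\otimes\mathfrak{g}/\mathfrak{h})^{(\mathcal{H},\mathrm{Ad})}$.

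The main obstacle in both parts is the same equivariance step: commuting the projection past $\mathrm{Ad}(\mathcal{H})$. For the supervielbein it is handed to us directly by the reductivity hypothesis and is essentially routine. For the connection part it is the genuinely delicate point, since $\mathrm{Ad}(\mathcal{H})$ preserves $\mathfrak{h}$ but not, a priori, a chosen complement; verifying condition (ii) therefore forces $\mathrm{pr}_{\mathfrak{h}}$ to be $\mathrm{Ad}(\mathcal{H})$-equivariant, which is exactly what an $\mathrm{Ad}(\mathcal{H})$-invariant complement — the reductive splitting — provides. A secondary point worth spelling out is the smoothness of $\mathrm{pr}_{\mathfrak{h}}\circ\mathcal{A}$ as a bona fide element of $\Omega^1(\mathcal{P}_{/\mathcal{S}},\mathfrak{h})$; this I would argue exactly as in the correspondence between principal connections and connection forms, by testing against the global frame of vertical fundamental fields $\widetilde{X}_i$ and a local smooth horizontal frame.
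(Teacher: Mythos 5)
Your proof is correct and follows essentially the same route as the paper: both parts are reduced directly to conditions (i)--(iii) of Definition \ref{Def:4.2}, horizontality of $E$ is read off from $\braket{\widetilde{X}|\mathcal{A}}=X\in\mathfrak{h}$ (the paper phrases this as $X=\braket{\widetilde{X}|\omega}+\braket{\widetilde{X}|E}=X+\braket{\widetilde{X}|E}$, which is the same computation), and the $(\mathcal{H},\mathrm{Ad})$-equivariance of $E$ comes from commuting $\mathrm{pr}_{\mathfrak{g}/\mathfrak{h}}$ past $\mathrm{Ad}_{h^{-1}}$ using reductivity. The one place where you go beyond the paper is worth flagging: the paper dismisses the first assertion as ``immediate by condition (i) and (ii) for a super Cartan connection,'' whereas you correctly observe that verifying $(\Phi_{\mathcal{S}})_h^{*}(\mathrm{pr}_{\mathfrak{h}}\circ\mathcal{A})=\mathrm{Ad}_{h^{-1}}\circ\mathrm{pr}_{\mathfrak{h}}\circ\mathcal{A}$ requires $\mathrm{pr}_{\mathfrak{h}}\circ\mathrm{Ad}_{h^{-1}}=\mathrm{Ad}_{h^{-1}}\circ\mathrm{pr}_{\mathfrak{h}}$ on all of $\mathrm{Lie}(\mathcal{G})$ (surjectivity of $\mathcal{A}_p$ from the Cartan condition leaves no room to weaken this), and that this holds only when the complement defining $\mathrm{pr}_{\mathfrak{h}}$ is $\mathrm{Ad}(\mathcal{H})$-invariant --- i.e.\ the first assertion implicitly needs the reductive splitting that the proposition only introduces for the second assertion. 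This is exactly the situation in the classical theory, where the $\mathfrak{h}$-part of a Cartan connection is an Ehresmann connection only in the reductive case, so your more careful reading is an improvement on the paper's proof rather than a defect of yours.
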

\begin{proof}
That $\omega:=\mathrm{pr}_{\mathfrak{h}}\circ\mathcal{A}$ defines a super connection 1-form in the sense of Ehresmann is immediate by condition (i) and (ii) for a super Cartan connection. Furthermore, if the Cartan geometry is reductive, $\Lambda\otimes\mathfrak{g}/\mathfrak{h}$ defines a $\mathrm{Ad}(\mathcal{H})$ super $\Lambda$-module. Hence, by condition (ii) for a super Cartan connection, the super soldering form $E:=\mathrm{pr}_{\mathfrak{g}/\mathfrak{h}}\circ\mathcal{A}$ yields a well-defined $\mathcal{H}$-equivariant 1-form on $\mathcal{P}_{/\mathcal{S}}$. To see that is horizontal, let $\widetilde{X}$ be a fundamental vector field on $\mathcal{P}_{/\mathcal{S}}$ generated by $X\in\mathfrak{h}$. Then, by condition (i), it follows
\begin{equation}
X=\braket{\widetilde{X}|\mathcal{A}}=\braket{\widetilde{X}|\omega}+\braket{\widetilde{X}|E}=X+\braket{\widetilde{X}|E}
\end{equation}
Hence, $\braket{\widetilde{X}|E}=0$ proving that $E$ is horizontal.
\end{proof}
\begin{prop}
Let $(\pi_{\mathcal{S}}:\,\mathcal{P}_{/\mathcal{S}}\rightarrow\mathcal{M}_{/\mathcal{S}},\mathcal{A})$ be a reductive super Cartan geometry modeled on a super Klein geometry $(\mathcal{G},\mathcal{H})$ with a super Cartan connection $\mathcal{A}$. Let $E:=\mathrm{pr}_{\mathfrak{g}/\mathfrak{h}}\circ\mathcal{A}$ be the super soldering form as defined in Prop. \ref{prop:4.7.0}. Let furthermore $\iota_{\mathcal{P}}:\,\mathcal{P}\hookrightarrow\mathcal{S}\times\mathcal{P}$ be an embedding. Then, the pullback $\theta:=\iota_{\mathcal{P}}^*E\in\Omega^1_{hor}(\mathcal{P},\mathfrak{g}/\mathfrak{h})^{(\mathcal{H},\mathrm{Ad})}$ defines a non-degenerate 1-form and induces a smooth map 
\begin{equation}
\iota:\,\mathcal{P}\rightarrow\mathscr{F}(\mathcal{M}),\,p\mapsto D_p\pi\circ\theta_p^{-1}
\label{eq:4.0.1}
\end{equation}
which is fiber preserving and $\mathcal{H}$-equivariant in the sense that $\iota\circ\Phi=\Psi\circ(\iota\times\mathrm{Ad})$ with $\mathrm{Ad}:\,\mathcal{H}\rightarrow\mathrm{GL}(\Lambda\otimes\mathfrak{g}/\mathfrak{h})$ the Adjoint action and $\Phi$ and $\Psi$ the group right actions on $\mathcal{P}$ and $\mathscr{F}(\mathcal{M})$, respectively. In particular, $\mathcal{P}$ defines a $\mathcal{H}$-reduction of the frame bundle $\mathscr{F}(\mathcal{M})$. 
\end{prop}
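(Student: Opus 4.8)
The plan is to establish, in turn, the four asserted properties of $\iota$ — non-degeneracy of $\theta$ (which is what makes $\iota$ well-defined), smoothness, the combination of fiber-preservation and $\mathcal{H}$-equivariance — and then to invoke Definition \ref{prop:2.29} to read off the reduction.

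First I would verify that $\theta=\iota_{\mathcal{P}}^*E$ is non-degenerate and that $\iota(p)$ is a genuine frame. By Prop. \ref{prop:4.7.0} the soldering form $E$ is horizontal of type $(\mathcal{H},\mathrm{Ad})$, and these properties survive the pullback $\iota_{\mathcal{P}}^*$ along $p\mapsto(s,p)$ for a body point $s\in\mathbf{B}(\mathcal{S})$; in particular $\theta_p$ annihilates $\mathscr{V}_p$, since $\braket{\widetilde{X}|\mathcal{A}}=X\in\mathfrak{h}$ is killed by $\mathrm{pr}_{\mathfrak{g}/\mathfrak{h}}$. The super Cartan condition (iii) of Definition \ref{Def:4.2} asserts that $\iota_{\mathcal{P}}^*\mathcal{A}_p\colon T_p\mathcal{P}\to\mathrm{Lie}(\mathcal{G})$ is an isomorphism of free super $\Lambda$-modules. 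Decomposing $\mathrm{Lie}(\mathcal{G})=\mathrm{Lie}(\mathcal{H})\oplus(\Lambda\otimes\mathfrak{g}/\mathfrak{h})$ by reductivity, and using condition (i) to see that $\mathscr{V}_p$ is carried isomorphically onto $\mathrm{Lie}(\mathcal{H})$, a dimension count forces $\ker\theta_p=\mathscr{V}_p$ exactly, so $\theta_p$ descends to an isomorphism $T_p\mathcal{P}/\mathscr{V}_p\xrightarrow{\sim}\Lambda\otimes\mathfrak{g}/\mathfrak{h}$. Since $D_p\pi$ also descends to an isomorphism $T_p\mathcal{P}/\mathscr{V}_p\xrightarrow{\sim}T_{\pi(p)}\mathcal{M}$, the composite $\iota(p)=D_p\pi\circ\theta_p^{-1}$ is a well-defined isomorphism $\Lambda\otimes\mathfrak{g}/\mathfrak{h}\to T_{\pi(p)}\mathcal{M}$ (the symbol $\theta_p^{-1}$ being read off the descended map, equivalently via the horizontal lift furnished by $\mathscr{H}=\ker(\mathrm{pr}_{\mathfrak{h}}\circ\mathcal{A})$); here one identifies the typical fiber of $T\mathcal{M}$ with $\Lambda\otimes\mathfrak{g}/\mathfrak{h}$, the dimensions agreeing because $\dim\mathcal{M}=\dim\mathfrak{g}-\dim\mathfrak{h}=\dim\mathfrak{g}/\mathfrak{h}$. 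Thus $\iota(p)\in\mathscr{F}(\mathcal{M})_{\pi(p)}$, which at once gives that $\iota$ is fiber-preserving, $\pi_{\mathscr{F}}\circ\iota=\pi$.

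The main technical obstacle is the smoothness of $\iota$, as it requires inverting the $p$-dependent family $\theta_p$. I would argue locally: choose a local section $s\colon U\to\mathcal{P}$, together with the induced local frame fields on $\mathcal{P}$ and a trivialization of $T\mathcal{M}$ over $U$, and express $\iota$ in these trivializations. Since $\theta$ and $D\pi$ are smooth and right-linear on fibers, Lemma \ref{prop:2.7} yields smooth $\mathrm{End}_R$-valued representatives; the matrix of $\theta_p$ (restricted to a horizontal complement) is invertible by the previous paragraph, and because inversion of super matrices is smooth — precisely the mechanism used in the proofs of Lemma \ref{prop:3.8} and Prop. \ref{prop:2.10} — the assignment $p\mapsto\iota(p)$ is of class $H^{\infty}$. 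Composing with the bundle charts $\psi_\alpha$ of the frame bundle from Example \ref{prop:2.19} then shows $\iota\colon\mathcal{P}\to\mathscr{F}(\mathcal{M})$ is smooth.

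Finally I would establish $\mathcal{H}$-equivariance by direct computation. As $\theta$ is of type $(\mathcal{H},\mathrm{Ad})$ we have $\Phi_h^*\theta=\mathrm{Ad}_{h^{-1}}\circ\theta$, i.e. $\theta_{p\cdot h}\circ(\Phi_h)_{*}=\mathrm{Ad}_{h^{-1}}\circ\theta_p$, whence on $\Lambda\otimes\mathfrak{g}/\mathfrak{h}$ one gets $\theta_{p\cdot h}^{-1}=(\Phi_h)_{*}\circ\theta_p^{-1}\circ\mathrm{Ad}_h$. Using that the action preserves fibers, $\pi\circ\Phi(\cdot,h)=\pi$ gives $D_{p\cdot h}\pi\circ(\Phi_h)_{*}=D_p\pi$ by the chain rule, so
\begin{equation}
\iota(p\cdot h)=D_{p\cdot h}\pi\circ\theta_{p\cdot h}^{-1}=D_p\pi\circ\theta_p^{-1}\circ\mathrm{Ad}_h=\iota(p)\circ\mathrm{Ad}_h=\Psi(\iota(p),\mathrm{Ad}_h),
\end{equation}
that is $\iota\circ\Phi=\Psi\circ(\iota\times\mathrm{Ad})$. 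Having exhibited $\iota$ as a smooth, fiber-preserving, $\mathrm{Ad}$-equivariant map from the principal $\mathcal{H}$-bundle $\mathcal{P}$ to the principal $\mathrm{GL}(\Lambda\otimes\mathfrak{g}/\mathfrak{h})$-bundle $\mathscr{F}(\mathcal{M})$, the requirements of Definition \ref{prop:2.29} are met with $\lambda=\mathrm{Ad}\colon\mathcal{H}\to\mathrm{GL}(\Lambda\otimes\mathfrak{g}/\mathfrak{h})$, exhibiting $\mathcal{P}$ as an $\mathcal{H}$-reduction of the frame bundle $\mathscr{F}(\mathcal{M})$.
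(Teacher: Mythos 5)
Your proposal is correct and follows essentially the same route as the paper: use the super Cartan condition together with reductivity to see that $\theta_p$ becomes an isomorphism onto $\Lambda\otimes\mathfrak{g}/\mathfrak{h}$ once the vertical directions are removed (you pass through the quotient $T_p\mathcal{P}/\mathscr{V}_p$, the paper restricts to the horizontal distribution $\mathscr{H}_p=\mathrm{ker}(\iota_{\mathcal{P}}^*\omega)$ — the same thing), compose with $D_p\pi$ to get a frame, and then derive equivariance from $\Phi_h^*\theta=\mathrm{Ad}_{h^{-1}}\circ\theta$ by exactly the computation the paper gives. Your additional paragraph on the $H^{\infty}$-smoothness of $p\mapsto\theta_p^{-1}$ is a point the paper leaves implicit, and handling it via Lemma \ref{prop:2.7} and smooth inversion of super matrices is the right mechanism.
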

\begin{proof}
By the previous proposition, it is clear that $\theta$ defines a horizontal 1-form on $\mathcal{P}$ of type $(\mathcal{H},\mathrm{Ad})$. Let $\mathscr{H}:=\mathrm{ker}(\iota_{\mathcal{P}}^*\omega)$ be the horizontal distribution induced by the pullback of the super connection 1-form $\omega:=\mathrm{pr}_{\mathfrak{h}}\circ\mathcal{A}$. Then, by the Cartan condition, it follows that $\theta_p:\,\mathscr{H}_p\rightarrow\Lambda\otimes\mathfrak{g}/\mathfrak{h}$, for any $p\in\mathcal{P}$, yields an isomorphism of free super $\Lambda$-modules. Moreover, the pushforward of the bundle projection induces an isomorphism $D_p\pi:\,\mathscr{H}_p\stackrel{\sim}{\rightarrow}T_p\mathcal{M}$. Hence, this in turn induces an isomorphism
\begin{equation}
D_p\pi\circ\theta_p^{-1}:\,\Lambda\otimes\mathfrak{g}/\mathfrak{h}\rightarrow T_p\mathcal{M}
\end{equation}
that is, a linear frame at $p$. It thus remains to show that \eqref{eq:4.0.1} indeed defines a $\mathcal{H}$-reduction of $\mathscr{F}(\mathcal{M})$. Therefore, note that, by condition (ii) for a super Cartan connection, we have $\theta(\Phi_{h*}(Y_p))=\Phi_h^*\theta(Y_p)=\mathrm{Ad}_{h^{-1}}(\theta(Y_p))$ $\forall Y_p\in T_p\mathcal{P}$ and $h\in\mathcal{H}$. Hence, this yields
\begin{equation}
\iota(p\cdot h)=D_{ph}\pi\circ\theta_{ph}^{-1}=D_{ph}\pi\circ D_p\Phi_{h}\circ\theta_p^{-1}\circ\mathrm{Ad}_h=\iota(p)\circ\mathrm{Ad}_h
\end{equation}
which proves that $\iota$ is $\mathcal{H}$-equivariant.
\end{proof}

\begin{cor}\label{Corollary:4.9}
Let $(\pi_{\mathcal{S}}:\,\mathcal{P}_{/\mathcal{S}}\rightarrow\mathcal{M}_{/\mathcal{S}},\mathcal{A})$ be a metric reductive super Cartan geometry modeled on a super Klein geometry $(\mathcal{G},\mathcal{H})$ with a super Cartan connection $\mathcal{A}$ and smooth super metric $\mathscr{S}$ on $\Lambda\otimes\mathfrak{g}/\mathfrak{h}$. Let $\theta:=\iota_{\mathcal{P}}^*E\in\Omega^1(\mathcal{P},\mathfrak{g}/\mathfrak{h})^{(\mathcal{H},\mathrm{Ad})}$ be the pullback of the super soldering form on the principal super fiber bundle $\mathcal{P}$ w.r.t. an embedding $\iota_{\mathcal{P}}:\,\mathcal{P}\hookrightarrow\mathcal{S}\times\mathcal{P}$. For any $x\in\mathcal{M}$, consider the map $g_x:\,T_x\mathcal{M}\times T_x\mathcal{M}\rightarrow\Lambda^{\mathbb{C}}$ defined as
\begin{equation}
g(X_x,Y_x):=\mathscr{S}(\theta_p(X^*_p),\theta(Y^*_p))
\end{equation}
for any $p\in\mathcal{P}_x$ and $X^*_p,Y^*_p\in T_p\mathcal{P}$ the unique horizontal tangent vectors such that $D_p\pi(X^*_p)=X_x$ and $D_p\pi(Y^*_p)=Y_x$. Then, $g_x$ is a well-defined super metric on $T_x\mathcal{M}$ for any $x\in\mathcal{M}$. In particular, the assignment $g:\,\mathcal{M}\ni x\mapsto g_x$ defines a smooth super metric on $\mathcal{M}$.
\end{cor}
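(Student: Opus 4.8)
The plan is to verify three things in turn: (a) that $g_x$ does not depend on the point $p\in\mathcal{P}_x$ chosen to evaluate it, (b) that each $g_x$ is an even, graded-symmetric, non-degenerate right-bilinear form, hence a super metric on $T_x\mathcal{M}$, and (c) that $x\mapsto g_x$ is smooth. Throughout I would rely on the preceding proposition, which already supplies that $\theta$ is a non-degenerate horizontal $1$-form of type $(\mathcal{H},\mathrm{Ad})$, that $\theta_p$ restricts to an isomorphism $\mathscr{H}_p\xrightarrow{\sim}\Lambda\otimes\mathfrak{g}/\mathfrak{h}$, and that $D_p\pi\colon\mathscr{H}_p\xrightarrow{\sim}T_x\mathcal{M}$. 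Writing $\Theta_p:=\theta_p\circ(D_p\pi|_{\mathscr{H}_p})^{-1}\colon T_x\mathcal{M}\to\Lambda\otimes\mathfrak{g}/\mathfrak{h}$, which is exactly the inverse $\iota(p)^{-1}$ of the frame produced in that proposition, the definition becomes $g_x(X_x,Y_x)=\mathscr{S}(\Theta_p X_x,\Theta_p Y_x)$; that is, $g_x=\Theta_p^{*}\mathscr{S}$ is the pullback of the fixed form $\mathscr{S}$ along the isomorphism $\Theta_p$. This reformulation organizes the whole argument.

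For (a), I would use that $\Phi$ is free and transitive on fibers, so any $p'\in\mathcal{P}_x$ equals $p\cdot h$ for a unique $h\in\mathcal{H}$, which need not be a body point; hence $\Phi_h$ is treated through the generalized tangent map of Definition \ref{definition:3.9}. Right-invariance of $\mathscr{H}$ gives $(\Phi_h)_{*}\mathscr{H}_p=\mathscr{H}_{p\cdot h}$, while fiber preservation $\pi\circ\Phi=\pi\circ\mathrm{pr}_1$, via the composition rule for generalized tangent maps (Prop. \ref{definition:3.11}), yields $D_{p\cdot h}\pi\circ(\Phi_h)_{*}=D_p\pi$; by uniqueness of the horizontal lift this forces $X^{*}_{p\cdot h}=(\Phi_h)_{*}X^{*}_p$. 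The type-$(\mathcal{H},\mathrm{Ad})$ property of $\theta$ then gives $\theta_{p\cdot h}(X^{*}_{p\cdot h})=\mathrm{Ad}_{h^{-1}}(\theta_p(X^{*}_p))$, i.e. $\Theta_{p\cdot h}=\mathrm{Ad}_{h^{-1}}\circ\Theta_p$ (equivalently this is the $\mathcal{H}$-equivariance $\iota(p\cdot h)=\iota(p)\circ\mathrm{Ad}_h$ of the preceding proposition), and the $\mathrm{Ad}(\mathcal{H})$-invariance of $\mathscr{S}$ (available since the geometry is metric) shows the two evaluations of $g_x$ agree.

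For (b), right-bilinearity of $g_x$ is immediate from right-linearity of $\Theta_p$ and right-bilinearity of $\mathscr{S}$. Since $\mathcal{A}$, and hence $E=\mathrm{pr}_{\mathfrak{g}/\mathfrak{h}}\circ\mathcal{A}$ and its pullback $\theta$, is even, $\Theta_p$ is parity-preserving, so evenness of $g_x$ follows from evenness of $\mathscr{S}$, and graded symmetry of $g_x$ is inherited from that of $\mathscr{S}$ because $|\Theta_p X_x|=|X_x|$. Non-degeneracy is the crux: as $\Theta_p$ is an isomorphism of free super $\Lambda$-modules it maps nilpotent vectors bijectively to nilpotent vectors, hence descends to an isomorphism of the quotients by $\mathcal{N}$ and transports the non-degeneracy of $\mathscr{S}$ to $g_x$. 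This establishes that each $g_x$ is a super metric on $T_x\mathcal{M}$.

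For (c), I would localize by choosing a smooth local section $s\colon U\to\mathcal{P}$ (Prop. \ref{prop:2.17}) and claim $g_x=\mathscr{S}((s^{*}\theta)_x(\,\cdot\,),(s^{*}\theta)_x(\,\cdot\,))$ on $U$: indeed $D_xs(X_x)$ projects to $X_x$, so its horizontal part is precisely $X^{*}_{s(x)}$, and since $\theta$ annihilates vertical vectors, $\theta_{s(x)}(D_xs(X_x))=\theta_{s(x)}(X^{*}_{s(x)})=\Theta_{s(x)}(X_x)$. Because $s^{*}\theta$ is a smooth $\Lambda\otimes\mathfrak{g}/\mathfrak{h}$-valued $1$-form and $\mathscr{S}$ is a smooth, constant-coefficient bilinear form, $g(X,Y)=\mathscr{S}(s^{*}\theta(X),s^{*}\theta(Y))\in H^{\infty}(U)\otimes\mathbb{C}$ for smooth $X,Y$, and well-definedness from (a) guarantees these local expressions patch into a global smooth section of $(T\mathcal{M}\otimes T\mathcal{M})^{*}$. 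I expect the main obstacle to be the careful bookkeeping in (a), namely justifying $X^{*}_{p\cdot h}=(\Phi_h)_{*}X^{*}_p$ rigorously, since $h$ need not lie in $\mathbf{B}(\mathcal{H})$ and $\Phi_h$ is therefore not $H^{\infty}$-smooth, so every step has to be phrased through the generalized tangent map rather than an ordinary differential.
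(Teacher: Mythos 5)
Your proposal is correct and follows essentially the same route as the paper's proof: well-definedness via uniqueness of horizontal lifts, right-invariance of $\mathscr{H}$, the $(\mathcal{H},\mathrm{Ad})$-equivariance of $\theta$ and the $\mathrm{Ad}(\mathcal{H})$-invariance of $\mathscr{S}$, followed by smoothness through the local expression $g(X,Y)=\mathscr{S}(s^{*}\theta(X),s^{*}\theta(Y))$ with constant coefficients $\mathscr{S}_{ij}\in\mathbb{C}$. Your extra care in justifying $X^{*}_{p\cdot h}=(\Phi_h)_{*}X^{*}_p$ through the generalized tangent map merely makes explicit what the paper compresses into ``by uniqueness.''
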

\begin{proof}
Since $D_p\pi:\,\mathscr{H}_p\stackrel{\sim}{\rightarrow}T_p\mathcal{M}$ is an isomorphism of super $\Lambda$-modules for any $p\in\mathcal{P}$, where $\mathscr{H}:=\mathrm{ker}(\iota_{\mathcal{P}}^*\omega)$ is the horizontal distribution induced by the pullback of the super connection 1-form $\omega:=\mathrm{pr}_{\mathfrak{h}}\circ\mathcal{A}$., it is clear that, for any tangent vectors $X_x,Y_x\in T_x\mathcal{M}$, the horizontal lifts $X^*_p,Y^*_p\in T_p\mathcal{P}$ exist and are unique. Moreover, as $\theta_p$ is a right linear isomorphism of super $\Lambda$-modules, it is clear that $g_x$ defines a super metric on $T_x\mathcal{M}$ once we have shown that $g_x$ is well-defined. Therefore, let $p'\in\mathcal{P}_x$ be another point on the fiber over $x$. Then, there exists $g\in\mathcal{P}$ such that $p'=p\cdot g$. By uniqueness, it follows $X^*_{p\cdot g}=\Phi_{g*}X_p$ and $Y^*_{p\cdot g}=\Phi_{g*}Y_p$. Thus, 
\begin{equation}
\theta_{p'}(X^*_{p'})=\theta_{p\cdot g}(\Phi_{g*}X_p)=\mathrm{Ad}_{g^{-1}}(\theta_p(X^*_p))
\end{equation}
and similarly for $Y^*$. Thus, as $\mathscr{S}$ is $\mathrm{Ad}(\mathcal{H})$-invariant, it follows that $g_x$ is indeed well-defined. To see that $g:\,\mathcal{M}\ni x\mapsto g_x$ is smooth, let $s:\,\mathcal{M}\supseteq U\rightarrow\mathcal{P}$ be a local section and $(e_i)_i$ be a homogeneous basis of $\mathfrak{g}/\mathfrak{h}$. Then, on $U$, the super metric is given by 
\begin{equation}
g(X,Y)=\mathscr{S}(s^*\theta(X),s^*\theta(Y))=(-1)^{(|e_i|+|X|)|e_j|}\mathscr{S}_{ij}(s^*\theta)(X)^i(s^*\theta)(Y)^j
\end{equation}
where $\mathscr{S}_{ij}:=\mathscr{S}(e_i,e_j)\in\mathbb{C}$ as $\mathscr{S}$ is smooth and we have made the expansion $s^*\theta(X)=e_i\otimes(s^*\theta)(X)^i$ with $(s^*\theta)(X)^i$ smooth functions on $U$ and similarly for $s^*\theta(Y)$. Thus, it follows $g(X,Y)\in H^{\infty}(U)\otimes\mathbb{C}$ proving that $g$ is smooth.
\end{proof}
The following proposition demonstrates the strong link between Cartan connections and Ehresmann connections defined on associated bundles. In physics, this thus provides a concrete relation between Cartan geometries and Yang-Mills gauge theories. This is due to the fact that, by definition, both type of connections turn out to be already fixed uniquely on vertical vector fields, i.e., vector fields tangent to the fibers of the underlying principal super fiber bundles. Indeed, on those type of vertical fields, both connections are related to the Maurer-Cartan forms on the respective structure groups. Using this observation, one then arrives at the following.  

\end{definition}
\begin{prop}\label{prop:4.6}
Let $\mathcal{H}\rightarrow\mathcal{P}_{/\mathcal{S}}\rightarrow\mathcal{M}_{/\mathcal{S}}$ be a $\mathcal{S}$-relative principal super fiber bundle with structure group $\mathcal{H}$ as well as $(\mathcal{G},\mathcal{H})$ a super Klein geometry. Then, there is a bijective correspondence between generalized super Cartan connections in $\Omega^1(\mathcal{P}_{/\mathcal{S}},\mathfrak{g})_0$ and super connection 1-forms in $\Omega^1(\mathcal{P}_{/\mathcal{S}}\times_{\mathcal{H}}\mathcal{G},\mathfrak{g})_0$ with $\mathcal{P}_{/\mathcal{S}}\times_{\mathcal{H}}\mathcal{G}:=(\mathcal{P}\times_{\mathcal{H}}\mathcal{G})_{/\mathcal{S}}$ the $\mathcal{G}$-extension of $\mathcal{P}_{/\mathcal{S}}$.
\end{prop}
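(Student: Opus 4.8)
The plan is to realize the correspondence explicitly through the canonical embedding $\iota:\,\mathcal{P}_{/\mathcal{S}}\rightarrow\mathcal{P}_{/\mathcal{S}}\times_{\mathcal{H}}\mathcal{G}$, $\iota(p)=[p,e]$, of Corollary \ref{prop:2.27}, together with the quotient projection $q:\,\mathcal{S}\times\mathcal{P}\times\mathcal{G}\rightarrow\mathcal{P}_{/\mathcal{S}}\times_{\mathcal{H}}\mathcal{G}$, which by Prop. \ref{prop:2.22} is an open submersion whose target carries the quotient topology. In one direction, to a super connection $1$-form $\mathcal{B}\in\Omega^1(\mathcal{P}_{/\mathcal{S}}\times_{\mathcal{H}}\mathcal{G},\mathfrak{g})_0$ I assign $\mathcal{A}:=\iota^*\mathcal{B}$. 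That $\mathcal{A}$ is a generalized super Cartan connection follows from the $\mathcal{H}$-equivariance $\iota\circ\Phi_{\mathcal{S}}=\widetilde{\Phi}\circ(\iota\times\lambda)$: the pushforward $\iota_*$ maps the fundamental field generated by $X\in\mathfrak{h}$ to the one generated by $\lambda_*X=X\in\mathfrak{g}$, so condition (i) for $\mathcal{B}$ yields $\braket{\widetilde{X}|\mathcal{A}}=X$, while the identity $\widetilde{\Phi}_{\lambda(h)}^*\mathcal{B}=\mathrm{Ad}_{h^{-1}}\circ\mathcal{B}$ pulled back along $\iota$ yields condition (ii).

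For the converse --- the substantial direction --- I would build the extension by an explicit basic representative. Given a generalized super Cartan connection $\mathcal{A}$, define on $\mathcal{S}\times\mathcal{P}\times\mathcal{G}$ the even $\mathfrak{g}$-valued $1$-form
\begin{equation}
\omega:=\mathrm{Ad}_{g^{-1}}\circ\mathrm{pr}_{\mathcal{P}}^*\mathcal{A}+\mathrm{pr}_{\mathcal{G}}^*\theta_{\mathrm{MC}},
\end{equation}
where $\theta_{\mathrm{MC}}$ is the Maurer-Cartan form of Example \ref{example:MC} and $g$ denotes the $\mathcal{G}$-coordinate. The key step is to show that $\omega$ is $\mathcal{H}$-basic for the action $R_h(p,g)=(\Phi(p,h),\lambda(h)^{-1}g)$ of Prop. \ref{prop:2.21}, so that it descends along $q$ to a unique $\hat{\mathcal{A}}\in\Omega^1(\mathcal{P}_{/\mathcal{S}}\times_{\mathcal{H}}\mathcal{G},\mathfrak{g})_0$ with $q^*\hat{\mathcal{A}}=\omega$. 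Invariance $R_h^*\omega=\omega$ uses condition (ii) for $\mathcal{A}$ together with $\mathrm{Ad}_{(\lambda(h)^{-1}g)^{-1}}=\mathrm{Ad}_{g^{-1}}\circ\mathrm{Ad}_h$ and the left-invariance of $\theta_{\mathrm{MC}}$; horizontality on the $\mathcal{H}$-fundamental fields uses condition (i) for $\mathcal{A}$ and $(\theta_{\mathrm{MC}})_g=L_{g^{-1}*}$, the two contributions $\mathrm{Ad}_{g^{-1}}X$ and $-\mathrm{Ad}_{g^{-1}}X$ cancelling for $X\in\mathfrak{h}$. I would then verify that $\hat{\mathcal{A}}$ is a genuine super connection $1$-form: evaluating $\omega$ on the lift $(0,X^L)$ of a $\mathcal{G}$-fundamental field gives condition (i) via $\braket{X^L|\theta_{\mathrm{MC}}}=X$, and the standard identity $R_a^*\theta_{\mathrm{MC}}=\mathrm{Ad}_{a^{-1}}\circ\theta_{\mathrm{MC}}$ together with $\mathrm{Ad}_{(ga)^{-1}}=\mathrm{Ad}_{a^{-1}}\circ\mathrm{Ad}_{g^{-1}}$ gives condition (ii).

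It remains to check that the two assignments are mutually inverse. Writing $\iota=q\circ j$ with $j(p)=(p,e)$, one has $\iota^*\hat{\mathcal{A}}=j^*\omega=\mathcal{A}$, since $\mathrm{Ad}_e=\mathrm{id}$ and the Maurer-Cartan term pulls back to zero along the constant map $g\equiv e$. Conversely, starting from $\mathcal{B}$, both $\mathcal{B}$ and the form $\hat{\mathcal{A}}$ extending $\iota^*\mathcal{B}$ agree on $\iota_*T_p\mathcal{P}$ (by construction of the first assignment) and on every $\mathcal{G}$-fundamental vector (both giving $X$ by condition (i)); since these span $T_{[p,e]}(\mathcal{P}_{/\mathcal{S}}\times_{\mathcal{H}}\mathcal{G})$ and every point is of the form $\widetilde{\Phi}_{g'}([p,e])$, the $\mathcal{G}$-equivariance (ii) of both forms propagates the equality to the whole bundle, so $\hat{\mathcal{A}}=\mathcal{B}$.

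The main obstacle I anticipate is not the formal algebra but the careful execution of the descent of $\omega$ along the super principal $\mathcal{H}$-bundle $q$ in the $\mathcal{S}$-relative category --- deducing smoothness of $\hat{\mathcal{A}}$ from that of $\omega$ via Prop. \ref{prop:2.22} --- and the systematic use of the generalized tangent map (Definition \ref{definition:3.9}, Prop. \ref{definition:3.11}) when evaluating the pullbacks $R_h^*$, $\widetilde{\Phi}_{g'}^*$ and the Maurer-Cartan identities at points $g,g'\in\mathcal{G}$ that need not lie in the body $\mathbf{B}(\mathcal{G})$, where naive partial evaluation fails to be of class $H^{\infty}$.
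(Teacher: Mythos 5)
Your proposal is correct and follows essentially the same route as the paper: the forward direction is pullback along $\hat{\iota}$, and the converse is built from exactly the formula $\mathrm{Ad}_{g^{-1}}\braket{X_p|\mathcal{A}_p}+\braket{Y_g|\theta_{\mathrm{MC}}}$, with your ``$\omega$ is $\mathcal{H}$-basic and descends along $q$'' packaging being the mirror image of the paper's ``define on the quotient, check the kernel of $D\hat{\pi}$ is annihilated and the value is representative-independent.'' The only cosmetic difference is that you make the mutual-inverse check explicit, whereas the paper obtains bijectivity from the uniqueness of the extension ($\hat{\pi}$ being a submersion forces the formula).
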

\begin{proof}
One direction is immediate, i.e., given a $\mathcal{S}$-relative super connection 1-form $\mathcal{A}$ on $\mathcal{P}_{/\mathcal{S}}\times_{\mathcal{H}}\mathcal{G}$, the pullback $\hat{\iota}^*\mathcal{A}$ w.r.t. the embedding $\hat{\iota}:=\mathrm{id}\times\iota:\,\mathcal{P}_{/\mathcal{S}}\rightarrow\mathcal{P}_{/\mathcal{S}}\times_{\mathcal{H}}\mathcal{G}$, with $\iota$ as defined in \ref{prop:2.27}, yields a generalized super Cartan connection on $\mathcal{P}_{/\mathcal{S}}$ according to Definition \ref{Def:4.2}.\\
Conversely, suppose $\mathcal{A}\in\Omega^1(\mathcal{P}_{/\mathcal{S}},\mathfrak{g})_0$ is a generalized super Cartan connection. Let $\hat{\pi}:\,\mathcal{P}_{/\mathcal{S}}\times\mathcal{G}\rightarrow\mathcal{P}_{/\mathcal{S}}\times_{\mathcal{H}}\mathcal{G}$ be the canonical projection. If $\hat{\Phi}_{\mathcal{S}}$ denotes the $\mathcal{G}$-right action on $\mathcal{P}_{/\mathcal{S}}\times_{\mathcal{H}}\mathcal{G}$, it follows that the fundamental vector fields are given by
\begin{align}
\widetilde{Y}_{[p,g]}&=(\hat{\Phi}_{\mathcal{S}})_{[p,g]*}(Y_e)=
D_{([p,g],e)}\hat{\Phi}_{\mathcal{S}}(0_{[p,g]},Y)=D_{([p,g],e)}\hat{\Phi}_{\mathcal{S}}(D_{(p,g)}\hat{\pi}(0_{p},0_g),Y)\nonumber\\
&=D_{(p,g,e)}(\hat{\Phi}_{\mathcal{S}}\circ(\hat{\pi}\times\mathrm{id}))(0_{p},0_g,Y)=D_{(p,g,e)}(\hat{\pi}\circ(\mathrm{id}\times\mu_{\mathcal{G}}))(0_{p},0_g,Y)\nonumber\\
&=D_{(p,g)}\hat{\pi}(0_p,\mu_{\mathcal{G}}(0_g,Y))=D_{(p,g)}\hat{\pi}(0_p,L_{g*}Y)=D_{(p,g)}\hat{\pi}(0_{p},L_{g*}Y)
\end{align}
for any $Y\in\mathrm{Lie}(\mathcal{G})$ and $p\in\mathcal{P}_{/\mathcal{S}}$, $g\in\mathcal{G}$, where the generalized tangent map was used at various stages. Furthermore, for any $X_{p}\in T_{p}(\mathcal{P}_{/\mathcal{S}})$, one has
\begin{align}
D_{(p,g)}\hat{\pi}(X_{p},0_g)&=D_{(p,g)}\hat{\pi}(X_p,D_{(g,e)}\mu_{\mathcal{G}}(0_g,0_e))=D_{(p,g,e)}(\hat{\pi}\circ(\mathrm{id}\times\mu_{\mathcal{G}}))(X_p,0_g,0_e)\nonumber\\
&=D_{(p,g,e)}(\hat{\pi}\circ(\hat{\Phi}_{/\mathcal{S}}\times\mathrm{id}))(X_p,0_g,0_e)=D_{(p\cdot g,e)}\hat{\pi}((\hat{\Phi}_{/\mathcal{S}})_{g*}X_p,0_e)\nonumber\\
&=\hat{\iota}_{*}((\Phi_{\mathcal{S}})_{g*}X_{p})
\end{align}
$\forall (p,g)\in\mathcal{P}_{/\mathcal{S}}\times\mathcal{G}$. Hence, this yields
\begin{align}
D_{(p,g)}\hat{\pi}(X_{p},Y_g)&=D_{(p,g)}\hat{\pi}(X_{p},0_g)+D_{(p,g)}\hat{\pi}(0_{p},Y_g)\nonumber\\
&=\hat{\iota}_{*}((\Phi_{\mathcal{S}})_{g*}X_p)+D_{(p,g)}\hat{\pi}(0_p,L_{g*}\circ L_{g^{-1}*}(Y_g))\nonumber\\
&=\iota_{*}((\Phi_{\mathcal{S}})_{g*}X_p)+\widetilde{\braket{Y_g|\theta_{\mathrm{MC}}}}_{[p,g]}
\end{align}
where $\theta_{\mathrm{MC}}\in\Omega^1(\mathcal{G},\mathfrak{g})$ is the Maurer-Cartan form on $\mathcal{G}$ (see Example \ref{example:MC}). Therefore, if there exists a super connection 1-form $\hat{\iota}_{*}\mathcal{A}$ whose pullback under $\hat{\iota}$ is given by $\mathcal{A}$, then it necessarily has to be of the form
\begin{equation}
\braket{D_{(p,g)}\hat{\pi}(X_p,Y_g)|\hat{\iota}_{*}\mathcal{A}_{[p,g]}}=\mathrm{Ad}_{g^{-1}}\braket{X_p|\mathcal{A}_p}+\braket{Y_g|\theta_{\mathrm{MC}}}
\label{eq:p5.7.1}
\end{equation}
In particular, as $\hat{\pi}$ is a submersion, it is uniquely determined by (\ref{eq:p5.7.1}). Hence, it remains to show that $\hat{\iota}_{*}\mathcal{A}$ as defined via (\ref{eq:p5.7.1}) is in fact well-defined and a super connection 1-form on $\mathcal{P}_{/\mathcal{S}}\times_{\mathcal{H}}\mathcal{G}$.\\
To see that it is well-defined, note that, for any $(p,g)\in\mathcal{P}_{/\mathcal{S}}\times\mathcal{G}$, the kernel of $D_{(p,g)}\hat{\pi}$ is given by $\{(\widetilde{Y}_p,-R_{g*}Y)|\,Y\in\mathrm{Lie}(\mathcal{H})\}\subset T_p(\mathcal{P}_{/\mathcal{S}})\times T_g\mathcal{G}$ which may be checked by direct computation using the explicit form of the local trivializations of the bundle $\pi:\,\mathcal{P}\times\mathcal{G}\rightarrow\mathcal{P}\times_{\mathcal{H}}\mathcal{G}$ as defined in the proof of Prop. \ref{prop:2.22}. Hence, this yields
\begin{align}
\braket{(\widetilde{Y}_p,-R_{g*}Y)|\hat{\iota}_{*}\mathcal{A}_{[p,g]}}&=\mathrm{Ad}_{g^{-1}}\braket{\widetilde{Y}_p|\mathcal{A}_p}-\braket{R_{g*}Y|\theta_{\mathrm{MC}}}=\mathrm{Ad}_{g^{-1}}(Y)-L_{g^{-1}*}\circ R_{g*}(Y)\nonumber\\
&=\mathrm{Ad}_{g^{-1}}(Y)-\mathrm{Ad}_{g^{-1}}(Y)=0
\end{align}
$\forall Y\in\mathrm{Lie}(\mathcal{H})$. Finally, to see that is independent of the choice of a representative of $[p,g]\in\mathcal{P}_{/\mathcal{S}}\times_{\mathcal{H}}\mathcal{G}$, we compute
\begin{align}
\braket{D_{(ph,h^{-1}g)}\hat{\pi}((\hat{\Phi}_{\mathcal{S}})_{h*}X_p,L_{h^{-1}*}Y_g)|\hat{\iota}_{*}\mathcal{A}_{[ph,h^{-1}g]}}&=\mathrm{Ad}_{(h^{-1}g)^{-1}}\braket{(\hat{\Phi}_{\mathcal{S}})_{h*}X_p|\mathcal{A}_{ph}}+\braket{L_{{h^{-1}}*}Y_g|\theta_{\mathrm{MC}}}\nonumber\\
&=\mathrm{Ad}_{g^{-1}}\braket{X_p|\mathcal{A}_{p}}+L_{(h^{-1}g)^{-1}*}\circ L_{h^{-1}*}(Y_g)\nonumber\\
&=\mathrm{Ad}_{g^{-1}}\braket{X_p|\mathcal{A}_{p}}+\braket{Y_g|\theta_{\mathrm{MC}}}\nonumber\\
&=\braket{D_{(p,g)}\hat{\pi}(X_p,Y_g)|\hat{\iota}_{*}\mathcal{A}_{[p,g]}}
\end{align}
$\forall h\in\mathcal{H}$. This shows that $\hat{\iota}_{*}\mathcal{A}$ is in fact well-defined. To see that is $\mathcal{G}$-equivariant, we compute
\begin{align}
\braket{(\hat{\Phi}_{\mathcal{S}})_{h*}D_{(p,g)}\pi(X_p,Y_g)|\hat{\iota}_{*}\mathcal{A}_{[p,gh]}}&=\braket{D_{(p,g,h)}(\hat{\Phi}_{\mathcal{S}}\circ(\hat{\pi}\times\mathrm{id}))(X_p,Y_g,0_h)|\hat{\iota}_{*}\mathcal{A}_{[p,g]}}\nonumber\\
&=\braket{D_{(p,g,h)}(\hat{\pi}\circ(\mathrm{id}\times\mu_{\mathcal{G}}))(X_p,Y_g,0_h)|\hat{\iota}_{*}\mathcal{A}_{[p,gh]}}\nonumber\\
&=\braket{D_{(p,gh)}\hat{\pi}(X_p,R_{h*}Y_g)|\hat{\iota}_{*}\mathcal{A}_{[p,gh]}}\nonumber\\
&=\mathrm{Ad}_{(gh)^{-1}}\braket{X_p|\mathcal{A}_{p}}+\braket{R_{h*}Y_g|\theta_{\mathrm{MC}}}\nonumber\\
&=\mathrm{Ad}_{h^{-1}}(\mathrm{Ad}_{g^{-1}}\braket{X_p|\mathcal{A}_{p}})+L_{(gh)^{-1}*}\circ R_{h*}(Y_g)\nonumber\\
&=\mathrm{Ad}_{h^{-1}}(\mathrm{Ad}_{g^{-1}}\braket{X_p|\mathcal{A}_{p}}+\braket{Y_g|\theta_{\mathrm{MC}}})\nonumber\\
&=\mathrm{Ad}_{h^{-1}}\braket{D_{(p,g)}\hat{\pi}(X_p,Y_g)|\hat{\iota}_{*}\mathcal{A}_{[p,g]}}
\end{align}
Finally, for $Y\in\mathrm{Lie}(\mathcal{G})$, it follows
\begin{align}
\braket{\widetilde{Y}_{[p,g]}|\hat{\iota}_{*}\mathcal{A}_{[p,g]}}&=\braket{D_{(p,g)}\hat{\pi}(X_p,L_{g*}Y)|\hat{\iota}_{*}\mathcal{A}_{[p,g]}}=\braket{L_{g*}Y|\theta_{\mathrm{MC}}}=Y
\end{align}
Hence, this proves that $\hat{\iota}_{*}\mathcal{A}\in\Omega^1(\mathcal{P}_{/\mathcal{S}}\times_{\mathcal{H}}\mathcal{G},\mathfrak{g})_0$ is a well-defined super connection 1-form on $\mathcal{P}_{/\mathcal{S}}\times_{\mathcal{H}}\mathcal{G}$.
\end{proof}

\section{Graded principal bundles and graded connections}\label{section:graded}
As already outlined in the introduction, there exist various different approaches to supermanifold theory. The precise link between the algebro-geometric approach and Rogers-DeWitt approach has been discussed in various references (see e.g. \cite{Batchelor:1980,Rogers:1980,Eder:2020erq} as well as \cite{Mol:10,Sac:08} in context of the categorial approach; for our choice of the definition of algebraic supermanifolds see \cite{Eder:2020erq}). In this section, we want to show in which sense super principal fiber bundles and connection forms in the category of algebro-geometric supermanifolds $\mathbf{SMan}_{\mathrm{Alg}}$ as given in \cite{Stavracou:1996qb} can be related to the respective formulation in the $H^{\infty}$-category. Just for notational simplication, only in this section, we will call objects and morphisms in the category $\mathbf{SMan}_{\mathrm{Alg}}$ with the addition \emph{graded} to distinguish them from their respective $H^{\infty}$ counterparts. However, we have to emphasize that the definition of graded manifolds as chosen here is different to the original definition as given by Berezin-Kostant-Leites \cite{Berezin:1976,Kostant:1975qe} using the notion of \emph{finite duals}, the latter being much more general (see also \cite{Carmeli:2011} for a comparison). Also, in this section, we are considering trivial parametrizing supermanifolds $\mathcal{S}=\{*\}$. The generalization to non-trivial parametrizing supermanifolds is straightforward, though with the addition graded is a bit cumbersome.
\begin{remark}
Before, we introduce the definition of a group action on graded manifolds, recall that a morphism $f:\,(M,\mathcal{O}_{\mathcal{M}})\rightarrow(N,\mathcal{O}_{\mathcal{N}})$ between graded manifolds consists of a tuple $f=(|f|,f^{\sharp})$ with $f:\,M\rightarrow N$ a smooth map between ordinary smooth manifolds as well as a sheaf morphism $f^{\sharp}:\,\mathcal{O}_{\mathcal{N}}\rightarrow f_{*}\mathcal{O}_{\mathcal{M}}$ called \emph{pullback} which is local in the sense that it maps the maximal ideal of the stalk $\mathcal{O}_{\mathcal{N},f(p)}$ to the maximal ideal of $f_{*}\mathcal{O}_{\mathcal{M},p}$, $\forall p\in M$. 

Finally, by the 'Global Chart Theorem' (see e.g. \cite{Carmeli:2011}), it follows that a morphism between graded manifolds is uniquely determined by its pullback. Hence, in the following, we will often state certain properties of morphisms that only involve the corresponding pullback. 
\end{remark}
\begin{definition}
A \emph{right action} of a graded Lie group $\mathcal{G}=(G,\mathcal{O}_{\mathcal{G}})$ on a graded manifold $\mathcal{M}=(M,\mathcal{O}_{\mathcal{M}})$ is a morphism $\Phi:\,(M,\mathcal{O}_{\mathcal{M}})\times(G,\mathcal{O}_{\mathcal{G}})\rightarrow(M,\mathcal{O}_{\mathcal{M}})$ of graded manifolds such that
\begin{equation}
\Phi^{\sharp}\circ(\Phi^{\sharp}\otimes\mathds{1})=\Phi^{\sharp}\circ(\mathds{1}\otimes\mu_{\mathcal{G}}^{\sharp}),\quad(\mathds{1}\otimes e_{\mathcal{G}}^{\sharp})\circ\Phi^{\sharp}=\mathds{1}
\end{equation} 
where $\mu_{\mathcal{G}}^{\sharp}$ and $e_{\mathcal{G}}^{\sharp}$ denote the pullback of the group multiplication $\mu_{\mathcal{G}}:\,\mathcal{G}\times\mathcal{G}\rightarrow\mathcal{G}$ on $\mathcal{G}$ as well as its \emph{neutral element} $e_{\mathcal{G}}:\,\mathbb{R}^{0|0}\rightarrow\mathcal{G}$. Analogously, one defines a \emph{left action} of $(G,\mathcal{O}_{\mathcal{G}})$ on $(M,\mathcal{O}_{\mathcal{M}})$ as a morphism $\Phi:\,(G,\mathcal{O}_{\mathcal{G}})\times(M,\mathcal{O}_{\mathcal{M}})\rightarrow(M,\mathcal{O}_{\mathcal{M}})$ of graded manifolds such that
\begin{equation}
\Phi^{\sharp}\circ(\mathds{1}\otimes\Phi^{\sharp})=\Phi^{\sharp}\circ(\mu_{\mathcal{G}}^{\sharp}\otimes\mathds{1}),\quad(e_{\mathcal{G}}^{\sharp}\otimes\mathds{1})\circ\Phi^{\sharp}=\mathds{1}
\end{equation} 
\end{definition}
The following definition is a particular variant of the definition of graded principal bundles given in \cite{Stavracou:1996qb}.
\begin{definition}[after \cite{Stavracou:1996qb}]
A \emph{graded principal bundle} over a graded manifold $(M,\mathcal{O}_{\mathcal{M}})$ consists of a graded manifold $(P,\mathcal{O}_{\mathcal{P}})$ as well as a right action $\Phi:\,(P,\mathcal{O}_{\mathcal{P}})\times(G,\mathcal{O}_{\mathcal{G}})\rightarrow(P,\mathcal{O}_{\mathcal{P}})$ of a graded Lie group $(G,\mathcal{O}_{\mathcal{G}})$ on $(P,\mathcal{O}_{\mathcal{P}})$ such that
 \begin{enumerate}[label=(\roman*)]
	\item the quotient $(P/G,\mathcal{O}_{\mathcal{P}}/\mathcal{O}_{\mathcal{G}})$ exists as a graded manifold isomorphic to $(M,\mathcal{O}_{\mathcal{M}})$ and the canonical projection $\pi:\,(P,\mathcal{O}_{\mathcal{P}})\rightarrow(P/G,\mathcal{O}_{\mathcal{P}}/\mathcal{O}_{\mathcal{G}})$ is a submersion
	\item $(P,\mathcal{O}_{\mathcal{P}})$ satisfies the local triviality property: For any $p\in M$, there exists an open neighborhood $U\subseteq M$ of $p$ as well as an isomorphism $\phi:\,(V,\mathcal{O}_{\mathcal{M}}|_V)\rightarrow(U\times G,\mathcal{O}_{\mathcal{M}}|_{U}\hat{\otimes}_{\pi}\mathcal{O}_{\mathcal{G}})$ of graded manifolds, where $V:=|\iota\circ\pi|^{-1}(U)\subseteq P$ with $\iota$ the isomorphism $\iota:\,(P/G,\mathcal{O}_{\mathcal{P}}/\mathcal{O}_{\mathcal{G}})\stackrel{\sim}{\rightarrow}(M,\mathcal{O}_{\mathcal{M}})$, such that $\phi$ is $(G,\mathcal{O}_{\mathcal{G}})$-equivariant, that is,
\begin{equation}
\Phi^{\sharp}\circ\phi^{\sharp}=(\phi^{\sharp}\otimes\mathds{1})\circ(\mathds{1}\otimes\mu_{\mathcal{G}}^{\sharp})
\end{equation}
\end{enumerate} 
\end{definition}

\begin{remark}\label{prop:5.3}
In \cite{Eder:2020erq}, it was shown, using the \emph{functor of points} technique, that there exists an equivalence of categories given by two functors $\mathbf{A}:\,\mathbf{SMan}_{H^{\infty}}\rightarrow\mathbf{SMan}_{\mathrm{Alg}}$ and $\mathbf{H}:\,\mathbf{SMan}_{\mathrm{Alg}}\rightarrow\mathbf{SMan}_{\mathrm{Alg}}$. Here, for a given Grassmann algebra $\Lambda$, both categories have to be restricted to the subcategories of supermanifolds with odd dimensions bounded by the number of generators in $\Lambda$ (this can be avoided choosing instead the infinite dimensional Grassmann-algebra $\Lambda_{\infty}$, see e.g. \cite{Batchelor:1980,Rogers:1980}). If $\mathcal{M}$ is a $H^{\infty}$-supermanifold, the corresponding graded manifold $\mathbf{A}(\mathcal{M})$ is defined as
\begin{equation}
\mathbf{A}(\mathcal{M}):=(\mathbf{B}(\mathcal{M}),\mathbf{B}_{*}H^{\infty}_{\mathcal{M}})
\end{equation}
with $\mathbf{B}_{*}H^{\infty}_{\mathcal{M}}$ the pushforward sheaf $\mathbf{B}(\mathcal{M})\supseteq U\mapsto\mathbf{B}_{*}H^{\infty}_{\mathcal{M}}(U):=H^{\infty}(\mathbf{B}^{-1}(U))$. On the other hand, for any graded manifold $\mathcal{K}=(K,\mathcal{O}_{\mathcal{K}})$, the corresponding $H^{\infty}$-supermanifold is defined as the $\Lambda$-point
\begin{equation}
\mathbf{H}(\mathcal{K}):=\mathrm{Hom}_{\mathbf{SAlg}}(\mathcal{O}(\mathcal{K}),\Lambda)
\end{equation}
As it turns out, these functors, in particular, preserve products. In fact, for instance, consider two $H^{\infty}$-supermanifolds $\mathcal{M}$ and $\mathcal{N}$, then $H^{\infty}(\mathcal{M}\times\mathcal{N})\cong H^{\infty}(\mathcal{M})\hat{\otimes}_{\pi} H^{\infty}(\mathcal{N})$ where the completion is taken w.r.t. the \emph{Grothendiek's $\pi$-topology}. Moreover, $\mathbf{B}(\mathcal{M}\times\mathcal{N})=\mathbf{B}(\mathcal{M})\times\mathbf{B}(\mathcal{N})$ which yields $\mathbf{A}(\mathcal{M}\times\mathcal{N})\cong\mathbf{A}(\mathcal{M})\times\mathbf{A}(\mathcal{N})$. On the other hand, for given graded manifolds $\mathcal{K}$ and $\mathcal{L}$, the corresponding $\Lambda$-point is given by $\mathbf{H}(\mathcal{K}\times\mathcal{L})=\mathrm{Hom}_{\mathbf{SAlg}}(\mathcal{O}(\mathcal{K})\hat{\otimes}_{\pi}\mathcal{O}(\mathcal{L}),\Lambda)$. Given morphisms $\phi:\,\mathcal{O}(\mathcal{K})\rightarrow\Lambda$ and $\psi:\,\mathcal{O}(\mathcal{L})\rightarrow\Lambda$, this yields a morphism $\braket{\phi\otimes\psi}:\,\mathcal{O}(\mathcal{K})\hat{\otimes}_{\pi}\mathcal{O}(\mathcal{L})\rightarrow\Lambda$ which on elementary tensors is defined as
\begin{equation}
\braket{\phi\otimes\psi}(f\otimes g):=\phi(f)\psi(g)
\end{equation}
Conversely, given a morphism $\Psi:\,\mathcal{O}(\mathcal{K})\hat{\otimes}_{\pi}\mathcal{O}(\mathcal{L})\rightarrow\Lambda$, we can define morphims $\phi:\,\mathcal{O}(\mathcal{K})\rightarrow\Lambda$ and $\psi:\,\mathcal{O}(\mathcal{L})\rightarrow\Lambda$ setting $\phi(f):=\Psi(f\otimes 1)$ and $\psi(g):=\Psi(1\otimes g)$. Hence, this yields an isomorphism between $\Lambda$-points $\mathbf{H}(\mathcal{K}\times\mathcal{L})\cong\mathbf{H}(\mathcal{K})\times\mathbf{H}(\mathcal{L})$. To summarize, it follows that $\mathbf{A}$ and $\mathbf{H}$ can be extended to \emph{monoidal functors} between \emph{monoidal categories}.
\end{remark}

\begin{prop}\label{prop:5.4}
There is a bijective correspondence between graded principal bundles and $H^{\infty}$-principal super fiber bundles. More precisely,
\begin{enumerate}[label=(\roman*)]
	\item if $\mathcal{G}\rightarrow\mathcal{P}\stackrel{\pi_{\mathcal{P}}}{\rightarrow}\mathcal{M}$ is a $H^{\infty}$-principal super fiber bundle with structure group $\mathcal{G}$, then $\mathbf{A}(\mathcal{P})=(\mathbf{B}(\mathcal{P}),\mathbf{B}_{*}H^{\infty}_{\mathcal{P}})$ has the structure of a graded principal bundle over the graded manifold $\mathbf{A}(\mathcal{M})=(\mathbf{B}(\mathcal{M}),\mathbf{B}_{*}H^{\infty}_{\mathcal{M}})$.
	\item if $(P,\mathcal{O}_{\mathcal{P}})$ is a graded principal bundle over a graded manifold $(M,\mathcal{O}_{\mathcal{M}})$, then $\mathbf{H}(G,\mathcal{O}_{\mathcal{G}})\rightarrow\mathbf{H}(P,\mathcal{O}_{\mathcal{P}})\rightarrow\mathbf{H}(M,\mathcal{O}_{\mathcal{M}})$ has the structure of a $H^{\infty}$-principal super fiber bundle with bundle map $\tilde{\pi}:=\mathbf{H}(\iota\circ\pi)=\mathbf{H}(\iota)\circ\mathbf{H}(\pi):\,\mathbf{H}(P,\mathcal{O}_{\mathcal{P}})\rightarrow\mathbf{H}(M,\mathcal{O}_{\mathcal{M}})$, where $\pi:\,(P,\mathcal{O}_{\mathcal{P}})\rightarrow(P/G,\mathcal{O}_{\mathcal{P}}/\mathcal{O}_{\mathcal{G}})$ is the canonical projection and $\iota$ the isomorphism $\iota:\,(P/G,\mathcal{O}_{\mathcal{P}}/\mathcal{O}_{\mathcal{G}})\stackrel{\sim}{\rightarrow}(M,\mathcal{O}_{\mathcal{M}})$.
\end{enumerate} 
\end{prop}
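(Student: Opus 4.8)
The plan is to leverage the monoidal equivalence of categories $(\mathbf{A},\mathbf{H})$ recalled in Remark \ref{prop:5.3}: both functors are quasi-inverse equivalences preserving finite products. Since every ingredient in the definition of a principal bundle is assembled from morphisms, products, and local data, the strategy is to transport the structure formally along the functors and to verify by hand only the genuinely local conditions (submersion and local triviality). The mutual inverseness of the two constructions will then follow from the natural isomorphisms $\mathbf{H}\circ\mathbf{A}\cong\mathrm{id}$ and $\mathbf{A}\circ\mathbf{H}\cong\mathrm{id}$ exhibiting the equivalence.

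For part (i), I would first observe that, because $\mathbf{A}$ is product-preserving, it sends group objects to group objects; hence $\mathbf{A}(\mathcal{G})$ is a graded Lie group and $\mathbf{A}(\Phi):\mathbf{A}(\mathcal{P})\times\mathbf{A}(\mathcal{G})\to\mathbf{A}(\mathcal{P})$ is a right action, the action axioms following by applying $\mathbf{A}$ to $\Phi\circ(\Phi\times\mathrm{id})=\Phi\circ(\mathrm{id}\times\mu_{\mathcal{G}})$ together with the product isomorphism $\mathbf{A}(\mathcal{P}\times\mathcal{G})\cong\mathbf{A}(\mathcal{P})\times\mathbf{A}(\mathcal{G})$. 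Next, Prop. \ref{prop:2.16} identifies the $H^{\infty}$-quotient $\mathcal{P}/\mathcal{G}\cong\mathcal{M}$ together with its sheaf of invariants; applying $\mathbf{A}$ and using $\mathbf{A}(\mathcal{M})=(\mathbf{B}(\mathcal{M}),\mathbf{B}_{*}H^{\infty}_{\mathcal{M}})$ yields the graded quotient $(P/G,\mathcal{O}_{\mathcal{P}}/\mathcal{O}_{\mathcal{G}})\cong\mathbf{A}(\mathcal{M})$ and the canonical projection. For the submersion and local triviality conditions I would argue locally: over each $U_{\alpha}$ the chart $\phi_{\alpha}$ is a $\mathcal{G}$-equivariant isomorphism onto $U_{\alpha}\times\mathcal{G}$, so $\mathbf{A}(\phi_{\alpha})$ is an isomorphism of graded manifolds onto $\mathbf{A}(U_{\alpha}\times\mathcal{G})\cong\mathbf{A}(U_{\alpha})\times\mathbf{A}(\mathcal{G})$, and the equivariance relation \eqref{eq:2.32}, being expressed purely through products and $\mu_{\mathcal{G}}$, transports to the graded equivariance condition. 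The submersion property is then immediate since locally the projection coincides with the product projection, which $\mathbf{A}$ maps to a product projection.

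Part (ii) proceeds symmetrically with $\mathbf{H}$ in place of $\mathbf{A}$: product-preservation turns the graded right action into a smooth $\mathbf{H}(\mathcal{G})$-right action on $\mathbf{H}(\mathcal{P})$, the graded quotient passes to $\mathbf{H}(P/G)\cong\mathbf{H}(M)$, and the graded local trivializations are carried to $H^{\infty}$-formal bundle charts whose smoothness and equivariance are inherited from the functor; by Theorem \ref{prop:2.4} these charts endow $\mathbf{H}(\mathcal{P})$ with the structure of a principal super fiber bundle with bundle map $\tilde{\pi}=\mathbf{H}(\iota\circ\pi)$. I expect the main obstacle to be precisely the careful handling of these local conditions: submersions and the local triviality isomorphisms are not purely categorical data, so one must verify explicitly that $\mathbf{A}$ and $\mathbf{H}$ commute with passage to open sub(super)manifolds — i.e. that $\mathbf{A}(\pi_{\mathcal{P}}^{-1}(U_{\alpha}))$ is the open subgraded manifold of $\mathbf{A}(\mathcal{P})$ over $\mathbf{B}(U_{\alpha})$ — and that they match $\mathbf{A}(U_{\alpha}\times\mathcal{G})$ with the completed tensor product sheaf $\mathcal{O}_{\mathcal{M}}|_{U}\hat{\otimes}_{\pi}\mathcal{O}_{\mathcal{G}}$ appearing in the definition of a graded principal bundle. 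Once this local compatibility is in place, all the global algebraic identities transfer formally along the monoidal equivalence.
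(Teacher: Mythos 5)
Your proposal is correct and follows essentially the same route as the paper, whose proof simply invokes the monoidal equivalence of Remark \ref{prop:5.3} together with the quotient structure of Prop. \ref{prop:2.16}; your elaboration of the local verifications (compatibility of $\mathbf{A}$ and $\mathbf{H}$ with restriction to opens and with the completed tensor product sheaf) is exactly the content the paper leaves implicit.
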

\begin{proof}
This is an immediate consequence of Remark \ref{prop:5.3} as well as Prop. \ref{prop:2.16}.
\end{proof}
Hence, this proposition demonstrates that the categorical equivalence between graded and $H^{\infty}$ supermanifolds even carries over to principal super fiber bundles. Next, we want to show that connection 1-forms defined on these bundles are in fact in one-to-one correspondence.
\begin{definition}
Let $(M,\mathcal{O}_{\mathcal{M}})$ be a graded manifold and $\mathfrak{g}$ be a super Lie algebra. A $\mathfrak{g}$-valued differential form $\omega$ on $(M,\mathcal{O}_{\mathcal{M}})$ is an element of $\Omega(M,\mathcal{O}_{\mathcal{M}},\mathfrak{g})\equiv\Omega(M,\mathcal{O}_{\mathcal{M}})\otimes\mathfrak{g}$.
\end{definition}
\begin{definition}
Let $(G,\mathcal{O}_{\mathcal{G}})$ be a graded Lie group with super Lie algebra $\mathfrak{g}$. The morphism
\begin{equation}
\mathrm{Ad}:\,G\rightarrow\mathrm{GL}(\mathfrak{g}),\,\mathrm{Ad}_g(X):=(\mathrm{ev}_g\otimes X\otimes\mathrm{ev}_{g^{-1}})\circ(\mathds{1}\otimes\mu^{\sharp})\circ\mu^{\sharp}
\end{equation}
$\forall X\in\mathfrak{g}$, is called the Adjoint representation of $G$ on $\mathfrak{g}$.
\end{definition}
\begin{definition}
Let $(P,\mathcal{O}_{\mathcal{P}})$ be a graded principal bundle over a graded manifold $(M,\mathcal{O}_{\mathcal{M}})$ and right action $\Phi:\,(P,\mathcal{O}_{\mathcal{P}})\times(G,\mathcal{O}_{\mathcal{G}})\rightarrow(P,\mathcal{O}_{
\mathcal{P}})$ of a graded Lie group $(G,\mathcal{O}_{\mathcal{G}})$ on $(P,\mathcal{O}_{\mathcal{P}})$. A \emph{graded connection 1-form} $\omega$ on $(P,\mathcal{O}_{\mathcal{P}})$ is an even $\mathfrak{g}$-valued 1-form $\omega\in\Omega^1(P,\mathcal{O}_{\mathcal{P}},\mathfrak{g})_0$ such that
\begin{enumerate}[label=(\roman*)]
  \item $\braket{\widetilde{X}|\omega}=X$ $\forall X\in\mathfrak{g}$ and $\widetilde{X}:=\mathds{1}\otimes X\circ\Phi^*\in\mathrm{Der}(\mathcal{O}(\mathcal{P}))$ the associated fundamental vector field
	\item $\Phi_g^*\omega=\mathrm{Ad}_{g^{-1}}\circ\omega$ $\forall g\in G$ and $L_{\widetilde{X}}\omega=-\mathrm{ad}_X\circ\omega$ $\forall X\in\mathfrak{g}$
\end{enumerate}
Here, for $g\in G$, $\Phi_g:\,(P,\mathcal{O}_{\mathcal{P}})\rightarrow(P,\mathcal{O}_{\mathcal{P}})$ is the isomorphism of graded manifolds induced by the pullback morphism $\Phi_g^{\sharp}:=(\mathds{1}\otimes\mathrm{ev}_g)\circ\Phi^{\sharp}:\,\mathcal{O}_{\mathcal{P}}\rightarrow\mathcal{O}_{\mathcal{P}}$. Moreover, for homogeneous $X\in\mathfrak{g}$, $\mathrm{ad}_X\circ\omega$ is defined as
	\begin{equation}
	\braket{Z|\mathrm{ad}_X\circ\omega}=(-1)^{|Z||X|}\mathrm{ad}_X\braket{Z|\omega}
	\end{equation}
for any homogeneous $Z\in\mathrm{Der}(\mathcal{O}(\mathcal{P}))$.
\end{definition}
In order to provide a link between graded and $H^{\infty}$-super connection 1-forms, the following lemma will play a central role. It is based on the equivalent characterization of super Lie groups in terms of the corresponding \emph{super Harish-Chandra pair} $(\mathbf{B}(\mathcal{G}),\mathfrak{g})$ (see e.g. \cite{Tuynman:2004,Tuynman:2018} or \cite{Carmeli:2011} in the pure algebraic setting as well as \cite{Schuett:2018} in case of infinite dimensions). 
\begin{lemma}\label{prop:5.8}
Given a $H^{\infty}$-super Lie group as a well as a smooth map $F\in H^{\infty}(\mathcal{G})$. Then, $F$ vanishes identically on $\mathcal{G}$ if and only if $XF|_{\mathbf{B}(\mathcal{G})}\equiv 0$ for all $X\in\mathcal{U}(\mathfrak{g})$ (and thus in particular $F|_{\mathbf{B}(\mathcal{G})}\equiv 0$) with $\mathcal{U}(\mathfrak{g})$ the universal enveloping algebra of $\mathfrak{g}$.
\end{lemma}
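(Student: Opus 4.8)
The forward (``only if'') implication is immediate, since $XF$ is by definition the result of applying left-invariant differential operators to $F$. The plan is therefore to establish the nontrivial converse: if $XF|_{\mathbf{B}(\mathcal{G})}\equiv 0$ for every $X\in\mathcal{U}(\mathfrak{g})$, then $F\equiv 0$. The strategy is an induction on the Grassmann degree in the global odd coordinates of $\mathcal{G}$, driven by the odd left-invariant vector fields, whose degree-lowering part will turn out to be an invertible matrix over the body.

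First I would invoke the super Harish-Chandra structure of $\mathcal{G}$, i.e.\ the global splitting $\mathcal{G}\cong\mathcal{S}(\mathfrak{g}_1^*,\mathbf{B}(\mathcal{G}))$ (see \cite{Tuynman:2004,Tuynman:2018,Carmeli:2011}): choosing a homogeneous basis of $\mathfrak{g}=\mathfrak{g}_0\oplus\mathfrak{g}_1$, say even generators $(X_\mu)$ and odd generators $(Y_a)_{a=1,\ldots,N}$ with $N=\dim\mathfrak{g}_1$, yields global odd coordinates $\theta^1,\ldots,\theta^N$ dual to $(Y_a)$, together with local even coordinates pulled back from charts of $\mathbf{B}(\mathcal{G})$. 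Since there are only finitely many odd generators, every $F\in H^\infty(\mathcal{G})$ admits a finite and globally valid Grassmann expansion $F=\sum_{\underline{\mu}}\widehat{F_{\underline{\mu}}}\,\theta^{\underline{\mu}}$, where $\underline{\mu}$ ranges over increasing multi-indices in $\{1,\ldots,N\}$, each $F_{\underline{\mu}}\in C^\infty(\mathbf{B}(\mathcal{G}))$, and $\widehat{\,\cdot\,}$ denotes the unique supersmooth ($H^\infty$) extension. As restriction to the body recovers the coefficient functions and the extension map is injective, it suffices to prove $F_{\underline{\mu}}\equiv 0$ for all $\underline{\mu}$.

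Next I would analyse the odd left-invariant vector fields in these coordinates. Because $Y_a\in\mathfrak{g}_1$ is a body point, $Y_a$ is a smooth first-order operator, and I would decompose it as $Y_a=M_a^{\,b}(x)\,\partial_{\theta^b}+E_a$, where $M_a^{\,b}\in C^\infty(\mathbf{B}(\mathcal{G}))$ is the degree-$0$ part of its $\partial_{\theta^b}$-coefficient and $E_a$ collects everything else. The point is that $E_a$ cannot lower the Grassmann degree: its remaining $\partial_\theta$-coefficients carry positive Grassmann degree, while its $\partial_x$-coefficients must be odd and hence vanish on the body. Since the left-invariant fields form a global frame of $T\mathcal{G}$ and the $\partial_x$-terms vanish on $\mathbf{B}(\mathcal{G})$, the matrix $M(x)$ is invertible at every body point. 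The key computation is then that, for an increasing multi-index $\underline{\nu}$ with $|\underline{\nu}|=k$, applying $Y_{a_k}\cdots Y_{a_1}$ to $F$ and restricting to $\theta=0$ can only produce a degree-$0$ output through the pure product $M_{a_k}^{\,b_k}\cdots M_{a_1}^{\,b_1}\partial_{\theta^{b_k}}\cdots\partial_{\theta^{b_1}}$ acting on the degree-$k$ part of $F$; any factor of $E$ leaves a residual odd coordinate and thus contributes nothing after setting $\theta=0$. Hence, once the lower-degree coefficients vanish, $(Y_{a_k}\cdots Y_{a_1}F)|_{\mathbf{B}(\mathcal{G})}$ equals an invertible transform, built from the $k$-fold products of $M$, of the coefficients $(F_{\underline{\mu}})_{|\underline{\mu}|=k}$.

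The proof is then completed by induction on $k$: for $k=0$ the hypothesis with $X=1$ gives $F_\emptyset=F|_{\mathbf{B}(\mathcal{G})}\equiv 0$, and assuming $F_{\underline{\mu}}\equiv 0$ for all $|\underline{\mu}|<k$, the vanishing of $(Y_{a_k}\cdots Y_{a_1}F)|_{\mathbf{B}(\mathcal{G})}$ together with invertibility of $M$ forces $F_{\underline{\nu}}\equiv 0$ for every $|\underline{\nu}|=k$. Once all $F_{\underline{\mu}}$ vanish on the body, injectivity of the supersmooth extension yields $\widehat{F_{\underline{\mu}}}\equiv 0$ and therefore $F\equiv 0$. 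I expect the main obstacle to be the bookkeeping needed to pin down the precise degree-filtration behaviour of the odd left-invariant fields, i.e.\ verifying that no combination of the correction terms $E_a$ can lower the Grassmann degree and that the leading matrix $M(x)$ is globally invertible over the body; once this is settled, the inductive extraction of the coefficients is routine, and it is worth noting that the even generators of $\mathcal{U}(\mathfrak{g})$ are not actually required.
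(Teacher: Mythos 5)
Your proposal is correct, but it runs the paper's argument in the opposite direction. The paper also starts from the super Harish--Chandra splitting and the global Grassmann expansion $F=\mathbf{S}(F)_I\theta^I$, but instead of expanding the odd left-invariant fields in the coordinate frame, it expands the coordinate derivations in the left-invariant frame: writing $\partial_{\alpha}=\braket{\partial_{\alpha}|\tensor[^i]{\omega}{}}X_i$ with the dual left-invariant $1$-forms, every composite $\partial_I=\partial_{\alpha_1}\cdots\partial_{\alpha_k}$ becomes an $H^{\infty}(\mathcal{G})$-linear combination of elements of $\mathcal{U}(\mathfrak{g})$, so the hypothesis kills $\partial_IF$ at every body point in one stroke, and $\partial_IF|_{\mathbf{B}(\mathcal{G})}=(-1)^{k(k-1)/2}F_I$ finishes the proof with no induction and no filtration analysis. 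Your version pays for going the other way: you must isolate the degree-lowering leading part $M_a^{\,b}(x)\partial_{\theta^b}$ of $Y_a$, justify global invertibility of $M$ over the body (which you do correctly via the block-triangular form of the left-invariant frame at body points, the odd $\partial_x$-coefficients having no degree-zero component), and run an induction on Grassmann degree to control the correction terms $E_a$. What this buys is the observation that only the odd generators of $\mathcal{U}(\mathfrak{g})$ are needed -- the paper's expansion $\partial_{\alpha}=\braket{\partial_{\alpha}|\tensor[^i]{\omega}{}}X_i$ a priori involves the whole frame -- so your statement is marginally sharper; the paper's route is shorter and avoids the bookkeeping entirely. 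Both are valid proofs.
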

\begin{proof}
One direction is clear, so suppose that for some smooth function $F\in H^{\infty}(\mathcal{G})$, $XF|_{\mathbf{B}(\mathcal{G})}\equiv 0$ for all $X\in\mathcal{U}(\mathfrak{g})$. By the \emph{super Harish Chandra theorem}, $\mathcal{G}$ can be identified with the split super Lie group $\mathbf{S}(\mathfrak{g},\mathbf{B}(\mathcal{G}))\cong\mathcal{G}_0\times(\mathfrak{g}_1\otimes\Lambda_{1})$ associated to the trivial vector bundle $\mathbf{B}(\mathcal{G})\times\mathfrak{g}\rightarrow\mathbf{B}(\mathcal{G})$. Hence, there exist odd functions $\theta^{\alpha}\in H^{\infty}(\mathcal{G})$, $\alpha=1,\ldots,n$, $n=\mathrm{dim}\,\mathfrak{g}_1$, such that any $f\in H^{\infty}(\mathcal{G})$ is of the form $f=\mathbf{S}(f)_I\theta^I$ with $\mathbf{S}(f)_I$ the (generalized) Grassmann extensions of smooth functions $f_I\in C^{\infty}(\mathbf{B}(\mathcal{G}))$, $I\in M_n$. Hence, $F$ can be written in the form $F=\mathbf{S}(F)_I\theta^I$ for some $F_I\in C^{\infty}(\mathbf{B}(\mathcal{G}))$. It then follows from the assumptions that for $1\in\mathcal{U}(\mathfrak{g})$, $F(g)=0=F_{\emptyset}(g)$ for any body point $g\in\mathbf{B}(\mathcal{G})$, that is, $F_{\emptyset}\equiv 0$.\\
Let $\partial_{\alpha}$ be the derivations on $H^{\infty}(\mathcal{G})$ satisfying $\partial_{\alpha}\theta^{\beta}=\delta^{\beta}_{\alpha}$, $(X_i)_{i}$ be a homogeneous basis of smooth left-invariant vector fields $X_i\in\mathfrak{g}$ $\forall i$ on $\mathcal{G}$ and, according to \ref{prop:2.13}, $(\tensor[^i]{\omega}{})_i$ the corresponding smooth left-invariant 1-forms satisfying $\tensor[^i]{\omega}{}(X_j)=\delta_{j}^i$. Then, $\partial_{\alpha}$ can be written in the form $\partial_{\alpha}=\braket{\partial_{\alpha}|\tensor[^i]{\omega}{}}X_{i}$ with $\braket{\partial_{\alpha}|\tensor[^i]{\omega}{}}\in H^{\infty}(\mathcal{G})$ $\forall\alpha=1,\ldots,n$ and $i$. As a consequence, for each multi-index $I\in M_n$, $\partial_I:=\partial_{\alpha_1}\cdots\partial_{\alpha_k}$ with $k=|I|$ is a $H^{\infty}(\mathcal{G})$-linear expansion of elements in $\mathcal{U}(\mathfrak{g})$. Hence, by hypothesis, this implies $0=\partial_IF(g)=(-1)^{k(k-1)/2}F_I(g)$ for any body point $g\in\mathbf{B}(\mathcal{G})$, i.e., $F_I\equiv 0$, and therefore $F\equiv 0$ as claimed.
\end{proof}
\begin{remark}
It is clear that Lemma \ref{prop:5.8} equally holds if one replaces $\mathcal{U}(\mathfrak{g})$ by the respective right-invariant counterpart $\mathcal{U}(\mathfrak{g}^R)$, i.e., the universal enveloping algebra of the super Lie algebra $\mathfrak{g}^R$ of smooth right-invariant vector fields on $\mathcal{G}$.
\end{remark}

\begin{prop}\label{prop:5.10}
Let $\mathcal{G}\rightarrow\mathcal{P}\stackrel{\pi_{\mathcal{P}}}{\rightarrow}\mathcal{M}$ be a $H^{\infty}$-principal super fiber bundle with $\mathcal{G}$-right action $\Phi:\,\mathcal{P}\times\mathcal{G}\rightarrow\mathcal{P}$. A smooth even $\mathrm{Lie}(\mathcal{G})$-valued 1-form $\mathcal{A}\in\Omega^1(\mathcal{P},\mathfrak{g})_0$ is a connection 1-form on $\mathcal{P}$ if and only if
\begin{enumerate}[label=(\roman*)]
  \item $\braket{\widetilde{X}|\mathcal{A}}=X$ $\forall X\in\mathfrak{g}$ and $\widetilde{X}:=\mathds{1}\otimes X\circ\Phi^*\in\Gamma(T\mathcal{P})$ the associated smooth fundamental vector field
	\item $\Phi_g^*\mathcal{A}=\mathrm{Ad}_{g^{-1}}\circ\mathcal{A}$ $\forall g\in\mathbf{B}(\mathcal{G})$ and $L_{\widetilde{X}}\mathcal{A}=-\mathrm{ad}_X\circ\mathcal{A}$ $\forall X\in\mathfrak{g}$.
\end{enumerate}
\end{prop}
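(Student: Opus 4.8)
The plan is to read this as an instance of the \emph{super Harish--Chandra principle}: a condition on the full super Lie group $\mathcal{G}$ is equivalent to its restriction to the body $\mathbf{B}(\mathcal{G})$ together with its infinitesimal counterpart along $\mathfrak{g}$. Concretely, conditions (i)--(ii) of Definition \ref{prop:3.19} (demanded for all $g\in\mathcal{G}$) are to be shown equivalent to (i) together with the two assertions in (ii) of the present statement, and the engine driving the equivalence is Lemma \ref{prop:5.8}. Throughout I restrict the generators to $X\in\mathfrak{g}$, for which both the fundamental vector field $\widetilde X$ and the left-invariant field $X^L$ are genuinely smooth.

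For the forward direction, assume $\mathcal{A}$ satisfies Definition \ref{prop:3.19}. Condition (i) is verbatim, and the body equivariance is merely the restriction of the full equivariance to $g\in\mathbf{B}(\mathcal{G})$. The infinitesimal identity I would obtain by differentiating $g\mapsto\Phi_g^*\mathcal{A}$ along $X^L$ at $g=e$. This rests on the differentiation formula $X^L(g\mapsto\Phi_g^*\mathcal{A})=\Phi_g^*(L_{\widetilde X}\mathcal{A})$ and, on the other side, $X^L(g\mapsto\mathrm{Ad}_{g^{-1}}\circ\mathcal{A})=-\mathrm{ad}_X\circ(\mathrm{Ad}_{g^{-1}}\circ\mathcal{A})$, the latter following from the cocycle relation $\mathrm{Ad}_{(gh)^{-1}}=\mathrm{Ad}_{h^{-1}}\circ\mathrm{Ad}_{g^{-1}}$ and the defining property of $\mathrm{ad}_X$ as the generalized derivative of $h\mapsto\mathrm{Ad}_{h^{-1}}$ at $e$. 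Evaluating both at $g=e$ and using $\Phi_e=\mathrm{id}$ yields $L_{\widetilde X}\mathcal{A}=-\mathrm{ad}_X\circ\mathcal{A}$.

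The substance is the converse. Assume (i) and the two parts of (ii). Fix $p\in\mathcal{P}$, a tangent vector $Y_p$ and a dual index, and consider the smooth function on $\mathcal{G}$ given by $F(g):=\braket{Y_p|\Phi_g^*\mathcal{A}_p}-\braket{Y_p|(\mathrm{Ad}_{g^{-1}}\circ\mathcal{A})_p}$, smoothness in $g$ being guaranteed by the generalized tangent map of Definition \ref{definition:3.9}. By the body-equivariance hypothesis, $F|_{\mathbf{B}(\mathcal{G})}\equiv 0$. Using the infinitesimal hypothesis together with the differentiation formula, both summands obey the same first-order recursion $X^L w=-\mathrm{ad}_X\circ w$ for every $X\in\mathfrak{g}$, whence $F$ itself satisfies $X^L F=-\mathrm{ad}_X\circ F$. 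Iterating, and using that $X^L$ commutes up to the graded sign with the fixed linear maps $\mathrm{ad}_{X_i}$, one gets $X_k^L\cdots X_1^L F=\pm\,\mathrm{ad}_{X_1}\circ\cdots\circ\mathrm{ad}_{X_k}\circ F$, which vanishes on $\mathbf{B}(\mathcal{G})$ since $F$ does. Thus $XF|_{\mathbf{B}(\mathcal{G})}\equiv 0$ for all $X\in\mathcal{U}(\mathfrak{g})$, and Lemma \ref{prop:5.8} forces $F\equiv 0$ on $\mathcal{G}$. Since $p$, $Y_p$ and the index were arbitrary, this is precisely $\Phi_g^*\mathcal{A}=\mathrm{Ad}_{g^{-1}}\circ\mathcal{A}$ for all $g\in\mathcal{G}$, so $\mathcal{A}$ is a connection $1$-form.

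The main obstacle I anticipate is rigorously establishing the differentiation formula $X^L(g\mapsto\Phi_g^*\mathcal{A})=\Phi_g^*(L_{\widetilde X}\mathcal{A})$ in the $H^{\infty}$-category: for odd $X\in\mathfrak{g}$ there is no honest one-parameter subgroup, so the usual flow argument is unavailable and one must instead proceed through the generalized tangent map and the chain rule of Proposition \ref{definition:3.11}, with $L_{\widetilde X}$ taken in the sense of the graded Cartan formula. Keeping the graded signs consistent through the iterated-derivative step, and checking that the recursion is genuinely parity-homogeneous so that Lemma \ref{prop:5.8} applies component-wise, is the other point demanding care.
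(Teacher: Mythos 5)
Your overall strategy is the one the paper uses: reduce the full equivariance $\Phi_g^*\mathcal{A}=\mathrm{Ad}_{g^{-1}}\circ\mathcal{A}$ to its restriction to $\mathbf{B}(\mathcal{G})$ plus the infinitesimal identity $L_{\widetilde{X}}\mathcal{A}=-\mathrm{ad}_X\circ\mathcal{A}$, and propagate back via Lemma \ref{prop:5.8} by showing that all $\mathcal{U}(\mathfrak{g})$-derivatives of the difference function vanish on the body. Your organization of the converse as a single first-order recursion $X^LF=-\mathrm{ad}_X\circ F$ for the difference $F$ is a slightly cleaner packaging of the paper's induction, which computes the iterated derivatives of $\braket{Z_p|\Phi_g^*\mathcal{A}_p}$ and of $\mathrm{Ad}_{g^{-1}}\braket{Z_p|\mathcal{A}_p}$ separately and matches them on body points; the content is the same.

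However, the step you defer as ``the main obstacle'' --- the differentiation formula $X^L\bigl(g\mapsto\Phi_g^*\mathcal{A}\bigr)=\Phi_g^*(L_{\widetilde X}\mathcal{A})$ for odd $X\in\mathfrak{g}_1$ --- is precisely where the technical weight of the paper's proof lies, and your proposal does not supply the device that makes it work. The paper's resolution is to extend everything to $\mathrm{Lie}(\mathcal{G})_0\times\mathcal{P}$ (resp.\ $\mathrm{Lie}(\mathcal{G})_0\times\mathcal{G}\times\mathcal{P}$) and to introduce the \emph{even} smooth vector fields $\widetilde{\mathcal{Z}}_{(Y,p)}=(0_Y,\widetilde{Y}_p)$ and $\mathcal{Z}^L_{(Y,g)}=(0_Y,L_{g*}Y_e)$ parametrized by $Y\in\mathrm{Lie}(\mathcal{G})_0$; these admit honest flows $\phi_t(Y,p)=(Y,\Phi(p,e^{tY}))$, so the classical Lie-derivative-as-flow-derivative formula applies. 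One then recovers the statement for even $X\in\mathfrak{g}_0$ by setting $Y=X$, and for odd $X\in\mathfrak{g}_1$ by setting $Y=\tau X$ with $\tau\in\Lambda_1$ an auxiliary odd Grassmann parameter (so that $\tau X$ is even), tracking the sign $(-1)^{|Z|}$ that $\tau$ picks up when moved past a homogeneous test vector field, and stripping $\tau$ off at the end since the identity holds for all $\tau\in\Lambda_1$. Without this (or an equivalent substitute), the recursion $X^LF=-\mathrm{ad}_X\circ F$ is only established for even generators, and the iterated-derivative argument does not exhaust $\mathcal{U}(\mathfrak{g})$, so Lemma \ref{prop:5.8} cannot be invoked. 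A second, minor point: you work with a single tangent vector $Y_p$, whereas the Cartan-formula expression $\braket{Z|L_{\widetilde{X}}\mathcal{A}}=\widetilde{X}\braket{Z|\mathcal{A}}-\braket{[\widetilde{X},Z]|\mathcal{A}}$ that the computation rests on requires a smooth homogeneous vector field $Z$; you should either extend $Y_p$ to such a field or phrase the whole argument with fields from the outset, as the paper does.
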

\begin{proof}
The proof of this proposition is a bit lengthy and technical as one always has to care about smoothness in the various construction since $H^{\infty}$-smoothness is not preserved under partial evaluation. Therefore, we have moved it to the appendix \ref{section:Proof}. 
\end{proof}
\begin{prop}
There is a bijective correspondence between graded connection 1-forms on graded principal bundles and $H^{\infty}$-super connection 1-forms on $H^{\infty}$-principal super fiber bundles.
\end{prop}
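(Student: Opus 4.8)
The plan is to leverage the two equivalences already established in this section, namely Proposition \ref{prop:5.4} (the correspondence of bundles under the functors $\mathbf{A}$ and $\mathbf{H}$) together with the Harish--Chandra characterizations Proposition \ref{prop:5.10} and its graded counterpart in the definition of a graded connection $1$-form. The key observation is that both the graded and the $H^{\infty}$ notions of connection $1$-form have been deliberately phrased in \emph{exactly parallel} terms: condition (i) fixes the form on fundamental vector fields $\widetilde{X}$ for $X\in\mathfrak{g}$, and condition (ii) replaces the full group-equivariance $\Phi_g^*\mathcal{A}=\mathrm{Ad}_{g^{-1}}\circ\mathcal{A}$ (which in the $H^{\infty}$ setting one can only impose for body points $g\in\mathbf{B}(\mathcal{G})$, since partial evaluation at non-body points destroys $H^{\infty}$-smoothness) by the pair consisting of body-point equivariance plus the infinitesimal relation $L_{\widetilde{X}}\mathcal{A}=-\mathrm{ad}_X\circ\mathcal{A}$ for $X\in\mathfrak{g}$. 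Since the graded picture records precisely the data of the super Harish--Chandra pair $(\mathbf{B}(\mathcal{G}),\mathfrak{g})$, these two packages of conditions carry the same information.

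First I would set up the isomorphism on the level of vector-valued forms induced by the equivalence $\mathbf{A}\circ\mathbf{H}\cong\mathrm{id}$ (and $\mathbf{H}\circ\mathbf{A}\cong\mathrm{id}$) from Remark \ref{prop:5.3}: applying $\mathbf{A}$ to a $H^{\infty}$-principal bundle $\mathcal{G}\rightarrow\mathcal{P}\rightarrow\mathcal{M}$ yields the graded principal bundle $\mathbf{A}(\mathcal{P})$, and the pullback isomorphism $\mathbf{B}_*H^{\infty}_{\mathcal{P}}\cong\mathcal{O}_{\mathbf{A}(\mathcal{P})}$ on function sheaves induces an isomorphism $\Omega^1(\mathbf{A}(\mathcal{P}),\mathfrak{g})_0\cong\Omega^1(\mathcal{P},\mathfrak{g})_0$ of even $\mathfrak{g}$-valued $1$-forms, which is independent of the chosen homogeneous basis of $\mathfrak{g}$. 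Next I would verify that this identification carries fundamental vector fields to fundamental vector fields: in both categories $\widetilde{X}=(\mathds{1}\otimes X)\circ\Phi^{*}$ is defined by the \emph{same} formula in terms of the $\mathcal{G}$-action, and since $\mathbf{A}$ sends $\Phi$ to $\mathbf{A}(\Phi)$ functorially, the fundamental fields correspond. Consequently condition (i) for a graded connection $1$-form is equivalent to condition (i) of Proposition \ref{prop:5.10} under the identification. The same functoriality applied to the isomorphisms $\Phi_g$ (for body points $g$) and to the Lie derivative $L_{\widetilde{X}}$ and the adjoint/coadjoint actions $\mathrm{Ad}_{g^{-1}}$, $\mathrm{ad}_X$ shows that the two halves of condition (ii) correspond term by term.

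The main obstacle — and the reason Proposition \ref{prop:5.10} was proved separately in the appendix — is that the naive $H^{\infty}$ equivariance condition and the graded equivariance condition are \emph{not} literally the same statement: the graded condition $\Phi_g^*\omega=\mathrm{Ad}_{g^{-1}}\circ\omega$ ranges over all $g\in G$ in the functor-of-points sense, whereas in the $H^{\infty}$ category one can only evaluate at body points. The bridge is exactly Lemma \ref{prop:5.8}, which shows (via the super Harish--Chandra theorem and the split structure $\mathbf{S}(\mathfrak{g},\mathbf{B}(\mathcal{G}))$) that a smooth function on $\mathcal{G}$ vanishes identically iff all its derivatives along $\mathcal{U}(\mathfrak{g})$ vanish on the body. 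This is precisely what lets one upgrade ``body-point equivariance plus $L_{\widetilde{X}}\mathcal{A}=-\mathrm{ad}_X\circ\mathcal{A}$'' to full equivariance, so that Proposition \ref{prop:5.10} guarantees the $H^{\infty}$ object obtained from a graded connection is a genuine connection $1$-form, and conversely. I would therefore structure the proof as: (a) construct the basis-independent isomorphism $\Omega^1(\mathcal{P},\mathfrak{g})_0\cong\Omega^1(\mathbf{A}(\mathcal{P}),\mathfrak{g})_0$; (b) show it intertwines the data appearing in conditions (i) and (ii) of both definitions, using functoriality of $\mathbf{A}$ and $\mathbf{H}$ on $\Phi$, $\Phi_g$, $L_{\widetilde{X}}$, $\mathrm{Ad}$ and $\mathrm{ad}$; and (c) invoke Proposition \ref{prop:5.10} to conclude that the matched conditions characterize connection $1$-forms on each side, so the correspondence restricts to a bijection between graded and $H^{\infty}$-super connection $1$-forms. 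The verification in (b) is essentially bookkeeping once Proposition \ref{prop:5.10} is in hand, so the genuine content has already been discharged by Lemma \ref{prop:5.8} and Proposition \ref{prop:5.10}.
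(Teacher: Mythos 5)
Your proposal is correct and follows exactly the route the paper takes: the paper's proof simply states that the result is an immediate consequence of Proposition \ref{prop:5.4}, Proposition \ref{prop:5.10} and Remark \ref{prop:5.3}, and your steps (a)--(c) are precisely the expansion of that one-line argument, with Lemma \ref{prop:5.8} correctly identified as the ingredient already discharged inside Proposition \ref{prop:5.10}.
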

\begin{proof}
This is an immediate consequence of Prop. \ref{prop:5.4} and \ref{prop:5.10} as well as Remark \ref{prop:5.3}.
\end{proof}


\section{Parallel transport map}
\subsection{Preliminaries and first construction}\label{section:parallel}
In this section, we want to derive the parallel transport map corresponding to super connection forms. Therefore, at the beginning, we want restrict to trivial parametrizing supermanifolds $\mathcal{S}=\{*\}$. This will provide us already with the main ideas behind the construction and, at the same time, points out the necessity of the parametrization. The generalization to the relative category will then be considered in the subsequent section. The following proposition plays a central role. 
\begin{prop}\label{prop:6.1}
Let $\Phi:\,\mathcal{M}\times\mathcal{G}\rightarrow\mathcal{M}$ be a smooth right action of a super Lie group $\mathcal{G}$ on a supermanifold $\mathcal{M}$. Then, 
\begin{equation}
D_{(p,g)}\Phi(X_p,Y_g)=\Phi_{g*}(X_p)+\widetilde{\theta_{\mathrm{MC}}(Y_g)}_{p\cdot g}
\end{equation}
at any $(p,q)\in\mathcal{M}\times\mathcal{G}$ and tangent vectors $X_p\in T_p\mathcal{M}$, $Y_g\in T_g\mathcal{G}$, where $\theta_{\mathrm{MC}}\in\Omega^1(\mathcal{G},\mathfrak{g})$ is the \emph{Maurer-Cartan form} on $\mathcal{G}$ (see Example \ref{example:MC}).
\end{prop}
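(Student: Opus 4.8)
The plan is to use the canonical splitting $T_{(p,g)}(\mathcal{M}\times\mathcal{G})\cong T_p\mathcal{M}\times T_g\mathcal{G}$ together with the linearity of the differential to reduce the statement to two independent terms. Writing $(X_p,Y_g)=(X_p,0_g)+(0_p,Y_g)$ gives
\begin{equation*}
D_{(p,g)}\Phi(X_p,Y_g)=D_{(p,g)}\Phi(X_p,0_g)+D_{(p,g)}\Phi(0_p,Y_g).
\end{equation*}
The first summand is, directly by Definition \ref{definition:3.9}, the generalized tangent map $\Phi_{g*}(X_p)$ of $\Phi_g=\Phi(\cdot,g)$ at $p$. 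Hence it remains only to identify the second summand $D_{(p,g)}\Phi(0_p,Y_g)=\Phi_{p*}(Y_g)$ with the fundamental tangent vector $\widetilde{\theta_{\mathrm{MC}}(Y_g)}_{p\cdot g}$.

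For this I would recall from Example \ref{example:MC} that $(\theta_{\mathrm{MC}})_g=L_{g^{-1}*}$, so that, setting $Z:=\theta_{\mathrm{MC}}(Y_g)=L_{g^{-1}*}(Y_g)\in T_e\mathcal{G}=\mathrm{Lie}(\mathcal{G})$ and using that $L_{g*}$ and $L_{g^{-1}*}$ are mutually inverse (a consequence of Prop. \ref{definition:3.11} applied to $L_{g^{-1}}\circ L_g=\mathrm{id}$), one may write $Y_g=L_{g*}(Z)$. Prop. \ref{definition:3.11} then yields
\begin{equation*}
\Phi_{p*}(Y_g)=\Phi_{p*}\circ L_{g*}(Z)=(\Phi_p\circ L_g)_{*}(Z).
\end{equation*}

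The decisive step exploits the right-action axiom. By Definition \ref{definition:3.10}, the generalized tangent map $(\Phi_p\circ L_g)_{*}$ is the generalized tangent map of the smooth map $\Phi\circ(\mathrm{id}\times\mu_{\mathcal{G}})$ partially evaluated at $(p,g)$. Since $\Phi$ is a group action, one has the equality of smooth maps $\Phi\circ(\mathrm{id}\times\mu_{\mathcal{G}})=\Phi\circ(\Phi\times\mathrm{id})$, so the same generalized tangent map is equally computed from $\Phi\circ(\Phi\times\mathrm{id})$, whose partial evaluation at $(p,g)$ is $h\mapsto\Phi(p\cdot g,h)$. A short chain-rule computation, using $D_{(p,g)}\Phi(0_{(p,g)})=0$, then gives
\begin{equation*}
(\Phi_p\circ L_g)_{*}(Z)=D_{(p\cdot g,e)}\Phi(0_{p\cdot g},Z)=\Phi_{(p\cdot g)*}(Z)=\widetilde{Z}_{p\cdot g},
\end{equation*}
which is precisely $\widetilde{\theta_{\mathrm{MC}}(Y_g)}_{p\cdot g}$ by definition of the fundamental tangent vector. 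Combining the two summands yields the assertion.

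The main obstacle I expect is one of bookkeeping rather than of substance: since $p\in\mathcal{M}$ and $g\in\mathcal{G}$ need not be body points, the partial maps $\Phi_p$, $\Phi_g$ and $L_g$ are in general only continuous and not of class $H^{\infty}$, so none of the manipulations above may be carried out by naive composition of ordinary tangent maps. Every step must be phrased through the generalized tangent map formalism of Definitions \ref{definition:3.9} and \ref{definition:3.10} together with the chain rule of Prop. \ref{definition:3.11}, and the crucial input is that the two smooth realizations $\Phi\circ(\mathrm{id}\times\mu_{\mathcal{G}})$ and $\Phi\circ(\Phi\times\mathrm{id})$ of the action axiom are \emph{literally equal} as smooth maps, so that their generalized tangent maps coincide.
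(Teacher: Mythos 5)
Your argument is correct and follows essentially the same route as the paper's own proof: the same linear splitting into $\Phi_{g*}(X_p)+\Phi_{p*}(Y_g)$, the same rewriting $Y_g=L_{g*}(\theta_{\mathrm{MC}}(Y_g))$, and the same use of the right-action identity $\Phi\circ(\mathrm{id}\times\mu_{\mathcal{G}})=\Phi\circ(\Phi\times\mathrm{id})$ combined with Prop.~\ref{definition:3.11} to identify $\Phi_{p*}(Y_g)$ with the fundamental tangent vector at $p\cdot g$. Your closing remark about having to phrase everything through the generalized tangent map because $p$ and $g$ need not be body points is exactly the caveat the paper itself emphasizes.
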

\begin{proof}
The proof is very similar to the classical theory of ordinary smooth manifolds. One only has to care about smoothness. We will therefore employ the notion of the generalized tangent map. By linearity, we have
\begin{align}
D_{(p,g)}\Phi(X_p,Y_g)&=D_{(p,g)}\Phi(X_p,0_g)+D_{(p,g)}\Phi(0_p,Y_g)=\Phi_{g*}(X_p)+\Phi_{p*}(Y_g)
\end{align}
Since $Y_g=L_{g*}\circ L_{g^{-1}*}(Y_g)=L_{g*}(\theta_{\mathrm{MC}}(Y_g))$ by definition of the Maurer-Cartan form, this yields, using Prop. \ref{definition:3.11} and that $\Phi$ defines a right action, 
\begin{align}
\Phi_{p*}(Y_g)&=\Phi_{p*}\circ L_{g*}(\theta_{\mathrm{MC}}(Y_g))=(\Phi_{p}\circ L_{g})_{*}(\theta_{\mathrm{MC}}(Y_g))\nonumber\\
&=(\Phi\circ(\mathrm{id}\times\mu))_{(p,g)*}(\theta_{\mathrm{MC}}(Y_g))=(\Phi\circ(\Phi\times\mathrm{id}))_{(p,g)*}(\theta_{\mathrm{MC}}(Y_g))\nonumber\\
&=\Phi_{p\cdot g*}(\theta_{\mathrm{MC}}(Y_g))
\end{align}
which implies $\Phi_{p*}(Y_g)=\widetilde{\theta_{\mathrm{MC}}(Y_g)}_{p\cdot g}$ by definition of the fundamental tangent vector.
\end{proof}

Before we state the main definition of this section concerning the horizontal lift of smooth paths on supermanifolds, let us briefly recall the notion of a local flow of a smooth vector field. In the super category, one needs to distinguish between even and odd vector fields whose corresponding local flows turn out to possess different properties. In what follows, we want to focus on odd vector fields as these seem to be rarely discussed in the literature. In contrast to the classical theory, it follows that the corresponding local flow depends on two parameters $(t,\theta)$ given by both an even and odd parameter $t$ and $\theta$, respectively. These define elements of the superspace $\Lambda^{1,1}$ which can be given the structure of a super Lie group also called the \emph{super translation group} with multiplication defined via $(t,\theta)\cdot(s,\eta)=(t+s+\theta\eta,\theta+\eta)$ $\forall(t,\theta),(s,\eta)\in\Lambda^{1,1}$.  

\begin{definition}
Let $X\in\Gamma(T\mathcal{M})_1$ be an odd smooth vector field on a supermanifold $\mathcal{M}$, $f:\,\mathcal{M}\rightarrow\mathcal{M}$ a smooth map and $t_0\in\Lambda^{1,1}$ a body point. A smooth map $\phi:\,\mathcal{I}\times U\rightarrow V$, with $U,V\subset\mathcal{M}$ open and $t_0\in\mathcal{I}\subset\Lambda^{1,1}$ an open connected super interval, is called a \emph{local flow} of $X$ around $t_0$ with initial condition $f$, if $\phi$ satisfies
\begin{equation}
(\mathcal{D}\otimes\mathds{1})\circ\phi^*=X\circ\phi
\end{equation}
as well as $\phi_{(t_0,0)}:=\phi(t_0,0,\,\cdot\,)=f$ on $U$, where $\mathcal{D}$ denotes the right-invariant vector field $\mathcal{D}:=\partial_{\theta}+\theta\partial_t$ on $\Lambda^{1,1}$. If, in particular, $\mathcal{I}=\Lambda^{1,1}$ and $U=V=\mathcal{M}$, $\phi$ is called a \emph{global flow}.
\end{definition}
 
\begin{prop}\label{prop:6.3}
Let $X\in\Gamma(T\mathcal{M})_1$ be an odd smooth vector field on a supermanifold $\mathcal{M}$ and $f:\,\mathcal{M}\rightarrow\mathcal{M}$ a smooth map, then, for any body point $t_0\in\Lambda^{1,1}$, $X$ admits a local flow $\phi$ around $t_0$ with initial condition $f$. 
\end{prop}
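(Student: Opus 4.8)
The plan is to reduce the existence of a local flow for an odd vector field to the classical existence theorem for solutions of ODEs via the super Harish-Chandra / split-supermanifold description together with the structure of the super translation group $\Lambda^{1,1}$. The key observation is that the defining equation $(\mathcal{D}\otimes\mathds{1})\circ\phi^{*}=X\circ\phi$ with $\mathcal{D}=\partial_{\theta}+\theta\partial_{t}$ is a flow equation for an action of the one-parameter supergroup $\Lambda^{1,1}$. First I would work locally: choose a coordinate neighborhood $(U,\varphi)$ of $\mathcal{M}$ with coordinates $(x^{a},\xi^{\mu})$ and write $X=X^{a}\partial_{x^{a}}+X^{\mu}\partial_{\xi^{\mu}}$ with $X^{a},X^{\mu}\in H^{\infty}(U)$, noting $X$ is odd so the $X^{a}$ are odd and the $X^{\mu}$ are even functions. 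Writing the sought flow in components $\phi^{*}(x^{a})=\Xi^{a}(t,\theta,\cdot)$, $\phi^{*}(\xi^{\mu})=\Xi^{\mu}(t,\theta,\cdot)$, the equation $(\mathcal{D}\otimes\mathds{1})\circ\phi^{*}=X\circ\phi$ becomes a first-order system in the pair of variables $(t,\theta)$.

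The essential trick is to expand every component in the single odd coordinate $\theta$, writing each $\Xi=\alpha+\theta\beta$ with $\alpha,\beta$ depending only on $t$ (and on the point of $U$). Applying $\mathcal{D}=\partial_{\theta}+\theta\partial_{t}$ and matching the $\theta^{0}$ and $\theta^{1}$ coefficients converts the super flow equation into an \emph{ordinary} (even) first-order system for the coefficient functions $\alpha,\beta$ in the even variable $t$, with the odd structure now carried by the nilpotent Grassmann generators appearing in the coefficients rather than by the flow variable itself. Concretely, the $\theta$-linear part produces $\partial_{t}\alpha$ in terms of the lower-order data, so the system is triangular and smooth in $t$. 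Since we are in the $H^{\infty}$-category over the finitely generated Grassmann algebra $\Lambda$, such a system is equivalent, after expanding in the Grassmann generators, to a finite coupled system of smooth real ODEs to which the classical Picard--Lindelöf theorem applies, yielding a unique smooth solution on some open super interval $\mathcal{I}\ni t_{0}$ with the prescribed initial condition $\phi_{(t_{0},0)}=f$. One has to check that reassembling the coefficient functions produces a genuine $H^{\infty}$-map $\phi$, i.e. that the solution depends smoothly on the initial point; this follows from smooth dependence of ODE solutions on parameters and initial data.

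The main obstacle I expect is \emph{not} the local existence but the bookkeeping needed to make the odd flow variable $\theta$ interact correctly with the right-invariant field $\mathcal{D}$ and the super translation group law $(t,\theta)\cdot(s,\eta)=(t+s+\theta\eta,\theta+\eta)$; in particular one must verify that $\mathcal{D}$ is exactly the generator whose integral curves reproduce this group multiplication, so that the flow equation is the correct infinitesimal statement and the initial condition at $(t_{0},0)$ is compatible with the group structure. A clean way to handle this is to first treat the case $f=\mathrm{id}$ and establish the flow as (the pullback of) a local $\Lambda^{1,1}$-action, then obtain the general initial condition by precomposition $\phi\mapsto\phi\circ(\mathrm{id}_{\mathcal{I}}\times f)$, using that $f$ is smooth. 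Finally I would globalize by the standard patching argument: local flows on overlapping coordinate neighborhoods agree by the uniqueness part of the ODE theorem, so they glue to a flow on a neighborhood of $t_{0}$ in $\Lambda^{1,1}$ times a neighborhood of each point, which is exactly the asserted local flow. Smoothness of the glued map is inherited from smoothness of the pieces, completing the construction.
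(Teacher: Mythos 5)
Your argument is essentially correct, but it takes a different route from the paper: the paper does not prove Proposition \ref{prop:6.3} at all, it simply refers to Dumitrescu \cite{Florin:2008}, where the statement is established in the algebro-geometric category using the functor of points (so that, strictly speaking, transferring it to the $H^{\infty}$ setting relies on the categorical equivalence of section \ref{section:graded}). You instead give a direct proof inside the concrete category, and the mechanism you describe is the standard one: expanding $\phi^{*}=\alpha+\theta\beta$ and matching coefficients against $\mathcal{D}=\partial_{\theta}+\theta\partial_{t}$ gives the algebraic constraint $\beta=X(\alpha)$ from the $\theta^{0}$ part and the genuine ODE $\partial_{t}\alpha=\tfrac{1}{2}[X,X](\alpha)$ from the $\theta^{1}$ part, so the odd flow is the flow of the even vector field $X^{2}=\tfrac{1}{2}[X,X]$ dressed by the algebraically determined odd correction; existence, uniqueness and smooth dependence then come from Picard--Lindel\"of applied to the finite system of real ODEs obtained by expanding the $H^{\infty}$ coefficient functions (note the finiteness comes from the finitely many odd coordinates of $\mathcal{M}$, not from any assumption on the number of generators of $\Lambda$, so your argument survives the infinite-dimensional $\Lambda$ case the paper also allows). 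Your approach buys a self-contained proof in the category the paper actually works in and makes the triangular structure of the super flow equation explicit, at the cost of coordinate bookkeeping; the paper's citation buys brevity and a coordinate-free formulation, at the cost of importing the functor-of-points machinery. Reducing to $f=\mathrm{id}$ and obtaining the general initial condition by precomposition is also sound, since $\mathcal{D}\otimes\mathds{1}$ commutes with $\mathds{1}\otimes f^{*}$.
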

\begin{proof}
See for instance \cite{Florin:2008} for a proof in the pure algebraic setting using the concept of \emph{functor of points}.
\end{proof}

\begin{cor}
If $\phi^X$ is a local flow of an odd smooth vector field $X$, then
\begin{equation}
\phi^X_{(t,\theta)}\circ\phi^X_{(s,\eta)}=\phi^X_{(t+s+\theta\eta,\theta+\eta)}
\end{equation}
whenever both sides are defined.
\end{cor}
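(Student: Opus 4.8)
The plan is to prove this as the super analogue of the classical one‑parameter‑group property of flows, the only genuinely new input being that the two super‑time parameters $(t,\theta),(s,\eta)\in\Lambda^{1,1}$ compose according to the super translation group law $\mu\colon(t,\theta)\cdot(s,\eta)=(t+s+\theta\eta,\theta+\eta)$. As always the argument rests on the uniqueness of the local flow: I would show that both sides of the asserted identity, viewed as functions of the first super‑time $(t,\theta)$, solve the same defining equation $(\mathcal{D}\otimes\mathds{1})\circ\phi^*=\phi^*\circ X$ with the same initial condition, and then invoke uniqueness. Here I use the flow $\phi^X$ normalized so that $\phi^X_{(0,0)}=\mathrm{id}$ (initial condition $f=\mathrm{id}$ at the group identity $(0,0)$), which is exactly the normalization making the group law meaningful, since setting $(s,\eta)=(0,0)$ forces $\phi^X_{(0,0)}=\mathrm{id}$.

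The main obstacle is that in the $H^{\infty}$‑category the partial evaluation $\phi^X_{(s,\eta)}=\phi^X(s,\eta,\cdot)$ at a non‑body super‑time is \emph{not} $H^{\infty}$‑smooth, so the bare composition of maps in the statement must be handled with care. The remedy is to keep all super‑time parameters as honest variables and prove an equality of genuinely smooth maps $\Lambda^{1,1}\times\Lambda^{1,1}\times\mathcal{M}\to\mathcal{M}$,
\begin{equation}
F:=\phi^X\circ(\mu\times\mathrm{id}_{\mathcal{M}}),\qquad G:=\phi^X\circ(\mathrm{id}_{\Lambda^{1,1}}\times\phi^X),
\end{equation}
and to recover the claimed identity only at the very end by specializing the parameters. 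Regarding $G$, since the inner map $\mathrm{id}_{\Lambda^{1,1}}\times\phi^X$ does not touch the first $\Lambda^{1,1}$‑factor, the operator $(\mathcal{D}\otimes\mathds{1})$ in $(t,\theta)$ commutes with its pullback, so $(\mathcal{D}\otimes\mathds{1})G^*=(\mathrm{id}\times\phi^X)^*(\mathcal{D}\otimes\mathds{1})(\phi^X)^*=(\mathrm{id}\times\phi^X)^*(\phi^X)^*X=G^*X$, i.e. $G$ solves the flow equation in $(t,\theta)$. For $F$ the point is precisely the right‑invariance of $\mathcal{D}$, which at the level of pullbacks reads $(\mathcal{D}\otimes\mathds{1})\circ\mu^*=\mu^*\circ\mathcal{D}$ (with $\mathcal{D}$ acting on the left argument); combined with the flow equation this yields $(\mathcal{D}\otimes\mathds{1})F^*=(\mu\times\mathrm{id})^*(\mathcal{D}\otimes\mathds{1})(\phi^X)^*=(\mu\times\mathrm{id})^*(\phi^X)^*X=F^*X$, so $F$ solves the same equation.

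It remains to compare initial data and conclude. Since $(0,0)$ is the neutral element, $(0,0)\cdot(s,\eta)=(s,\eta)$, so $F(0,0,s,\eta,x)=\phi^X(s,\eta,x)=\phi^X\bigl(0,0,\phi^X(s,\eta,x)\bigr)=G(0,0,s,\eta,x)$, using $\phi^X_{(0,0)}=\mathrm{id}$; thus $F$ and $G$ share the initial condition at $(t,\theta)=(0,0)$ for every value of the passive variables $(s,\eta,x)$. Applying the uniqueness of the local flow on the relevant connected super‑interval (established together with Proposition~\ref{prop:6.3}, cf. \cite{Florin:2008}) gives $F=G$ as smooth maps, and restricting to fixed arguments produces $\phi^X_{(t,\theta)}\circ\phi^X_{(s,\eta)}=\phi^X_{(t,\theta)\cdot(s,\eta)}=\phi^X_{(t+s+\theta\eta,\theta+\eta)}$ wherever both sides are defined. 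The two steps requiring the most care are the uniqueness statement for flows of odd vector fields and the precise bookkeeping ensuring that every pullback manipulation is performed on honestly $H^{\infty}$‑smooth maps, never on an illegitimate partial evaluation at a non‑body point.
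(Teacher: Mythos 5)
Your proof is correct and follows essentially the same route as the paper: realize both sides as honest $H^{\infty}$-smooth maps in all super-time variables, show via the right-invariance of $\mathcal{D}$ that they satisfy the same flow equation with the same initial condition at $(t,\theta)=(0,0)$, and invoke uniqueness. The only cosmetic difference is that the paper lifts to the odd vector field $\mathcal{D}\otimes\mathds{1}+\mathds{1}\otimes X$ on $\Lambda^{1,1}\times\mathcal{M}$ so the stated uniqueness for local flows applies verbatim, whereas you keep the flow equation for $X$ alone and treat $(s,\eta)$ as passive parameters, which requires the (trivially equivalent) uniqueness statement for flows with parameters.
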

\begin{proof}
We give an algebraic proof of this proposition. Therefore, consider the smooth maps  
\begin{align}
\Phi_1:\,(t,\theta,s,\eta,p)&\mapsto (t+s+\theta\eta,\theta+\eta,\phi^X_{(t+s+\theta\eta,\theta+\eta)}(p))\\
\Phi_2:\,(t,\theta,s,\eta,p)&\mapsto (t+s+\theta\eta,\phi^X_{(t,\theta)}(\phi_{(s,\eta)}(p)))
\end{align}
defined on some open subsets of $\Lambda^{1,1}\times\Lambda^{1,1}\times\mathcal{M}$. It then follows that both maps satsify 
\begin{equation}
(\mathcal{D}\otimes\mathds{1})\circ\Phi_i^*=\Phi_i^*\circ(\mathcal{D}\otimes\mathds{1}+\mathds{1}\otimes X),\quad\forall i=1,2
\end{equation}
For instance, since $\Phi_1=(\mathrm{id}\times\phi^X)\circ(d\times\mathrm{id})\circ(\mu\times\mathrm{id})$, with $d$ the diagonal map on $\Lambda^{1,1}$, it follows from the right-invariance of $\mathcal{D}$ as well as $\mathcal{D}\circ d^*=d^*\circ(\mathcal{D}\otimes\mathds{1}+\mathds{1}\otimes\mathcal{D})$
\begin{align}
(\mathcal{D}\otimes\mathds{1})\circ\Phi_i^*&=(\mathcal{D}\otimes\mathds{1})\circ(\mu^*\otimes\mathds{1})\circ(d^*\otimes\mathds{1})\circ(\mathds{1}\otimes\phi^{X*})\nonumber\\
&=(\mu^*\circ\mathcal{D}\otimes\mathds{1})\circ(d^*\otimes\mathds{1})\circ(\mathds{1}\otimes\phi^{X*})\nonumber\\
&=(\mu^*\otimes\mathds{1})\circ(d^*\otimes\mathds{1})\circ(\mathcal{D}\otimes\mathds{1}+\mathds{1}\otimes\mathcal{D})\circ(\mathds{1}\otimes\phi^{X*})\nonumber\\
&=(\mu^*\otimes\mathds{1})\circ(d^*\otimes\mathds{1})\circ(\mathcal{D}\otimes\phi^{X*}+\mathds{1}\otimes(\phi^{X*}\circ X))\nonumber\\
&=\Phi_1^*\circ(\mathcal{D}\otimes\mathds{1}+\mathds{1}\otimes X)
\end{align} 
and similarly for $\Phi_2$. Hence, $\Phi_i$ for $i=1,2$ both define local flows of the odd vector field $\mathcal{D}\otimes\mathds{1}+\mathds{1}\otimes X$ with the initial condition $\Phi_i(0,0,\,\cdot\,)=(\mathrm{id}\times\phi^X)\circ(d\times\mathrm{id})$. By uniqueness, it thus follows that they have to coincide on the intersection of their domains.
\end{proof}

\begin{definition}
Let $\mathcal{P}\stackrel{\pi}{\rightarrow}\mathcal{M}$ be a principal super fiber bundle and $\mathscr{H}\subset T\mathcal{P}$ a principal connection on $\mathcal{P}$. Let $\mathcal{I}\subseteq\Lambda^{1,1}$ be a super interval and $\gamma:\,\mathcal{I}\rightarrow\mathcal{M}$ be a map also called a \emph{path} on $\mathcal{M}$. Then, a smooth path $\gamma^{hor}:\,\Lambda^{1,1}\supseteq\mathcal{I}\rightarrow\mathcal{P}$ on $\mathcal{P}$ is called a \emph{horizontal lift} of $\gamma$, if $\pi\circ\gamma^{hor}=\gamma$ and $(\gamma^{hor})_*\mathcal{D}\subset(\gamma^{hor})^*\mathscr{H}$. 
\end{definition}
The following theorem provides the existence of horizontal lifts of paths defined on supermanifolds. Since, the zero element $0\in\mathcal{I}$ of a super interval defines a body point and paths are supposed to be smooth, it is important to note that the initial values of both the path and its horizontal lift need to be body points, as well.\\
Moreover, as will be proven below, in order to obtain a non-trivial parallel transport map, one necessarily needs to consider smooth paths depending on both even and odd parameters. However, as will be discussed in the subsequent sections, this can be remedied considering instead relative supermanifolds.

\begin{theorem}\label{thm:8.0}
Let $\mathcal{A}\in\Omega^{1}(\mathcal{P},\mathfrak{g})_0$ be a super connection 1-form on a principal super fiber bundle $\mathcal{G}\rightarrow\mathcal{P}\stackrel{\pi}{\rightarrow}\mathcal{M}$ over $\mathcal{M}$ defining a principal connection $\mathscr{H}\subset T\mathcal{P}$ on $\mathcal{P}$. Then, for any smooth path $\gamma:\,\Lambda^{1,1}\supseteq\epsilon_{1,1}^{-1}([0,1])\rightarrow\mathcal{M}$ on the supermanifold $\mathcal{M}$ and any body point $p\in\mathcal{P}$, there exists a unique horizontal lift $\gamma^{hor}_p:\,\epsilon_{1,1}^{-1}([0,1])\rightarrow\mathcal{P}$ of $\gamma$ such that $\gamma^{hor}_p(0)=p$.\\ 
If, in particular, the path is bosonic, i.e., $\gamma:\,\Lambda_0\supseteq\epsilon_{1,0}^{-1}([0,1])\rightarrow\mathcal{M}$ the  horizontal lift $\gamma^{hor}_p:\,\epsilon_{1,0}^{-1}([0,1])\rightarrow\mathcal{P}$ of $\gamma$ is given by
	\begin{equation}
	\gamma^{hor}_p=\mathbf{S}(\mathbf{B}(\gamma)^{hor}_{p})
	\end{equation}
where $\mathbf{B}(\gamma)^{hor}_{p}$ is the (unique) horizontal lift of $\mathbf{B}(\gamma)$ through $p\in P:=\mathbf{B}(\mathcal{P})$ to the ordinary principal bundle $P$ where the principal connection $H\subset TP$ on $P$ is induced by the ordinary connection 1-form $A:=\mathbf{B}(\mathcal{A})\in\Omega^1(P,\mathfrak{g}_0)$.
\end{theorem}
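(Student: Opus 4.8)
The plan is to reduce the existence and uniqueness of the horizontal lift to the integration of a single odd vector field on $\mathcal{P}$, exploiting the super translation group structure of $\Lambda^{1,1}$ and the local flow theory established in Prop.~\ref{prop:6.3}. The key geometric input is the decomposition afforded by the connection: at each point the tangent module splits as $\mathscr{H}\oplus\mathscr{V}$, and a tangent vector lifts uniquely to a horizontal one. First I would reduce to the case where $\gamma$ takes values in a single local trivialization neighborhood $\pi^{-1}(U)\cong U\times\mathcal{G}$; since $\epsilon_{1,1}^{-1}([0,1])$ is connected and $\gamma$ is smooth, its body image is compact, so one can cover it by finitely many such neighborhoods and glue the locally defined lifts, uniqueness guaranteeing that they agree on overlaps.

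\textbf{Local construction.} Working in a trivialization, a lift of $\gamma$ has the form $\gamma^{hor}(t,\theta)=\phi_U^{-1}(\gamma(t,\theta),g(t,\theta))$ for an unknown $\mathcal{G}$-valued map $g$. The horizontality condition $(\gamma^{hor})_*\mathcal{D}\subset(\gamma^{hor})^*\mathscr{H}$ translates, via the connection $1$-form $\mathcal{A}$ and Prop.~\ref{prop:6.1} applied to the right action $\Phi$, into a first-order differential equation for $g$ driven by the pullback $\gamma^*\mathcal{A}^{\mathrm{loc}}$ of the local connection form along $\gamma$. Concretely, the requirement $\braket{(\gamma^{hor})_*\mathcal{D}|\mathcal{A}}=0$ becomes an equation of the schematic form $(\mathcal{D}\otimes\mathds{1})\circ g^*=-\,\widetilde{(\gamma^*\mathcal{A})(\mathcal{D})}\circ g$, i.e.\ $g$ is an integral curve of an odd, time-dependent fundamental-type vector field on $\mathcal{G}$. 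After absorbing the base variable, this is exactly a local flow equation for a single odd smooth vector field on $\mathcal{P}$ (or on $U\times\mathcal{G}$) in the sense of the definition preceding Prop.~\ref{prop:6.3}, with initial condition fixing $\gamma^{hor}(0)=p$. Existence and uniqueness of this flow around the body point $0\in\Lambda^{1,1}$ then follow directly from Prop.~\ref{prop:6.3}, and the fact that $p$ is a body point is precisely what makes the initial condition admissible.

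\textbf{The bosonic case.} For the explicit formula when $\gamma$ is bosonic I would argue by functoriality of the body and splitting functors $\mathbf{B}$ and $\mathbf{S}$. Since $\gamma$ factors through $\Lambda_0$, the defining flow equation restricts to the even sector; applying $\mathbf{B}$ to the whole construction collapses $\mathcal{A}$ to the ordinary connection $1$-form $A=\mathbf{B}(\mathcal{A})\in\Omega^1(P,\mathfrak{g}_0)$ and collapses the horizontal-lift ODE to the classical parallel-transport equation on the ordinary principal bundle $P=\mathbf{B}(\mathcal{P})$, whose unique solution is $\mathbf{B}(\gamma)^{hor}_p$. It then remains to check that the Grassmann extension $\mathbf{S}(\mathbf{B}(\gamma)^{hor}_p)$ solves the original super horizontal-lift equation; this holds because $\mathbf{S}$ intertwines $\mathbf{B}(\mathcal{A})$ with $\mathcal{A}$ on body paths and because $\mathcal{D}$ restricted to $\Lambda_0$ reduces to $\partial_t$, so the two equations become identified. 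Uniqueness from the first part then forces $\gamma^{hor}_p=\mathbf{S}(\mathbf{B}(\gamma)^{hor}_p)$.

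\textbf{Main obstacle.} The delicate point, which the theorem's own statement flags, is \emph{smoothness under partial evaluation}: in the $H^{\infty}$-category a map $\phi(\,\cdot\,,q)$ obtained by freezing a non-body point $q$ need not be of class $H^{\infty}$. I expect the hard part to be setting up the local flow equation so that all vector fields appearing are genuinely smooth sections — this is exactly why the construction must be phrased through the generalized tangent map of Def.~\ref{definition:3.9} and through an honest odd vector field on $\mathcal{P}$, rather than by naive pointwise integration along $\gamma$. The two-parameter nature of the flow (the appearance of both $t$ and $\theta$, with $\mathcal{D}^2=\partial_t$) must be handled carefully so that the solution genuinely depends on $(t,\theta)\in\Lambda^{1,1}$; it is precisely the coupling $\mathcal{D}=\partial_\theta+\theta\partial_t$ that produces a nontrivial transport in the odd direction and that, as remarked after the statement, cannot be seen by bosonic paths alone.
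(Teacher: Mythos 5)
Your overall strategy coincides with the paper's: reduce to a single trivialization, write $\gamma^{hor}=\Phi\circ(\delta\times g)$ with $\delta=s\circ\gamma$, use Prop.~\ref{prop:6.1} to turn horizontality into the equation $\mathcal{D}g=-R_{g*}\mathcal{A}^{\gamma}$, recast this as the flow equation of an odd vector field $Z$ on $\mathcal{I}\times\mathcal{G}$, and invoke Prop.~\ref{prop:6.3}; the bosonic formula is likewise obtained in both treatments by reducing $\mathcal{A}$ to the bosonic sub-bundle and appealing to uniqueness. So the route is the right one.

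There is, however, one substantive step you skip over. Prop.~\ref{prop:6.3} only produces a \emph{local} flow, so even after restricting to a single trivialization neighborhood the solution $g$ is a priori defined only on some $\epsilon_{1,1}^{-1}([0,\delta'))$ with $\delta'<1$; your gluing argument addresses the covering of $\mathrm{im}\,\gamma$ by trivialization charts, which is a different (and easier) issue. Extending $g$ in the time direction requires a genuine concatenation argument: one covers the compact set $\mathcal{I}\times\{e\}$ by finitely many flow boxes to get a uniform $\delta>0$, subdivides $[0,1]$ into pieces of length $<\delta$, restarts the flow at each $t_i$ with initial value $e$, and then right-translates the new solution by $g(t_i,0)$ to match the previous one. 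The delicate point — and it is exactly an instance of the partial-evaluation problem you yourself flag as ``the main obstacle'' without resolving it — is that $R_{g(t_i,0)}$ is only guaranteed to be $H^{\infty}$ because $g(t_i,0)$ is a \emph{body} point of $\mathcal{G}$ (which holds since $g$ is smooth and $(t_i,0)\in\mathbf{B}(\Lambda^{1,1})$); a right translation by a non-body group element would not preserve smoothness, and without this observation the concatenated solution would not be known to be $H^{\infty}$. A smaller omission: one must also verify that the flow of the auxiliary field $Z$ on $\mathcal{I}\times\mathcal{G}$ has first component equal to $(t,\theta)\mapsto(t,\theta)$, so that its second component really solves the intended non-autonomous equation; this is a short computation but not automatic from the definition of the flow.
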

\begin{proof}
Using a gluing argument, it suffices to show that, for any local trivialization neighborhood $\pi^{-1}(U)\subset\mathcal{P}$ of $\mathcal{P}$ with $U\subset\mathcal{M}$ open and $\mathrm{im}\,\gamma\cap U\neq\emptyset$, there exists a horizontal lift $\gamma^{hor}:\,\mathcal{I}\longrightarrow\mathcal{P}$ of $\gamma$ with $\mathcal{I}=\gamma^{-1}(U)$.\\
Hence, in the following, let us assume that $\gamma$ is contained within a local trivialization neighborhood of $\mathcal{P}$, i.e., there is a local section $s:\,U\rightarrow\mathcal{P}$ of $\mathcal{P}$ such that $\mathrm{im}\,\gamma\subset U$. Let $\mathcal{I}=\epsilon_{1,1}^{-1}([0,1])$ and $\delta:=s\circ\gamma$. A horizontal lift then has to be of the form $\gamma^{hor}:=\Phi\circ(\delta\times g)$ for some smooth function $g:\,\tilde{\mathcal{I}}\rightarrow\mathcal{G}$ defined on some open subset $\tilde{\mathcal{I}}\subseteq\mathcal{I}$. By Prop. \ref{prop:6.1}, this yields
\begin{equation}
\mathcal{D}\gamma^{hor}(t,\theta)=D_{(\delta(t,\theta),g(t,\theta))}\Phi(\mathcal{D}\delta,\mathcal{D}g)=\Phi_{g(t,\theta)*}(\mathcal{D}\delta)+\widetilde{\theta_{\mathrm{MC}}(\mathcal{D}g)}_{\gamma^{hor}(t,\theta)}
\end{equation}
Hence, $\gamma^{hor}$ defines a horizontal lift of $\gamma$ if and only if
\begin{align}
0=\braket{\mathcal{D}\gamma^{hor}(t,\theta)|\mathcal{A}}&=\mathrm{Ad}_{g(t,\theta)^{-1}}\braket{\mathcal{D}\delta|\mathcal{A}}+\braket{\mathcal{D}g|\theta_{\mathrm{MC}}}\nonumber\\
&=L_{g(t,\theta)^{-1}*}(R_{g(t,\theta)*}\braket{\mathcal{D}\delta|\mathcal{A}}+\mathcal{D}g)
\end{align}
But, since the pushforward of the left-translation is an isomorphism of tangent modules, this equivalent to
\begin{equation}
\mathcal{D}g(t,\theta)=-R_{g(t,\theta)*}\braket{\mathcal{D}\delta(t,\theta)|\mathcal{A}}=-R_{g(t,\theta)*}\mathcal{A}^{\gamma}(t,\theta)
\label{eq:6.0.1}
\end{equation}
where we set $\mathcal{A}^{\gamma}(t,\theta):=\braket{\mathcal{D}\delta(t,\theta)|\mathcal{A}}=\braket{\mathcal{D}\gamma(t,\theta)|s^*\mathcal{A}}$. Hence, the claim follows if we can show that \eqref{eq:6.0.1} admits a smooth solution on all of $\mathcal{I}$, i.e., $\tilde{\mathcal{I}}=\mathcal{I}$. To see this, let us define the vector field
\begin{align}
Z:\,\mathcal{I}\times\mathcal{G}&\rightarrow T(\mathcal{I}\times\mathcal{G})\cong\Lambda^2\times T\mathcal{G}\nonumber\\
(s,\eta,g)&\mapsto (\eta,1,-D_{(e,g)}\mu_{\mathcal{G}}(\mathcal{A}^{\gamma}(s,\eta),0_g))
\end{align}
Since $\mathcal{A}^{\gamma}$ and the zero section $\mathbf{0}:\,\mathcal{G}\rightarrow T\mathcal{G},\,g\mapsto 0_g$ are both of class $H^{\infty}$, it follows that $Z$ defines a smooth section of the tangent bundle of the supermanifold $\mathcal{I}\times\mathcal{G}$. In particular, as $\mathcal{A}$ is even and $\mathcal{D}$ is odd, $\mathcal{A}^{\gamma}(t,\theta)$ defines an odd derivation $\forall(t,\theta)\in\mathcal{I}$ and therefore $Z$ is an odd vector field. Hence, by Prop. \eqref{prop:6.3}, there exists a smooth function $F:\,\Lambda^{1,1}\supseteq\epsilon_{1,1}^{-1}(\tilde{I})\rightarrow\Lambda^{1,1}\times\mathcal{G},\,(t,\theta)\mapsto (h(t,\theta),g(t,\theta))$, with smooth functions $h$ and $g$ defined on some interval $\epsilon_{1,1}^{-1}(\tilde{I})$ with $\tilde{I}=[0,\delta')$, $0<\delta'<1$, such that
\begin{equation}
\mathcal{D}F(t,\theta)=Z_{F(t,\theta)}
\label{eq:6.0.2}
\end{equation}
with the initial condition $F(0,0)=(0,0,e)$. Since $h:\,(t,\theta)\mapsto h(t,\theta)\in\Lambda^{1,1}$ has to be of the form $h(t,\theta)=(a(t),\theta b(t))$ with smooth functions $a,b:\,\epsilon_{1,0}^{-1}(\tilde{I})\rightarrow\Lambda_{0}$, it follows that $\mathcal{D}h(t,\theta)=(\theta\partial_t a(t),b(t))$. Hence, by definition of the vector field $Z$, the differential equation \eqref{eq:6.0.2} yields $b\equiv 1$ and $\partial_ta=b\equiv 1$ such that, by the initial condition, it follows $h(t,\theta)=(t,\theta)$. Thus, \eqref{eq:6.0.2} becomes
\begin{equation}
\mathcal{D}F(t,\theta)=(\theta,1,\mathcal{D}g(t,\theta))=(\theta,1,-D_{(t,\theta,g(t,\theta))}\mu_{\mathcal{G}}(\mathcal{A}^{\gamma}(t,\theta),0_{g(t,\theta)}))
\end{equation}
That is, $g:\,\epsilon_{1,1}^{-1}(\tilde{I})\rightarrow\mathcal{G}$ defines a smooth solution of \eqref{eq:6.0.1} with initial condition $g(0,0)=e$.\\
It remains to show that $g$ can be extended to all of $\mathcal{I}$. Therefore, since $\mathcal{I}\times\{e\}\subset\mathcal{I}\times\mathcal{G}$ is compact, there exists finitely many open subsets $U_i\subset\mathcal{I}$ and $V_i\subset\mathcal{G}$, $i=1,\ldots,n$, such that $\bigcup_{i=1}^nU_i=\mathcal{I}$ and $e\in V_i$ $\forall i=1,\ldots,n$, as well as smooth maps $\phi_i^Z:\,\epsilon_{1,1}^{-1}((-\delta_i,\delta_i))\times U_i\times V_i\rightarrow\mathcal{I}\times\mathcal{G}$, $0<\delta_i<1$ $\forall i=1,\ldots,n$, such that $\phi^Z_i$ defines a local flow of $Z$. If we set $\delta:=\min\{\delta',\delta_i\}_{i=1,\ldots,n}>0$ and $V:=\bigcap_{i=1}^{n}V_i\ni e$, we may glue the $\phi_i^Z$ to get a local flow $\phi^Z:\,\epsilon_{1,1}^{-1}((-\delta,\delta))\times\mathcal{I}\times V\rightarrow\mathcal{I}\times\mathcal{G}$. Hence, let us choose $t_i\in[0,1]$, $i=0,\ldots,m$, such that $0=:t_0<t_1<\ldots<t_m:=1$ and $|t_{i}-t_{i-1}|<\delta$ $\forall i=1,\ldots,m$. Set $\mathcal{I}_i:=\epsilon_{1,1}^{-1}([t_{i},t_{i-1}])$ for $i=1,\ldots,m$.\\
By assumption, we know that $g$ is well-defined on $\mathcal{I}_1$. Hence, let us next consider the path $G:\,\mathcal{I}_2\rightarrow\mathcal{I}\times\mathcal{G}$ defined as $G(s,\theta)=\phi^Z_{(s,\theta)}(t_1,0,e)$ which is well-defined by definition of $\mathcal{I}_2$. Similarly as above, it follows that $G$ has to be of the form $G(s,\theta)=(s+t_1,\theta,b(s,\theta))$ with $\mathcal{D}b(s,\theta)=-R_{b(s,\theta)*}\mathcal{A}^{\gamma}(s+t_1,\theta)$. If we then define $G'(s,\theta):=(s+t_1,\theta,b(s,\theta)\cdot g(t_1,0))$ on $\mathcal{I}_2$, it follows (note that $G'$ is smooth as $g(t_1,0)$ is a body point) 
\begin{align}
\mathcal{D}G'(s,\theta)&=(\theta,1,R_{g(t_1,0)*}\mathcal{D}b(s,\theta))=(\theta,1,-R_{g(t_1,0)*}R_{b(s,\theta)*}\mathcal{A}^{\gamma}(s+t_1,\theta))\nonumber\\
&=(\theta,1,-R_{b(s,\theta)\cdot g(t_1,0)*}\mathcal{A}^{\gamma}(s+t_1,\theta))=Z_{G'(s,\theta)}
\end{align}
That is, $G'$ defines an integral curve of $Z$ through $G'(0,0)=(t_1,0,g(t_1,0))$. Thus, let us define the smooth path $\tilde{g}:\,\mathcal{I}_1\cup\mathcal{I}_2\rightarrow\mathcal{G}$ via
\begin{equation}
\tilde{g}(t,\theta):=\begin{cases}
    g(t,\theta), & \text{if } (t,\theta)\in\mathcal{I}_1 \\
    b(t-t_1,\theta)\cdot g(t_1,0), & \text{if } (t,\theta)\in\mathcal{I}_2 
  \end{cases}
\end{equation}
It then follows that $\tilde{g}$ is smooth and defines a solution of \eqref{eq:6.0.1}. We have thus found an extension of $g$ to $\mathcal{I}_1\cup\mathcal{I}_2$. Hence, by induction, it follows that $g$ can be extended to all of $\mathcal{I}$.\\
To prove the last claim note that, by pulling back $\mathcal{A}$ to the bosonic sub supermanifold $\mathcal{P}_0:=\mathbf{S}\circ\mathbf{B}(\mathcal{P})$, $\mathcal{A}$ only takes values in the even super Lie sub module $\Lambda\otimes\mathfrak{g}_0=\mathrm{Lie}(\mathcal{G}_0)$. Hence, on $\mathcal{P}_0$, $\mathcal{A}$ can be reduced to a super connection 1-form on the principal super fiber bundle $\mathcal{G}_0\rightarrow\mathcal{P}_0|_{\mathcal{M}_0}\rightarrow\mathcal{M}_0$. The claim now follows immediately.
\end{proof}

\subsection{Parallel transport map revisited}\label{section:parallelNEU}
As we have seen in the last section, in the ordinary theory of principal super fiber bundles, in order to obtain a non-trivial parallel transport map, one necessarily has to consider smooth paths depending on both even and odd parameters. But, still, due to smoothness, the endpoints at which the parallel transport map is constructed have to be points on the body of a supermanifold. Hence, it is important to emphasize that, in the ordinary category of supermanifolds, \emph{one cannot use the parallel transport map to compare points on different fibers of a super fiber bundle!} 

As we will see in what follows, a resolution is given considering instead super connections forms on parametrized super fiber bundles. At the same time, this also allows us to include (anticommutative) fermionic degrees of freedom on the body of a supermanifold which is of utmost importance for supersymmetric field theories in physics. In this framework, it moreover suffices to consider (parametrized) paths depending solely on an even time parameter. The generalization to both even and odd parameters can be obtained along the lines of the previous section. 
\begin{definition}
Let $\mathcal{G}\rightarrow\mathcal{P}_{/\mathcal{S}}\stackrel{\pi_{\mathcal{S}}}{\rightarrow}\mathcal{M}_{/\mathcal{S}}$ be a $\mathcal{S}$-relative principal super fiber bundle and $\gamma:\,\mathcal{S}\times\mathcal{I}\rightarrow\mathcal{M}_{/\mathcal{S}}$ a smooth path on $\mathcal{M}_{/\mathcal{S}}$ with $\mathcal{I}\subseteq\Lambda^{1,0}$. Given a super connection $1$-form $\mathcal{A}$ on $\mathcal{P}_{/\mathcal{S}}$, a smooth path $\gamma^{hor}:\,\mathcal{S}\times\mathcal{I}\rightarrow\mathcal{P}$ on $\mathcal{P}_{/\mathcal{S}}$ is called a horizontal lift of $\gamma$ w.r.t. $\mathcal{A}$ if $\pi\circ\gamma^{hor}=\gamma$ and $\braket{(\mathds{1}\otimes\partial_t)\alpha_{\mathcal{S}}^{-1}(\gamma^{hor})(s,t)|\mathcal{A}}=0$ $\forall (s,t)\in\mathcal{S}\times\mathcal{I}$.  
\end{definition}
\begin{prop}\label{prop:9.7}
Let $\mathcal{A}\in\Omega^1(\mathcal{P}_{/\mathcal{S}},\mathfrak{g})_0$ be a super connection $1$-form on the $\mathcal{S}$-relative principal super fiber bundle $\mathcal{G}\rightarrow\mathcal{P}_{/\mathcal{S}}\stackrel{\pi_{\mathcal{S}}}{\rightarrow}\mathcal{M}_{/\mathcal{S}}$ as well as $\gamma:\,\mathcal{S}\times\mathcal{I}\rightarrow\mathcal{M}$ a smooth path. Let furthermore $f:\,\mathcal{S}\rightarrow\mathcal{P}$ be a smooth map. Then, there exists a unique horizontal lift $\gamma^{hor}:\,\mathcal{S}\times\mathcal{I}\rightarrow\mathcal{M}$ of $\gamma$ w.r.t. $\mathcal{A}$ such that $\gamma^{hor}(\,\cdot\,,0)=f$.
\end{prop}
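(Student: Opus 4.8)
The plan is to mimic the strategy of Theorem \ref{thm:8.0}, the essential simplification being that, since we now work in the relative category, the horizontality condition involves only the \emph{even} time derivative $\partial_t$ rather than the odd operator $\mathcal{D}$. First I would reduce to a local trivialization by the same gluing argument as in Theorem \ref{thm:8.0}: by Definition \ref{definition-bundle}, local trivializations of $\mathcal{P}_{/\mathcal{S}}$ are products of local trivializations of $\mathcal{P}$ with $\mathcal{S}$, so one may assume that $\mathrm{im}\,\gamma$ is contained in $\mathcal{S}\times U$ for some open $U\subseteq\mathcal{M}$ admitting a smooth local section $s\colon U\rightarrow\mathcal{P}$. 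Setting $\delta:=s\circ\gamma\colon\mathcal{S}\times\mathcal{I}\rightarrow\mathcal{P}$, any candidate lift satisfying $\pi\circ\gamma^{hor}=\gamma$ must, by freeness and transitivity of the $\mathcal{G}$-action on the fibers, be of the form $\gamma^{hor}=\Phi\circ(\delta\times g)$ for a smooth map $g\colon\mathcal{S}\times\mathcal{I}\rightarrow\mathcal{G}$.

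Next I would pin down the initial value and derive the defining equation. The condition $\gamma^{hor}(\cdot,0)=f$ forces $g(\cdot,0)=g_0$, where $g_0\colon\mathcal{S}\rightarrow\mathcal{G}$ is the unique smooth map with $f=\Phi(\delta(\cdot,0),g_0)$; its existence and smoothness follow as in Prop. \ref{prop:2.17} from the local trivialization, using the compatibility $\pi\circ f=\gamma(\cdot,0)$. Applying the derivative formula of Prop. \ref{prop:6.1} together with the generalized tangent map and the defining properties (i), (ii) of $\mathcal{A}$, I would compute
\begin{equation}
\braket{(\mathds{1}\otimes\partial_t)\alpha_{\mathcal{S}}^{-1}(\gamma^{hor})(s,t)|\mathcal{A}}=\mathrm{Ad}_{g(s,t)^{-1}}\braket{\partial_t\delta(s,t)|\mathcal{A}}+\braket{\partial_tg(s,t)|\theta_{\mathrm{MC}}}
\end{equation}
so that, writing $\mathcal{A}^{\gamma}(s,t):=\braket{\partial_t\delta(s,t)|\mathcal{A}}=\braket{\partial_t\gamma(s,t)|s^*\mathcal{A}}$ and using that $L_{g*}$ is an isomorphism of tangent modules, horizontality becomes equivalent to the first-order equation
\begin{equation}
\partial_tg(s,t)=-R_{g(s,t)*}\mathcal{A}^{\gamma}(s,t),\qquad g(\cdot,0)=g_0.
\label{eq:planODE}
\end{equation}

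To solve \eqref{eq:planODE} I would autonomize it. Since $\mathcal{A}$ is even and $\partial_t$ is even, $\mathcal{A}^{\gamma}$ is an even $\mathrm{Lie}(\mathcal{G})$-valued smooth map, so the vector field
\begin{equation}
Z\colon\mathcal{S}\times\mathcal{I}\times\mathcal{G}\rightarrow T(\mathcal{S}\times\mathcal{I}\times\mathcal{G}),\quad(s,u,g)\mapsto(0_s,1,-D_{(e,g)}\mu_{\mathcal{G}}(\mathcal{A}^{\gamma}(s,u),0_g))
\end{equation}
is a smooth \emph{even} vector field, in contrast to the odd vector field appearing in Theorem \ref{thm:8.0}. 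Hence the existence theory for flows of even vector fields (the even counterpart of Prop. \ref{prop:6.3}) provides a local flow $\phi^{Z}$. Precomposing this flow with the initial datum $g_0$ yields a solution of \eqref{eq:planODE} on $\mathcal{S}\times\epsilon_{1,0}^{-1}([0,\delta'))$ for some $0<\delta'<1$; crucially, because $g_0$ need not take values in $\mathbf{B}(\mathcal{G})$, one composes the \emph{full} smooth flow map with $g_0$ rather than evaluating at a fixed point, since $H^{\infty}$-smoothness is not preserved under partial evaluation.

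Finally, I would extend the solution to all of $\mathcal{I}$ by the compactness-and-gluing procedure used at the end of Theorem \ref{thm:8.0}: cover the body interval $[0,1]$ by finitely many subintervals shorter than the flow time, and iteratively extend, invoking the right-invariance built into \eqref{eq:planODE} to translate each piece by the value $g$ attains at the preceding node. Uniqueness is then immediate from the uniqueness of solutions of \eqref{eq:planODE}. \textbf{The main obstacle} is precisely this last smoothness point: unlike in Theorem \ref{thm:8.0}, where the lift started at a \emph{body} point $p\in\mathcal{P}$, here the initial condition $f\colon\mathcal{S}\rightarrow\mathcal{P}$ — and hence $g_0$ — is a general morphism whose values are typically not body points. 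Tracking joint smoothness in the parameter $\mathcal{S}$ and in the $\mathcal{G}$-fibre throughout the gluing, while only ever composing with smooth flow maps, is the delicate part of the argument, and is exactly what the relative framework is designed to accommodate.
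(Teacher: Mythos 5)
Your proposal is correct and follows essentially the same route as the paper: reduction to a local trivialization, rewriting horizontality as the first-order equation $(\mathds{1}\otimes\partial_t)g=-R_{g*}\mathcal{A}^{\gamma}$, solving it via the flow of an even autonomized vector field on $(\mathcal{S}\times\mathcal{I})\times\mathcal{G}$, and extending by compactness and gluing. The one point of divergence is that the paper disposes of what you call the main obstacle at the outset: instead of carrying a general (non-body-valued) initial datum $g_0\colon\mathcal{S}\rightarrow\mathcal{G}$ through the flow, it replaces the local section $\tilde{s}$ by $\Phi_{\mathcal{S}}(\tilde{s},g')$ so that $\tilde{s}(\,\cdot\,,\gamma(\,\cdot\,,0))=f$ and the initial condition becomes $g(\,\cdot\,,0)=e$, a body point; your workaround of always precomposing the full smooth flow map with $g_0$ is also valid, but adopting the paper's normalization would let you delete the delicate bookkeeping entirely.
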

\begin{proof}
The proof of this proposition is similar as in Theorem \ref{thm:8.0}. Let us therefore only sketch the most important steps. Again, it suffices to assume that $\gamma$ is contained within a local trivialization neighborhood of $\mathcal{P}_{/\mathcal{S}}$, i.e., there exists an open subset $U\subseteq\mathcal{M}$ and a smooth morphism $\tilde{s}:\,U_{/\mathcal{S}}:=\mathcal{M}_{/\mathcal{S}}|_{\mathcal{S}\times U}\rightarrow\mathcal{P}_{/\mathcal{S}}$ of $\mathcal{S}$-relative supermanifolds such that $\pi_{\mathcal{S}}\circ\tilde{s}=\mathrm{id}_{U_{/\mathcal{S}}}$ and $\mathrm{im}\,\gamma\subseteq\pi^{-1}_{\mathcal{S}}(\mathcal{S}\times U)$.\\
Furthermore, w.l.o.g., we can assume that $\tilde{s}(\,\cdot\,,\gamma(\,\cdot\,,0))=f$. In fact, suppose this would not be the case. Then, since the super Lie group $\mathcal{G}$ acts transitively on each fiber of the underlying principal super fiber bundle $\mathcal{P}$, there exists a unique map $g':\,\mathcal{S}\times U\rightarrow\mathcal{G}$ with $\Phi_{\mathcal{S}}(\tilde{s}(\,\cdot\,,\gamma(\,\cdot\,,0)),g')=f$. Since $\tilde{s}(\,\cdot\,,\gamma(\,\cdot\,,0))$, $f$ as well as the inverse operation in the super Lie group $\mathcal{G}$ are of class $H^{\infty}$, it follows immediately that $g'$ is also smooth. Hence, replacing $\tilde{s}$ by $\Phi_{\mathcal{S}}(\tilde{s},g')$ the map thus obtained will have the required properties.\\
Set $\delta:=\tilde{s}\circ(\mathrm{id}\times\gamma):\,\mathcal{I}_{/\mathcal{S}}\rightarrow\mathcal{P}_{/\mathcal{S}}$. It follows that a horizontal lift has to be of the form $\gamma^{hor}:=\Phi_{\mathcal{S}}\circ(\delta\times g)$ for some smooth function $g:\,\mathcal{S}\times\tilde{\mathcal{I}}\rightarrow\mathcal{G}$ defined on some open subset $\tilde{\mathcal{I}}\subseteq\mathcal{I}$. Using Prop. \ref{prop:6.1}, one finds that $\braket{(\mathds{1}\otimes\partial_t)\alpha_{\mathcal{S}}^{-1}(\gamma^{hor})(s,t)|\mathcal{A}}=0$ if and only if
\begin{equation}
(\mathds{1}\otimes\partial_t)g(s,t)=-R_{g(s,t)*}\braket{(\mathds{1}\otimes\partial_t)\delta(s,t)|\mathcal{A}}=-R_{g(s,t)*}\mathcal{A}^{\gamma}(s,t)
\label{eq:9.9}
\end{equation}
where $\mathcal{A}^{\gamma}(s,t):=\braket{(\mathds{1}\otimes\partial_t)\delta(s,t)|\mathcal{A}}=\braket{(\mathds{1}\otimes\partial_t)\alpha_{\mathcal{S}}^{-1}(\gamma)(s,t)|\tilde{s}^*\mathcal{A}}$ with the initial condition $g(\,\cdot\,,0)=e$. Hence, the claim follows if one can show that \eqref{eq:9.9} admits a smooth solution with $\tilde{\mathcal{I}}=\mathcal{I}$. Therefore, consider the even smooth vector field
\begin{align}
\tilde{Z}:\,(\mathcal{S}\times\mathcal{I})\times\mathcal{G}&\rightarrow T(\mathcal{S}\times\mathcal{I})\times T\mathcal{G}\nonumber\\
(s',t',g)&\mapsto (0_{s'},1,-D_{(e,g)}\mu_{\mathcal{G}}(\mathcal{A}^{\gamma}(s',t'),0_g))
\label{eq:9.10}
\end{align}
It follows that there exists a smooth local solution $F:\,U'\rightarrow(\mathcal{S}\times\mathcal{I})\times\mathcal{G}$, $U'\subset\mathcal{S}\times\mathcal{I}$ open, of the equation
\begin{equation}
\partial_tF(s,t)=\tilde{Z}_{F(s,t)}
\label{eq:9.11}
\end{equation}
with the initial condition $F(\,\cdot\,,0)=(\,\cdot\,,0,e)$. Moreover, $F$ has to be of the form $F(s,t)=(s,t,g(s,t))$ for some smooth function $g:\,U'\rightarrow\mathcal{G}$ such that $g(\,\cdot\,,0)=e$ and
\begin{equation}
\partial_tF(s,t)=(0_s,1,(\mathds{1}\otimes\partial_t)g(s,t))=(0_s,1,-D_{(e,g)}\mu_{\mathcal{G}}(\mathcal{A}^{\gamma}(s,t),0_g))
\label{eq:9.12}
\end{equation}
that is, $g$ is a solution of \eqref{eq:9.9} proving that a local smooth solution indeed exists. Remains to show that $g$ can be extended to all of $\mathcal{S}\times\mathcal{I}$.\\
Therefore, one can proceed as in the proof of Theorem \ref{thm:8.0}. In fact, restricting on compact subsets and gluing local solutions together, it follows that, for any $s_0\in\mathcal{S}$, there exists an open neighborhood $s_0\in V\subset\mathcal{S}$ as well as a smooth map $g_{s_0}:\,V\times\mathcal{I}\rightarrow\mathcal{G}$ such that $V$ is contained in a compact subset and $g_{s_0}$ is a solution of \eqref{eq:9.9} with $g(\,\cdot\,,0)=e$. Thus, by uniqueness of solutions of differential equations, these maps can be glued together yielding a global solution $g:\,\mathcal{S}\times\mathcal{I}\rightarrow\mathcal{G}$ of \eqref{eq:9.9}. 
\end{proof}
\begin{remark}\label{prop:7.3}
For a smooth map $f:\,\mathcal{S}\rightarrow\mathcal{M}$, one can consider the pullback super fiber bundle 
\begin{equation}
f^*\mathcal{P}=\{(s,p)|\,f(s)=\pi(p)\}\subset\mathcal{S}\times\mathcal{P}
\label{eq:9.13}
\end{equation}
over $\mathcal{S}$. A smooth section $\tilde{\phi}:\mathcal{S}\rightarrow f^*\mathcal{P}$ of the pullback bundle is then of the form $\tilde{\phi}(s)=(s,\phi(s))$ $\forall s\in\mathcal{S}$ with $\phi:\,\mathcal{S}\rightarrow\mathcal{P}$ a smooth map satsifying $\pi\circ\phi=f$. Hence, we can identify
\begin{equation}
\Gamma(f^*\mathcal{P})=\{\phi:\,\mathcal{S}\rightarrow\mathcal{P}|\,\pi\circ\phi=f\}
\label{eq:9.14}
\end{equation}
\end{remark}
\begin{definition}\label{def:9.9}
Under the conditions of Prop. \ref{prop:9.7}, the \emph{parallel transport in $\mathcal{P}_{/\mathcal{S}}$ along $\gamma$} w.r.t. the connection $\mathcal{A}$ is defined as
\begin{align}
\mathscr{P}^{\mathcal{A}}_{\mathcal{S},\gamma}:\,\Gamma(\gamma_0^*\mathcal{P})&\rightarrow\Gamma(\gamma_1^*\mathcal{P})\label{eq:9.15}\\
\phi&\mapsto\gamma^{hor}_{\phi}(\,\cdot\,,1)\nonumber
\end{align}
where, for $\Gamma(\gamma_0^*\mathcal{P})\ni\phi:\mathcal{S}\rightarrow\mathcal{P}$, $\gamma^{hor}_{\phi}$ is the unique horizontal lift of $\gamma$ with respect to $\mathcal{A}$ such that $\gamma^{hor}_{\phi}(\,\cdot\,,0)=\phi$.
\end{definition}
Given a morphism $\lambda:\,\mathcal{S}\rightarrow\mathcal{S}'$, this induces the pullback $\lambda^*:\,H^{\infty}(\mathcal{S}')\rightarrow H^{\infty}(\mathcal{S}),\,f\mapsto\lambda^*f=f\circ\lambda$ on the function sheaves. Since the super $H^{\infty}(\mathcal{S}\times\mathcal{M})$-module $\mathfrak{X}(\mathcal{M}_{/\mathcal{S}})$ of smooth vector fields on $\mathcal{M}_{/\mathcal{S}}$ is isomorphic to $H^{\infty}(\mathcal{S})\otimes\mathfrak{X}(\mathcal{M})$, this yields the morphism
\begin{align}
\lambda^*\equiv\lambda^*\otimes\mathds{1}:\,\mathfrak{X}(\mathcal{M}_{/\mathcal{S}'})&\rightarrow\mathfrak{X}(\mathcal{M}_{/\mathcal{S}})\label{eq:9.16}\\
f\otimes X&\mapsto\lambda^*f\otimes X\nonumber
\end{align}
Moreover, from $\Omega^1(\mathcal{M}_{/\mathcal{S}})\cong\Omega^1(\mathcal{M})\otimes H^{\infty}(\mathcal{S})$ we obtain the morphism
\begin{align}
\lambda^*\equiv\mathds{1}\otimes\lambda^*:\,\Omega^1(\mathcal{M}_{/\mathcal{S}'})&\rightarrow\Omega^1(\mathcal{M}_{/\mathcal{S}})\\
\omega\otimes f&\mapsto\omega\otimes\lambda^*f\nonumber
\label{eq:9.17}
\end{align}
By definition, it then follows 
\begin{equation}
\braket{\lambda^*X|\lambda^*\mathcal{A}}=\lambda^*\braket{X|\mathcal{A}}
\label{eq:9.18}
\end{equation}
In fact, since \eqref{eq:9.18} is a local property, let us choose a local coordinate neighborhood such that $X$ and $\omega$ can be locally expanded in the form $X=f^i\otimes X_i$ and $\omega=\omega_j\otimes g^j$ with $X_i$ and $\omega_j$ smooth vector fields and 1-forms on $\mathcal{M}$, respectively. We then compute
\begin{align}
\braket{\lambda^*X|\lambda^*\mathcal{A}}&=\braket{\lambda^*f^i\otimes X_i|\omega_j\otimes\lambda^*g^j}=\lambda^*f^i\braket{X_i|\omega_j}\lambda^*g^j\nonumber\\
&=\lambda^*\braket{f^i\otimes X_i|\omega_j\otimes g^j}=\lambda^*\braket{X|\mathcal{A}}
\label{eq:9.19}
\end{align}
The following proposition summarizes some important properties of the parallel transport map such as the functoriality under composition of paths as well as covariance under change of parametrization demonstrating the independence of the choice of a particular parametrizing supermanifold.

\begin{prop}\label{prop:9.10}
The parallel tansport map enjoys the following properties:
\begin{enumerate}[label=(\roman*)]
	\item $\mathscr{P}^{\mathcal{A}}_{\mathcal{S}}$ is functorial under compositions of paths, that is, for smooth paths $\gamma:\,\mathcal{S}\times\mathcal{I}\rightarrow\mathcal{M}$ and $\delta:\,\mathcal{S}\times\mathcal{I}\rightarrow\mathcal{M}$ on $\mathcal{M}_{/\mathcal{S}}$, one has
\begin{equation}
\mathscr{P}^{\mathcal{A}}_{\mathcal{S},\gamma\circ\delta}=\mathscr{P}^{\mathcal{A}}_{\mathcal{S},\gamma}\circ\mathscr{P}^{\mathcal{A}}_{\mathcal{S},\delta}
\label{eq:9.20}
\end{equation}
\item $\mathscr{P}^{\mathcal{A}}_{\mathcal{S},\gamma}$ is covariant under change of parametrization in the sense that if $\lambda:\,\mathcal{S}\rightarrow\mathcal{S}'$ is a morphism of supermanifolds, then the diagram
\begin{equation}
	 \xymatrix{
         \Gamma(f^*\mathcal{P})\ar[r]^{\mathscr{P}^{\mathcal{A}}_{\mathcal{S'},\gamma}}\ar[d]_{\lambda^*} &   \Gamma(g^*\mathcal{P}) \ar[d]^{\lambda^*}\\
            \Gamma((f\circ\lambda)^*\mathcal{P})\ar[r]^{\mathscr{P}^{\lambda^*\!\mathcal{A}}_{\mathcal{S},\lambda^*\!\gamma}} &   \Gamma((g\circ\lambda)^*\mathcal{P})  
     }
		\label{eq:9.21}
 \end{equation}
is commutative for any smooth path $\gamma:\,f\Rightarrow g$ on $\mathcal{M}_{/\mathcal{S}'}$.
\end{enumerate}
\end{prop}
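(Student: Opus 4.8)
The plan is to derive both statements from the \emph{uniqueness} clause of Proposition \ref{prop:9.7}, which reduces each claim to exhibiting an explicit candidate horizontal lift and then recognizing it, by uniqueness, as \emph{the} horizontal lift carrying the prescribed initial condition. Throughout, the defining data of the parallel transport is the horizontality condition $\braket{(\mathds{1}\otimes\partial_t)\alpha_{\mathcal{S}}^{-1}(\gamma^{hor})(s,t)|\mathcal{A}}=0$, equivalently the ODE \eqref{eq:9.9}, so that no explicit solution formula is needed.

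For (i), I would fix $\phi\in\Gamma(\delta_0^*\mathcal{P})$ and let $\delta^{hor}_{\phi}$ be the horizontal lift of $\delta$ with $\delta^{hor}_{\phi}(\,\cdot\,,0)=\phi$, so that by Definition \ref{def:9.9} one has $\mathscr{P}^{\mathcal{A}}_{\mathcal{S},\delta}(\phi)=\delta^{hor}_{\phi}(\,\cdot\,,1)=:\psi$; let $\gamma^{hor}_{\psi}$ be the horizontal lift of $\gamma$ with initial value $\psi$. The candidate lift of the concatenation $\gamma\circ\delta$ is the path that runs $\delta^{hor}_{\phi}$ on the first half of the super interval and $\gamma^{hor}_{\psi}$ on the second. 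This candidate projects onto $\gamma\circ\delta$, has initial value $\phi$, and is horizontal on each half because the horizontality condition is local in the parameter $t$; the two pieces match since $\delta^{hor}_{\phi}(\,\cdot\,,1)=\psi=\gamma^{hor}_{\psi}(\,\cdot\,,0)$. By uniqueness it equals $(\gamma\circ\delta)^{hor}_{\phi}$, and evaluating at the terminal parameter yields $\mathscr{P}^{\mathcal{A}}_{\mathcal{S},\gamma\circ\delta}(\phi)=\gamma^{hor}_{\psi}(\,\cdot\,,1)=\mathscr{P}^{\mathcal{A}}_{\mathcal{S},\gamma}(\psi)=\mathscr{P}^{\mathcal{A}}_{\mathcal{S},\gamma}\circ\mathscr{P}^{\mathcal{A}}_{\mathcal{S},\delta}(\phi)$.

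For (ii), the key input is identity \eqref{eq:9.18}, together with the fact that the change of parametrization $\lambda$ acts only on the $\mathcal{S}$-factor and hence commutes both with the derivative $(\mathds{1}\otimes\partial_t)$ along $\mathcal{I}$ and with the partial evaluation encoded in $\alpha_{\mathcal{S}}^{-1}$. Concretely, if $\gamma^{hor}$ is the horizontal lift of $\gamma$ w.r.t.\ $\mathcal{A}$ on $\mathcal{M}_{/\mathcal{S}'}$ with initial value $\phi$, I would show that $\lambda^*\gamma^{hor}:=\gamma^{hor}\circ(\lambda\times\mathrm{id}_{\mathcal{I}})$ is the horizontal lift of $\lambda^*\gamma$ w.r.t.\ $\lambda^*\mathcal{A}$ on $\mathcal{M}_{/\mathcal{S}}$: the projection condition $\pi\circ(\lambda^*\gamma^{hor})=\lambda^*\gamma$ is immediate, while pulling back the horizontality equation and applying \eqref{eq:9.18} gives $\braket{(\mathds{1}\otimes\partial_t)\alpha_{\mathcal{S}}^{-1}(\lambda^*\gamma^{hor})|\lambda^*\mathcal{A}}=\lambda^*\braket{(\mathds{1}\otimes\partial_t)\alpha_{\mathcal{S}'}^{-1}(\gamma^{hor})|\mathcal{A}}=0$. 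Since the initial value of $\lambda^*\gamma^{hor}$ is $\lambda^*\phi=\phi\circ\lambda$, uniqueness forces $\lambda^*\gamma^{hor}=(\lambda^*\gamma)^{hor}_{\lambda^*\phi}$, and evaluating at $t=1$ gives $\lambda^*\big(\gamma^{hor}(\,\cdot\,,1)\big)=(\lambda^*\gamma)^{hor}_{\lambda^*\phi}(\,\cdot\,,1)$, which is precisely the commutativity of the square \eqref{eq:9.21}.

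The main obstacle in both parts is not the algebra but the careful handling of smoothness and of the partial-evaluation formalism: for (i) one must ensure the concatenated path is a genuine smooth morphism of $\mathcal{S}$-relative supermanifolds at the gluing parameter, which is arranged by the standard device of assuming the paths are constant near the joining endpoint (\emph{sitting instants}); for (ii) one must verify that $\alpha_{\mathcal{S}}^{-1}$ and $(\mathds{1}\otimes\partial_t)$ genuinely intertwine with $\lambda^*$, so that \eqref{eq:9.18} may be applied termwise after a local coordinate expansion as in \eqref{eq:9.19}. Once these compatibilities are secured, both identities follow formally from the uniqueness of solutions to the defining ODE \eqref{eq:9.9}.
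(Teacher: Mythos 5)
Your proposal is correct and follows essentially the same route as the paper's proof: both parts reduce to the uniqueness clause of Prop.~\ref{prop:9.7}, with (i) obtained by recognizing the concatenation of horizontal lifts as the horizontal lift of the concatenation, and (ii) by showing via \eqref{eq:9.18} that $\lambda^*\gamma^{hor}$ is horizontal w.r.t.\ $\lambda^*\mathcal{A}$ with initial value $\lambda^*\phi$. Your added remark on sitting instants at the gluing parameter is a reasonable point of care that the paper glosses over, but it does not change the argument.
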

\begin{proof}
The functoriality property of the parallel transport map under the composition of paths is an immediate consequence of equation \eqref{eq:9.9} or \eqref{eq:9.11} and the uniqueness of solutions of differential equations once fixing the inital conditions. In fact, this implies $(\gamma\circ\delta)^{hor}=\gamma^{hor}\circ\delta^{hor}$ yielding \eqref{eq:9.20} by Definition \eqref{eq:9.15}.\\
To prove covariance under change of parametrization, notice that for a supermanifold morphism $\lambda:\,\mathcal{S}\rightarrow\mathcal{S}'$, one has $(\mathds{1}\otimes\partial_t)\lambda^*\gamma^{hor}=\lambda^*((\mathds{1}\otimes\partial_t)\gamma^{hor})$ so that, by Definition \eqref{eq:9.4}, it follows
\begin{equation}
(\mathds{1}\otimes\partial_t)\alpha_{\mathcal{S}}^{-1}(\lambda^*\gamma^{hor})=\lambda^*((\mathds{1}\otimes\partial_t)\alpha_{\mathcal{S}'}^{-1}(\gamma^{hor}))
\label{eq:9.22}
\end{equation}
and thus
\begin{align}
\braket{(\mathds{1}\otimes\partial_t)\alpha_{\mathcal{S}}^{-1}(\lambda^*\gamma^{hor})|\lambda^*\!\mathcal{A}}&=\braket{\lambda^*((\mathds{1}\otimes\partial_t)\alpha_{\mathcal{S}'}^{-1}(\gamma^{hor}))|\lambda^*\!\mathcal{A}}\nonumber\\
&=\braket{(\mathds{1}\otimes\partial_t)\alpha_{\mathcal{S}'}^{-1}(\gamma^{hor})|\mathcal{A}}=0
\label{eq:9.23}
\end{align} 
according to \eqref{eq:9.18}. Since $\lambda^*\gamma^{hor}_{\phi}(\,\cdot\,,0)=\phi\circ\lambda=\lambda^*\phi$ and $\pi\circ\lambda^*\gamma^{hor}_{\phi}=\lambda^*\gamma$, by uniqueness, this yields $\lambda^*\gamma^{hor}_{\phi}=(\lambda^*\gamma)^{hor}_{\lambda^*\phi}$ and therefore
\begin{equation}
\mathscr{P}^{\lambda^*\!\mathcal{A}}_{\mathcal{S},\lambda^*\!\gamma}(\lambda^*\phi)=(\lambda^*\gamma)^{hor}_{\lambda^*\phi}(\,\cdot\,,1)=\lambda^*\gamma^{hor}_{\phi}(\,\cdot\,,1)=\lambda^*(\mathscr{P}^{\mathcal{A}}_{\mathcal{S}',\gamma}(\phi))
\label{eq:9.24}
\end{equation}
$\forall\phi\in\Gamma(f^*\mathcal{P})$ proving the commutativity of the diagram \eqref{eq:9.21}.
\end{proof}

\begin{definition}
A \emph{global gauge transformation} $f$ on the $\mathcal{S}$-relative principal super fiber bundle $\mathcal{G}\rightarrow\mathcal{P}_{/\mathcal{S}}\rightarrow\mathcal{M}_{/\mathcal{S}}$ is a morphism $f:\,\mathcal{P}_{/\mathcal{S}}\rightarrow\mathcal{P}_{/\mathcal{S}}$ of $\mathcal{S}$-relative supermanifolds which is fiber-preserving and $\mathcal{G}$-equivariant, i.e., $\pi_{\mathcal{S}}\circ f=\pi_{\mathcal{S}}$ and $f\circ\Phi_{\mathcal{S}}=\Phi_{\mathcal{S}}\circ(f\times\mathrm{id})$. The set of global gauge transformations on $\mathcal{P}_{/\mathcal{S}}$ will be denoted by $\mathscr{G}(\mathcal{P}_{/\mathcal{S}})$.
\end{definition}
\begin{prop}\label{prop:7.0.6}
Their exists a bijective correspondence between the set $\mathscr{G}(\mathcal{P}_{/\mathcal{S}})$ of global gauge transformations on the $\mathcal{S}$-relative principal super fiber bundle $\mathcal{G}\rightarrow\mathcal{P}_{/\mathcal{S}}\rightarrow\mathcal{M}_{/\mathcal{S}}$ and the set 
\begin{equation}
H^{\infty}(\mathcal{S}\times\mathcal{P},\mathcal{G})^{\mathcal{G}}:=\{\sigma:\,\mathcal{S}\times\mathcal{P}\rightarrow\mathcal{G}|\,\sigma\circ\Phi_{\mathcal{S}}=\alpha_{g^{-1}}\circ\sigma\}
\label{eq:9.25}
\end{equation}
via
\begin{equation}
H^{\infty}(\mathcal{S}\times\mathcal{P},\mathcal{G})^{\mathcal{G}}\ni\sigma\mapsto\Phi_{\mathcal{S}}\circ(\mathrm{id}\times\sigma)\circ d_{\mathcal{S}\times\mathcal{P}}\in\mathscr{G}(\mathcal{P}_{\mathcal{S}})
\label{eq:9.26}
\end{equation}
In particular, global gauge transformations are super diffeomorphisms on $\mathcal{P}_{/\mathcal{S}}$ and $\mathscr{G}(\mathcal{P}_{/\mathcal{S}})$ forms an abstract group under composition of smooth maps. 
\end{prop}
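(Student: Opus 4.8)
The plan is to exhibit the stated assignment together with an explicit inverse and to check they are mutually inverse, after which the group statements follow almost formally. Writing $\Phi_{\mathcal{S}}=\mathrm{id}_{\mathcal{S}}\times\Phi$, for $\sigma\in H^{\infty}(\mathcal{S}\times\mathcal{P},\mathcal{G})^{\mathcal{G}}$ the map $f_{\sigma}:=\Phi_{\mathcal{S}}\circ(\mathrm{id}\times\sigma)\circ d_{\mathcal{S}\times\mathcal{P}}$ reads, in points, $f_{\sigma}(s,p)=(s,\,p\cdot\sigma(s,p))$. First I would check that $f_{\sigma}$ is a global gauge transformation: it visibly preserves $\mathrm{pr}_{\mathcal{S}}$ (hence is a morphism of $\mathcal{S}$-relative supermanifolds), and since $\Phi$ preserves fibres, $\pi_{\mathcal{S}}\circ f_{\sigma}=\pi_{\mathcal{S}}$. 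For $\mathcal{G}$-equivariance one computes, using $\Phi(\Phi(p,g),h)=\Phi(p,gh)$ together with the defining relation $\sigma(s,p\cdot g)=g^{-1}\sigma(s,p)g$, that
\begin{equation}
f_{\sigma}(s,p\cdot g)=(s,\,(p\cdot g)\cdot(g^{-1}\sigma(s,p)g))=(s,\,(p\cdot\sigma(s,p))\cdot g)=\Phi_{\mathcal{S}}(f_{\sigma}(s,p),g),
\end{equation}
so $f_{\sigma}\circ\Phi_{\mathcal{S}}=\Phi_{\mathcal{S}}\circ(f_{\sigma}\times\mathrm{id})$, i.e. $f_{\sigma}\in\mathscr{G}(\mathcal{P}_{/\mathcal{S}})$.

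For the converse, given $f\in\mathscr{G}(\mathcal{P}_{/\mathcal{S}})$ I would use that $\Phi$ acts freely and transitively on the fibres of $\mathcal{P}$ (Prop. \ref{prop:2.17}). Since $f$ is fibre-preserving, for each $(s,p)$ the points $p$ and $\mathrm{pr}_{\mathcal{P}}\circ f(s,p)$ lie in the same fibre, so there is a unique $\sigma(s,p)\in\mathcal{G}$ with $f(s,p)=(s,\,p\cdot\sigma(s,p))$; by construction $f=f_{\sigma}$, and the two assignments are mutually inverse once $\sigma$ is shown to lie in $H^{\infty}(\mathcal{S}\times\mathcal{P},\mathcal{G})^{\mathcal{G}}$. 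Equivariance of $\sigma$ is extracted from that of $f$: from $f(s,p\cdot g)=\Phi_{\mathcal{S}}(f(s,p),g)$ one reads off $\mathrm{pr}_{\mathcal{P}}f(s,p\cdot g)=(\mathrm{pr}_{\mathcal{P}}f(s,p))\cdot g$, whence uniqueness forces $\sigma(s,p\cdot g)=g^{-1}\sigma(s,p)g=\alpha_{g^{-1}}(\sigma(s,p))$.

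The one genuinely technical point, which I expect to be the main obstacle, is the smoothness of $\sigma$: since partial evaluation does not preserve $H^{\infty}$-smoothness, it cannot simply be asserted. I would establish it locally. Over a trivialising neighbourhood $U\subseteq\mathcal{M}$ with section $s_U$ and trivialisation $\psi_U$ (Prop. \ref{prop:2.17}), every $p\in\pi^{-1}(U)$ is uniquely $p=\Phi(s_U(\pi(p)),h(p))$ with $h:=\mathrm{pr}_2\circ\psi_U^{-1}$ smooth. Writing $\tilde f:=\mathrm{pr}_{\mathcal{P}}\circ f$, the fibre-preserving property gives $\pi\circ\tilde f=\pi\circ\mathrm{pr}_{\mathcal{P}}$, and a short computation with $\Phi(\Phi(q,a),b)=\Phi(q,ab)$ yields, on $\mathcal{S}\times\pi^{-1}(U)$,
\begin{equation}
\sigma(s,p)=h(p)^{-1}\cdot h\big(\tilde f(s,p)\big),
\end{equation}
a composition of the smooth maps $h$, $\tilde f$, $\mathrm{pr}_{\mathcal{P}}$ and of the smooth multiplication and inversion of $\mathcal{G}$, hence of class $H^{\infty}$; crucially, no partial evaluation is involved. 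By uniqueness these local expressions agree on overlaps and patch to a global $\sigma\in H^{\infty}(\mathcal{S}\times\mathcal{P},\mathcal{G})^{\mathcal{G}}$.

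Finally I would settle the group statements. A direct computation as in the first paragraph, using once more the equivariance of the factors, gives $f_{\sigma_1}\circ f_{\sigma_2}=f_{\sigma_1\sigma_2}$ with $(\sigma_1\sigma_2)(s,p):=\sigma_1(s,p)\,\sigma_2(s,p)$, and $\sigma_1\sigma_2$ is again equivariant because conjugation is an automorphism of $\mathcal{G}$. The pointwise product thus makes $H^{\infty}(\mathcal{S}\times\mathcal{P},\mathcal{G})^{\mathcal{G}}$ a group, with unit the constant map $e$ and inverse $\sigma\mapsto\sigma^{-1}$ (using smoothness of inversion in $\mathcal{G}$). Transporting this structure through the established bijection shows that $\mathscr{G}(\mathcal{P}_{/\mathcal{S}})$ is a group under composition and that the correspondence is a group isomorphism; in particular $f_{\sigma}\circ f_{\sigma^{-1}}=f_{\sigma\sigma^{-1}}=f_{e}=\mathrm{id}$, so every global gauge transformation is a super diffeomorphism.
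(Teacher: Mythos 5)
Your proposal is correct and follows essentially the same route as the paper: the paper likewise reduces everything to the one non-classical point, the $H^{\infty}$-smoothness of $\sigma_f$, and establishes it by writing $f$ in a local trivialization so that $\sigma_f$ becomes $\mu_{\mathcal{G}}(g^{-1},\cdot)$ applied to a smooth map — which is exactly your formula $\sigma(s,p)=h(p)^{-1}\cdot h(\tilde f(s,p))$ in slightly different notation. The remaining verifications (equivariance, mutual inverseness, the group structure) are treated in the paper as "the same as in the classical theory," and your explicit versions of them are accurate.
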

\begin{proof}
The proof of this proposition is almost the same as in the classical theory. Hence, let us only show that the map $\sigma_f\in H^{\infty}(\mathcal{S}\times\mathcal{P},\mathcal{G})^{\mathcal{G}}$ corresponding to a global gauge transformation $f\in\mathscr{G}(\mathcal{P}_{/\mathcal{S}})$ such that $f(s,p)=(s,p)\cdot\sigma_f(s,p)$ $\forall(s,p)\in\mathcal{S}\times\mathcal{P}$ is indeed of class $H^{\infty}$.\\
Therefore, choose a local trivialization $(U,\phi_U)$ of $\mathcal{P}$ and set $\tilde{\phi}_U:=\mathrm{id}\times\phi_U:\,\pi_{\mathcal{S}}^{-1}(\mathcal{S}\times U)\rightarrow(\mathcal{S}\times U)\times\mathcal{G}$. On the local trivialization neighborhood, $f$ is then of the form
\begin{equation}
\tilde{\phi}_U\circ f\circ\tilde{\phi}_U^{-1}((s,x),g)=((s,x),\sigma(s,x,g))
\label{eq:9.27}
\end{equation}
for some smooth function $\sigma:\,(\mathcal{S}\times U)\times G\rightarrow\mathcal{G}$. Hence, 
\begin{equation}
\sigma_f\circ\phi_U^{-1}((s,x),g)=\mu_{\mathcal{G}}(g^{-1},\sigma(s,x,g))
\label{eq:9.28}
\end{equation}
on $(\mathcal{S}\times U)\times\mathcal{G}$ proving that $\sigma_f$ is smooth. That global gauge transformations are diffeomorphisms and $\mathscr{G}(\mathcal{P}_{/\mathcal{S}})$ forms an abstract group now follows immediately from the respective properties of $H^{\infty}(\mathcal{S}\times\mathcal{P},\mathcal{G})^{\mathcal{G}}$.
\end{proof}
\begin{prop}\label{Prop:2.26}
Let $\mathcal{A}\in\Omega^1(\mathcal{P}_{/\mathcal{S}},\mathfrak{g})_0$ be a $\mathcal{S}$-relative super connection $1$-form and $f\in\mathscr{G}(\mathcal{P}_{/\mathcal{S}})$ a global gauge transformation on the $\mathcal{S}$-relative principal super fiber bundle $\mathcal{G}\rightarrow\mathcal{P}_{/\mathcal{S}}\stackrel{\pi_{\mathcal{S}}}{\rightarrow}\mathcal{M}_{/\mathcal{S}}$. Then,
\begin{enumerate}[label=(\roman*)]
	\item $f^*\mathcal{A}\in\Omega^1(\mathcal{P}_{/\mathcal{S}},\mathfrak{g})_0$ is a connection 1-form and, in particular,
\begin{equation}
f^*\mathcal{A}=\mathrm{Ad}_{\sigma_f^{-1}}\circ\mathcal{A}+\sigma_f^*\theta_{\mathrm{MC}}
\label{eq:9.29}
\end{equation}
\item the diagram
\begin{displaymath}
	 \xymatrix{
         \Gamma(g^*\mathcal{P})\ar[r]^{\mathscr{P}^{\mathcal{A}}_{\mathcal{S},\gamma}}\ar[d]_{\alpha_{\mathcal{S}}\circ f\circ\alpha_{\mathcal{S}}^{-1}} &   \Gamma(h^*\mathcal{P}) \ar[d]^{\alpha_{\mathcal{S}}\circ f\circ\alpha_{\mathcal{S}}^{-1}}\\
            \Gamma(g^*\mathcal{P})\ar[r]^{\mathscr{P}^{f^*\mathcal{A}}_{\mathcal{S},\gamma}} &   \Gamma(h^*\mathcal{P})  
     }
		\label{eq:9.30}
 \end{displaymath}
is commutative for any smooth path $\gamma:\,g\Rightarrow h$ on $\mathcal{M}_{/\mathcal{S}}$.
\end{enumerate}
\end{prop}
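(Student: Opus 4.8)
The plan is to prove the two assertions separately, both exploiting the explicit local description of the horizontal lift derived in Proposition \ref{prop:9.7}, namely that $\gamma^{hor} = \Phi_{\mathcal{S}}\circ(\delta\times g)$ with $g$ solving the flow equation \eqref{eq:9.9}, together with the characterization of global gauge transformations from Proposition \ref{prop:7.0.6} via the equivariant function $\sigma_f\in H^{\infty}(\mathcal{S}\times\mathcal{P},\mathcal{G})^{\mathcal{G}}$ satisfying $f(s,p)=(s,p)\cdot\sigma_f(s,p)$.

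For part (i), I would first observe that since $f$ is fiber-preserving and $\mathcal{G}$-equivariant, its pullback $f^*\mathcal{A}$ is again an even $\mathrm{Lie}(\mathcal{G})$-valued $1$-form. To establish that it satisfies the two defining conditions of a super connection $1$-form, the most efficient route is to directly verify the transformation formula \eqref{eq:9.29}. Writing $f=\Phi_{\mathcal{S}}\circ(\mathrm{id}\times\sigma_f)\circ d_{\mathcal{S}\times\mathcal{P}}$, I would apply the generalized tangent-map chain rule together with Proposition \ref{prop:6.1}, which decomposes the differential of $\Phi_{\mathcal{S}}$ into a pushforward term and a fundamental-vector-field term built from the Maurer-Cartan form. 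Concretely, for $X_p\in T_p(\mathcal{P}_{/\mathcal{S}})$ one gets $D_pf(X_p)=(\Phi_{\mathcal{S}})_{\sigma_f(p)*}(X_p)+\widetilde{\braket{D_p\sigma_f(X_p)|\theta_{\mathrm{MC}}}}_{f(p)}$; pairing with $\mathcal{A}$ and using condition (i) of Definition \ref{prop:3.19} on the fundamental-vector-field term and condition (ii) on the pushforward term yields exactly $\braket{X_p|f^*\mathcal{A}}=\mathrm{Ad}_{\sigma_f(p)^{-1}}\braket{X_p|\mathcal{A}}+\braket{X_p|\sigma_f^*\theta_{\mathrm{MC}}}$. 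Once \eqref{eq:9.29} is in hand, the two connection axioms for $f^*\mathcal{A}$ follow from those for $\mathcal{A}$ by a short computation using the equivariance $\sigma_f\circ\Phi_{\mathcal{S}}=\alpha_{g^{-1}}\circ\sigma_f$ and standard identities for $\theta_{\mathrm{MC}}$.

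For part (ii), the key point is to show that if $\gamma^{hor}_\phi$ is the $\mathcal{A}$-horizontal lift of $\gamma$ starting at $\phi$, then $f\circ\gamma^{hor}_\phi$ (suitably read as a section via $\alpha_{\mathcal{S}}$) is precisely the $f^*\mathcal{A}$-horizontal lift starting at $\alpha_{\mathcal{S}}\circ f\circ\alpha_{\mathcal{S}}^{-1}(\phi)$. By the uniqueness part of Proposition \ref{prop:9.7}, it suffices to check the two defining properties of a horizontal lift. The projection condition $\pi\circ(f\circ\gamma^{hor}_\phi)=\gamma$ is immediate since $f$ is fiber-preserving. For the horizontality condition I would compute $\braket{(\mathds{1}\otimes\partial_t)\alpha_{\mathcal{S}}^{-1}(f\circ\gamma^{hor}_\phi)|f^*\mathcal{A}}$; pushing the derivative through $f$ and invoking the naturality of pullback under the generalized tangent map reduces this to $\braket{(\mathds{1}\otimes\partial_t)\alpha_{\mathcal{S}}^{-1}(\gamma^{hor}_\phi)|\mathcal{A}}$, which vanishes by hypothesis. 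Evaluating at $t=1$ then gives the commutativity of the diagram by Definition \ref{def:9.9}.

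The main obstacle I anticipate is bookkeeping the interplay between the $\alpha_{\mathcal{S}}$ identification and the generalized tangent map, since $f$ is a morphism of $\mathcal{S}$-relative supermanifolds while the differential equation \eqref{eq:9.9} is written in terms of the partially evaluated map $\alpha_{\mathcal{S}}^{-1}$ and the $\partial_t$-derivative along the (body) time parameter. In particular, care is needed to ensure that $D_p f$ commutes appropriately with the operation $(\mathds{1}\otimes\partial_t)\alpha_{\mathcal{S}}^{-1}$, which rests on $\partial_t$ acting only on the $\mathcal{I}$-factor and $f$ preserving the $\mathcal{S}$-projection; this is exactly the functoriality of the generalized tangent map encoded in Proposition \ref{definition:3.11}. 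Beyond this smoothness-and-compatibility check, both parts are essentially formal once \eqref{eq:9.29} is established, so the bulk of the effort concentrates in the chain-rule computation of part (i).
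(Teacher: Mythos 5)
Your treatment of part (i) is correct and is essentially the paper's computation run in the opposite order: the paper first verifies the two connection axioms for $f^*\mathcal{A}$ directly from the relative-morphism properties of $f$ (equivariance gives $f_{*}\widetilde{X}=f^*\circ\widetilde{X}$, hence axiom (i) at once, and $(f\circ(\Phi_{\mathcal{S}})_g)^*=(\Phi_{\mathcal{S}})_g^*\circ f^*$ gives axiom (ii)), and only afterwards derives \eqref{eq:9.29} via Prop.~\ref{prop:6.1} exactly as you describe. Your order is legitimate, but re-deriving axiom (i) from \eqref{eq:9.29} forces you to compute $D\sigma_f(\widetilde{X})$ from the equivariance $\sigma_f\circ\Phi_{\mathcal{S}}=\alpha_{g^{-1}}\circ\sigma_f$, which is more work than the direct check; you should be aware that this extra step is where the cancellation $\mathrm{Ad}_{\sigma_f^{-1}}X-\mathrm{Ad}_{\sigma_f^{-1}}X+X=X$ has to happen.

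Part (ii) contains a genuine error of direction. The curve $f\circ\gamma^{hor}_{\phi}$ is \emph{not} $f^*\mathcal{A}$-horizontal. Since $\braket{Y_q|(f^*\mathcal{A})_q}=\braket{D_qf(Y_q)|\mathcal{A}_{f(q)}}$, while the velocity of the transported curve is $(\mathds{1}\otimes\partial_t)\alpha_{\mathcal{S}}^{-1}(f\circ\gamma^{hor}_{\phi})=f_{*}\bigl((\mathds{1}\otimes\partial_t)\alpha_{\mathcal{S}}^{-1}(\gamma^{hor}_{\phi})\bigr)$, pairing the two produces $\braket{(f\circ f)_{*}(\mathds{1}\otimes\partial_t)\alpha_{\mathcal{S}}^{-1}(\gamma^{hor}_{\phi})|\mathcal{A}}$, i.e.\ the $\mathcal{A}$-horizontality of $f^{2}\circ\gamma^{hor}_{\phi}$, which is not given: the two applications of $f_{*}$ compound rather than cancel, so your claimed reduction to $\braket{(\mathds{1}\otimes\partial_t)\alpha_{\mathcal{S}}^{-1}(\gamma^{hor}_{\phi})|\mathcal{A}}=0$ fails. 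The cancellation occurs only if you insert $f^{-1}$: the correct candidate, and the one the paper uses, is $\tilde{\gamma}_{\phi}:=\alpha_{\mathcal{S}}(f^{-1}\circ\alpha_{\mathcal{S}}^{-1}(\gamma^{hor}_{f,\phi}))$ with $\gamma^{hor}_{f,\phi}$ the $\mathcal{A}$-horizontal lift through $\alpha_{\mathcal{S}}(f\circ\alpha_{\mathcal{S}}^{-1}(\phi))$, for which $\braket{(\mathds{1}\otimes\partial_t)\tilde{\gamma}_{\phi}|f^*\mathcal{A}}=\braket{(\mathds{1}\otimes\partial_t)\alpha_{\mathcal{S}}^{-1}(\gamma^{hor}_{f,\phi})|\mathcal{A}}=0$ and $\tilde{\gamma}_{\phi}(\cdot,0)=\phi$. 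The resulting intertwining relation is $\mathscr{P}^{f^*\mathcal{A}}_{\mathcal{S},\gamma}=(\alpha_{\mathcal{S}}\circ f^{-1}\circ\alpha_{\mathcal{S}}^{-1})\circ\mathscr{P}^{\mathcal{A}}_{\mathcal{S},\gamma}\circ(\alpha_{\mathcal{S}}\circ f\circ\alpha_{\mathcal{S}}^{-1})$, with $f$ on one side and $f^{-1}$ on the other; your formulation (which would put $f$ on both sides) is not what the horizontality computation supports, and the fix is precisely the insertion of $f^{-1}$.
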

\begin{proof}
First, let us show that $f^*\mathcal{A}$ is a $\mathcal{S}$-relative connection 1-form on $\mathcal{P}_{/\mathcal{S}}$. Therefore, since $f$ is $\mathcal{G}$-equivariant, it follows that fundamental vector fields $\widetilde{X}$ on $\mathcal{P}_{/\mathcal{S}}$ associated to $X\in\mathfrak{g}$ satisfy
\begin{align}
f_{*}\widetilde{X}&=\mathds{1}\otimes X\circ\Phi_{\mathcal{S}}^*\circ f^*=\mathds{1}\otimes X\circ f^*\otimes\mathds{1}\circ\Phi_{\mathcal{S}}\nonumber\\
&=f^*\circ\mathds{1}\otimes X\circ\Phi_{\mathcal{S}}^*=f^*\circ\widetilde{X}
\label{eq:9.31}
\end{align}
which yields
\begin{equation}
\braket{\widetilde{X}|f^*\mathcal{A}}=\braket{f_{*}\widetilde{X}|\mathcal{A}_{f(\,\cdot\,)}}=f^*\braket{\widetilde{X}|\mathcal{A}}=X
\label{eq:9.32}
\end{equation}
$\forall X\in\mathfrak{g}$. Moreover,
\begin{equation}
(\Phi_{\mathcal{S}})_g^*(f^*\mathcal{A})=(f\circ\Phi_{\mathcal{S}})^*\mathcal{A}=f^*((\Phi_{\mathcal{S}})_g^*\mathcal{A})=\mathrm{Ad}_{g^{-1}}\circ f^*\mathcal{A}
\label{eq:9.33}
\end{equation}
This proves that $f^*\mathcal{A}\in\Omega^1(\mathcal{P}_{/\mathcal{S}},\mathfrak{g})_0$ indeed defines a connection 1-form on $\mathcal{P}_{/\mathcal{S}}$. Next, applying Prop. \ref{prop:6.1}, we find
\begin{align}
f_{*}(X_p)=D_{p}(\Phi_{\mathcal{S}}\circ(\mathrm{id}\times\sigma_f))(X_p,X_p)&=D_{(p,\sigma_f(p))}\Phi_{\mathcal{S}}(X_p,D_p\sigma_f(X_p))\nonumber\\
&=(\Phi_{\mathcal{S}})_{\sigma_{f}(p)^*}(X_p)+\widetilde{\theta_{\mathrm{MC}}(D_p\sigma_f(X_p))}
\label{eq:9.34}
\end{align}
$\forall X_p\in T_p(\mathcal{P}_{/\mathcal{S}})$, $p\in\mathcal{P}_{/\mathcal{S}}$, and thus
\begin{align}
\braket{X_p|f^*\mathcal{A}_p}&=\braket{D_pf(X_p)|\mathcal{A}_{f(p)}}=\braket{(\Phi_{\mathcal{S}})_{\sigma_{f}(p)^*}(X_p)|\mathcal{A}}+\braket{D_p\sigma_f(X_p)|\theta_{\mathrm{MC}}}\nonumber\\
&=\mathrm{Ad}_{\sigma_f(p)^{-1}}\braket{X_p|\mathcal{A}_p}+\braket{X_p|\sigma_f^*\theta_{\mathrm{MC}}}
\label{eq:9.35}
\end{align}
which yields \eqref{eq:9.29}. To prove the last assertion, let $\gamma_{f,\phi}^{hor}:\,\mathcal{S}\times\mathcal{P}\rightarrow\mathcal{P}$ for any $\phi\in\Gamma(g^*\mathcal{P})$ be the unique horizontal lift of the smooth path $\gamma:\,g\Rightarrow h$ w.r.t. $\mathcal{A}$ through $\alpha_{\mathcal{S}}(f\circ\alpha_{\mathcal{S}}^{-1}(\phi))\in\Gamma(g^*\mathcal{P})$. Set
\begin{equation}
\tilde{\gamma}_{\phi}:=\alpha_{\mathcal{S}}(f^{-1}\circ\alpha_{\mathcal{S}}^{-1}(\gamma_{f,\phi}^{hor})):\,\mathcal{S}\times\mathcal{P}\rightarrow\mathcal{P}
\label{eq:9.36}
\end{equation}
Then, $\tilde{\gamma}_{\phi}$ is a smooth path on $\mathcal{P}_{/\mathcal{S}}$ with $\tilde{\gamma}_{\phi}(\,\cdot\,,0)=\phi$ and 
\begin{equation}
\braket{(\mathds{1}\otimes\partial_t)\tilde{\gamma}_{\phi}|f^*\mathcal{A}}=\braket{(\mathds{1}\otimes\partial_t)\alpha_{\mathcal{S}}^{-1}(\gamma_{f,\phi}^{hor})|\mathcal{A}}=0
\label{eq:9.37}
\end{equation}
so that $\tilde{\gamma}_{\phi}$ coincides with the unique horizontal lift of $\gamma$ w.r.t. $f^*\mathcal{A}$ through $\phi$. 
\end{proof}
\begin{example}\label{example:7.9}
We want to give an explicit local expression of the parallel transport map as derived above making it more accessible for applications in physics. Therefore, we assume that $\mathcal{G}$ is a super matrix Lie group, i.e., an embedded super Lie subgroup of the general linear group $\mathrm{GL}(\mathcal{V})$ of a super $\Lambda$-vector space $\mathcal{V}$. In this case, the pushforward $R_{g*}$ of the right translation for any $g\in\mathcal{G}$ then just coincides with the right multiplication by the super matrix $g$. Hence, let $\gamma:\,\mathcal{S}\times\mathcal{I}\rightarrow\mathcal{M}$ be a smooth path which is contained within a local trivialization neighborhood of $\mathcal{P}_{/\mathcal{S}}$ and $\tilde{s}:\,U_{/\mathcal{S}}:=\mathcal{M}_{/\mathcal{S}}|_{\mathcal{S}\times U}\rightarrow\mathcal{P}_{/\mathcal{S}}$ the corresponding smooth section. Then, equation \eqref{eq:9.9} in the proof of Prop. \eqref{prop:9.7} reads 
\begin{equation}
(\mathds{1}\otimes\partial_t)g(s,t)=-\mathcal{A}^{\gamma}(s,t)\cdot g(s,t)
\label{eq:9.38}
\end{equation}
with $\mathcal{A}^{\gamma}(s,t):=\braket{(\mathds{1}\otimes\partial_t)\alpha_{\mathcal{S}}^{-1}(\gamma)(s,t)|\tilde{s}^*\mathcal{A}}$. Furthermore, suppose that $U$ defines a local coordinate neighborhood of $\mathcal{M}$. The 1-form $\tilde{s}^*\mathcal{A}$ on $\mathcal{S}\times U$ can then be expanded in the form
\begin{equation}
\tilde{s}^*\mathcal{A}=\mathrm{d}x^{\mu}\mathcal{A}^{(\tilde{s})}_{\mu}+\mathrm{d}\theta^{\alpha}\mathcal{A}^{(\tilde{s})}_{\alpha}
\label{eq:9.39}
\end{equation} 
with smooth even and odd functions $\mathcal{A}^{(\tilde{s})}_{\mu}$ and $\mathcal{A}^{(\tilde{s})}_{\alpha}$ on $\mathcal{S}\times U$, respectively. This yields 
\begin{equation}
\mathcal{A}^{\gamma}(s,t)=:\dot{x}^{\mu}\mathcal{A}^{(\tilde{s})}_{\mu}(s,t)+\dot{\theta}^{\alpha}\mathcal{A}^{(\tilde{s})}_{\alpha}(s,t)
\label{eq:9.40}
\end{equation}
Hence, the solution of equation \eqref{eq:9.38} with the initial condition $g(\,\cdot\,,0)=\mathds{1}$ takes the form
\begin{equation}
g(s,t)=\mathcal{P}\exp\left(-\int_{0}^t{\mathrm{d}t'\,\dot{x}^{\mu}\mathcal{A}^{(\tilde{s})}_{\mu}(s,t')+\dot{\theta}^{\alpha}\mathcal{A}^{(\tilde{s})}_{\alpha}(s,t')}\right)
\label{eq:9.41}
\end{equation}
where $\mathcal{P}\exp(\ldots)$ denotes the usual \emph{path-ordered exponential}. This is the most general local expression of the parallel transport map corresponding to a $\mathcal{S}$-relative super connection 1-form. This form is used for instance in \cite{Mason:2010yk} in the discussion about the relation between super twistor theory and $\mathcal{N}=4$ super Yang-Mills theory (see also \cite{Groeger:2013aja}). Note that in case $\mathcal{S}=\{*\}$ is a single point, the odd coefficients in \eqref{eq:9.41} become zero so that this expression just reduces the parallel transport map of an ordinary connection 1-form on a principal fiber bundle in accordance with Theorem \ref{thm:8.0}.\\
By definition, $g[\mathcal{A}]:=g(\cdot,1)$ defines a smooth map $g[\mathcal{A}]:\,\mathcal{S}\rightarrow\mathcal{G}$ from the parametrizing supermanifold $\mathcal{S}$ to the gauge group $\mathcal{G}$. As explained in detail in \cite{Eder:2020erq} (see also section \ref{section:graded}), there exists an equivalence of categories $\mathbf{A}:\,\mathbf{Man}_{H^{\infty}}\rightarrow\mathbf{Man}_{\mathrm{Alg}}$ between the category $\mathbf{Man}_{H^{\infty}}$ of $H^{\infty}$-supermanifolds and the category $\mathbf{Man}_{\mathrm{Alg}}$ of algebro-geometric supermanifolds. Using this equivalence, it thus follows 
\begin{equation}
H^{\infty}(\mathcal{S},\mathcal{G})\cong\mathrm{Hom}_{\mathbf{SMan}_{\mathrm{Alg}}}(\mathbf{A}(\mathcal{S}),\mathbf{A}(\mathcal{G}))
\end{equation}
Hence, $g[\mathcal{A}]$ can be identified with a $\mathbf{A}(\mathcal{S})$-point of $\mathbf{A}(\mathcal{G})$. This coincides with the results of \cite{Florin:2008} and \cite{Groeger:2013aja} where the parallel transport of super connections on super vector bundles in the pure algebraic setting has been considered. It was found that the parallel transport map has the interpretation in terms of $\mathcal{T}$-points of a general linear group.
\end{example}
As explained in example \ref{example:7.9}, the parallel transport map corresponding to super connection 1-forms on relative principal super fiber bundles shares many properties with the parallel transport map as studied in the pure algebraic setting in \cite{Florin:2008,Groeger:2013aja} in the context of connections on super vector bundles. To make this link even more precise, let us start with an equivalent characterization of horizontal forms in terms of forms with values in the associated bundle.
\begin{prop}\label{prop:7.11}
Let $\mathcal{G}\rightarrow\mathcal{P}_{/\mathcal{S}}\stackrel{\pi_{\mathcal{S}}}{\rightarrow}\mathcal{M}_{/\mathcal{S}}$ be a $\mathcal{S}$-relative principal super fiber bundle and $\rho:\,\mathcal{G}\rightarrow\mathrm{GL}(\mathcal{V})$ be a representation of $\mathcal{G}$ on a super $\Lambda$-vector space $\mathcal{V}$. Then, there exists an isomorphism between $\Omega^k_{hor}(\mathcal{P}_{/\mathcal{S}},\mathcal{V})^{(\mathcal{G},\rho)}$ and $\Omega^k(\mathcal{M}_{/\mathcal{S}},\mathcal{E}_{/\mathcal{S}})\cong\Omega^k(\mathcal{M}_{/\mathcal{S}})\otimes\mathcal{E}_{/\mathcal{S}}$, i.e., $k$-forms on $\mathcal{M}_{/\mathcal{S}}$ with values in the associated $\mathcal{S}$-relative super vector bundle $\mathcal{E}_{/\mathcal{S}}:=(\mathcal{P}\times_{\rho}\mathcal{V})_{/\mathcal{S}}$.
\end{prop}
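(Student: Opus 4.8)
The plan is to exhibit two mutually inverse maps, generalizing the classical identification of basic (horizontal, equivariant) vector-valued forms on a principal bundle with forms on the base valued in the associated bundle. The case $k=0$ is precisely the $\mathcal{S}$-relative version of Corollary \ref{prop:2.24}, since $\Omega^0_{hor}(\mathcal{P}_{/\mathcal{S}},\mathcal{V})^{(\mathcal{G},\rho)}=H^{\infty}(\mathcal{S}\times\mathcal{P},\mathcal{V})^{\mathcal{G}}$ and $\Omega^0(\mathcal{M}_{/\mathcal{S}},\mathcal{E}_{/\mathcal{S}})=\Gamma(\mathcal{E}_{/\mathcal{S}})$, so the genuine work lies in the passage to $k\geq 1$ and, as always in this paper, in verifying $H^{\infty}$-smoothness of the objects produced.

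First I would define the forward map $\Theta$. Given $\omega\in\Omega^k_{hor}(\mathcal{P}_{/\mathcal{S}},\mathcal{V})^{(\mathcal{G},\rho)}$, a point $x\in\mathcal{M}_{/\mathcal{S}}$ and tangent vectors $Y_1,\ldots,Y_k\in T_x(\mathcal{M}_{/\mathcal{S}})$, I pick any $p\in\pi_{\mathcal{S}}^{-1}(x)$ and arbitrary lifts $\tilde{Y}_i\in T_p(\mathcal{P}_{/\mathcal{S}})$ with $D_p\pi_{\mathcal{S}}(\tilde{Y}_i)=Y_i$ (these exist, $\pi_{\mathcal{S}}$ being a submersion), and set
\[
\braket{Y_1,\ldots,Y_k|(\Theta\omega)_x}:=[p,\braket{\tilde{Y}_1,\ldots,\tilde{Y}_k|\omega}]\in(\mathcal{E}_{/\mathcal{S}})_x .
\]
Independence of the chosen lifts is immediate from horizontality: two lifts differ by a vertical tangent vector, on which $\omega$ vanishes. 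Independence of $p$ within the fiber uses $\mathcal{G}$-equivariance: since $\pi_{\mathcal{S}}\circ(\Phi_{\mathcal{S}})_g=\pi_{\mathcal{S}}$, the vectors $(\Phi_{\mathcal{S}})_{g*}\tilde{Y}_i$ are lifts of $Y_i$ at $p\cdot g$, whence $\braket{(\Phi_{\mathcal{S}})_{g*}\tilde{Y}_1,\ldots|\omega}=\braket{\tilde{Y}_1,\ldots|(\Phi_{\mathcal{S}})_g^*\omega}=\rho(g)^{-1}\braket{\tilde{Y}_1,\ldots|\omega}$, and the defining relation $[p\cdot g,\rho(g)^{-1}v]=[p,v]$ of $\mathcal{E}=\mathcal{P}\times_{\rho}\mathcal{V}$ shows the class is unchanged. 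Right-$\Lambda$-linearity, alternation and parity are inherited from $\omega$.

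Conversely I define $\Theta^{-1}$: for $\eta\in\Omega^k(\mathcal{M}_{/\mathcal{S}},\mathcal{E}_{/\mathcal{S}})$, $p\in\mathcal{P}_{/\mathcal{S}}$ and $Z_1,\ldots,Z_k\in T_p(\mathcal{P}_{/\mathcal{S}})$ put
\[
\braket{Z_1,\ldots,Z_k|\Theta^{-1}\eta}:=[p]^{-1}\!\big(\braket{D_p\pi_{\mathcal{S}}(Z_1),\ldots,D_p\pi_{\mathcal{S}}(Z_k)|\eta}\big),
\]
where $[p]:\mathcal{V}\to(\mathcal{E}_{/\mathcal{S}})_{\pi_{\mathcal{S}}(p)}$, $v\mapsto[p,v]$ is the canonical isomorphism of Corollary \ref{prop:2.24}. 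Horizontality holds because $D_p\pi_{\mathcal{S}}$ annihilates vertical vectors, while from $[p\cdot g]=[p]\circ\rho(g)$ one gets $\Phi_g^*(\Theta^{-1}\eta)=\rho(g)^{-1}\circ\Theta^{-1}\eta$, i.e. type $(\mathcal{G},\rho)$. A direct comparison of the two definitions shows $\Theta$ and $\Theta^{-1}$ are mutually inverse.

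The main obstacle is smoothness, which I would not attempt pointwise — partial evaluation at a fixed $p$ in a fibre is not $H^{\infty}$-preserving — but rather through genuine local sections. Choosing $s:U_{/\mathcal{S}}\to\mathcal{P}_{/\mathcal{S}}$ as in Proposition \ref{prop:2.17}, the pullback $s^*\omega\in\Omega^k(U_{/\mathcal{S}},\mathcal{V})$ is smooth, and $s$ induces a local trivialization $\phi_s$ of $\mathcal{E}_{/\mathcal{S}}$ over $U$ via Corollary \ref{prop:2.23}; under the identification $T(\mathcal{P}_{/\mathcal{S}})\cong(T\mathcal{P})_{/\mathcal{S}}$ of Remark \ref{prop:3.6} one then presents $\Theta\omega|_U=\phi_s^{-1}\circ s^*\omega$ as a smooth $\mathcal{E}_{/\mathcal{S}}$-valued form, consistency over overlaps following from the transition behaviour and the equivariance exactly as in the $k=0$ case. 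The only remaining care is the bookkeeping of the Koszul signs in the equivariance identity and in checking right-linearity of $\Theta^{-1}\eta$, which is routine given the conventions already fixed for $\braket{\cdot|\cdot}$ and $\diamond$ (Definition \ref{definitionNEU:2.11}).
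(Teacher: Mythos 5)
Your proposal is correct and follows essentially the same route as the paper: the paper defines $\overline{\omega}$ locally by $\braket{X_1,\ldots,X_k|\overline{\omega}}:=[s,\braket{X_1,\ldots,X_k|s^*\omega}]$ for a local section $s$ and invokes horizontality and $\mathcal{G}$-equivariance for well-definedness, which is exactly your local-section presentation $\phi_s^{-1}\circ s^*\omega$. Your additional pointwise formulation via arbitrary lifts, and the explicit verification that the two fiber representatives agree, merely spells out the independence checks the paper leaves as ``immediate.''
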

\begin{proof}
For $k=0$, this is straightforward generalization of Corollary \ref{prop:2.24}. For general $k\in\mathbb{N}$, suppose $\omega$ is a horizontal $k$-form on $\mathcal{P}_{/\mathcal{S}}$ of type $(\mathcal{G},\rho)$. Choose a local trivialization $s:\,U_{/\mathcal{S}}\rightarrow\mathcal{P}_{/\mathcal{S}}$ with $U\subseteq\mathcal{M}$ open. On $U_{/\mathcal{S}}$, we then define a $k$-form $\overline{\omega}\in\Omega^k(\mathcal{M}_{/\mathcal{S}},\mathcal{E}_{/\mathcal{S}})$ as follows
\begin{equation}
\braket{X_1,\ldots,X_k|\overline{\omega}}:=[s,\braket{X_1,\ldots,X_k|s^*\omega}]
\label{eq:7.0.1}
\end{equation}
for any smooth vector fields $X_i$ on $\mathcal{M}_{/\mathcal{S}}$, $i=1,\ldots,k$. By horizontality and $\mathcal{G}$-equivariance of $\omega$, it is then immediate to see that $\overline{\omega}$ is indeed well-defined and independent on the choice of a local section. The inverse direction follows similarly.
\end{proof}
\begin{definition}
Under the assumptions of Prop. \ref{prop:7.11}, on $\Omega(\mathcal{M}_{/\mathcal{S}},\mathcal{E}_{/\mathcal{S}})$, the \emph{exterior covariant derivative} $\mathrm{d}_{\mathcal{A}}:\,\Omega^k(\mathcal{M}_{/\mathcal{S}},\mathcal{E}_{/\mathcal{S}})\rightarrow\Omega^{k+1}(\mathcal{M}_{/\mathcal{S}},\mathcal{E}_{/\mathcal{S}})$ \emph{induced by $\mathcal{A}$} is defined via
\begin{equation}
\mathrm{d}_{\mathcal{A}}\overline{\omega}:=\overline{D^{(\mathcal{A})}\omega}
\end{equation}
for any $\omega\in\Omega^k_{hor}(\mathcal{P}_{/\mathcal{S}},\mathcal{V})^{(\mathcal{G},\rho)}$. For $k=0$, we also write $\mathrm{d}_{\mathcal{A}}\equiv\nabla^{(\mathcal{A})}$.
\end{definition} 
\begin{definition}
Under the assumptions of Prop. \ref{prop:7.11}, let $f:\,\mathcal{S}\rightarrow\mathcal{M}$ be a smooth map. As in Remark \ref{prop:7.3}, for the pullback bundle $f^*\mathcal{E}$, we have
\begin{equation}
\Gamma(f^*\mathcal{E})=\{\phi:\,\mathcal{S}\rightarrow\mathcal{E}|\,\pi_{\mathcal{E}}\circ\phi=f\}
\end{equation}
By definition, any $\phi\in\Gamma(f^*\mathcal{E})$ is of the form $\phi=[\phi_0,v]$ with $\phi_0\in\Gamma(f^*\mathcal{P})$. Hence, let $\gamma:\,\mathcal{S}\times\mathcal{I}\rightarrow\mathcal{M}$ be a smooth path on $\mathcal{M}_{/\mathcal{S}}$. The connection 1-form $\mathcal{A}$ on $\mathcal{P}_{/\mathcal{S}}$ then induces a parallel transport map $\mathscr{P}^{\mathcal{E},\mathcal{A}}_{\mathcal{S},\gamma}$ on $\mathcal{E}_{/\mathcal{S}}$ along $\gamma$ via
\begin{equation}
\mathscr{P}^{\mathcal{E},\mathcal{A}}_{\mathcal{S},\gamma}:\,\Gamma(\gamma_0^*\mathcal{E})\rightarrow\Gamma(\gamma_1^*\mathcal{E}),\,\phi=[\phi_0,v]\mapsto[\mathscr{P}^{\mathcal{A}}_{\mathcal{S},\gamma}(\phi_0),v]
\end{equation}
with $\mathscr{P}^{\mathcal{A}}_{\mathcal{S}}$ the parallel transport map induced by $\mathcal{A}$ as defined via \ref{def:9.9}.
\end{definition}
The following proposition, together with Prop. \ref{prop:3.23} giving an explicit form of the exterior covariant derivative on horizontal forms, provides a link between the parallel transport on associated $\mathcal{S}$-relative super vector bundles and the parallel transport on algebraic super vector bundles as constructed in \cite{Florin:2008,Groeger:2013aja}.
\begin{prop}
Under the assumptions of Prop. \ref{prop:7.11}, let $\mathcal{A}\in\Omega^1(\mathcal{P}_{/\mathcal{S}},\mathfrak{g})_0$ be a super connection 1-form on the $\mathcal{S}$-relative principal super fiber bundle $\mathcal{G}\rightarrow\mathcal{P}_{/\mathcal{S}}\rightarrow\mathcal{M}_{/\mathcal{S}}$ and $\mathscr{P}^{\mathcal{E},\mathcal{A}}_{\mathcal{S}}$ the induced parallel transport map on the associated $\mathcal{S}$-relative super vector bundle. Let furthermore $\gamma:\,\mathcal{S}\times\mathcal{I}\rightarrow\mathcal{M}$ be a smooth path on $\mathcal{M}_{/\mathcal{S}}$ and $e\in\Gamma(\mathcal{E}_{/\mathcal{S}})$ a smooth section which is covariantly constant along $\gamma$ w.r.t. $\mathcal{A}$, i.e.,
\begin{equation}
\braket{(\mathds{1}\otimes\partial_t)\hat{\gamma}|\nabla^{(\mathcal{A})}e}=0
\end{equation}
$\forall (s,t)\in\mathcal{S}\times\mathcal{I}$ with $\hat{\gamma}:=\alpha_{\mathcal{S}}^{-1}(\gamma)$. Then, the pullback of $e$ along the path $\gamma$ is given by $\hat{\gamma}^*e=[\gamma^{hor}_{\phi},v]$ with $[\phi,v]=:e\circ\hat{\gamma}(\cdot,0)\in\Gamma(\gamma_{0}^*\mathcal{E})$ and $\gamma_{\phi}^{hor}$ is the unique horizontal lift through $\phi$. In particular, 
\begin{equation}
\hat{\gamma}^*_1e=\mathscr{P}^{\mathcal{E},\mathcal{A}}_{\mathcal{S},\gamma}(\hat{\gamma}^*_0e)
\end{equation}
\end{prop}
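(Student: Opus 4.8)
The plan is to reduce the whole statement to the observation that an equivariant function representing $e$ is constant along the horizontal lift. First I would realize the section $e$ as a horizontal $0$-form of type $(\mathcal{G},\rho)$: by the relative version of Corollary \ref{prop:2.24} (that is, Prop.\ \ref{prop:7.11} with $k=0$) there is a unique equivariant function $f\in H^{\infty}(\mathcal{S}\times\mathcal{P},\mathcal{V})^{(\mathcal{G},\rho)}$ with $e(s,x)=[p,f(s,p)]$ for any $p\in\mathcal{P}$ over $x$. Since $\pi\circ\gamma^{hor}_{\phi}=\gamma$, the point $\gamma^{hor}_{\phi}(s,t)$ lies over $\gamma(s,t)$, so evaluating $e$ along the path gives $\hat{\gamma}^{*}e(s,t)=[\gamma^{hor}_{\phi}(s,t),F(s,t)]$, where I set $F:=f\circ\alpha_{\mathcal{S}}^{-1}(\gamma^{hor}_{\phi})$. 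Everything then hinges on showing that $F$ is independent of the time parameter $t$.

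Next I would establish the structural identity relating $\nabla^{(\mathcal{A})}e$ to $\mathrm{d}f$ evaluated on horizontal lifts. By Prop.\ \ref{prop:3.23} (with $k=0$) one has $D^{(\mathcal{A})}f=\mathrm{d}f+\rho_{*}(\mathcal{A})\wedge f\in\Omega^{1}_{hor}(\mathcal{P}_{/\mathcal{S}},\mathcal{V})^{(\mathcal{G},\rho)}$; in particular $D^{(\mathcal{A})}f$ is horizontal, which is in any case immediate from the defining formula $\braket{X|D^{(\mathcal{A})}f}=\braket{\mathrm{pr}_{h}\circ X|\mathrm{d}f}$. Using the definition \eqref{eq:7.0.1} of the correspondence $\overline{(\,\cdot\,)}$ together with the horizontality of $D^{(\mathcal{A})}f$, I would show that for any $X_x\in T_x\mathcal{M}_{/\mathcal{S}}$ and any $p$ over $x$,
\[
\braket{X_x|\nabla^{(\mathcal{A})}e}=\bigl[p,\braket{X^{h}_{p}|D^{(\mathcal{A})}f}\bigr]=\bigl[p,\braket{X^{h}_{p}|\mathrm{d}f}\bigr],
\]
where $X^{h}_{p}$ denotes the unique horizontal lift of $X_x$ at $p$ (obtained by applying $\mathrm{pr}_{h}$ to $D_xs(X_x)$ for a local section $s$ through $p$, the horizontality of $D^{(\mathcal{A})}f$ making the choice of $s$ irrelevant).

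Now I apply this with $p=\gamma^{hor}_{\phi}(s,t)$ and $X_x=(\mathds{1}\otimes\partial_t)\hat{\gamma}(s,t)$. The defining condition $\braket{(\mathds{1}\otimes\partial_t)\alpha_{\mathcal{S}}^{-1}(\gamma^{hor}_{\phi})|\mathcal{A}}=0$ of the horizontal lift says precisely that $(\mathds{1}\otimes\partial_t)\alpha_{\mathcal{S}}^{-1}(\gamma^{hor}_{\phi})(s,t)$ is horizontal, and it projects to $X_x$ under $D\pi_{\mathcal{S}}$; hence it is the horizontal lift $X^{h}_{p}$. The chain rule then gives $\braket{X^{h}_{p}|\mathrm{d}f}=\partial_t F$, so that $\braket{(\mathds{1}\otimes\partial_t)\hat{\gamma}|\nabla^{(\mathcal{A})}e}=[\gamma^{hor}_{\phi}(s,t),\partial_t F(s,t)]$. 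The covariant constancy hypothesis forces the left-hand side to vanish, and since $[p,\,\cdot\,]:\mathcal{V}\to\mathcal{E}_x$ is an isomorphism of super $\Lambda$-modules (Cor.\ \ref{prop:2.24}), I conclude $\partial_t F\equiv 0$, i.e.\ $F(s,t)=F(s,0)=:v$. Substituting back yields $\hat{\gamma}^{*}e=[\gamma^{hor}_{\phi},v]$ with $[\phi,v]=e\circ\hat{\gamma}(\,\cdot\,,0)$, and evaluating at $t=1$ together with Definition \ref{def:9.9} gives $\hat{\gamma}^{*}_{1}e=[\gamma^{hor}_{\phi}(\,\cdot\,,1),v]=[\mathscr{P}^{\mathcal{A}}_{\mathcal{S},\gamma}(\phi),v]=\mathscr{P}^{\mathcal{E},\mathcal{A}}_{\mathcal{S},\gamma}(\hat{\gamma}^{*}_{0}e)$.

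The main obstacle will be the structural identity in the second paragraph: verifying carefully, in the relative $H^{\infty}$-category, that the horizontal lift of $(\mathds{1}\otimes\partial_t)\hat{\gamma}$ is genuinely $(\mathds{1}\otimes\partial_t)\alpha_{\mathcal{S}}^{-1}(\gamma^{hor}_{\phi})$ and that the representative $[\gamma^{hor}_{\phi}(s,t),F(s,t)]$ does not depend on the choices made. This requires keeping track of partial evaluations and generalized tangent maps (Def.\ \ref{definition:3.9}) so that all intermediate maps remain of class $H^{\infty}$; the remaining bookkeeping, namely well-definedness of the equivalence class and smoothness of $F$, is then routine.
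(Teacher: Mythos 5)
Your argument is correct, but it takes a genuinely different route from the paper's. The paper localizes immediately: it writes $e=[\tilde{s},v]$ in a trivialization, computes $\braket{(\mathds{1}\otimes\partial_t)\hat{\gamma}|\nabla^{(\mathcal{A})}e}=[\delta,(\mathds{1}\otimes\partial_t)\hat{v}+\rho_{*}(\mathcal{A}^{\gamma})\hat{v}]$ so that covariant constancy becomes the explicit ODE $(\mathds{1}\otimes\partial_t)\hat{v}+\rho_{*}(\mathcal{A}^{\gamma})\hat{v}=0$, then separately shows that $\tilde{v}:=\rho(g)v_0$, with $g$ the solution of the parallel transport equation \eqref{eq:9.9}, satisfies the same ODE with the same initial condition, and concludes $\hat{v}=\tilde{v}$ by uniqueness of solutions. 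You instead work with the global equivariant representative $f\in H^{\infty}(\mathcal{S}\times\mathcal{P},\mathcal{V})^{(\mathcal{G},\rho)}$ and reduce everything to the identity $\braket{X_x|\nabla^{(\mathcal{A})}e}=[p,\braket{X^{h}_{p}|\mathrm{d}f}]$, so that covariant constancy along $\gamma$ is literally the statement that $f$ is constant along the horizontal lift; no ODE comparison is needed. What your approach buys is conceptual transparency and independence of the trivialization (once the well-definedness of $[p,\braket{X^{h}_{p}|\mathrm{d}f}]$ in $p$ is checked via equivariance of $D^{(\mathcal{A})}f$, which you do correctly); what the paper's approach buys is that the relevant ODE is exactly equation \eqref{eq:9.9} already analyzed in Prop.\ \ref{prop:9.7}, so no new smoothness or lifting issues arise, and the computation $\partial_t(\rho(g)v_0)=-\rho_{*}(\mathcal{A}^{\gamma})\rho(g)v_0$ makes the link to the path-ordered exponential of Example \ref{example:7.9} explicit. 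The one point you should spell out in a full write-up is the identification of $(\mathds{1}\otimes\partial_t)\alpha_{\mathcal{S}}^{-1}(\gamma^{hor}_{\phi})$ with the horizontal lift of $(\mathds{1}\otimes\partial_t)\hat{\gamma}$ and the chain rule $\braket{X^{h}_{p}|\mathrm{d}f}=\partial_tF$ in terms of the generalized tangent map of Definition \ref{definition:3.9}; you flag this yourself, and it is indeed only bookkeeping.
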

\begin{proof}
By locality, it suffices to assume that the claim holds on a local trivilization neighborhood. Hence, w.l.o.g. suppose that $\gamma$ is contained within a local trivialization neighborhood of $\mathcal{P}_{/\mathcal{S}}$ induced by a local section $\tilde{s}:\,U_{/\mathcal{S}}\rightarrow\mathcal{P}_{/\mathcal{S}}$. With respect to this trivialization, the section $e$ is then of the form $e=[\tilde{s},v]$ with $v:\,\mathcal{S}\times U\rightarrow\mathcal{V}$ a smooth map. Using \eqref{eq:7.0.1}, it then immediately follows by definition of the covariant derivative that
\begin{equation}
\braket{(\mathds{1}\otimes\partial_t)\hat{\gamma}|\nabla^{(\mathcal{A})}e}=[\delta,(\mathds{1}\otimes\partial_t)\hat{v}+\rho_{*}(\mathcal{A}^{\gamma})\hat{v}]
\end{equation}
where $\mathcal{A}^{\gamma}:=\braket{(\mathds{1}\otimes\partial_t)\hat{\gamma}|\tilde{s}^*\mathcal{A}}$, $\delta:=\tilde{s}\circ\hat{\gamma}:\,\mathcal{I}_{/\mathcal{S}}\rightarrow\mathcal{P}_{/\mathcal{S}}$ and $\hat{v}=v\circ\hat{\gamma}$ such that $\hat{v}(\cdot,0)=v_0$. Hence, $e$ is covariantly constant along $\gamma$ iff
\begin{equation}
(\mathds{1}\otimes\partial_t)\hat{v}+\rho_{*}(\mathcal{A}^{\gamma})\hat{v}=0
\end{equation}
On the other hand, consider the smooth path $\tilde{e}(s,t):=[\gamma_{\phi}^{hor}(s,t),v_0]$ $\forall(s,t)\in\mathcal{S}\times\mathcal{I}$. By Prop. \ref{prop:9.7}, w.r.t. to the chosen local trivialization, the horizontal lift takes the form $\gamma_{\phi}=\Phi_{\mathcal{S}}(\delta,g)$ with $g:\,\mathcal{S}\times\mathcal{I}\rightarrow\mathcal{G}$ a smooth map satisfying
\begin{equation}
(\mathds{1}\otimes\partial_t)g(s,t)=-R_{g(s,t)*}\mathcal{A}^{\gamma}(s,t)
\end{equation}
together with the initial condition $g(\cdot,0)=e$. Hence, this yields $\tilde{e}=[\Phi_{\mathcal{S}}(\delta,g),v_0]=[\delta,\hat{v}]$ with $\tilde{v}:=\rho(g)v_0:\,\mathcal{S}\times\mathcal{I}\rightarrow\mathcal{V}$. Taking the partial time derivative of $\tilde{v}$, this then yields, together with $\rho\circ R_g=R_{\rho(g)}\circ\rho$ $\forall g\in\mathcal{G}$,
\begin{align}
(\mathds{1}\otimes\partial_t)\tilde{v}(s,t)&=D_{g(s,t)}\rho((\mathds{1}\otimes\partial_t)g(s,t))v_0=-D_{e}(\rho\circ R_{g(s,t)})(\mathcal{A}^{\gamma})v_0\nonumber\\
&=-R_{\rho(g(s,t))*}(\rho_{*}(\mathcal{A}^{\gamma}))v_0=-\rho_{*}(\mathcal{A}^{\gamma})\rho(g(s,t))v_0\nonumber\\
&=-\rho_{*}(\mathcal{A}^{\gamma})\tilde{v}(s,t)
\end{align}
where, in the second line, we used that the pushforward of the right-translation on $\mathrm{GL}(\mathcal{V})$ can be identified with the ordinary right-group multiplication. Since, $\tilde{v}(\cdot,0)=v_0$ it thus follows from the uniqueness of solutions of differential equations once fixing the initial conditions that $\tilde{v}=\hat{v}$ on $\mathcal{S}\times\mathcal{I}$. This proves the proposition.
\end{proof}

\section{Applications}\label{section:Application}
In this last section, we want to provide some explicit examples in order to demonstrate how supermanifold techniques as studied in this present article find concrete applications in mathematical physics. To this end, at the beginning, we will discuss $\mathcal{N}=1$, $D=4$ Poincaré supergravity in the Cartan geometric framework. In particular, we will study bundle automorphisms as well as local symmetries of the supergravity action. In this context, we will also briefly comment on the Castellani-Castellani-D'Auria-Fré approach to supergravity \cite{DAuria:1982uck,Castellani:1991et,Castellani:2013iq,Castellani:2018zey}
 (for a different approach towards a mathematical rigorous formulation of geometric supergravity using the notion of \emph{chiral triples} see \cite{Cortes:2018lan}). Furthermore, we will sketch a link between superfields on parametrized supermanifolds and the description of anticommutative fermionic fields in pAQFT.\\
Finally, we will study Killing vector fields of super Riemannian manifolds arising from metric reductive super Cartan geometries. As an example, we will classify Killing spinors of homogeneous super anti-de Sitter space.
\subsection{Geometric supergravity}
\subsubsection{The super Poincaré group}
Before we discuss the interpretation of Poincaré supergravity in terms of a super Cartan geometry, let us briefly recall the basic definition of the super translation group as well as the super Poincaré group in $D=4$. We choose the following conventions: The gamma matrices $\gamma_I$, $I=0,\ldots,3$, satisfy the 4D Clifford algebra relations
\begin{equation}
    \{\gamma_I,\gamma_J\}=2\eta_{IJ}
\end{equation}
with $\eta$ the Minkowski metric on $\mathbb{R}^{1,3}$ with signature $(-+++)$. The Clifford algebra $\mathrm{Cl}(\mathbb{R}^{1,3},\eta)$ as a real vector space of dimension $\mathrm{dim}\,\mathrm{Cl}(\mathbb{R}^{1,3},\eta)=16$ is spanned by the unit $1$ together with the elements
\begin{equation}
\gamma_{I_{1}I_2\cdots I_{k}}:=\gamma_{[I_1}\gamma_{I_2}\cdot\ldots\cdot\gamma_{I_k]}    
\end{equation}
for $k=1,\ldots,4$ where the bracket denotes antisymmetrization. The \emph{charge conjugation matrix} $C$ is given by $C=i\gamma^3\gamma^1$ and satisfies the symmetry relations
\begin{equation}
    C^T=-C,\quad (C\gamma_I)^T=C\gamma_I\text{ and }(C\gamma_{IJ})^T=C\gamma_{IJ}
\end{equation}
Moreover, $\epsilon^{IJKL}=-\epsilon_{IJKL}$ denotes the completely antisymmetric symbol in $D=4$ with the convention $\epsilon^{0123}:=1$.

\begin{definition}[The super translation group $\mathcal{T}^{1,3|4}$]
Let $\kappa_{\mathbb{R}}:\,\mathrm{Spin}^{+}(1,3)\rightarrow\mathrm{GL}(\Delta_{\mathbb{R}})$ be the real Majorana representation of $\mathrm{Spin}^{+}(1,3)$. Consider the trivial vector bundle bundle $\mathbb{R}^{1,3}\times\Delta_{\mathbb{R}}\rightarrow\mathbb{R}^{1,3}$ over Minkowski spacetime $(\mathbb{R}^{1,3},\eta)$. Applying the split functor, this then yields a split supermanifold 
\begin{equation}
\mathbb{R}^{1,3|4}=\mathbf{S}(\Delta_{\mathbb{R}},\mathbb{R}^{1,3})\cong\Lambda^{4,4}
\end{equation}
also called \emph{super Minkowski spacetime}. On this supermanifold, we define the map 
\begin{align}
\mu:\,\Lambda^{4,4}\times\Lambda^{4,4}&\rightarrow\Lambda^{4,4}\\
((x,\theta),(y,\eta))&\mapsto (z,\theta+\eta)\nonumber
\end{align}
where $z^I:=x^I+y^I-\frac{1}{4}(C\gamma^I)_{\alpha\beta}\theta^{\alpha}\eta^{\beta}$ for $I=0,\ldots,3$ and $C$. It follows immediately that $\mu$ is smooth and associative. In particular, $\mathbb{R}^{1,3|4}$ equipped with $\mu$ defines a super Lie group with neutral element $e=(0,0)$ and inverse $i:\,\Lambda^{4,4}\rightarrow\Lambda^{4,4},\,(x,\theta)\mapsto (-x,-\theta)$ (note that $C\gamma^I$ is symmetric for $I=0,\ldots,3$). From now on, let us denote this super Lie group by $\mathcal{T}^{1,3|4}$ and call it the \emph{super translation group}.\\
To derive the corresponding super Lie algebra, note that the co-multiplication is given by
\begin{align}
\mu^*(x^I)&=x^I\otimes 1+1\otimes x^I-\frac{1}{4}(C\gamma^I)_{\alpha\beta}\theta^{\alpha}\otimes\theta^{\beta}\\
\mu^*(\theta^{\alpha})&=\theta^{\alpha}\otimes 1+1\otimes\theta^{\alpha}
\end{align} 
The super Lie module of the super translation group takes the form
\begin{equation}
T_e\mathcal{T}^{1,3|4}=:\Lambda\otimes\mathfrak{t}\cong\mathrm{span}_{\Lambda}\left\{\frac{\partial}{\partial x^I}\bigg{|}_e,\frac{\partial}{\partial\theta^{\alpha}}\bigg{|}_e\right\}
\end{equation} 
so that a homogeneous basis of left invariant vector fields is given by $P_I:=\left(\mathds{1}\otimes\frac{\partial}{\partial x^I}\big{|}_e\right)\circ\mu^*$ and $Q_{\alpha}:=\left(\mathds{1}\otimes\frac{\partial}{\partial\theta^{\alpha}}\big{|}_e\right)\circ\mu^*$ for $I=0,\ldots,3$ and $\alpha=1,\ldots,4$. Hence, their action on the coordinate functions yields
\begin{align}
Q_{\alpha}(x^I)&=\frac{1}{4}(C\gamma^I)_{\alpha\gamma}\theta^{\gamma},\quad Q_{\alpha}(\theta^{\beta})=\delta^{\beta}_{\alpha}\nonumber\\
P_I(x^J)&=\delta^J_I,\quad P_I(\theta^{\alpha})=0
\end{align}
so that the vector fields can explicitly be written in the form
\begin{equation}
Q_{\alpha}=\frac{\partial}{\partial\theta^{\alpha}}+\frac{1}{4}(C\gamma^I)_{\alpha\beta}\theta^{\beta}\frac{\partial}{\partial x^I},\quad P_I=\frac{\partial}{\partial x^I}
\end{equation}
Using these identities, we can compute the corresponding commutation relations which yields
\begin{align}
[Q_{\alpha},Q_{\beta}]=\frac{1}{2}(C\gamma^I)_{\alpha\beta}P_I,\quad[Q_{\alpha},P_I]=0\,\text{ and }\,[P_I,P_J]=0 
\label{eq:8.0.1}
\end{align}
\end{definition}
\begin{definition}[Super Poincaré group] 
The \emph{super Poincaré group} in $D=4$, $\mathcal{N}=1$ is defined as the semi-direct product
\begin{equation}
\mathrm{ISO}(\mathbb{R}^{1,3|4})=\mathcal{T}^{1,3|4}\rtimes_{\Phi}\mathbf{S}(\mathrm{Spin}^+(1,3))
\end{equation}
where $\Phi:\,\mathbf{S}(\mathrm{Spin}^+(1,3))\rightarrow\mathrm{GL}(\mathcal{T}^{1,3|4})$ is the representation of the purely bosonic super Lie group $\mathbf{S}(\mathrm{Spin}^+(1,3))$ on the super translation group $\mathcal{T}^{1,3|4}$ obtained by applying the split functor on the group representation
\begin{equation}
\mathrm{Spin}^+(1,3)\ni g\mapsto\mathrm{diag}(\lambda^+(g),\kappa_{\mathbb{R}}(g))\in\mathrm{GL}(\mathbb{R}^{1,3}\oplus\Pi\Delta_{\mathbb{R}})
\end{equation}
of $\mathrm{Spin}^+(1,3)$ on the super vector space $\mathbb{R}^{1,3}\oplus\Pi \Delta_{\mathbb{R}}$ with $\lambda^+:\,\mathrm{Spin}^+(1,3)\rightarrow\mathrm{SO}^+(1,3)$ the universal covering map and $\Pi:\,\mathbf{SVec}\rightarrow\mathbf{SVec}$ the \emph{parity functor}, i.e., $\Pi \Delta_{\mathbb{R}}$ is viewed as a purely odd super vector space.\\
The super Lie algebra $\mathfrak{iso}(\mathbb{R}^{1,3|4})$ is generated by the bosonic momenta and Lorentz boosts $(P_I,M_{IJ})$, $I,J=1,\ldots,3$, and four fermionic Majorana generators $Q_{\alpha}$, $\alpha=1,\ldots, 4$. It follows that, in addition to \eqref{eq:8.0.1}, the non-vanishing (graded) commutation relations are given by
\begin{align}
[M_{IJ},Q_{\alpha}]=\frac{1}{2}Q_{\beta}\tensor{(\gamma_{IJ})}{^{\beta}_{\alpha}},\text{ and }[P_I,Q_{\alpha}]=0
\end{align}
\end{definition}
Let us equip the super translation group $\mathcal{T}^{1,3|4}$ (or super Minkowski spacetime) with a smooth super metric $\mathscr{S}$ setting $\mathscr{S}(P_I,P_J)=\eta_{IJ}$ and $\mathscr{S}(Q_{\alpha},Q_{\beta})=C_{\alpha\beta}$. By definition, it follows that $\mathscr{S}$ is invariant under the Adjoint representation of $\mathrm{Spin}^+(1,3)$ on $\mathcal{T}^{1,3|4}$. Moreover, $\mathrm{ISO}(\mathbb{R}^{1,3|4})$ can be identified with the super isometry group of super Minkowski spacetime.
\subsubsection{Poincaré supergravity and the Castellani-D'Auria-Fré approach}
With these preparations, let us turn next to supergravity. We want to describe $D=4$, $\mathcal{N}=1$ Poincaré supergravity as a metric reductive super Cartan geometry $(\pi_{\mathcal{S}}:\,\mathcal{P}_{/\mathcal{S}}\rightarrow\mathcal{M}_{/\mathcal{S}},\mathcal{A})$ modeled on the super Klein geometry $(\mathcal{G},\mathcal{H})=(\mathrm{ISO}(\mathbb{R}^{1,3|4}),\mathrm{Spin}^+(1,3))$ corresponding to super Minkowski spacetime. Here and in the following, for notational simplification, we will often identify $\mathrm{Spin}^+(1,3)$ with the corresponding bosonic split super Lie group $\mathbf{S}(\mathrm{Spin}^+(1,3))$. It follows that the super Cartan connection decomposes as
\begin{equation}
\mathcal{A}=E+\omega:=\mathrm{pr}_{\mathfrak{g}/\mathfrak{h}}\circ\mathcal{A}+\mathrm{pr}_{\mathfrak{h}}\circ\mathcal{A}=:e^IP_I+\frac{1}{2}\omega^{IJ}M_{IJ}+\psi^{\alpha}Q_{\alpha}
\label{eq:8.223}
\end{equation}
with $E$ is the supervielbein and $\omega$ a Ehresmann connection. The Cartan curvature $F(\mathcal{A})=\mathrm{d}\mathcal{A}+[\mathcal{A}\wedge\mathcal{A}]$ takes the form
\begin{equation}
F(\mathcal{A})=F(\mathcal{A})^IP_I+\frac{1}{2}F(\mathcal{A})^{IJ}M_{IJ}+F(\mathcal{A})^{\alpha}Q_{\alpha}
\end{equation}
where
\begin{align}
F(\mathcal{A})^I=\Theta^{(\omega) I}-\frac{1}{4}\bar{\psi}\wedge\gamma^I\psi,\quad F(\mathcal{A})^{IJ}=F(\omega)^{IJ}\text{ and }F(\mathcal{A})^{\alpha}=D^{(\omega)}\psi^{\alpha}
\label{eq:torsion}
\end{align}
with $F(\omega)$ and $\Theta^{(\omega)}$ the curvature and torsion of $\omega$, respectively, and $D^{(\omega)}\psi=\mathrm{d}\psi+\kappa_{\mathbb{R}*}(\omega)\wedge\psi$ the spinor covariant derivative.\\
The action of $D=4$, $\mathcal{N}=1$ Poincaré supergravity can be obtained from the MacDowell-Mansouri action of anti-de Sitter supergravity as discussed in \cite{MacDowell:1977jt,Castellani:2013iq,Eder:2020erq} performing the Inönü-Wigner contraction. Let $P\rightarrow M$ be the underlying ordinary $\mathrm{Spin}^+(1,3)$-bundle obtained after applying the body functor. Choosing a local section $s:\,M\supset U\rightarrow P\subset\mathcal{P}$, it follows that the action takes the form 
\begin{equation}
    S(\mathcal{A})=\frac{1}{2\kappa}\int_{M}{s^*\mathscr{L}}
\end{equation}
with Langrangian $\mathscr{L}\in\Omega^4_{hor}(\mathcal{P}_{/\mathcal{S}})$ defining a horizontal form on $\mathcal{P}_{/\mathcal{S}}$ given by 
\begin{align}
\mathscr{L}:=\frac{1}{2}F(\omega)^{IJ}\wedge e^K\wedge e^L\epsilon_{IJKL}+
i\bar{\psi}\wedge\gamma_{*}\gamma_ID^{(\omega)}\psi\wedge e^I
\label{eq:Action}
\end{align}
In what follows, we want to study the local symmetries of the Lagrangian of Poincaré supergravity. We will therefore adapt the 'group-geometric' approach of Castellani-D'Auria-Fré. Before we proceed, we need some preparations.
\begin{definition}\label{Prop:8.3}
Let $\mathcal{H}\rightarrow\mathcal{P}_{/\mathcal{S}}\rightarrow\mathcal{M}_{/\mathcal{S}}$ be a $\mathcal{S}$-relative principal super fiber bundle. On $\mathcal{P}_{/\mathcal{S}}$, the set of \emph{infinitesimal automorphisms} is defined as    
\begin{equation}
\mathfrak{aut}(\mathcal{P}_{/\mathcal{S}})=\{X\in\Gamma(T\mathcal{P}_{/\mathcal{S}})|\,(\Phi_{\mathcal{S}})_{h*}X=X,\,\forall h\in\mathcal{H}\}
\end{equation}
Since the generalized tangent map commutes with the commutator between smooth vector fields, it follows that $\mathfrak{aut}(\mathcal{P}_{/\mathcal{S}})$ defines a proper super Lie subalgebra of the super Lie algebra $\Gamma(T\mathcal{P}_{/\mathcal{S}})$ of smooth sections of the tangent bundle of $\mathcal{P}_{/\mathcal{S}}$. 
\end{definition}
Suppose $X\in\Gamma(T\mathcal{M}_{/\mathcal{S}})$ is a smooth vector field. As the restriction of the bundle projection to the horizontal distribution $\mathscr{H}:=\mathrm{ker}(\omega)$ induced by the Ehresmann connection $\omega$ yields an isomorphism on the tangent bundle of $\mathcal{M}_{/\mathcal{S}}$, it follows that there exists a unique horizontal lift $X^*\in\Gamma(T\mathcal{P}_{/\mathcal{S}})$ such that $X^*_p\in\mathscr{H}_p$ and $\pi_{\mathcal{S}*}X^*=X$. Then, by uniqueness, it follows $(\Phi_{\mathcal{S}})_{g^*}X^*=X^*$ since $\pi_{\mathcal{S}}\circ\Phi_{\mathcal{S}}=\pi_{\mathcal{S}}\circ\mathrm{pr}_1$. The horizontal lift thus defines an infinitesimal automorphism on $\mathcal{P}_{/\mathcal{S}}$.  Moreover, it follows that we can identify $\Gamma(T\mathcal{P}_{/\mathcal{S}})\subset\mathfrak{aut}(\mathcal{P}_{/\mathcal{S}})$ with smooth horizontal vector fields on $\mathcal{P}_{/\mathcal{S}}$. Let us next consider the vertical counterpart.
\begin{definition}
The set $\mathfrak{gau}(\mathcal{P}_{/\mathcal{S}})$ of \emph{vertical infinitesimal automorphisms} or \emph{infinitesimal gauge transformations} is defined as the subset of $\mathfrak{aut}(\mathcal{P}_{/\mathcal{S}})$ given by
\begin{equation}
\mathfrak{gau}(\mathcal{P}_{/\mathcal{S}})=\{X\in\mathfrak{aut}(\mathcal{P}_{/\mathcal{S}})|\,X_p\in\mathscr{V}_p,\,\forall p\in\mathcal{P}_{/\mathcal{S}}\}
\end{equation}
Again, since the generalized tangent map preserves the commutator, it follows that the commutator between infinitesimal gauge transformations is again an infinitesimal gauge transformation. Thus, $\mathfrak{gau}(\mathcal{P}_{/\mathcal{S}})$ defines a proper super Lie subalgebra of $\mathfrak{aut}(\mathcal{P}_{/\mathcal{S}})$.
\end{definition}
Together with Proposition \ref{prop:2.24}, we obtain the following.
\begin{prop}
There exists an isomorphism between infinitesimal gauge transformations $\mathfrak{gau}(\mathcal{P}_{/\mathcal{S}})$ and $\mathcal{H}$-equivariant smooth functions on $\mathcal{S}\times\mathcal{P}$ with values in $\mathrm{Lie}(\mathcal{H})$ via
\begin{align}
\mathrm{Ad}(\mathcal{P}_{/\mathcal{S}})\cong H^{\infty}(\mathcal{S}\times\mathcal{P},\mathrm{Lie}(\mathcal{H}))^{\mathcal{H}}&\stackrel{\sim}{\rightarrow}\mathfrak{gau}(\mathcal{P}_{/\mathcal{S}})\\
f&\mapsto X:\,p\mapsto D_{(p,e)}\Phi_{\mathcal{S}}(0_p,f(p))
\end{align}
where $\mathrm{Ad}(\mathcal{P}_{/\mathcal{S}}):=(\mathcal{P}\times_{\mathrm{Ad}}\mathrm{Lie}(\mathcal{H}))_{/\mathcal{S}}$ is the \emph{Adjoint bundle}.
\end{prop}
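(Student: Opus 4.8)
The plan is to establish the asserted isomorphism by first producing the map in both directions and then verifying that it is a well-defined bijection respecting the super Lie module structure. The statement identifies $\mathfrak{gau}(\mathcal{P}_{/\mathcal{S}})$ with $H^{\infty}(\mathcal{S}\times\mathcal{P},\mathrm{Lie}(\mathcal{H}))^{\mathcal{H}}$, and the natural strategy is to recognize this as a special case of the general correspondence between equivariant functions and sections of associated bundles already recorded in Corollary \ref{prop:2.24} (applied to the Adjoint representation $\mathrm{Ad}:\,\mathcal{H}\rightarrow\mathrm{GL}(\mathrm{Lie}(\mathcal{H}))$), combined with the characterization of the vertical tangent module $\mathscr{V}_p=\{\widetilde{X}_p\,|\,X\in\mathrm{Lie}(\mathcal{H})\}$. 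First I would define the map $f\mapsto X_f$ by $X_f(p):=D_{(p,e)}\Phi_{\mathcal{S}}(0_p,f(p))=(\Phi_{\mathcal{S}})_{p*}(f(p))=\widetilde{f(p)}_p$, i.e. the fundamental tangent vector at $p$ generated by $f(p)\in\mathrm{Lie}(\mathcal{H})$, and check that $X_f$ is a smooth vertical vector field lying in $\mathfrak{aut}(\mathcal{P}_{/\mathcal{S}})$.

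The key steps proceed as follows. Smoothness of $X_f$ follows because $f$ is smooth and $\Phi_{\mathcal{S}}$ is smooth; concretely, expanding $f=f^i\otimes X_i$ in a homogeneous basis $(X_i)_i$ of $\mathrm{Lie}(\mathcal{H})$ with smooth coefficient functions $f^i$, one has $X_f=f^i\,\widetilde{X}_i$, a smooth combination of the fundamental vector fields $\widetilde{X}_i=\mathds{1}\otimes X_i\circ\Phi_{\mathcal{S}}^*$. That $X_f$ is vertical is immediate since $\widetilde{f(p)}_p\in\mathscr{V}_p$ by the generation property of the vertical distribution. The crucial point is $\mathcal{H}$-invariance, $(\Phi_{\mathcal{S}})_{h*}X_f=X_f$ for all $h\in\mathcal{H}$: using Lemma \ref{prop:3.14} we have $(\Phi_{\mathcal{S}})_{h*}\widetilde{f(p)}_p=\widetilde{\mathrm{Ad}_{h^{-1}}f(p)}_{\,p\cdot h}$, and the equivariance condition $f(p\cdot h)=\mathrm{Ad}_{h^{-1}}(f(p))$ defining $H^{\infty}(\mathcal{S}\times\mathcal{P},\mathrm{Lie}(\mathcal{H}))^{\mathcal{H}}$ is exactly what is needed to match this with $X_f(p\cdot h)=\widetilde{f(p\cdot h)}_{\,p\cdot h}$. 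Hence $X_f\in\mathfrak{gau}(\mathcal{P}_{/\mathcal{S}})$. For the inverse, given $X\in\mathfrak{gau}(\mathcal{P}_{/\mathcal{S}})$, since $(\Phi_{\mathcal{S}})_{p*}:\,\mathrm{Lie}(\mathcal{H})\rightarrow\mathscr{V}_p$ is an isomorphism (as shown in the proof that $\mathscr{V}$ is a distribution of rank $\dim\mathfrak{h}$), I would define $f_X(p):=(\Phi_{\mathcal{S}})_{p*}^{-1}(X_p)$ and run the invariance computation in reverse to obtain the equivariance of $f_X$.

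Finally I would verify that the two assignments are mutually inverse — immediate from the bijectivity of $(\Phi_{\mathcal{S}})_{p*}$ onto $\mathscr{V}_p$ — and that the map is even and linear over $\Lambda$, so that it is an isomorphism of super $\Lambda$-modules; the identification $\mathrm{Ad}(\mathcal{P}_{/\mathcal{S}})\cong H^{\infty}(\mathcal{S}\times\mathcal{P},\mathrm{Lie}(\mathcal{H}))^{\mathcal{H}}$ is then precisely Corollary \ref{prop:2.24} applied to the Adjoint representation. The main obstacle I anticipate is the careful treatment of smoothness: because $f(p)$ need not be a body point of $\mathrm{Lie}(\mathcal{H})$, the expression $D_{(p,e)}\Phi_{\mathcal{S}}(0_p,f(p))$ must be understood through the generalized tangent map of Definition \ref{definition:3.9}, and one must confirm that the pointwise-defined field $X_f$ assembles into a genuinely $H^{\infty}$-smooth section rather than merely a pointwise family. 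This is handled cleanly by the coefficient expansion $X_f=f^i\,\widetilde{X}_i$ above, which reduces everything to smoothness of the $f^i$ and of the globally smooth fundamental fields $\widetilde{X}_i$, so no partial-evaluation pathologies arise.
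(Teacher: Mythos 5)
Your proposal is correct and follows essentially the same route the paper intends: the paper states this proposition without a separate proof, deriving it from Corollary \ref{prop:2.24} together with the characterization of the vertical distribution by fundamental vector fields, the isomorphism $(\Phi_{\mathcal{S}})_{p*}:\,\mathrm{Lie}(\mathcal{H})\stackrel{\sim}{\rightarrow}\mathscr{V}_p$, and the equivariance identity of Lemma \ref{prop:3.14}, which are exactly the ingredients you assemble. Your handling of smoothness via the expansion $X_f=f^i\,\widetilde{X}_i$ in a real homogeneous basis of $\mathfrak{h}$ is the right way to avoid the partial-evaluation issue, and the rest of the verification matches the paper's implicit argument.
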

\begin{remark}
Note that, according to Prop. \ref{prop:7.0.6}, one can identify global gauge transformations on $\mathscr{G}(\mathcal{P}_{/\mathcal{S}})$ with the set $H^{\infty}(\mathcal{S}\times\mathcal{P},\mathcal{H})^{\mathcal{H}}$ which forms an abstract group via pointwise multiplication. Taking pointwise derivatives, this suggests that one may thus interpret $\mathfrak{gau}(\mathcal{P}_{/\mathcal{S}})$ as the super Lie algebra of the group of global gauge transformations $\mathscr{G}(\mathcal{P}_{/\mathcal{S}})$ if one may be able to equip it with a smooth supermanifold structure. An explicit proof, however, requires the study of infinite dimensional supermanifolds (see for instance \cite{Sac:09,Schuett:2018} for recent results in this direction).
\end{remark}
With these preparations, let us discuss the local symmetries of Poincaré supergravity. Since, the Lagrangian $\mathscr{L}$ is pulled back to the bosonic sub supermanifold $\mathcal{M}_0$ (or rather its body $M=\mathbf{B}(\mathcal{M}_0)$), we only have to require that $\mathscr{L}$ is invariant after restriction to $\mathcal{P}_0$. Hence, let $X\in\mathfrak{aut}(\mathcal{P}_{/\mathcal{S}})$ be an infinitesimal automorphism. We say that $X$ defines a \emph{local symmetry} iff
\begin{equation}
\delta_X\mathscr{L}|_{\mathcal{P}_0}:=\iota_{\mathcal{P}_0}^*(L_X\mathscr{L})=\mathrm{d}\alpha
\label{eq:8.0.0}
\end{equation}
for some smooth $3$-form $\alpha\in\Omega^3((\mathcal{P}_0)_{/\mathcal{S}})$. In order to compute the variation of the Lagrangian, one has to determine the variations of the individual field components. Therefore, note that the super Cartan connection induces an isomorphism
\begin{equation}
\mathcal{A}:\,\mathfrak{aut}(\mathcal{P}_{/\mathcal{S}})\rightarrow H^{\infty}(\mathcal{S}\times\mathcal{P},\mathrm{Lie}(\mathcal{G}))^{\mathcal{H}},\,X\mapsto\braket{X|\mathcal{A}}
\end{equation}
from the super Lie algebra of infinitesimal automorphisms to the super Lie algebra of $\mathcal{H}$-equivariant smooth functions with values in $\mathrm{Lie}(\mathcal{G})$. In fact, for any $X\in\mathfrak{aut}(\mathcal{P}_{/\mathcal{S}})$, we have $(\Phi_{\mathcal{S}})_{h*}X=X$ $\forall h\in\mathcal{H}$ which implies
\begin{equation}
\braket{X|\mathcal{A}}(p\cdot h)=\braket{X_p|(\Phi_{\mathcal{S}})_{h}^*\mathcal{A}}=\mathrm{Ad}_{h^{-1}}\braket{X|\mathcal{A}}(p)
\end{equation}
Hence, it follows that the variation of the super Cartan connection takes the form
\begin{equation}
\delta_X\mathcal{A}:=L_X\mathcal{A}=\iota_XF(\mathcal{A})+D^{(\mathcal{A})}(\iota_X\mathcal{A})
\label{eq:8.234}
\end{equation}
where, according to Remark \ref{prop:3.0.25}, $D^{(\mathcal{A})}(\iota_X\mathcal{A})=\mathrm{d}(\iota_X\mathcal{A})+[\mathcal{A}\wedge\iota_X\mathcal{A}]$. In fact, choosing a homogeneous basis $(T_{\underline{A}})_{\underline{A}}$ of $\mathfrak{g}$, a direct calculation yields (setting $|\underline{A}|\equiv|T_{\underline{A}}|$)
\begin{align}
\iota_X[\mathcal{A}\wedge\mathcal{A}]&=(-1)^{|\underline{A}||\underline{B}|}\iota_X(\mathcal{A}^{\underline{A}}\wedge\mathcal{A}^{\underline{B}})\otimes[T_{\underline{A}},T_{\underline{B}}]\nonumber\\
&=(-1)^{|\underline{A}||\underline{B}|}(\iota_X\mathcal{A}^{\underline{A}}\wedge\mathcal{A}^{\underline{B}}-(-1)^{|\underline{A}||X|}\mathcal{A}^{\underline{A}}\wedge\iota_X\mathcal{A}^{\underline{B}})\otimes[T_{\underline{A}},T_{\underline{B}}]\nonumber\\
&=(-1)^{(|\underline{A}|+|X|)|\underline{B}|}\mathcal{A}^{\underline{B}}\wedge\iota_X\mathcal{A}^{\underline{A}}\otimes[T_{\underline{B}},T_{\underline{A}}]-(-1)^{(|\underline{B}|+|X|)|\underline{A}|}\mathcal{A}^{\underline{A}}\wedge\iota_X\mathcal{A}^{\underline{B}}\nonumber\\
&=-2[\mathcal{A}\wedge\iota_X\mathcal{A}]
\end{align}
which immediately gives \eqref{eq:8.234}. Since $\mathscr{L}$ is obviously invariant under global $\mathrm{Spin}^+(1,3)$-gauge transformations by definition, i.e., it is horizontal, it follows that any $X\in\mathfrak{gau}(\mathcal{P}_{/\mathcal{S}})$, in particular, defines a local symmetry of the Lagrangian without even pulling it back to $\mathcal{P}_0$. That is, we have
\begin{equation}
    \delta_X\mathscr{L}=0,\quad\forall X\in\mathfrak{gau}(\mathcal{P}_{/\mathcal{S}})
\end{equation}
Since the Cartan curvature $F(\mathcal{A})$ is horizontal, this furthermore implies that the curvature contribution to the variations \eqref{eq:8.234} of the Cartan connection vanish in general iff $X\in\mathfrak{gau}(\mathcal{P}_{/\mathcal{S}})$. In this case, the variation of the connection takes the form of an ordinary infinitesimal gauge transformation.

On the other hand, this is not immediately the case if $X$ is horizontal and the curvature contributions do not vanish a priori. Let us briefly explain how these are treated in the Castellani-D'Auria-Fré approach to supergravity and how this approach may be related to the present formalism (for a detailed introduction to this fascinating subject see for instance \cite{DAuria:1982uck,Castellani:1991et,Castellani:2018zey} as well as \cite{Castellani:2014goa,Cremonini:2019aao,Catenacci:2018xsv} using the concept of \emph{integral forms}).

The horizontal vector fields can be subdivided into two different categories. In fact, note that the super Poincaré algebra splits into three $\mathcal{H}$-equivariant subspaces $\mathfrak{iso}(\mathbb{R}^{1,3|4})=\mathbb{R}^{1,3}\oplus\mathfrak{spin}^+(1,3)\oplus\Delta_{\mathbb{R}}$ so that one can decompose $H^{\infty}(\mathcal{S}\times\mathcal{P},\mathrm{Lie}(\mathcal{G}))^{\mathcal{H}}\cong\Gamma(\mathcal{E}_{/\mathcal{S}})$ according to
\begin{equation}
\mathcal{E}:=\mathcal{E}^{\mathbb{R}^{1,3}}\oplus\mathcal{E}^{\mathfrak{spin}}\oplus\mathcal{E}^{\Delta_{\mathbb{R}}}
\end{equation}
where, for instance, $\mathcal{E}^{\Delta_{\mathbb{R}}}:=\mathcal{P}\times_{\mathrm{Ad}}(\Lambda\otimes\Delta_{\mathbb{R}})$ denotes the associated super vector bundle corresponding to the real Majorana representation on $\Delta_{\mathbb{R}}$. Horizontal vector fields then correspond to sections of the bundle $\mathcal{E}^{\mathbb{R}^{1,3}}\oplus\mathcal{E}^{\Delta_{\mathbb{R}}}$. In the Castellani-D'Auria-Fré approach, horizontal vector fields corresponding to $\mathcal{E}^{\mathbb{R}^{1,3}}$ are typically referred to as infinitesimal spacetime translations while those corresponding to $\mathcal{E}^{\Delta_{\mathbb{R}}}$ are associated to supersymmetry transformations. In general, they do not provide local symmetries of the Lagrangian. In the Castellani-D'Auria-Fré approach, one then tries to resolve this by appropriately fixing the curvature contributions (or rather their pullback to $\mathcal{P}_0$) to the variations of the connection. These are typically referred to as the \emph{horizontality} and \emph{rheonomy conditions}. However, this cannot be done arbitrarily as, for instance, one has to ensure consistency with the Bianchi identity $D^{(\mathcal{A})}F(\mathcal{A})=0$.

To explain this in a bit more detail, following \cite{Castellani:2018zey}, note that the Lie derivative of the Lagrangian $L_{X}\mathscr{L}=\mathrm{d}(\iota_X\mathscr{L})+\iota_X\mathrm{d}\mathscr{L}$ picks up an additional exact form so that, after pulling back to $\mathcal{P}_0$, condition \eqref{eq:8.0.0} can be written in the equivalent form
\begin{equation}
    (\iota_X\mathrm{d}\mathscr{L})|_{\mathcal{P}_0}=\mathrm{d}\alpha'
    \label{eq:8.0.0.1}
\end{equation}
for some $\alpha'\in\Omega^3((\mathcal{P}_0)_{/\mathcal{S}})$. In order to compute the variation of the Lagrangian, one observes that its exterior derivative can be completely re-expressed in terms of the components \eqref{eq:torsion} of the super Cartan curvature. In fact, exploiting the manifest $\mathrm{Spin}^+(1,3)$-invariance of the Lagrangian, it follows that
\begin{align}
    \mathrm{d}\mathscr{L}=&F(\omega)^{IJ}\wedge D^{(\omega)}e^K\wedge e^L\epsilon_{IJKL}+
iD^{(\omega)}\bar{\psi}\wedge\gamma_{*}\gamma_ID^{(\omega)}\psi\wedge e^I-i\bar{\psi}\wedge\gamma_{*}\gamma_ID^{(\omega)}D^{(\omega)}\psi\wedge e^I\nonumber\\
&-i\bar{\psi}\wedge\gamma_{*}\gamma_ID^{(\omega)}\psi\wedge D^{(\omega)}e^I\nonumber\\
=&F(\mathcal{A})^{IJ}\wedge F(\mathcal{A})^K\wedge e^L\epsilon_{IJKL}+
i\bar{\rho}\wedge\gamma_{*}\gamma_I\rho\wedge e^I+\frac{1}{4}F(\omega)^{IJ}\wedge\bar{\psi}\wedge\gamma^K\psi\wedge e^L\epsilon_{IJKL}\nonumber\\
&-\frac{1}{4}F(\omega)^{JK}\wedge\bar{\psi}\wedge\gamma_{*}\gamma_{JK}\psi\wedge e^I-i\bar{\psi}\wedge\gamma_{*}\gamma_I\rho\wedge F(\mathcal{A})^I-i\bar{\rho}\wedge\gamma_{*}\gamma_I\psi\wedge\bar{\psi}\wedge\gamma^I\psi
 \label{eq:8.0.2}
\end{align}
where we set $\rho^{\alpha}:=F(\mathcal{A})^{\alpha}$ and used the relation $\bar{\psi}\wedge\gamma_{*}\gamma_I\rho=\bar{\rho}\wedge\gamma_{*}\gamma_I\psi$. Moreover, in the second equality, we made use of the Bianchi identity
\begin{equation}
    D^{(\omega)}D^{(\omega)}\psi=\kappa_{\mathbb{R}*}(F(\omega))\wedge\psi=\frac{1}{4}F(\omega)^{IJ}\wedge\gamma_{IJ}\psi
\end{equation}
Using the relation $\bar{\psi}\gamma_*\gamma_I\gamma_{JK}\psi=-\bar{\psi}\gamma_*\gamma_{JK}\gamma_I\psi$ which gives
\begin{equation}
    \bar{\psi}\wedge\gamma_*\gamma_I\gamma_{JK}\psi=\frac{1}{2}\bar{\psi}\wedge\gamma_*\{\gamma_I,\gamma_{JK}\}\psi=i\epsilon_{IJKL}\bar{\psi}\wedge\gamma^L\psi
\end{equation}
it follows that the third and forth term in the second equality of \eqref{eq:8.0.2} exactly cancel. Furthermore, due to the \emph{Fierz identity} $\gamma_I\psi\wedge\bar{\psi}\wedge\gamma^I\psi=0$, it follows that \eqref{eq:8.0.0.1} reduces to
\begin{align}
    \mathrm{d}\mathscr{L}=&F(\mathcal{A})^{IJ}\wedge F(\mathcal{A})^K\wedge e^L\epsilon_{IJKL}+
i\bar{\rho}\wedge\gamma_{*}\gamma_I\rho\wedge e^I-i\bar{\psi}\wedge\gamma_{*}\gamma_I\rho\wedge F(\mathcal{A})^I
 \label{eq:8.0.3}
\end{align}
Thus, indeed, the exterior derivative of the Lagrangian can be solely expressed in terms of the curvature of the super Cartan connection. 

Let us consider variations of the Lagrangian which correspond to supersymmetry transformations. Thus, let
$X\in\Gamma(\mathcal{E}^{\Delta_{\mathbb{R}}}_{/\mathcal{S}})_0$ be even smooth vector field and $\epsilon:=\braket{X|\mathcal{A}}$ be the corresponding $\mathrm{Spin}^+(1,3)$-equivariant function. The contraction of  \eqref{eq:8.0.3} with $X$ then takes the form
\begin{align}
    \iota_X\mathrm{d}\mathscr{L}=&\iota_X F(\mathcal{A})^{IJ}\wedge F(\mathcal{A})^K\wedge e^L\epsilon_{IJKL}+F(\mathcal{A})^{IJ}\wedge\iota_X F(\mathcal{A})^K\wedge e^L\epsilon_{IJKL}\nonumber\\
&+2i\bar{\rho}\wedge\gamma_{*}\gamma_I(\iota_X\rho)\wedge e^I
-i\bar{\epsilon}\wedge\gamma_{*}\gamma_I\rho\wedge F(\mathcal{A})^I+i\bar{\epsilon}\wedge\gamma_{*}\gamma_I(\iota_X\rho)\wedge F(\mathcal{A})^I\nonumber\\
&+i\bar{\psi}\wedge\gamma_{*}\gamma_I\rho\wedge \iota_X F(\mathcal{A})^I
\end{align}
Thus, when pulled back to the bosonic subbundle $\mathcal{P}_0$, we see that condition  \eqref{eq:8.0.0.1} for a local symmetry is satisfied if
\begin{equation}
    \iota_XF(\mathcal{A})^{I}=0,\quad\iota_XF(\mathcal{A})^{\alpha}=0,\quad\iota_XF(\mathcal{A})^{IJ}\wedge e^K\epsilon_{IJKL}=-i\bar{\rho}\gamma_*\gamma_L\epsilon
    \label{eq:8.0.4}
\end{equation}
where, here and in the following, the pullback to $\mathcal{P}_0$ is always implicitly assumed. As can be checked by direct computation, the last condition in \eqref{eq:8.0.4} is solved by
\begin{equation}
    \iota_XF(\mathcal{A})^{IJ}=-\frac{i}{4}\left(\epsilon^{IJKL}\bar{\rho}_{KL}\gamma_*\gamma_M\epsilon\,e^M+\epsilon^{KLM[I}\bar{\rho}_{KL}\gamma_*\gamma_M\epsilon\,e^{J]}\right)=:\bar{\theta}^{IJ}_K\epsilon\,e^K
    \label{eq:8.0.5}
\end{equation}
where we set $\rho=:\frac{1}{2}\rho_{IJ}e^I\wedge e^J$. Thus, to summarize, it follows that even smooth vector fields $X\in\Gamma(\mathcal{E}^{\Delta_{\mathbb{R}}}_{/\mathcal{S}})_0$ define local symmetries of the Lagrangian provided that their contraction with the super Cartan curvature satsifies the conditions \eqref{eq:8.0.4} and \eqref{eq:8.0.5} also called the \emph{rheonomy conditions}. Inserting these conditions into the general formula \eqref{eq:8.234}, it follows that the supersymmetry transformations of the individual components of the super Cartan connection take the form 
\begin{align}
\delta_{X}e^I=\frac{1}{2}\bar{\epsilon}\gamma^I\psi,\quad\delta_{X}\psi=D^{(\omega)}\epsilon\quad\text{and}\quad\delta_{X}\omega^{IJ}=\bar{\theta}^{IJ}_K\epsilon\,e^K
\end{align}
Note again that, by definition, $X\in\Gamma(\mathcal{E}^{\Delta_{\mathbb{R}}}_{/\mathcal{S}})_0$ implies $\braket{X|\omega}=0$, that is, $X$ is horizontal. Hence, in this framework, supersymmetry transformations have the interpretation in terms of super diffeomorphisms on the base supermanifold $\mathcal{M}$. Of course, one still needs to check whether the rheonomy conditions are in fact compatible with the Bianchi identity. As it turns out, this is indeed the case provided that the basic fields satisfy their equations of motion. Thus, in this framework, it follows that supersymmetry transformations can be interpreted in terms of super diffeomorphisms only when applied on solutions of the field equations \cite{Castellani:2018zey}. For general field configurations, this will no longer be the case. In this case, one needs to add additional fields, so-called \emph{auxilary fields}, to the theory. For more details on this subject, the interested reader may be referred to \cite{Castellani:2018zey} and references therein.
\begin{remark}
As discussed in \cite{Eder:2020erq}, in the Cartan geometric framework, there also exists another kind of variation of the Lagrangian arising from the lift of the Cartan connection to an Ehresmann connection on the associated bundle. In fact, using Prop. \ref{prop:2.27}, one can consider the $\mathcal{G}$-extension $\mathcal{P}[\mathcal{G}]_{/\mathcal{S}}:=(\mathcal{P}\times_{\mathcal{H}}\mathcal{G})_{/\mathcal{S}}$ with respect to which $\mathcal{P}_{/\mathcal{S}}$ defines a $\mathcal{H}$-reduction via the embedding
\begin{equation}
\hat{\iota}:\,\mathcal{P}_{/\mathcal{S}}\rightarrow\mathcal{P}[\mathcal{G}]_{/\mathcal{S}},\,p\mapsto[p,e]
\end{equation}
According to Prop. \ref{prop:4.6}, the Cartan connection can be lifted to an Ehresmann connection $\hat{\mathcal{A}}$ on the associated bundle in a unique way such that $\hat{\iota}^*\hat{\mathcal{A}}=\mathcal{A}$. Consider then a gauge transformation $X\in\mathfrak{gau}(\mathcal{P}[\mathcal{G}]_{/\mathcal{S}})$ such that $\epsilon:=\braket{X|\hat{\mathcal{A}}}\in H^{\infty}(\mathcal{S}\times\mathcal{P}[\mathcal{G}],\Lambda\otimes\Delta_{\mathbb{R}})^{\mathcal{G}}$. It then follows that the variations $\delta_X\hat{\mathcal{A}}:=L_X\hat{\mathcal{A}}$ of the individual components (denoted by the same symbol) of the connection take the form 
\begin{align}
\delta_{X}e^I=\frac{1}{2}\bar{\epsilon}\gamma^I\psi,\quad\delta_{X}\psi=D^{(\omega)}\epsilon\quad\text{and}\quad\delta_{X}\omega^{IJ}=0
\end{align}
In contrast to the previous considerations, the curvature now does not enter to the field variations as the infinitesimal automorphisms are vertical, i.e., they are gauge transformations, not super diffeomorphisms. When pulled back to $\mathcal{P}_{/\mathcal{S}}$, this yields precisely the supersymmetry transformation for the supervielbein $E$ as found in the previous prescription while the variation of the spin connection is altered. It follows that these variations indeed lead to local symmetries of the theory provided $\omega$ satisfies its field equations \cite{Freedman:1976xh,vanNieuwenhuizen:2004rh} which equivalent to the supertorsion constraint $F(\mathcal{A})^I=0$.
\end{remark}
\subsubsection{On the role of the parametrizing supermanifold}\label{sec:Param}
As we have frequently observed in the previous sections, among other things, studying parametrized supermanifolds is mandatory in order to incorporate nontrivial fermionic degrees of freedom on the body of a supermanifold. In fact, for the action $S(\mathcal{A})$ corresponding to Poincaré supergravity, one has
\begin{equation}
S(\mathcal{A})=\int_{M}s^*\mathscr{L}\in H^{\infty}(\mathcal{S})_0
\end{equation}
that is, the action defines an even smooth map on the underlying parametrizing supermanifold $\mathcal{S}$. If $\mathcal{S}$ would be trivial, this then implies that the fermionic fields contained in the action would simply drop off, that is, the action reduces the standard action of ordinary Einstein gravity.

It follows from the definition of super connection forms on parametrized super fiber bundles that all physical quantities transform covariantly under change of parametrization. More precisely, let $\lambda:\,\mathcal{S}'\rightarrow\mathcal{S}$ be a change of parametrization. Then, the super connection form $\mathcal{A}$ transforms via $\mathcal{A}\rightarrow\lambda^*\mathcal{A}$ so that for the action of the theory it follows 
\begin{equation}
    S(\mathcal{A})\rightarrow S(\lambda^*\mathcal{A})=\lambda^*S(\mathcal{A})\in H^{\infty}(\mathcal{S}')_0
\end{equation}
This may be regarded as the mathematical realization of the physical requirement that the physical theory should not depend on a particular choice of a parametrizing supermanifold.

In what follows, we want to further analyze the structure of field configuration space of Poincaré supergravity and work out explicitly the relation to pAQFT \cite{Rejzner:2011au,Rejzner:2016hdj}. However, let us emphasize that a full consistent treatment requires the study of infinite dimensional supermanifolds. Hence, in the following, we will only sketch the main ideas behind such a link. A rigorous account with detailed proofs will be reported elsewhere.

Choosing a reference connection, the configuration space of the theory can be identified with $\Omega^1(\mathcal{M}_{/\mathcal{S}},\mathrm{Ad}(\mathcal{P_{/\mathcal{S}}}))_0$. Since, by the \emph{rheonomy principle} \cite{Castellani:1991et}, we are actually only dealing with the pullback of super connections on the body of the supermanifold, in what follows, it suffices restrict on forms defined on the body $M:=\mathbf{B}(\mathcal{M})$ so that, for a specific choice of parametrization $\mathcal{S}$, the configuration space $\mathcal{C}$ of the theory can be taken to be
\begin{equation}
    \mathcal{C}:=(H^{\infty}(\mathcal{S})\otimes\Omega^1(M,P\times_{\mathrm{Ad}}\mathfrak{g}))_0=H^{\infty}(\mathcal{S})_0\otimes F_0\oplus H^{\infty}(\mathcal{S})_1\otimes F_1
\end{equation}
where $F$ denotes the infinite dimensional $\mathbb{Z}_2$-graded vector space given by
\begin{equation}
    F:=\Omega^1(M,E)=\Omega^1(M,E_0)\oplus\Omega^1(M,E_1)
\end{equation}
Here, $E:=P\times_{\mathrm{Ad}}\mathfrak{g}$ is the associated bundle which itself carries the structure of a super vector space with even and odd part respectively given by  $E_0=P\times_{\mathrm{Ad}}\mathfrak{g}_0$ and $E_1=P\times_{\mathrm{Ad}}\mathfrak{g}_1$ with $E_1$ the spinor bundle of Majorana fermions. For $\Phi\in\mathcal{C}$ it follows that, for any $s\in\mathcal{S}$, $\Phi(s)$ defines an element of the superspace
\begin{equation}
    \overline{F}(\Lambda):=(F\otimes\Lambda)_0=F_0\otimes\Lambda_0\oplus F_1\otimes\Lambda_1
\end{equation}
where $\Lambda$ is the Grassmann algebra over which $\mathcal{S}$ is modeled as a $H^{\infty}$ supermanifold. Hence, in this sense, it follows that one can identify the configuration space $\mathcal{C}$ with the space of $H^{\infty}$-smooth functions on $\mathcal{S}$ with values in the superspace $\overline{F}(\Lambda)$.

So far, our consideration was based on the choice of a particular parametrizing supermanifold $\mathcal{S}$. However, as explained at the beginning of this section, by definition of super connection forms defined on parametrized principal super fiber bundles, all the above constructions behave covariantly under change of parametrization. Due to this property, one may ask the question, whether there exists a particular choice of a parametrization $\EuScript{S}$, possibly infinite dimensional, such that any field configuration associated to some finite $\mathcal{S}$ can be obtained via pullback. Hence, $\EuScript{S}$ should be suitably large enough to encode all the configurations associated to $\mathcal{S}$-parametrized field theories. 

In fact, this idea has been first studied by Schmitt in \cite{Schmitt:1996hp} where, in this context, a theory of infinite dimensional \emph{analytic supermanifolds} has been developed. In the following, we want to sketch this idea using the Molotkov-Sachse approach to supermanifold theory \cite{Mol:10,Sac:08}, more precisely locally convex supermanifolds as considered in \cite{Schuett:2018}, as this seems to be the more established approach to this subject. In fact, as already outlined in the introduction, the Moltokov-Sachse approach can be regarded as a generalization of the correspondence between finite dimensional algebro-geometric and Rogers-De Witt supermanifolds via the functor of points prescription to the case of infinite dimensions. This is actually one of the reasons why we have focused on the Rogers-De Witt approach in this paper. 

To find a suitable candidate for $\EuScript{S}$, note that, for any $\Lambda\in\mathbf{Ob}(\mathbf{Gr})$, the superspace $\overline{F}(\Lambda)$ can be endowed with the structure of a locally convex space by choosing a particular locally convex topology on $F$ and extending it to $\overline{F}(\Lambda)$ using the product topology. In this way, the assignment $\mathbf{Ob}(\mathbf{Gr})\ni\Lambda\mapsto\overline{F}(\Lambda)$ induces a functor 
\begin{equation}
    (\overline{F}:\,\mathbf{Gr}\rightarrow\mathbf{Top})\in\mathbf{Top}^{\mathbf{Gr}}
\end{equation}
from the category of superpoints to the category $\mathbf{Top}$ of topological spaces. Hence, $\overline{F}$ defines a supermanifold (more precisely, a superdomain) in the sense of Molotkov-Sachse \cite{Mol:10,Sac:08,Schuett:2018}. 

Next, note that the parametrizing supermanifold $\mathcal{S}$, as a $H^{\infty}$ supermanifold, can be regarded as a $\Lambda$-point $\mathcal{S}\equiv\overline{\mathcal{S}}(\Lambda)$ of a particular algebro-geometric supermanifold $\overline{\mathcal{S}}$. On the other hand, via the functor of points prescription, $\overline{\mathcal{S}}$ itself induces a functor $\overline{\mathcal{S}}:\,\mathbf{Gr}\rightarrow\mathbf{Top}$ and thus yields a Molotkov-Sachse supermanifold (for a proof see e.g. \cite{Sac:08}). Hence, if we do not focus on a particular Grassmann algebra, according to the discussion above, we may identify the configuration space $\mathcal{C}$ with $(\mathcal{SC}^{\infty}(\overline{\mathcal{S}})\hat{\otimes} F)_0$, i.e., smooth functions (in the sense of Molotkov-Sachse) on $\overline{\mathcal{S}}$ with values in $\overline{F}$. We now make the important assumption that this space can be identified with $\mathcal{SC}^{\infty}(\overline{\mathcal{S}},\overline{F})$, that is, the space smooth maps between the infinite dimensional supermanifolds $\overline{\mathcal{S}}$ and $\overline{F}$. Note that this would be trivially the case, if $F$ were finite dimensional. 

Hence, in this case, it follows that any $\Phi\in\mathcal{C}$ can be identified with a morphism $\mu_{\Phi}:\,\overline{\mathcal{S}}\rightarrow\overline{F}$. But, note that $\mu_{\Phi}\equiv\mu_{\Phi}^*(x_{\overline{F}})$ with $x_{\overline{F}}:=\mathrm{id}:\,\overline{F}\rightarrow\overline{F}$ the identity morphism. Hence, any field configuration $\Phi$ associated to a particular parametrizing supermanifold $\overline{\mathcal{S}}$ can be obtained via pullback of $x_{\overline{F}}$ w.r.t. a (unique) morphism $\mu_{\Phi}:\,\overline{\mathcal{S}}\rightarrow\overline{F}$. As in \cite{Schmitt:1996hp}, we may therefore call $x_{\overline{F}}$ a \emph{fundamental coordinate} and the morphism $\mu_{\Phi}$ the \emph{classifying morphism} of the field configuration $\Phi$. Hence, to summarize, based on these observations, this suggests setting $\EuScript{S}:=\overline{F}$. With slight abuse of terminology, in what follows, we will also often refer to $\EuScript{S}$ as a configuration space.

As shown in \cite{Schuett:2018}, any smooth function $f\in\mathcal{SC}^{\infty}(\overline{F})\equiv\mathcal{SC}^{\infty}(\overline{F},\overline{\mathbb{R}^{1|1}})$ on $\overline{F}$ can be uniquely described in terms of a so-called \emph{skeleton} $(f_n)_n$ consisting of smooth maps between locally convex spaces $f_n:\,F_0\rightarrow\mathcal{A}\mathrm{lt}^n(F_1,\mathbb{R}^{1|1})$, $n\in\mathbb{N}_0$, from $F_0$ to $\mathcal{A}\mathrm{lt}^n(F_1,\mathbb{R}^{1|1})$, i.e., the space of graded symmetric $n$-multilinear smooth functionals on $F_1$. Thus, we can make the identification 
\begin{equation}
    \mathcal{SC}^{\infty}(\overline{F},\overline{\mathbb{R}^{1|1}})\cong C^{\infty}(F_0,\mathcal{A}\mathrm{lt}(F_1,\mathbb{R}^{1|1}))
    \label{eq:MolSac}
\end{equation}
It is interesting to note that \eqref{eq:MolSac} is precisely the field configuration space as considered in \cite{Rejzner:2011au,Rejzner:2016hdj} in context of pAQFT. There, among other things, this space has been considered in order to (classically) consistently incorporate the anticommutative nature of fermionic fields. As we see, here, it arises quite naturally studying relative supermanifolds.

To make this link to the description of fermionic fields in pAQFT even more precise, let us choose a mutual local trivialization neighborhood of $M$ and the vector bundle $E$. Let $(e_{\underline{A}})_{\underline{A}}$ with $\underline{A}\in\{I,\alpha\}$ be a corresponding homogeneous basis of local sections of $E$. Decomposing the fundamental coordinate $x_{\overline{F}}$ w.r.t. the homogeneous basis $(e_{\underline{A}})_{\underline{A}}$ and evaluating on coordinate differentials $\partial_{\mu}|_p$ at any point $p\in M$, it follows that the odd components induce smooth functionals $\Psi_{\mu}^{\alpha}(p)\in(F_1)'$, with $(F_1)'=\Gamma'(T^*M\otimes E_1)$ the topological dual of $F_1$, via $\Psi_{\mu}^{\alpha}(p):=\mathrm{pr}_{\alpha}\circ\braket{\partial_{\mu}|_{p}| x_{\overline{F}}(\cdot)}$ such that
\begin{equation}
    \Psi_{\mu}^{\alpha}(p):\,F_1\rightarrow\mathbb{R}^{0|1}\cong\mathbb{R},\,\psi\mapsto\psi^{\alpha}_{\mu}(p)
\end{equation}
Thus, it follows that, in this framework, fermionic fields are described in terms of odd evaluation functionals on the configuration space. This is exactly the interpretation of (classical) anticommutative fermionic fields in pAQFT \cite{Rejzner:2011au,Rejzner:2016hdj}. Given two fermionic fields $\Psi_{\mu}^{\alpha}(p)$ and $\Psi_{\nu}^{\beta}(p)$ at the same point $p\in M$, their product is defined via the ordinary wedge product yielding the bilinear map
\begin{equation}
    \Psi_{\mu}^{\alpha}(p)\Psi_{\nu}^{\beta}(p)\equiv\Psi_{\mu}^{\alpha}(p)\wedge\Psi_{\nu}^{\beta}(p)=-\Psi_{\nu}^{\beta}(p)\wedge\Psi_{\mu}^{\alpha}(p)
\end{equation}
so that, in this sense, the fermionic fields are indeed anticommutative.

\subsection{Killing vector fields and Killing spinors}
We want to describe global symmetries of a super Cartan geometry. Therefore, let $(\pi_{\mathcal{S}}:\,\mathcal{P}_{/\mathcal{S}}\rightarrow\mathcal{M}_{/\mathcal{S}},\mathcal{A})$ be a metric and reductive super Cartan geometry modeled on a super Klein geometry $(\mathcal{G},\mathcal{H})$ with super Cartan connection $\mathcal{A}$ and smooth $\mathrm{Ad}(\mathcal{H})$-invariant super metric $\mathscr{S}$ defined on $\mathfrak{g}/\mathfrak{h}$. For sake of simplicity, let us assume that $\mathcal{S}$ is a \emph{superpoint}, i.e., the body just consists of a single point $\mathbf{B}(\mathcal{S})=\{*\}$.\\
From a physical perspective, the necessity of the choice of a nontrivial parametrizing supermanifold $\mathcal{S}$ is based on the requirement of non-vanishing (anticommuting) fermionic degrees of freedom on the body of a supermanifold. The structure of the underlying base supermanifold $\mathcal{M}$ in the Cartan geometric framework, however, is encoded in the super soldering form $E=\mathrm{pr}_{\mathfrak{g}/\mathfrak{h}}\circ\mathcal{A}$ when restricted on bosonic configurations. In fact, let $\mathscr{H}=\mathrm{ker}(\omega|_{\mathbf{B}(\mathcal{S})})$ be the horizontal distribution induced by pullback of the Ehresmann connection $\omega:=\mathfrak{h}\circ\mathcal{A}$ to $\mathbf{B}(\mathcal{S})$. By the super Cartan condition, it then follows that the restriction $\theta:=E|_{\mathbf{B}(\mathcal{S})}\in\Omega^1(\mathcal{P},\mathfrak{g}/\mathfrak{h})$ yields an isomorphism
\begin{equation}
\theta_p:\,\mathscr{H}_p\rightarrow\Lambda\otimes\mathfrak{g}/\mathfrak{h}
\end{equation}
of free super $\Lambda$-modules at any $p\in\mathcal{P}$. Horizontal vector fields on $\mathcal{P}$ are in one-to-one correspondence with smooth vector fields on the base supermanifold $\mathcal{M}$. If $X\in\Gamma(T\mathcal{M})$ is a smooth vector field and $X^*$ its horizontal lift, then $X^*$, in particular, defines an infinitesimal bundle automorphism. By $\mathcal{H}$-equivariance, it follows that $\braket{X^*|\theta}$ defines a $\mathcal{H}$-equivariant smooth function on $\mathcal{M}$ with values in $\Lambda\otimes\mathfrak{g}/\mathfrak{h}$. Thus, to summarize, we have an isomorphism
\begin{equation}
\theta:\,\Gamma(T\mathcal{M})\stackrel{\sim}{\rightarrow} H^{\infty}(\mathcal{P},\Lambda\otimes\mathfrak{g}/\mathfrak{h})^{\mathcal{H}},\,X\rightarrow\braket{X^*|\theta}
\end{equation}
Since the super Cartan geometry is metric, according to Corollary \ref{Corollary:4.9}, the super soldering form $\theta$ induces a super metric $g$ on $\mathcal{M}$ which, w.r.t. any local section $s\in\Gamma(\mathcal{P})$ of the principal super fiber bundle, takes the form
\begin{equation}
g=(-1)^{|e_i||e_j|}\mathscr{S}_{ij}(s^*\theta)^i\otimes(s^*\theta)^j
\label{eq:8.241}
\end{equation}
where we have chosen a homogeneous basis $(e_i)_i$ of $\mathfrak{g}/\mathfrak{h}$ and set $\mathscr{S}_{ij}:=\mathscr{S}(e_i,e_j)$. With respect to this metric, the base $(\mathcal{M},g)$ has the structure of a \emph{super Riemannian manifold}. We now want to introduce the super analog of an infinitesimal isometry on a super Riemannian manifold.
\begin{definition}
Let $(\mathcal{M},g)$ be a super Riemannian manifold. A smooth vector field $X\in\Gamma(T\mathcal{M})$ is called a \emph{Killing vector field} if $g$ is constant along the flow generated by $X$, that is, 
\begin{equation}
L_Xg=0
\end{equation}
Since $L_{[X,Y]}=[L_X,L_Y]$ for any smooth vector fields $X$ and $Y$, Killing vector fields form a super Lie subalgebra $\mathfrak{k}(\mathcal{M})$ of $\Gamma(T\mathcal{M})$.
\end{definition}
In the supergeometric framework, Killing vector fields $X\in\mathfrak{k}(\mathcal{M})$ fall into two categories depending on their grading. In context of the super Cartan geometry, we will call an odd Killing vector field $X\in\mathfrak{k}(\mathcal{\mathcal{M}})_1$ a \emph{Killing spinor}. To explain its name, let us consider for example the super Cartan geometry corresponding to $\mathcal{N}=1$ Poincaré supergravity and let $X\in\mathfrak{k}(\mathcal{\mathcal{M}})_1$ be a odd Killing vector field. Via the super soldering form, this then corresponds to an odd smooth function 
\begin{equation}
\epsilon:=H^{\infty}(\mathcal{P},\Lambda\otimes\mathfrak{t})^{\mathrm{Spin}^+(1,3)}_1\cong\Gamma(\mathcal{P}\times_{\mathrm{Spin}^+(1,3)}\Lambda\otimes\mathfrak{t})_1
\end{equation}
with $\mathfrak{t}=\mathbb{R}^{1,3}\oplus\Delta_{\mathbb{R}}$ the super Lie algebra of the super translation group $\mathcal{T}^{1,3|4}$. If one restricts to the bosonic sub supermanifold $\mathcal{M}_0$ this then implies that $\epsilon$ defines a section of the associated spinor bundle, that is, it defines a Majorana spinor. Thus, Killing spinors of the super Riemannian geometry, induced by the metric reductive Cartan geometry, are associated to Majorana spinors.

Note that odd Killing vector fields on a supermanifold vanish when pulled back to the underlying bosonic supermanifold $\mathcal{M}_0$. Thus, they define trivial infinitesimal isometries of the bosonic Riemannian manifold $(\mathcal{M}_0,g_0)$ with $g_0$ the even part of the super metric $g$. Nevertheless, existence of Killing spinors, in general, may still impose strong restrictions on the structure of the underlying bosonic geometry. In fact, given two Killing spinors $X,Y\in\mathfrak{k}(\mathcal{M})_1$ their (graded) commutator $[X,Y]\in\mathfrak{k}(\mathcal{M})_0$ defines an even Killing vector field which can be non-vanishing when pulled back the bosonic submanifold.

To illustrate this, let us consider the homogeneous super Cartan geometry $(\pi:\,\mathcal{G}\rightarrow\mathcal{G}/\mathcal{H},\theta_{\mathrm{MC}})$ given by the super Klein geometry $(\mathcal{G},\mathcal{H})=(\mathrm{OSp}(1|4),\mathrm{Spin}^+(1,3))$ corresponding to the homogeneous supermanifold $\mathcal{M}:=\mathcal{G}/\mathcal{H}$ also called the \emph{super anti-de Sitter space} due to $\mathbf{B}(\mathcal{M})\cong\mathrm{Sp}(4)/\mathrm{Spin}^+(1,3)\cong\mathrm{AdS}_4$ (see e.g. \cite{Nicolai:1984hb,Freedman:1983na,Wipf:2016,Cheng:2012} for a definition of the orthosymplectic Lie superalgebra as well as \cite{Eder:2020erq} for our choice of conventions). Again, $\theta_{\mathrm{MC}}$ denotes the Maurer-Cartan form on $\mathcal{G}$. On $\mathfrak{osp}(1|4)$, we can define a smooth bi-invariant super metric $\mathscr{S}$ induced by the \emph{supertrace} $\mathrm{str}$ on $\mathrm{Mat}(1|4,\Lambda^{\mathbb{C}})$ setting    
\begin{equation}
\mathscr{S}(X,Y):=-\mathrm{str}(X\cdot Y),\quad\forall X,Y\in\mathrm{Lie}(\mathcal{G})
\end{equation}  
This yields an orthogonal decomposition of $\mathfrak{osp}(1|4)=\mathbb{R}^{1,3}\oplus\mathfrak{spin}^+(1,3)\oplus\Delta_{\mathbb{R}}$ into $\mathrm{Ad}(\mathcal{H})$-invariant subspaces generated by momenta and Lorentz transformations $P_I$ and $M_{IJ}$, respectively, as well as four Majorana charges $Q_{\alpha}$ such that $\mathscr{S}(P_I,P_J)=\frac{1}{L^2}\eta_{IJ}$ and $\mathscr{S}(Q_{\alpha},Q_{\beta})=\frac{1}{L}C_{\alpha\beta}$ where $L$ is the so-called anti-de Sitter radius. Thus, on $\mathfrak{g}/\mathfrak{h}=\mathbb{R}^{1,3}\oplus\Delta_{\mathbb{R}}$, $\mathscr{S}$ takes the form
\begin{equation}
\mathscr{S}=\begin{pmatrix}
	\frac{1}{L^2}\eta & 0\\
	0 & \frac{1}{L}C
\end{pmatrix}
\end{equation}
Let furthermore,
\begin{equation}
E:=\mathrm{pr}_{\mathfrak{g}/\mathfrak{h}}\circ\theta_{\mathrm{MC}}=:e^IP_I+\psi^{\alpha}Q_{\alpha}
\end{equation}
be the super soldering form which induces a super metric $g$ on $\mathcal{G}/\mathcal{H}$ via \eqref{eq:8.241}. According to Definition \ref{Prop:8.3}, an infinitesimal automorphism $X\in\mathfrak{aut}(\mathcal{G})$ needs to satisfy
\begin{equation}
R_{h*}X=X,\quad\forall h\in\mathcal{H}
\end{equation}
Thus, in particular, it follows that infinitesimal automorphisms are provided by right-invariant vector fields on $\mathcal{G}$. In fact, it follows that right-invariant vector fields are Killing vector fields of the induced super Riemannian geometry $(\mathcal{M},g)$. To see this, note first that left- and right-invariant vector fields commute. In fact, by the algebraic Definition \eqref{eq:Appendix:262}, it follows for, respectively, homogeneous right- and left-invariant vector fields $X$ and $Y$ on $\mathcal{G}$ that 
\begin{equation}
Y\circ X=\mathds{1}\otimes Y_e\circ\mu^*\circ X=\mathds{1}\otimes Y_e\circ X\otimes\mathds{1}\circ\mu^*=(-1)^{|X||Y|}X\circ\mathds{1}\otimes Y_e\circ\mu^*=(-1)^{|X||Y|}X\circ Y
\end{equation}
and therefore $[X,Y]=0$. Thus, this yields
\begin{equation}
\braket{Y|L_XE}=(-1)^{|X||Y|}(X\braket{Y|E}-\braket{[X,Y]|E})=(-1)^{|X||Y|}X\braket{Y|E}=0
\end{equation}  
as $\braket{Y|E}$ is constant by left-invariance. But, since, at any $g\in\mathcal{G}$, the left-invariant vector yield a homogeneous basis of $T_g\mathcal{G}$, this implies $L_XE=0$ so that, by \eqref{eq:8.241}, $X$ is indeed a Killing vector field. According to our definition above, odd right-invariant vector fields define Killing spinors which can be identified with $\mathfrak{g}_1=\Delta_{\mathbb{R}}$, that is, they are Majorana spinors. Let $\epsilon:=\epsilon^{\alpha}Q_{\alpha}\in\mathfrak{g}_1$ be such a spinor. Using \eqref{eq:8.234} and $L_XE=0$ for any right-invariant vector field $X$, it then follows in particular (note that the Cartan curvature vanishes by the Maurer-Cartan equation) 
\begin{equation}
L_{\epsilon^R}\psi=D^{(\omega)}\epsilon'-\frac{1}{2L}e^I\gamma_I\epsilon'=0
\label{eq:8.250}
\end{equation}
where we set $\epsilon':=\iota_{\epsilon^R}E$. Spinors satisfying an equation of the form \eqref{eq:8.250} are called \emph{twistor spinors} \cite{Alekseevsky:1997ri}. Thus, we see that odd Killing vector fields of the super Riemannian manifold $(\mathcal{M},g)$ correspond to twistor spinors.\\
Next, let $\epsilon,\eta\in\mathfrak{g}_1$ be two Majorana spinors and $\epsilon^R,\eta^R$ be the corresponding right-invariant vector fields. Since the commutator of two right-invariant vector fields is again right-invariant, the bilinear $K^*:=[\epsilon^R,\eta^R]$ again defines a Killing vector field. Using, $[\epsilon^R,\eta^R]=-[\epsilon,\eta]^R$, it follows
\begin{equation}
K^*=-\bar{\epsilon}\gamma^I\eta P^R_I-\frac{1}{4L}\bar{\epsilon}\gamma^{IJ}\eta M_{IJ}^R
\end{equation}   
As $K^*$ is purely bosonic, its pushforward $K:=\pi_{*}K^*$ defines, in general, a non-vanishing Killing vector field for the bosonic semi Riemannian manifold $(\mathcal{M}_0,g_0)$, i.e., ordinary $D=4$ AdS spacetime. 
\begin{remark}
Much more generally, in the algebraic framework, in \cite{Alekseevsky:1997ri}, it was shown that for the split supermanifold $\mathbf{S}(E_{\mathbb{R}},M)$ associated to a Majorana bundle $E_{\mathbb{R}}\times M\rightarrow M$ with $M$ spin, odd Killing vector fields of the corresponding super Riemannian geometry precisely correspond to twistor spinors. We suspect that any super Cartan geometry corresponding to Poincaré supergravity is of this form. More precisely, the principal super fiber bundle is of the form $\mathcal{P}\rightarrow\mathbf{S}(E_{\mathbb{R}},M)$ with $\mathcal{P}$ a spin-reduction of the frame bundle $\mathscr{F}(\mathbf{S}(E_{\mathbb{R}},M))$. This is supported by the fact that the super soldering form induces frame fields on the base supermanifold which are of the form as stated in \cite{Alekseevsky:1997ri}. 
\end{remark}
\begin{remark}
In physics, Killing spinors appear by studying consistency conditions for the bosonic background of solutions of supergravity theories. For instance, let us consider a super Cartan geometry corresponding to $\mathcal{N}=1$, $D=4$ AdS supergravity. The super Cartan connection is again of the form \eqref{eq:8.223}. The supersymmetry transformations of the individual field components are given by 
\begin{align}
\delta_{\epsilon}e^I=\frac{1}{2}\bar{\epsilon}\gamma^I\psi,\quad\delta_{\epsilon}\psi=D^{(\omega)}\epsilon-\frac{1}{2L}e^I\gamma_I\epsilon\,\text{ and }\,\delta_{\epsilon}\omega^{IJ}=\frac{1}{2L}\bar{\epsilon}\gamma^{IJ}\psi
\label{eq:8.252}
\end{align}
One may then be interested in the structure of the bosonic background of a general solution of the SUGRA field equations. As argued in section \ref{sec:Param}, the parametrizing supermanifold $\mathcal{S}$ may be regarded as a field configuration space. Bosonic field configurations taking values in the ordinary complex numbers are then encoded in the body $\mathbf{B}(\mathcal{S})$. But, if one then pulls back the field variations \eqref{eq:8.252} to the underlying spacetime manifold $\mathbf{B}(\mathcal{M})$, the fermionic fields, as being anticommutative, simply vanish. Hence, if the bosonic background solutions are required to be consistent with supersymmetry, there has to exist a spinor field $\epsilon$ such that
\begin{equation}
D^{(\omega)}\epsilon-\frac{1}{2L}e^I\gamma_I\epsilon=0
\label{eq:8.253}
\end{equation}
Thus, again, this leads back to the Killing spinor equation.   
\end{remark}
\section{Summary}
In this paper, we have studied the theory of super fiber bundles, in particular, principal super fiber bundles and super connection forms defined on them. We therefore studied these objects mainly in the Rogers-DeWitt category extending the seminal work of Tuynman \cite{Tuynman:2004} to relative supermanifolds defining objects in enriched categories. Moreover, we established a link to the theory of principal super fiber bundles and super connection forms in the algebro-geometric approach \cite{Stavracou:1996qb} and showed that both approaches are in fact equivalent.\\
We then also studied the parallel transport map correspoding to principal connections. A generic issue in both the algebro-geometric and concrete approach is the lack of (anticommutative) fermionic degrees of freedom on the body of a supermanifold. From a mathematical point of view, this implies that the parallel transport map cannot be used to compare points on different fibers of the bundle in contrast to the classical theory. A resolution is given studying relative supermanifolds as introduced in \cite{Hack:2015vna} whose underlying idea was first proposed already by Schmitt in \cite{Schmitt:1996hp} and which is rooted in the Molotkov-Sachse approach to supermanifold theory. In this paper, a rigorous mathematical account on this subject was given. In particular, we studied super connection forms on relative principal super fiber bundles and, in this context, also defined super Cartan geometries and super Cartan connections as introduced in \cite{Eder:2020erq}. Finally, the parallel transport was constructed in this relative category. Moreover, the corresponding parallel transport map on associated vector bundles was considered. In this context, among other things, we also established a link to similar constructions in the algebraic approach as studied for instance by Dumitrescu \cite{Florin:2008} and Groeger \cite{Groeger:2013aja}.\\
In the last section, we gave some concrete examples demonstrating the relevance of supermanifold techniques within mathematical physics. We therefore considered $\mathcal{N}=1$, $D=4$ supergravity as a metric reductive super Cartan geometry and analyzed local symmetries of this model. In this context, we also briefly sketched a possible embedding of Castellani-D'Auria-Fré approach to supergravity \cite{DAuria:1982uck,Castellani:1991et,Castellani:2018zey,Castellani:2014goa,Cremonini:2019aao,Catenacci:2018xsv} into the present formalism. Finally, Killing vector fields of super Riemannian manifolds were defined arising of metric reductive Cartan geometries. In this context, odd Killing vector fields were identified with Killing spinors typically arising as consistency conditions for the bosonic background of solutions of supergravity with supersymmetry.\\
There are many possible and interesting extensions of the present formalism, both from the mathematical and physical perspective. On the one hand, it would be interesting to generalize the formalism to include higher dimensional supergravity theories as well as extended supersymmetries. Higher dimensional supergravity theories typically involve higher gauge fields. Connection forms on higher principal bundles are studied for instance in \cite{Baez:2010ya,Schreiber:2013pra}. It would be very interesting to see how these approaches can be related. \\
On the other hand, there is quite some recent interest in the description of boundary charges \cite{DePaoli:2018erh,Freidel:2020xyx}. This has been addressed for instance in context of AdS supergravity in \cite{Aneesh:2020fcr}. There, among other things, it was found that a consistent treatment of (supersymmetric) boundary charges may be possible in the context of the superspace approach to supergravity. It would therefore be very interesting to generalize the Iyer-Wald's Noether charge formalism \cite{Iyer:1994ys} to supergravity which, in particular, explicitly takes into account the underlying supersymmetry of the theory. This may be achieved generalizing the work of Prahbu \cite{Prabhu:2015vua} to field theories defined on principal super fiber bundles.

\subsection*{Acknowledgments}
I thank Hanno Sahlmann for helpful discussions at various stages of this work. I also thank an anonymous referee for helpful comments and suggestions which helped to improve the manuscript. I thank the German Academic Scholarship Foundation (Studienstiftung des Deutschen Volkes) for financial support.
 
\subsection*{Data Availability}
The data that support the findings of this study are available from the corresponding author
upon reasonable request.

\appendix

\section{Supermanifolds and super Lie groups}\label{appendix:Super}
In this section, we want to briefly review the basic definition of $H^{\infty}$-supermanifolds in the Rogers-DeWitt approach in order to fix some terminology used in the main text (standard references include \cite{DeWitt:1984,Rogers:2007,Tuynman:2004}).
\begin{definition}
Let $\Lambda$ be a Grassmann algebra (finite or infinite dimensional) and $\Lambda^{m,n}:=\Lambda_0^m\times\Lambda_1^n$  be the \emph{superdomain} of dimension $(m,n)$ for any $m,n\in\mathbb{N}_0$. On $\Lambda^{m,n}$ one has the \emph{body map} given by the projection $\epsilon_{m,n}:\,\Lambda^{m,n}\rightarrow\mathbb{R}^m$ onto the real subspace $\mathbb{R}^m$. We equip $\Lambda^{m,n}$ with the \emph{DeWitt-topology} defined as the coarsest topology such that the body map is continuous.\\
For any open subset $U\subseteq\mathbb{R}^m$, a function $f:\,\epsilon_{m,n}^{-1}(U)\rightarrow\Lambda$ is called \emph{($H^{\infty}$-)smooth}, if there exists ordinary real smooth functions $f_I\in C^{\infty}(U)$, $I\in M_n$ a multi-index of length $\leq n$, such that
\begin{equation}
f(x,\theta)=\sum_I\mathbf{G}(f_I)(x)\theta^I
\end{equation}
$\forall(x,\theta)\in\epsilon_{m,n}^{-1}(U)$ where $\mathbf{G}(f_I)$ is the so-called \emph{Grassmann extension} of $f_I$ defined as
\begin{equation}
\mathbf{G}(f)(x):=\sum_{J}{\frac{1}{J!}\partial_J f_I(\epsilon_{m,n}(x))s(x)^I}
\end{equation}
with $s(x):=x-\epsilon_{m,n}(x)$ the \emph{soul} of $x\in\Lambda^m_0$.
\end{definition}

\begin{remark}
Following \cite{Tuynman:2004}, a topological space $M$ together with a $C^{\infty}$-smooth structure will be called a \emph{proto $C^{\infty}$-manifold}. That is, a proto $C^{\infty}$-manifold is just an ordinary smooth manifold without requiring the underlying topological space to be Hausdorff and second countable.
\end{remark}
\begin{definition}
Let $\mathcal{M}$ be a topological space. A local \emph{superchart} on $\mathcal{M}$ is a pair $(U,\phi_U)$ consisting of an open subset $U\subseteq\mathcal{M}$ as well as a homeomorphism $\phi_U:\,U\rightarrow\phi_U(U)\subseteq\Lambda^{m,n}$ onto an open subset of the superdomain $\Lambda^{m,n}$.\\
A $(m,n)$-dimensional $H^{\infty}$-smooth atlas on $\mathcal{M}$ is a family $\{(U_{\alpha},\phi_{\alpha})\}_{\alpha\in\Upsilon}$ of supercharts $\phi_{\alpha}:\,U_{\alpha}\rightarrow\phi_{\alpha}(U_{\alpha})\subseteq\Lambda^{m,n}$, $\alpha\in\Upsilon$, of $\mathcal{M}$ such that $\bigcup_{\alpha\in\Upsilon}{U_{\alpha}}=\mathcal{M}$ and the supercharts are smoothly compatible. That is, for any $\alpha,\beta\in\Lambda$ with $U_{\alpha}\cap U_{\beta}\neq\emptyset$, the \emph{transition functions} 
\begin{equation}
\phi_{\beta}\circ\phi_{\alpha}^{-1}:\,\phi_{\alpha}(U_{\alpha}\cap U_{\beta})\rightarrow\phi_{\beta}(U_{\alpha}\cap U_{\beta})
\end{equation}
are of class $H^{\infty}$. An atlas on $\mathcal{M}$ is called \emph{maximal} if any superchart $(U,\phi_U)$ of $\mathcal{M}$ that is smoothly compatible with any superchart of the given atlas is already contained in this atlas.
\end{definition}
\begin{definition}
A \emph{proto $H^{\infty}$-supermanifold} of dimension $(m,n)$ is a topological space $\mathcal{M}$ furnished with a $(m,n)$-dimensional $H^{\infty}$-smooth atlas. 
\end{definition}
\begin{definition}
Let $\mathcal{M}$ and $\mathcal{N}$ be proto $H^{\infty}$-supermanifolds. A continuous map $f:\,\mathcal{M}\rightarrow\mathcal{N}$ is called \emph{smooth} if, for any local charts $(U,\phi_U)$ and $(V,\psi_V)$ of $\mathcal{M}$ and $\mathcal{N}$, respectively, with $U\cap f^{-1}(V)\neq\emptyset$, the map
\begin{equation}
\psi_V\circ f\circ\phi_U^{-1}:\,\phi_U(U\cap f^{-1}(V))\rightarrow\psi_V(V)
\end{equation}
is of class $H^{\infty}$.
\end{definition}

\begin{definition}\label{def:B.6}
Let $\mathcal{M}$ be a proto $H^{\infty}$-supermanifold of dimension $\mathrm{dim}\,\mathcal{M}=(m,n)$ with maximal atlas $\left\{(V_{\alpha},\psi_{\alpha})\right\}_{\alpha\in\Gamma}$. On $\mathcal{M}$, one can introduce an equivalence relation $\sim$ via (for a proof that this indeed defines an equivalence relation see \cite{Rogers:2007})  
	\begin{equation}
	p\sim q:\Leftrightarrow\text{$\exists\alpha\in\Gamma:\,p,q\in V_{\alpha}$ and $\epsilon_{m,n}(\psi_{\alpha}(p))=\epsilon_{m,n}(\psi_{\alpha}(q))$}
	\end{equation}
It immediately follows from the definition that, for any $p\in\mathcal{M}$, there exists a unique element $\mathbf{B}(p)\in\mathcal{M}$ such that $p\sim\mathbf{B}(p)$ and $f(\mathbf{B}(p))\in\mathbb{R}$ $\forall f\in H^{\infty}(\mathcal{M})$ which we call its \emph{body representative}. This yields a proper subset 
\begin{equation}
\mathbf{B}(\mathcal{M}):=\{p\in\mathcal{M}|\,f(p)\in\mathbb{R}\,,\forall f\in H^{\infty}(\mathcal{M})\}
\end{equation}
called the \emph{body} of $\mathcal{M}$, together with a surjective map $\mathbf{B}:\,\mathcal{M}\rightarrow\mathbf{B}(\mathcal{M}),\,p\mapsto\mathbf{B}(p)$ called the \emph{body map}. For any smooth map $f:\,\mathcal{M}\rightarrow\mathcal{N}$ between $H^{\infty}$-supermanifolds, one has $f(\mathbf{B}(\mathcal{M}))\subseteq\mathbf{B}(\mathcal{N})$. Hence, the body map can be extended to morphisms setting $\mathbf{B}(f):=f|_{\mathbf{B}(\mathcal{M})}:\,\mathbf{B}(\mathcal{M})\rightarrow\mathbf{B}(\mathcal{N})$. This yields a functor $\mathbf{B}:\,\mathbf{SMan}_{proto,H^{\infty}}\rightarrow\mathbf{Set}$ which we call the \emph{body functor}.
\end{definition}

\begin{lemma}\label{eq:B.7}
Let $p\in\mathcal{M}$ be a point on a proto $H^{\infty}$-supermanifold $\mathcal{M}$ and $\sim$ the equivalence relation as in Definition \ref{def:B.6}. Then, any other point $q\in\mathcal{M}$ in the equivalence class of $p$ will be contained in an arbitrary small open neighborhood $U$ of $p$ in $\mathcal{M}$.
\end{lemma}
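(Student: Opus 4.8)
The statement to prove is Lemma~\ref{eq:B.7}: any point $q$ equivalent to $p$ (in the sense of Definition~\ref{def:B.6}) lies in every open neighborhood of $p$, no matter how small. The plan is to exploit the DeWitt topology directly, since the whole phenomenon is a consequence of the coarseness of this topology on the superdomain $\Lambda^{m,n}$.

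First I would reduce to the local model. Since $p\sim q$, by definition there exists a chart $(V_\alpha,\psi_\alpha)$ of the maximal atlas with $p,q\in V_\alpha$ and $\epsilon_{m,n}(\psi_\alpha(p))=\epsilon_{m,n}(\psi_\alpha(q))$; that is, $\psi_\alpha(p)$ and $\psi_\alpha(q)$ have the same body in $\mathbb{R}^m$. Now let $U$ be an arbitrary open neighborhood of $p$ in $\mathcal{M}$. I would replace $U$ by $U\cap V_\alpha$, which is still an open neighborhood of $p$, and transport the problem into the superdomain via the homeomorphism $\psi_\alpha$. So it suffices to show the following local fact: if $a,b\in\Lambda^{m,n}$ satisfy $\epsilon_{m,n}(a)=\epsilon_{m,n}(b)$ and $W\subseteq\Lambda^{m,n}$ is any DeWitt-open set containing $a$, then $b\in W$.

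The key step is then the structure of DeWitt-open sets. By definition the DeWitt topology is the coarsest topology making $\epsilon_{m,n}$ continuous, so its open sets are exactly the preimages $\epsilon_{m,n}^{-1}(O)$ of open subsets $O\subseteq\mathbb{R}^m$. Hence a DeWitt-open neighborhood $W$ of $a$ is of the form $W=\epsilon_{m,n}^{-1}(O)$ with $\epsilon_{m,n}(a)\in O$. But $\epsilon_{m,n}(b)=\epsilon_{m,n}(a)\in O$, so $b\in\epsilon_{m,n}^{-1}(O)=W$ as well. Transporting back through $\psi_\alpha^{-1}$ gives $q\in U\cap V_\alpha\subseteq U$, which is the claim. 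I would phrase this as a short chain:
\begin{equation}
b\in\epsilon_{m,n}^{-1}(\{\epsilon_{m,n}(a)\})\subseteq\epsilon_{m,n}^{-1}(O)=W.
\end{equation}

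The only genuine subtlety—and what I expect to be the main point requiring care—is the interaction between the intrinsic equivalence relation on $\mathcal{M}$ and the particular chart used to witness it. The relation $\sim$ is defined chart-wise, so one must check that the witnessing chart $(V_\alpha,\psi_\alpha)$ can be used to localize, and that $\psi_\alpha$ being a homeomorphism onto a DeWitt-open subset of $\Lambda^{m,n}$ indeed pulls back the fiber structure of $\epsilon_{m,n}$ correctly; this is immediate once one recalls that charts are homeomorphisms for the DeWitt topology by definition of an $H^\infty$-atlas. A minor additional remark is that one should note $\psi_\alpha(U\cap V_\alpha)$ is DeWitt-open in $\psi_\alpha(V_\alpha)$ and hence of the restricted preimage form, so the argument applies verbatim to the possibly-smaller chart image. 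Beyond this bookkeeping the argument is purely topological and requires no analysis of the Grassmann extensions.
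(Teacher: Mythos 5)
Your proof is correct and follows essentially the same route as the paper's: localize to a chart witnessing $p\sim q$, use that every DeWitt-open subset of $\Lambda^{m,n}$ is a preimage $\epsilon_{m,n}^{-1}(\tilde U)$ of an open $\tilde U\subseteq\mathbb{R}^m$, conclude that $\psi_\alpha(q)$ lies in $\psi_\alpha(U\cap V_\alpha)$ because it has the same body as $\psi_\alpha(p)$, and pull back by injectivity of $\psi_\alpha$. No issues.
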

\begin{proof}
Let $p\in U$ be an open neighborhood of $p$ in $\mathcal{M}$ and $(V_{\alpha},\psi_{\alpha})$ a coordinate neighborhood of $p$ with $q\in V_{\alpha}$. Then, $p\in U\cap V_{\alpha}$ and $\psi_{\alpha}(U\cap V_{\alpha})$ is open in $\Lambda^{m,n}$ so that, by the definition of the DeWitt-topology, there is a $\tilde{U}\subset\mathbb{R}^m$ open with $\psi_{\alpha}(U\cap V_{\alpha})=\epsilon_{m,n}^{-1}(\tilde{U})$. As $p\sim q$ we have $\epsilon_{m,n}(\psi_{\alpha}(q))=\epsilon_{m,n}(\psi_{\alpha}(p))\in\tilde{U}$ and thus $\psi_{\alpha}(q)\in \psi_{\alpha}(U\cap V_{\alpha})$. But, since $\psi_{\alpha}$ is injective this implies $q\in U\cap V_{\alpha}$ and therefore $q\in U$.
\end{proof}

\begin{lemma}\label{eq:B.8}
Let $\mathcal{M}$ be a proto $H^{\infty}$-supermanifold and $U,V\subseteq\mathcal{M}$ open subsets in $\mathcal{M}$. Then $U\subseteq V$ $\Leftrightarrow$ $\mathbf{B}(U)\subseteq\mathbf{B}(V)$ and thus, in particular, $U=V$ $\Leftrightarrow$ $\mathbf{B}(U)=\mathbf{B}(V)$.\\
Moreover, any open subset $U\subseteq\mathcal{M}$ is of the form $U=\mathbf{B}^{-1}(W)$ with $W$ open in $\mathbf{B}(\mathcal{M})$ (in fact $W=\mathbf{B}(U)$) which can be thought of as a generalization of the DeWitt-topology.
\end{lemma}
\begin{proof}
It is clear that $U\subseteq V$ implies $\mathbf{B}(U)\subseteq\mathbf{B}(V)$. Hence, suppose that $\mathbf{B}(U)\subseteq\mathbf{B}(V)$. Then $x\in U$ yields $\mathbf{B}(x)\in\mathbf{B}(U)\subseteq\mathbf{B}(V)$ such that there exists $y\in V$ with $x\sim y$. But, by Lemma \ref{eq:B.7}, this implies $x\in V$ and thus indeed $U\subseteq V$.\\
Next, for any open subset $U\subseteq\mathcal{M}$ define $U':=\mathbf{B}^{-1}(\mathbf{B}(U))$ yielding $U\subseteq U'$.  For any $x\in V$ one has $\mathbf{B}(x)\in\mathbf{B}(V)=\mathbf{B}(U)$. Thus, similarly as above, this implies $x\in U$ as $U$ is open and therefore $U'\subseteq U$, that is, $U'=U$.
\end{proof}

\begin{definition}
A $H^{\infty}$-supermanifold $\mathcal{M}$ is a proto $H^{\infty}$-supermanifold whose body $\mathbf{B}(\mathcal{M})$, equipped with the trace toplogy, defines a second countable Hausdorff topological space and thus defines an ordinary $C^{\infty}$-smooth manifold.\\
Supermanifolds together with smooth maps between them form a category $\mathbf{SMan}_{H^{\infty}}$ called the \emph{category of $H^{\infty}$-supermanifolds}.
\end{definition}

\begin{prop}\label{Prop:A.10}	
Let $f:\,\mathcal{M}\rightarrow\mathcal{N}$ be a smooth map between $H^{\infty}$-supermanifolds $\mathcal{M}$ and $\mathcal{N}$. Then, $f$ is surjective if and only if $\mathbf{B}(f):\,\mathbf{B}(\mathcal{M})\rightarrow \mathbf{B}(\mathcal{N})$ is surjective.
\end{prop}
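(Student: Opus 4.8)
I would prove the two directions separately, with the forward direction (surjectivity of $f$ implies surjectivity of $\mathbf{B}(f)$) being the more delicate one, and the reverse direction following from a density-type argument based on Lemma \ref{eq:B.7}. The key structural fact to exploit throughout is that $\mathbf{B}(f) = f|_{\mathbf{B}(\mathcal{M})}$ and that $f$ maps bodies to bodies, i.e. $f(\mathbf{B}(\mathcal{M})) \subseteq \mathbf{B}(\mathcal{N})$, as recorded in Definition \ref{def:B.6}.

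First I would handle the implication ``$\mathbf{B}(f)$ surjective $\Rightarrow$ $f$ surjective.'' Let $q \in \mathcal{N}$ be arbitrary. Its body representative $\mathbf{B}(q) \in \mathbf{B}(\mathcal{N})$ satisfies $\mathbf{B}(q) \sim q$. By surjectivity of $\mathbf{B}(f)$, there exists $p_0 \in \mathbf{B}(\mathcal{M})$ with $f(p_0) = \mathbf{B}(q)$. Now the task is to produce a genuine preimage of $q$ itself, not merely of its body. The idea is to work in a coordinate neighborhood $(V,\psi_V)$ of $q = \mathbf{B}(q)$ in $\mathcal{N}$ (note $q$ and $\mathbf{B}(q)$ lie in the same chart domain since they are equivalent, by Lemma \ref{eq:B.7}) and a chart $(U,\phi_U)$ around $p_0$, and to use the local coordinate description of $f$ together with the fact that a smooth map whose body point is hit can, locally, have its ``soul directions'' solved for. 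The cleanest route is to invoke the inverse/implicit function theorem machinery in the $H^{\infty}$-category: since $f(p_0) = \mathbf{B}(q)$ and $p_0$ is a body point, I would analyze the induced map on the soul fibers $\epsilon_{m,n}^{-1}(\text{pt})$ and argue that the image is saturated enough to reach $q$. The main obstacle is precisely here — establishing that hitting the body point forces one to hit every point equivalent to it under $\sim$ requires controlling the nilpotent (soul) directions, and in general a smooth surjection on bodies need not fill out the souls without some rank condition.

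On reflection, the correct and much simpler argument for this direction avoids rank conditions entirely by exploiting the generalized DeWitt topology: by Lemma \ref{eq:B.8}, every open set in $\mathcal{N}$ is of the form $\mathbf{B}^{-1}(W)$, and the body map and $f$ interact functorially. I would therefore argue as follows. Consider $q \in \mathcal{N}$ with $\mathbf{B}(q) = f(p_0)$. The point $q$ lies in the equivalence class of $f(p_0)$. Since $p_0$ is a body point, I would examine the image $f(\mathbf{B}^{-1}(\{p_0\}))$ of the full soul fiber over $p_0$ and show, using the local Grassmann-extension form of $f$, that this image is contained in and in fact surjects onto the soul fiber over $\mathbf{B}(q)$ — this is a local computation in $\Lambda^{m,n}$ where a smooth function hitting a body point, expanded via its Grassmann extension, has image closed under the relevant nilpotent shifts. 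This is the step I expect to be genuinely technical.

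For the reverse direction, ``$f$ surjective $\Rightarrow$ $\mathbf{B}(f)$ surjective,'' the argument is more straightforward. Let $\bar{q} \in \mathbf{B}(\mathcal{N})$. By surjectivity of $f$ there is $p \in \mathcal{M}$ with $f(p) = \bar{q}$. Then $f(\mathbf{B}(p)) \in \mathbf{B}(\mathcal{N})$, and since $\mathbf{B}(p) \sim p$, functoriality of $\mathbf{B}$ together with $f$ respecting the equivalence relation gives $f(\mathbf{B}(p)) = \mathbf{B}(f(p)) = \mathbf{B}(\bar{q}) = \bar{q}$, where the last equality holds because $\bar{q}$ is already a body point. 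Hence $\mathbf{B}(f)(\mathbf{B}(p)) = \bar{q}$, proving $\mathbf{B}(f)$ surjective. The only point requiring care is verifying $f(\mathbf{B}(p)) = \mathbf{B}(f(p))$, which follows from the defining property that smooth maps preserve the characterization of body points via $H^{\infty}$-functions taking real values, as in Definition \ref{def:B.6}.
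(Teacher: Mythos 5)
Your easy direction ($f$ surjective $\Rightarrow$ $\mathbf{B}(f)$ surjective) is correct and is essentially the paper's argument: both rest on the identity $f(\mathbf{B}(p))=\mathbf{B}(f(p))$, which holds because a smooth map preserves the equivalence relation $\sim$ (the body of the output depends only on the body of the input, by the local Grassmann-extension form) and sends body points to body points.

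The genuine gap is in the other direction. You correctly diagnose the obstacle yourself --- ``a smooth surjection on bodies need not fill out the souls without some rank condition'' --- but your ``on reflection'' fix does not remove it. The step you defer as ``genuinely technical'', namely that $f$ maps the soul fiber $\mathbf{B}^{-1}(\{p_0\})$ \emph{onto} the soul fiber over $f(p_0)$, is false for a general $H^{\infty}$ map. Concretely, let $f:\Lambda^{1,0}\rightarrow\Lambda^{1,0}$ be the Grassmann extension of $t\mapsto t^3$, i.e.\ $f(x)=x^3$. Its body $t\mapsto t^3$ is surjective on $\mathbb{R}$, but $\theta^1\theta^2\in\Lambda_0$ has no preimage: any $x$ with $x^3=\theta^1\theta^2$ would have $\epsilon_{1,0}(x)=0$, hence be a pure soul, and the cube of a soul involves only products of at least six generators. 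So the image of the soul fiber over $0$ is nowhere near the whole fiber, and no local computation with Grassmann extensions can close this gap without an extra hypothesis (surjectivity of the tangent map at the relevant body points, or the local product structure available where the proposition is actually invoked, e.g.\ for the bundle projections in Prop.~\ref{prop:2.2}). For comparison, the paper's proof of this direction is purely topological: from $\mathbf{B}(f(\mathcal{M}))=\mathbf{B}(\mathcal{N})$ it asserts that $f(\mathcal{M})$ is open and concludes $f(\mathcal{M})=\mathcal{N}$ via Lemma~\ref{eq:B.8}. Note that ``$f(\mathcal{M})$ has open body, hence is open'' is precisely the saturation property you were trying to establish locally, so the two arguments hinge on the same point; you at least flag it explicitly, but neither your sketch nor the paper's one-line assertion supplies it, and the example above shows it cannot follow from smoothness alone.
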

\begin{proof}
Let $M:=\mathbf{B}(\mathcal{N})$ and $N:=\mathbf{B}(\mathcal{N})$. If $f$ is surjective, it follows $\mathbf{B}(f)(M)=\mathbf{B}(f(\mathcal{M}))=\mathbf{B}(\mathcal{N})=N$ and thus $\mathbf{B}(f)$ is surjective. Conversely, if $\mathbf{B}(f)$ be surjective, it follows from $N=\mathbf{B}(f)(M)=\mathbf{B}(f(\mathcal{M}))$ that $\mathbf{B}(f(\mathcal{M}))$ is open and thus $f(\mathcal{M})$ is open in $\mathcal{N}$ implying $\mathbf{B}(f(\mathcal{M}))=\mathbf{B}(f)(M)=N=\mathbf{B}(\mathcal{N})$. Hence, by Lemma \ref{eq:B.8}, this yields $f(\mathcal{M})=\mathcal{N}$, that is, $f$ is surjective.  
\end{proof}

\begin{example}
To any vector bundle $V\rightarrow E\rightarrow M$ with $\mathrm{dim}\,V=n$ and $\mathrm{dim}\,M=m$, it follows that one can associate a $H^{\infty}$-supermanifold $\mathbf{S}(E,M)$ of dimension $(m,n)$ called the \emph{split supermanifold} (see e.g.  \cite{Tuynman:2004} for an explicit construction via Grassmann extensions of transition functions or \cite{Eder:2020erq} using the functor of points applied to the algebro-geometric supermanifold $(M,\Gamma(\Exterior E^*))$). Moreover, any morphism $(\phi,f):\,(E,M)\rightarrow (F,N)$ between two vector bundles induces a morphism $\mathbf{S}(\phi,f):\,\mathbf{S}(E,M)\rightarrow\mathbf{S}(F,N)$ between the corresponding split supermanifolds. Hence, this yields a functor
\begin{equation}
\mathbf{S}:\,\mathbf{Vect}_{\mathbb{R}}\rightarrow\mathbf{SMan}_{H^{\infty}}
\label{eq:2.2.6}
\end{equation}
between the category of real vector bundles to the category of $H^{\infty}$-supermanifolds which we call the \emph{split functor}. It is a general result due to Batchelor \cite{Batchelor:1979} that any algebro-geometric supermanfold is isomorphic to a split supermanifold, i.e., \eqref{eq:2.2.6} is surjective on objects. However, the split functor is \emph{not} full, i.e., not every morphism $f:\,\mathbf{S}(E,M)\rightarrow\mathbf{S}(F,N)$ between split manifolds arises from a morphism between the respective vector bundles $(E,M),(F,N)\in\mathbf{Ob}(\mathbf{Vect}_{\mathbb{R}})$. Hence, the structure of morphisms between supermanifolds in general turns out to be much richer than for ordinary vector bundles.\\
A smooth manifold $M$ can be identified with the trivial vector bundle $M\times\{0\}\rightarrow M$, i.e., one has the \emph{inclusion functor} $\mathbf{i}:\,\mathbf{Man}\rightarrow\mathbf{Vect}_{\mathbb{R}}$ such that $\mathbf{i}(M):=(M\times\{0\},M)$ and $\mathbf{i}(f):=(\mathbf{0},f)$ for any $M\in\mathbf{Ob}(\mathbf{Man})$ and morphisms $f\in\mathrm{Hom}_{\mathbf{Man}}(M,N)$. Combined with the split functor, this yields another functor
\begin{equation}
\mathbf{S}\equiv\mathbf{S}\circ\mathbf{i}:\,\mathbf{Man}\rightarrow\mathbf{SMan}_{H^{\infty}}
\end{equation}
mapping an ordinary smooth manifold $M$ to a supermanifold $\mathbf{S}(M)$ with trivial odd dimensions, also called a \emph{bosonic supermanifold}. Conversely, given a bosonic supermanifold $\mathcal{M}$, it follows that the corresponding split supermanifold $\mathcal{M}_0:=\mathbf{S}(\mathbf{B}(\mathcal{M}))$ is isomorphic to $\mathcal{M}$. Hence, in this way, one obtains an equivalence of categories between bosonic supermanifolds and ordinary $C^{\infty}$-smooth manifolds.
\end{example}

\begin{definition}
A super Lie group $\mathcal{G}$ is a $H^{\infty}$-supermanifold together with two morphisms $\mu:\,\mathcal{G}\times\mathcal{G}\rightarrow\mathcal{G}$ and $i:\,\mathcal{G}\rightarrow\mathcal{G}$ called \emph{multiplication} and \emph{inverse}, respectively, as well as an element $e\in\mathcal{G}$ called \emph{neutral element} such that, after applying the forgetful functor, $(\mathcal{G},\mu,i,e)$ defines an ordinary group in the category of sets.
\end{definition}

\begin{definition}
A vector field $X\in\mathfrak{X}(\mathcal{G})$ on a super Lie group $\mathcal{G}$ is called \emph{left}- resp. \emph{right- invariant} if
\begin{equation}
\mathds{1}\otimes X\circ\mu^*=\mu^{*}\circ X\quad\text{resp.}\quad X\otimes\mathds{1}\circ\mu^{*}=\mu^*\circ X
\label{eq:Appendix:262}
\end{equation} 
where $\mu^*$ denotes the pullback of the group multiplication.
\end{definition}
\begin{prop}\label{prop:superLie}
Let $\mathcal{G}$ be a super Lie group. Then, the tangent module $T_e\mathcal{G}$ at the identity has the structure of a super Lie module of the form 
\begin{equation}
T_e\mathcal{G}=\Lambda\otimes\mathfrak{g}
\end{equation}
with $\mathfrak{g}$ canonically isomorphic to the super Lie algebra of smooth left-invariant (resp. right-invariant) vector fields on $\mathcal{G}$.
\end{prop}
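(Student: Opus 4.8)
The plan is to exhibit the claimed isomorphism concretely through the evaluation map $X\mapsto X_e:=\mathrm{ev}_e\circ X$, sending a smooth left-invariant vector field to its value at the identity, with inverse the left-invariant extension $X_e\mapsto X^L:=(\mathds{1}\otimes X_e)\circ\mu^*$ already employed in Example \ref{example:MC}. The genuinely super-geometric point, with no classical counterpart, is that $H^\infty$-smoothness is not preserved under partial evaluation, so this extension yields a \emph{smooth} vector field only for a distinguished real subspace of $T_e\mathcal{G}$; identifying exactly that subspace is where the real content lies.

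First I would establish the decomposition $T_e\mathcal{G}=\Lambda\otimes\mathfrak{g}$. Since $e\in\mathbf{B}(\mathcal{G})$ is a body point, a local coordinate neighbourhood around $e$ furnishes coordinate derivations $\partial_i|_e$, $i=1,\dots,m+n$, forming a homogeneous basis of the free super $\Lambda$-module $T_e\mathcal{G}$. Their real span is a super vector space $\mathfrak{g}$ of dimension $(m,n)$, so that $T_e\mathcal{G}=\Lambda\otimes\mathfrak{g}$; intrinsically $\mathfrak{g}$ consists of those derivations taking their values in $\mathbb{R}$ on $H^\infty(\mathcal{G})$, which shows the decomposition is chart-independent. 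This is precisely the argument later invoked for super metrics in the Remark following Definition \ref{bilinear map}.

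Next I would analyse the extension $X^L=(\mathds{1}\otimes X_e)\circ\mu^*$ for $X_e\in T_e\mathcal{G}$, checking four properties. (i) \emph{Value at $e$:} using the left unit identity $(\mathrm{ev}_e\otimes\mathds{1})\circ\mu^*=\mathds{1}$ coming from $\mu(e,\cdot)=\mathrm{id}_{\mathcal{G}}$, one gets $\mathrm{ev}_e\circ X^L=X_e$. (ii) \emph{Left-invariance:} the identity $\mathds{1}\otimes X^L\circ\mu^*=\mu^*\circ X^L$ reduces to the coassociativity $(\mathds{1}\otimes\mu^*)\circ\mu^*=(\mu^*\otimes\mathds{1})\circ\mu^*$ encoding associativity of $\mu$. (iii) \emph{Smoothness:} expanding $X_e=\sum_i c^i X_{i,e}$ in a real basis, $X^L=\sum_i c^i X_i^L$ with each $X_i^L$ a smooth field as in Example \ref{example:MC}; since $H^\infty(\mathcal{G})$ is only an $\mathbb{R}$-algebra and multiplication by a non-real constant $c^i\in\Lambda$ destroys $H^\infty$-smoothness, $X^L$ is smooth if and only if all $c^i$ are real, i.e. iff $X_e\in\mathfrak{g}$. (iv) \emph{Surjectivity of evaluation:} for a smooth left-invariant $X$, writing $X_e=\mathrm{ev}_e\circ X$ and applying the right unit $(\mathds{1}\otimes\mathrm{ev}_e)\circ\mu^*=\mathds{1}$ to the left-invariance relation yields $X=X^L$, and then (iii) forces $X_e\in\mathfrak{g}$. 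Together these give a parity-preserving $\mathbb{R}$-linear bijection between $\mathfrak{g}$ and the smooth left-invariant vector fields.

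Finally I would transport the Lie structure. The graded commutator $[X,Y]=X\circ Y-(-1)^{|X||Y|}Y\circ X$ of two left-invariant derivations is again left-invariant, which follows by applying $\mathds{1}\otimes(-)\circ\mu^*$ termwise and repeatedly using the left-invariance identities; the graded Jacobi identity is then inherited from the super Lie algebra $\mathfrak{X}(\mathcal{G})$ of all smooth vector fields. Pulling this bracket back along the bijection of the previous step makes $\mathfrak{g}$ a Lie superalgebra, and extending it $\Lambda$-bilinearly to $T_e\mathcal{G}=\Lambda\otimes\mathfrak{g}$ equips the tangent module with the structure of a super Lie module, graded skew-symmetry and Jacobi being preserved under the extension. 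The right-invariant case is entirely parallel, with the roles of the two unit identities interchanged. I expect the main obstacle to be step (iii): carefully justifying that $X^L$ is $H^\infty$ exactly when $X_e$ is a real tangent vector, since this is precisely the failure of smoothness under partial evaluation at non-body points and is what forces the correspondence onto $\mathfrak{g}$ rather than all of $T_e\mathcal{G}$.
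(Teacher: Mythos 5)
Your proposal is correct and follows essentially the same route as the paper's proof: both identify $\mathfrak{g}$ with the derivations at $e$ taking real values on $H^\infty(\mathcal{G})$ (using that $e$ is a body point), construct the inverse via the left-invariant extension $(\mathds{1}\otimes X_e)\circ\mu^*$ with left-invariance following from associativity, and transport the bracket to $\mathfrak{g}$ before extending $\Lambda$-linearly to $T_e\mathcal{G}$. Your steps (i), (iii) and (iv) merely make explicit some verifications (the unit identities and the smoothness-iff-real-coefficients argument) that the paper leaves implicit.
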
 
\begin{proof}
Let us only prove the proposition for the left-invariant case, the right-invariant case being similar. Note that the neutral element $e\in\mathcal{G}$ of a super Lie group $\mathcal{G}$ is a body point. In fact, since $\mathbf{B}(i)=i|_{\mathbf{B}(\mathcal{G})}$, it follows that, for any $g\in\mathbf{B}(\mathcal{G})$, one has $g^{-1}=i(g)=\mathbf{B}(i)(g)\in\mathbf{B}(\mathcal{G})$. Thus, since $\mathbf{B}(\mu)=\mu|_{\mathbf{B}(\mathcal{G})}$, we have in particular $g\cdot g^{-1}=e\in\mathbf{B}(\mathcal{G})$ (this also shows that $\mathbf{B}(\mathcal{G})$ in fact defines an ordinary Lie group).\\
Suppose, $X\in\mathfrak{X}(\mathcal{G})$ is a left-invariant vector field. Since, $e\in\mathbf{B}(\mathcal{G})$ it then immediately follows that the corresponding tangent vector $X_e\in T_e\mathcal{G}$ at $e$ satisfies
\begin{equation}
X_e(f):=X(f)(e)\in\mathbb{R},\,\forall f\in H^{\infty}(\mathcal{G})
\label{eq:9.266}
\end{equation}
Conversely, suppose $Y_e\in T_e\mathcal{G}$ satisfies \eqref{eq:9.266}. This induces a left-invariant vector field $Y\in\mathfrak{X}(\mathcal{G})$ setting $Y:=\mathds{1}\otimes Y_e\circ\mu^{*}$. In fact, using associativity of the group multiplication, it follows
\begin{align}
\mathds{1}\otimes Y\circ\mu^{*}&=\mathds{1}\otimes(\mathds{1}\otimes Y_e\circ\mu^{*})\circ\mu^{*}=\mathds{1}\otimes\mathds{1}\otimes Y_e\circ\mathds{1}\otimes\mu^{*}\circ\mu^{*}\nonumber\\
&=\mathds{1}\otimes\mathds{1}\otimes Y_e\circ\mu^{*}\otimes\mathds{1}\circ\mu^{*}=\mu^{\sharp}\circ(\mathds{1}\otimes Y_e\circ\mu^{*})=\mu^{*}\circ Y
\end{align}
Thus, we see that their exists an isomorphism between the super vector space of left-invariant vector fields on $\mathcal{G}$ and the super vector space of tangent vectors $X_e\in T_e\mathcal{G}$ at $e\in\mathcal{G}$ satisfying \eqref{eq:9.266} which we denote by $\mathfrak{g}$. Using this isomorphism, this induces a super Lie algebra structure on $\mathfrak{g}$ which, by linearity, then can be extended to all of $T_e\mathcal{G}$ yielding the structure of a super Lie module.
\end{proof}

\begin{definition}
Let $\mathcal{G}$ be a super Lie group. The \emph{super Lie module} $\mathrm{Lie}(\mathcal{G})$ of $\mathcal{G}$ is defined as the tangent module $\mathrm{Lie}(\mathcal{G})\equiv T_e\mathcal{G}=\Lambda\otimes\mathfrak{g}$. If not stated otherwise, we will identify $\mathfrak{g}$ with the super Lie algebra of smooth left-invariant vector fields on $\mathcal{G}$.
\end{definition}

\section{Proof of Proposition \ref{prop:5.10}}\label{section:Proof}

\begin{proof}
Since, for any $p\in\mathcal{P}$, the map $\Phi_{p*}:\,\mathrm{Lie}(\mathcal{G})\rightarrow\mathscr{V}_p\subset T_p\mathcal{P}$ is an isomorphism of free super $\Lambda$-modules and $\mathcal{A}\in\Omega^1(\mathcal{P},\mathfrak{g})_0$ is linear, condition (i) in Definition \ref{prop:3.19} of a connection 1-form and condition (i) of the proposition are equivalent. In the following, we thus can restrict to condition (ii).\\
Hence, suppose $\mathcal{A}\in\Omega^1(\mathcal{P},\mathfrak{g})_0$ is a connection 1-form on $\mathcal{P}$. Then, by restriction on body points, the first part of condition (ii) is immediate, i.e., $\Phi_g^*\mathcal{A}=\mathrm{Ad}_{g^{-1}}\circ\mathcal{A}$ for all $g\in\mathbf{B}(\mathcal{G})$. For the second part, let us extend $\mathcal{A}$ to a smooth $\mathrm{Lie}(\mathcal{G})$-valued 1-form $\tilde{\mathcal{A}}\in\Omega^1(\mathrm{Lie}(\mathcal{G})_0\times\mathcal{P},\mathfrak{g})$ on $\mathrm{Lie}(\mathcal{G})_0\times\mathcal{P}$ by setting $\braket{(Z,Z')|\tilde{\mathcal{A}}}:=\braket{Z'|\mathcal{A}}$ $\forall (Z,Z')\in T(\mathrm{Lie}(\mathcal{G})_0\times\mathcal{P})\cong\mathrm{Lie}(\mathcal{G})\times T\mathcal{P}$. Moreover, we consider the extended $\mathcal{G}$-right action $\tilde{\Phi}:\,(\mathrm{Lie}(\mathcal{G})_0\times\mathcal{P})\times\mathcal{G}\rightarrow\mathrm{Lie}(\mathcal{G})_0\times\mathcal{P}$ on $\mathrm{Lie}(\mathcal{G})_0\times\mathcal{P}$ defined via $\tilde{\Phi}((Y,p),g):=(Y,\Phi(p,g))$. It is then immediate to see that that condition (ii) in Definition \ref{prop:3.19} of a connection 1-form is equivalent to
\begin{equation}
\tilde{\Phi}_g^*\tilde{\mathcal{A}}=\mathrm{Ad}_{g^{-1}}\circ\tilde{\mathcal{A}},\quad\forall g\in\mathcal{G}
\label{eq:5.0.1}
\end{equation}
Consider the even smooth vector field $\widetilde{\mathcal{Z}}\in\Gamma(T(\mathrm{Lie}(\mathcal{G})_0\times\mathcal{P}))$ given by \[\widetilde{\mathcal{Z}}_{(X,p)}:=(0_Y,D_{(p,e)}\Phi(0_p,Y_e))=(0_Y,\widetilde{Y}_p)\] $\forall (Y,p)\in\mathrm{Lie}(\mathcal{G})_0\times\mathcal{P}$ parametrizing the (not necessarily $H^{\infty}$-smooth) fundamental vector fields on $\mathcal{P}$. The flow $\phi^{\widetilde{\mathcal{Z}}}:\,\mathbb{R}_{S,0}\times(\mathrm{Lie}(\mathcal{G})_0\times\mathcal{P})\rightarrow\mathrm{Lie}(\mathcal{G})_0\times\mathcal{P}$ of $\widetilde{\mathcal{Z}}$ is of the form $\phi^{\widetilde{\mathcal{Z}}}_t(Y,p)=(Y,\Phi(p,e^{tY}))=\tilde{\Phi}_{e^{tY}}(Y,p)$. Using (\cite{Tuynman:2004}, Prop. V.7.27), \eqref{eq:5.0.1} then yields
\begin{align}
\braket{(0_Y,Z_p)|L_{\widetilde{\mathcal{Z}}}\tilde{\mathcal{A}}_{(Y,p)}}&=\frac{\partial}{\partial t}\bigg{|}_{t=0}\braket{(0_Y,Z_p)|((\phi^{\widetilde{\mathcal{Z}}}_t)^*\tilde{\mathcal{A}})_{(Y,p)}}=\frac{\partial}{\partial t}\bigg{|}_{t=0}\braket{(0_Y,Z_p)|(\Phi_{e^{tY}}^*\mathcal{A})_{(Y,p)}}\nonumber\\
&=\frac{\partial}{\partial t}\bigg{|}_{t=0}\mathrm{Ad}_{e^{-tY}}\braket{(0_Y,Z_p)|\tilde{\mathcal{A}}_{(Y,p)}}=-\mathrm{ad}_Y\braket{Z_p|\mathcal{A}_{p}}
\end{align}
$\forall (X,p)\in\mathrm{Lie}(\mathcal{G})_0\times\mathcal{P}$ and smooth homogeneous $Z\in\Gamma(T\mathcal{P})$. On the other hand, one has
\begin{align}
\braket{(0_{Y},Z_p)|L_{\widetilde{\mathcal{Z}}}\tilde{\mathcal{A}}_{(Y,p)}}&=\widetilde{\mathcal{Z}}_{(Y,p)}\braket{(0,Z)|\tilde{\mathcal{A}}}-\braket{[\widetilde{\mathcal{Z}},(0,Z)]_{(Y,p)}|\tilde{\mathcal{A}}_{(Y,p)}}\nonumber\\
&=\widetilde{Y}_p\braket{Z|\mathcal{A}}-\braket{[\widetilde{\mathcal{Z}},(0,Z)]_{(Y,p)}|\tilde{\mathcal{A}}_{(Y,p)}}
\end{align}
for any $(Y,p)\in\mathrm{Lie}(\mathcal{G})_0\times\mathcal{P}$. If $Y=X\in\mathfrak{g}_0\subseteq\mathrm{Lie}(\mathcal{G})_0$, it follows $[\widetilde{\mathcal{Z}},(0,Z)]_{(X,p)}=[\widetilde{X},Z]_{p}$ yielding
\begin{equation}
\widetilde{X}_p\braket{Z|\mathcal{A}}-\braket{[\widetilde{X},Z]_{p}|\mathcal{A}_{p}}=\braket{Z_p|L_{\widetilde{X}}\mathcal{A}_p}
\end{equation}
and thus $\braket{Z_p|L_{\widetilde{X}}\mathcal{A}_p}=\braket{Z_p|-\mathrm{ad}_X\circ\mathcal{A}_p}$ $\forall p\in\mathcal{P}$. Since this holds for any smooth homogeneous vector field $Z$, this implies
\begin{equation}
L_{\widetilde{X}}\mathcal{A}=-\mathrm{ad}_X\circ\mathcal{A},\,\text{for }X\in\mathfrak{g}_0
\end{equation}
On the other hand, if $Y=\tau X$ with $X\in\mathfrak{g}_1$ and $\tau\in\Lambda_{1}$, it follows $[\widetilde{\mathcal{Z}},(0,Z)]_{(Y,p)}=\tau[\widetilde{X},Z]_p$ such that
\begin{equation}
\tau\widetilde{X}_p\braket{Z|\mathcal{A}}-\tau\braket{[\widetilde{X},Z]_{p}|\mathcal{A}_{p}}=(-1)^{|Z|}\tau\braket{Z_p|L_{\widetilde{X}}\mathcal{A}_p}
\end{equation}
and therefore $\tau\braket{Z_p|L_{\widetilde{X'}}\mathcal{A}_p}=-(-1)^{|Z|}\tau\mathrm{ad}_{X}\braket{Z_p|\mathcal{A}_p}=\tau\braket{Z_p|-\mathrm{ad}_{X}\circ\mathcal{A}_p}$ $\forall p\in\mathcal{P}$. Since this holds for any $\tau\in\Lambda_{1}$, we thus have 
\begin{equation}
L_{\widetilde{X}}\mathcal{A}=-\mathrm{ad}_X\circ\mathcal{A},\,\text{for }X\in\mathfrak{g}_1
\end{equation}
Conversely, suppose $\mathcal{A}\in\Omega^1(\mathcal{P},\mathfrak{g})_0$ satisfies the conditions (i) and (ii) of the proposition. For any smooth homogeneous vector field $Z\in\Gamma(T\mathcal{P})$ consider the $\mathrm{Lie}(\mathcal{G})$-valued $H^{\infty}$-smooth functions $F_{Z},G_{Z}\in H^{\infty}(\mathcal{G}\times\mathcal{P})\otimes\mathrm{Lie}(\mathcal{G})$ defined as
\begin{align}
F_{Z}(g,p):=\braket{Z_p|\Phi^*_g\mathcal{A}_p}=\braket{D_{(p,g)}\Phi(Z_p,0_g)|\mathcal{A}_{p\cdot g}}
\end{align}
as well as 
\begin{equation}
G_{Z}(g,p):=\mathrm{Ad}_{g^{-1}}\braket{Z_p|\mathcal{A}_p}
\end{equation}
$\forall (g,p)\in\mathcal{G}\times\mathcal{P}$. Since $H^{\infty}(\mathcal{G}\times\mathcal{P})\cong H^{\infty}(\mathcal{G})\hat{\otimes}_{\pi}H^{\infty}(\mathcal{P})$, it follows from Lemma \ref{prop:5.8} that $\mathcal{A}$ defines a connection 1-form on $\mathcal{P}$ if and only if we can show 
\begin{equation}
(X\otimes\mathds{1})F_{Z}(g,p)=(X\otimes\mathds{1})G_Z(g,p)
\label{eq:5.0.2}
\end{equation}
$\forall p\in\mathcal{P}$ and body points $g\in\mathbf{B}(\mathcal{G})$ as well as $X\in\mathcal{U}(\mathfrak{g})$ and smooth homogeneous vector fields $Z\in\Gamma(T\mathcal{P})$. For $X=1\in\mathcal{U}(\mathfrak{g})$, this is an immediate consequence of the first part of condition (ii).\\
Using the extension $\tilde{\mathcal{A}}\in\Omega^1(\mathrm{Lie}(\mathcal{G})_0\times\mathcal{P},\mathrm{Lie}(\mathcal{G}))$ of $\mathcal{A}$ on $\mathrm{Lie}(\mathcal{G})_0\times\mathcal{P}$ as well as the $\mathcal{G}$-right action $\tilde{\Phi}:\,(\mathrm{Lie}(\mathcal{G})_0\times\mathcal{P})\times\mathcal{G}\rightarrow\mathrm{Lie}(\mathcal{G})_0\times\mathcal{P}$ as defined above, we may extend $F_Z$ and $G_Z$ to $H^{\infty}$-smooth functions $\tilde{F}_Z$ and $\tilde{G}_Z$ on $\mathrm{Lie}(\mathcal{G})_0\times\mathcal{G}\times\mathcal{P}$ by setting
\begin{align}
\tilde{F}_{Z}(Y,g,p):&=\braket{\tilde{\Phi}_{g*}(0_Y,Z_p)|\tilde{\mathcal{A}}_{\tilde{\Phi}(Y,p,g)}}=\braket{D_{(Y,p,g)}\tilde{\Phi}(0_Y,Z_p,0_g)|\tilde{\mathcal{A}}_{\tilde{\Phi}(Y,p,g)}}\nonumber\\
&=\braket{D_{(p,g)}\Phi(Z_p,0_g)|\mathcal{A}_{p\cdot g}}=F_Z(g,p)
\end{align}
as well as 
\begin{equation}
\tilde{G}_{Z}(Y,g,p):=\mathrm{Ad}_{g^{-1}}\braket{(0_Y,Z_p)|\tilde{\mathcal{A}}_{(Y,p)}}=\mathrm{Ad}_{g^{-1}}\braket{Z_p|\mathcal{A}_p}=G_Z(g,p)
\end{equation}
$\forall (Y,g,p)\in\mathrm{Lie}(\mathcal{G})_0\times\mathcal{G}\times\mathcal{P}$. Let $\mathcal{Z}^L\in\Gamma(T(\mathrm{Lie}(\mathcal{G})_0\times\mathcal{G}))$ be the $H^{\infty}$-smooth homogeneous vector field on $\mathrm{Lie}(\mathcal{G})_0\times\mathcal{G}$ defined as $\mathcal{Z}^L_{(Y,g)}:=(0_Y,D_{(g,e)}\mu(0_g,Y_e))$ $\forall(Y,g)\in\mathrm{Lie}(\mathcal{G})_0\times\mathcal{G}$. Similarly as above, using the explicit form of the flow $\phi^L$ of $\mathcal{Z}^L$, this yields
\begin{align}
((\phi^L_t)^*\tilde{F}_Z)(Y,g,p)&=\braket{\tilde{\Phi}_{ge^{tY*}}(0_Y,Z_p)|\tilde{\mathcal{A}}_{\tilde{\Phi}(Y,p,g\cdot e^{tY})}}\nonumber\\
&=\braket{\tilde{\Phi}_{e^{tY*}}\circ\tilde{\Phi}_{g*}(0_Y,Z_p)|\tilde{\mathcal{A}}_{\tilde{\Phi}(Y,p,g\cdot e^{tY})}}\nonumber\\
&=\braket{(0_Y,\Phi_{g*}Z_p)|(\phi^{\widetilde{\mathcal{Z}}}_t)^*\tilde{\mathcal{A}}_{\phi^{\widetilde{\mathcal{Z}}}_t(Y,p\cdot g)}}
\end{align}
Taking the derivative, we thus conclude
\begin{align}
(Y\otimes\mathds{1})F_Z(g,p)&=\frac{\partial}{\partial t}\bigg{|}_{t=0}\braket{(0_Y,\Phi_{g*}Z_p)|(\phi^{\widetilde{\mathcal{Z}}}_t)^*\tilde{\mathcal{A}}_{\phi^{\widetilde{\mathcal{Z}}}_t(Y,p\cdot g)}}\nonumber\\
&=\braket{(0_Y,\Phi_{g*}Z_p)|L_{\widetilde{\mathcal{Z}}}\tilde{\mathcal{A}}_{(Y,p\cdot g)}}
\end{align}
Following exactly the same steps as above, one then concludes
\begin{equation}
(X\otimes\mathds{1})F_Z(g,p)=(-1)^{|Z||X|}\braket{\Phi_{g*}Z_p|L_{\widetilde{X}}\mathcal{A}_{p\cdot g}},\quad\forall\text{homogeneous }X\in\mathfrak{g}
\end{equation}
For $G_Z$ we proceed similarly and compute
\begin{align}
((\phi^L_t)^*\tilde{G}_Z)(Y,g,p)&=\mathrm{Ad}_{(ge^{tY})^{-1}}\braket{(0_Y,Z_p)|\tilde{\mathcal{A}}_{(Y,p)}}\nonumber\\
&=\mathrm{Ad}_{e^{-tY}}(\mathrm{Ad}_{g^{-1}}\braket{Z_p|\mathcal{A}_{p}})=\mathrm{Ad}_{e^{-tY}}(G_Z(g,p))
\end{align}
which yields
\begin{align}
(Y\otimes\mathds{1})F_Z(g,p)&=\frac{\partial}{\partial t}\bigg{|}_{t=0}\mathrm{Ad}_{e^{-tY}}(G_Z(g,p))=-\mathrm{ad}_YG_Z(g,p)\nonumber\\
&=-\mathrm{ad}_Y(\mathrm{Ad}_{g^{-1}}\braket{Z_p|\mathcal{A}_{p}})
\end{align}
Hence, it follows for $\forall X\in\mathfrak{g}$
\begin{equation}
(X\otimes\mathds{1})G_Z(g,p)=-\mathrm{ad}_X(\mathrm{Ad}_{g^{-1}}\braket{Z_p|\mathcal{A}_{p}})
\end{equation}
$\forall g\in\mathcal{G},\,p\in\mathcal{P}$. If $g\in\mathbf{B}(\mathcal{G})$ is a body point this, together with condition (ii), yields
\begin{align}
(X\otimes\mathds{1})G_Z(g,p)&=-\mathrm{ad}_X(\mathrm{Ad}_{g^{-1}}\braket{Z_p|\mathcal{A}_{p}})=-\mathrm{ad}_X\braket{\Phi_{g*}Z_p|\mathcal{A}_{p\cdot g}}\nonumber\\
&=(-1)^{|Z||X|}\braket{\Phi_{g^*}Z_p|L_{\widetilde{X}}\mathcal{A}_{p\cdot g}}=(X\otimes\mathds{1})F_Z(g,p)
\end{align}
proving \eqref{eq:5.0.2} in case $X\in\mathfrak{g}$. Next, let $Y\circ X\in\mathcal{U}(\mathfrak{g})$ with homogeneous $Y,X\in\mathfrak{g}$. In a similar way as above, one finds  
\begin{equation}
(Y\circ X\otimes\mathds{1})F_Z(g,p)=(-1)^{|Z|(|X|+|Y|)}\braket{\Phi_{g*}Z_p|L_{\widetilde{Y}}L_{\widetilde{X}}\mathcal{A}_{p\cdot g}}
\end{equation}
as well as
\begin{align}
(Y\circ X\otimes\mathds{1})G_Z(g,p)&=(Y\otimes\mathds{1})\mathrm{ad}_X\circ G_Z(g,p)=(-1)^{|X||Y|}\mathrm{ad}_X((Y\otimes\mathds{1})G_Z(g,p))\nonumber\\
&=(-1)^{|X||Y|}\mathrm{ad}_X\circ\mathrm{ad}_Y\circ G_Z(g,p)\nonumber\\
&=(-1)^{|X||Y|}\mathrm{ad}_X\circ\mathrm{ad}_Y(\mathrm{Ad}_{g^{-1}}\braket{Z_p|\mathcal{A}_{p}})
\label{eq:5.0.3}
\end{align}
$\forall g\in\mathcal{G},\,p\in\mathcal{P}$.
Taking the Lie derivative on both sides of the second part of condition (ii), one obtains
\begin{equation}
L_{\widetilde{Y}}L_{\widetilde{X}}\mathcal{A}=-L_{\widetilde{Y}}(-\mathrm{ad}_X\circ\mathcal{A})=-(-1)^{|X||Y|}\mathrm{ad}_X\circ L_{\widetilde{Y}}\mathcal{A}=(-1)^{|X||Y|}\mathrm{ad}_X\circ\mathrm{ad}_Y\circ\mathcal{A}
\label{eq:5.0.4}
\end{equation}
Hence, inserting \eqref{eq:5.0.3} in \eqref{eq:5.0.4} and restricting on body points $g\in\mathbf{B}(\mathcal{G})$, it follows
\begin{align}
(Y\circ X\otimes\mathds{1})G_Z(g,p)&=(-1)^{|X||Y|}\mathrm{ad}_X\circ\mathrm{ad}_Y(\mathrm{Ad}_{g^{-1}}\braket{Z_p|\mathcal{A}_{p}})\nonumber\\
&=(-1)^{|X||Y|}\mathrm{ad}_X\circ\mathrm{ad}_Y(\braket{\Phi_{g*}Z_p|\mathcal{A}_{p\cdot g}})\nonumber\\
&=(-1)^{|Z|(|X|+|Y|)}(\braket{\Phi_{g*}Z_p|(-1)^{|X||Y|}\mathrm{ad}_X\circ\mathrm{ad}_Y\circ\mathcal{A}_{p\cdot g}})\nonumber\\
&=(-1)^{|Z|(|X|+|Y|)}(\braket{\Phi_{g*}Z_p|L_{\widetilde{Y}}L_{\widetilde{X}}\mathcal{A}_{p\cdot g}})\nonumber\\
&=(Y\circ X\otimes\mathds{1})F_Z(g,p)
\end{align}
proving \eqref{eq:5.0.2} in case $Y\circ X\in\mathcal{U}(\mathfrak{g})$ with $Y,X\in\mathfrak{g}$. Thus, by induction, one concludes that \eqref{eq:5.0.2} holds for any $X\in\mathcal{U}(\mathfrak{g})$ and hence $\mathcal{A}$ indeed defines a connection 1-form on $\mathcal{P}$.
\end{proof}

\end{document}